\definecolor{Burgundy}{RGB}{130,20,0}
\definecolor{OliveGreen}{RGB}{53,110,45}
\newcommand{\N}{\mathbb N}
\newcommand{\R}{\mathbb R}
\newcommand{\e}{\varepsilon}
\renewcommand{\qed}{\ \hfill \fbox{} \bigskip}
\newcommand{\E}{\mathscr E}
\newcommand{\EE}{\mathcal E}
\newcommand{\V}{\mathcal V}
\newcommand{\T}{\mathcal T}
\newcommand{\DD}{\mathscr D}
\newcommand{\D}{\mathcal D}
\newcommand{\DDD}{\mathsf D}
\newcommand{\p}{\overline{p}}
\renewcommand{\tilde}{\widetilde}
\newcommand{\Cyl}{{\sf Cyl}_b^\infty(C^\infty_*(\R^n))}
\renewcommand{\Cyl}{{\rm CylF}(\U)}
\newcommand{\CylF}{{\rm CylF}}
\newcommand{\CylV}{{\rm CylV}}
\newcommand{\ECylF}{{\rm ECylF}}
\newcommand{\U}{\Upsilon}
\renewcommand{\p}{\pi_{\sf m}}
\newcommand{\dg}{{\rm diag}}
\newcommand{\s}{\mathsf s}
\renewcommand{\p}{\pi}
\renewcommand{\d}{{\mathsf d}}
\renewcommand{\S}{\mathsf S}
\renewcommand{\SS}{\mathbf S}
\renewcommand{\c}{\mathsf c}
\renewcommand{\.}{\ldots}
\numberwithin{equation}{section}
\newtheorem{thm}{Theorem}[section]
\newtheorem*{thm*}{Theorem}
\newtheorem{defn}[thm]{Definition}
\newtheorem{lem}[thm]{Lemma}
\newtheorem{rem}[thm]{Remark}
\newtheorem{cor}[thm]{Corollary}
\newtheorem{prop}[thm]{Proposition}
\DeclareRobustCommand{\Sec}{\ifmmode\mathsection\else\textsection\fi}
\renewcommand{\tilde}{\widetilde}
\renewcommand{\paragraph}[1]{\medskip{\noindent{\it #1}. }}
\def\@fnsymbol#1{\ensuremath{\ifcase#1\or *\or \mathsection \or  \mathparagraph \or \dagger\or
    \ddagger \or \|\or **\or \dagger\dagger
   \or \ddagger\ddagger \else\@ctrerr\fi}}
\newcommand\thankssymb[1]{\textsuperscript{\@fnsymbol{#1}}}
\def\l@subsection{\@tocline{2}{0pt}{2.5pc}{5pc}{}}
\title{BV Functions and Sets of Finite Perimeter on Configuration Spaces}
\author[E.~ Bru\'e]{Elia Bru\'e\thankssymb{1}}
\author[K.~Suzuki]{Kohei Suzuki\thankssymb{2}\thankssymb{3}}
\thanks{\thankssymb{1}Department of Decision Sciences, Bocconi University, Italy.~Email:\phantom{\thankssymb{1}} elia.brue@unibocconi.it}
\thanks{\thankssymb{2} Department of Mathematical Sciences, Durham University, United Kingdom. Email:\phantom{\thankssymb{2}} kohei.suzuki@durham.ac.uk}
\thanks{\thankssymb{3}Theoretical Sciences Visiting Program, Okinawa Institute of Science and Technology Graduate University, Onna, 904-0495, Japan}
\subjclass[2020]{Primary 28C20; Secondary 28A75, 28A78, 26E15.}
\keywords{Configuration Spaces, BV Functions, Perimeters, Gau\ss--Green Formula}
\begin{document}

\maketitle
\begin{center}
\today
\end{center}
\begin{abstract}
\noindent  In this paper, we aim to develop the foundations of a theory of BV functions in the configuration space over the Euclidean space $\R^n$ equipped with the Poisson measure $\p$. 
We first construct the $m$-codimensional Poisson measure ---formally written as ``$(\infty-m)$-dimensional Poisson measure''--- on the configuration space. We then show that our construction is consistent with potential theory induced by the infinitely many independent Brownian motions by establishing relations between the $m$-codimensional Poisson measure and Bessel capacities. Secondly, we introduce three different definitions of BV functions based on the variational, relaxation, and semigroup approaches, and prove the equivalence of them.  In the process, we prove the $p$-Bakry--\'Emery inequality on the configuration space for any $1<p<\infty$. Thirdly, we construct perimeter measures and introduce an appropriate notion of measure-theoretic boundary, called the reduced boundary. We then prove that the perimeter measure can be expressed by the $1$-codimensional Poisson measure restricted to the reduced boundary, which is a generalisation of De Giorgi's identity to the configuration space.  Finally, we construct the total variation measures for functions of bounded variation, and prove the Gau\ss--Green formula. 
\end{abstract}

\tableofcontents

\section{Introduction}
The purpose of this paper is to establish the foundations for functions of bounded variations (BV functions) in the space of all locally finite point measures (without multiplicity) in the Euclidean space $\R^n$, denoted by $\U(\R^n)$ and called the {\it configuration space}.
%
%
%
The space $\U(\R^n)$ is endowed with the vague topology $\tau_v$, the $L^2$-transportation (extended) distance ${\sf d}_\U$, which stems from the optimal transport problem, and the Poisson measure $\p$ whose intensity measure is the Lebesgue measure $\mathbf L^n$ on $\R^n$.
The resulting {\it topological (extended) metric measure structure} $(\U(\R^n), \tau_v, {\sf d}_\U, \p)$ plays a fundamental role to describe dynamical systems of infinite particles stemming from statistical physics, random point processes, random graphs and integral geometry, representation theory of diffeomorphism groups on manifolds, and many others. Instead of giving enormous numbers of related references here, we refer the reader to \cite[Section1.6]{DS21} for an overview of the aforementioned subjects. 

\medskip

The studies of BV functions and sets of finite perimeter beyond the standard Euclidean space have seen a thriving development in the last years, see \cite{AlBaChRoShTe1,AlBaChRoShTe2,AlBaChRoShTe3,Am01,Am02,ABS,AmbDiM14,AmbGheMa,BPS,MirandaJr} and references therein.
However, all of these results do not cover the configuration space $\U(\R^n)$. 
The space~$(\U(\R^n), \tau_v, {\sf d}_\U, \p)$ is known to possesses several pathological properties (see details in \cite{DS21}): 
\begin{itemize}
\item the extended distance ${\sf d}_\U$ is not continuous with respect the topology $\tau_v$; 
\item ${\sf d}_\U$-metric balls are negligible with respect to the Poisson measure $\p$; 
\item ${\sf d}_{\U}$--Lipschitz functions are not necessarily $\p$-measurable;
\item the Riesz--Markov--Kakutani representation theorem does not hold.
\end{itemize}
For these reasons, the study of the configuration space $(\U(\R^n), \tau_v, {\sf d}_\U, \p)$ does not fall into the standard framework of metric measure geometry. Furthermore, the lack of the Riesz--Markov-Kakutani's representation theorem causes further complexity to construct total variation measures supporting the Gau\ss--Green formula by means of standard functional-analytic technique.


\medskip

In the setting of infinite-dimensional spaces, the study of geometric measure theory has been pioneered by Feyel--de la Pradelle \cite{F92}, Fukushima \cite{Fuk01}, Fukushima--Hino \cite{FH01} and Hino \cite{H10} in the Wiener space. In \cite{F92}, they constructed the finite-codimensional Gau\ss--Hausdorff measure in the Wiener space and investigated its relation to capacities. In \cite{Fuk01} and \cite{FH01}, they developed the theory of functions of bounded variation and constructed perimeter measures, and prove the Gau\ss--Green formula. Based on these results,  Hino introduced in \cite{H10} a notion of reduced boundary and investigated relations between the one-codimensional Hausdorff--Gau\ss\ measure and the perimeter measures. Further fine properties were investigated by Ambrosio--Figalli \cite{AmbFig}, Ambrosio--Figalli--Runa \cite{AmbFigRuna}, Ambrosio--Miranda--Pallara \cite{AmbMiPa1,AmbMiPa2}, Ambrosio--Maniglia--Miranda--Pallara \cite{AMMP10}. The notion of functions of bounded variation has been studied also in a Gelfand triple by R\"ockner--Zhu--Zhu \cite{RZZ10, RZZ12, RZZ15}. All of the aforementioned results rely heavily on the linear structure of the Wiener space or the Hilbert space, which is used to perform finite-dimensional approximations. However, the configuration space does not have a linear structure and there is no chance to apply similar techniques.

\subsection{Non-linear dimension reduction and overview of the main results}

To overcome the difficulties explained above, we develop a {\it non-linear dimensional reduction} tailored to the configuration space $\U(\R^n)$.
A key observation is that $\U(B_r)$, the configuration space over the Euclidean closed metric ball $B_r$ centred at the origin $o$ with radius $r>0$, is essentially finite dimensional. More precisely, due to the compactness of $B_r$, $\U(B_r)$ can be written as the disjoint union $\sqcup_{k \in \N} \U^k(B_r)$ of the $k$-particle configuration spaces $\U^k(B_r)$, each of which is isomorphic to the quotient space of the $k$-product space $B_r^{\times k}$ by the $k$-th symmetric group. 
In light of this observation, the main task is to lift geometric measure theory on $\U(B_r)$ to the infinite-dimensional space $\U(\R^n)$ by finite-dimensional approximations.

\smallskip
In this paper, we first construct the $m$-codimensional Poisson measure on the configuration space (Theorem \ref{thm: MI} and Definition \ref{defn: MCH}), and
study its relation to $(1,p)$-Bessel capacities (Theorem \ref{thm: CH}). Secondly, we introduce three different definitions of functions of bounded variation based on the variational, relaxation and the semigroup approaches, and prove their equivalence (Theorem \ref{prop: IBV}). In the process of showing the equivalence of these three definitions, we prove the $p$-Bakry--\'Emery inequality (Theorem \ref{rem: BE}) for the heat semigroup on $\U(\R^n)$ for $1<p<\infty$, which was previously known only for $p=2$ in  Erbar--Huesmann \cite{EH15}.  Thirdly, we construct perimeter measures and introduce the notion of the reduced boundary in Section \ref{sec:reduced boundary}. We then prove that the perimeter measure can be expressed by the $1$-codimensional Poisson measure restricted to the reduced boundary (Theorem \ref{thm: fpr}). Fourthly, we construct the total variation measures for functions of bounded variation and prove the Gau\ss--Green formula (Theorem \ref{thm: BV}).

\smallskip
We now explain each result in details.

\subsection{$m$-codimensional Poisson measure}
The first main result of this paper is the construction of the $m$-codimensional Poisson measure on $\U(\R^n)$. Since $\U(\R^n)$ is infinite-dimensional, it is formally written as 
\begin{align*}
\text{``$(\infty-m)$-dimensional Poisson measure''}.
\end{align*}
In the case of finite-dimensional spaces, usually the construction of finite-codimensional measures builds upon covering arguments, which heavily rely on the volume doubling property of the ambient measure. However, this property does not hold for the Poisson measure $\pi$ on $\U(\R^n)$. 


We construct the $m$-codimensional Poisson measure on $\U(\R^n)$ by passing to the limit of finite dimensional approximations obtained by using the $m$-codimensional Poisson measure on $\U(B_r)$.
The key step in the construction is to prove the {\it monotonicity} of these finite dimensional approximations  with respect to the radius $r$, allowing us to find a unique limit measure.
More in details, based on the decomposition $\U(B_r) = \sqcup_{k \in \N} \U^k(B_r)$, we build $\rho^m_{\U(B_r)}$, the {\it spherical Hausdorff measure of codimension $m$} in $\U(B_r)$, by summing the $m$-codimensional spherical Hausdorff measure  $\rho^{m, k}_{\U(B_r)}$ on the $k$-particle configuration space $\U^k(B_r)$, which is obtained by the quotient measure of the $m$-codimensional spherical Hausdorff measure on the $k$-product space $B_r^{\times k}$ with a suitable renormalisation corresponding to the Poisson measure.
The {\it localised $m$-codimensional Poisson measure $\rho^m_r$} of a set $A\subset \U(\R^n)$ is then obtained by averaging the $\rho^m_{\U(B_r)}$-measure of sections of $A$ with the Poisson measure $\pi_{B_r^c}$ on $\U(B_r^c)$, i.e.
\begin{equation*}
	\rho^m_r(A) 
	:= \int_{\U(B_r^c)} \rho^m_{\U(B_r)}(\{\gamma\in \U(B_r)\, : \gamma + \eta \in A\}) d \p_{B_r^c}(\eta) \, .
\end{equation*}
We prove that $\rho^m_r$ is well-defined on Borel sets (indeed, we prove it for all Suslin sets), and that it is monotone increasing with respect to $r$ (Theorem \ref{thm: MI} and Definition \ref{defn: MCH}). In particular, we can define {\it the $m$-codimensional Poisson measure} as
\begin{equation*}
	\rho^m := \lim_{r\to \infty} \rho^m_r \, .
\end{equation*}
We refer the readers to Section \ref{s:FCP} for the detailed construction of $\rho^m$. 

\subsection{Bessel capacity}
In Section \ref{sec: cap}, we compare the $m$-codimensional Poisson measure $\rho^m$ and ${\rm Cap}_{\alpha, p}$, the {\it Bessel capacity} induced by the Dirichlet form associated with infinite independent Brownian motions constructed in Albeverio--Kondratiev--R\"ockner~\cite{AKR98}. We prove that zero capacity sets are $\rho^m$ negligible provided $\alpha p > m$ (Theorem \ref{thm: CH}). This result, that is well-known in the case of finite-dimensional spaces, proves that our $m$-codimensional Poisson measure~$\rho^m$ behaves coherently with the potential-analytic structure of~$\U(\R^n)$. 
To prove it,  we introduce the $(\alpha, p)$-Bessel capacity ${\rm Cap}^{\U(B_r)}_{\alpha, p}$ on $\U(B_r)$ and the localised $(\alpha, p)$-Bessel capacity
${\rm Cap}^{r}_{\alpha, p}$ on $\U(\R^n)$ based on the localisation argument of the $L^p$-heat semigroup $\{T_t\}$ on $\U(\R^n)$. 
We prove that ${\rm Cap}_{\alpha, p}$ is approximated by ${\rm Cap}^{r}_{\alpha, p}$ as $r \to \infty$, hence we can obtain the proof by lifting the corresponding result for $\rho^m_{\U(B_r)}$ and ${\rm Cap}^{\U(B_r)}_{\alpha, p}$ in $\U(B_r)$ (see Proposition \ref{lem: MC3}). We refer the readers to Section \ref{sec: cap} for the detailed arguments. 

As an application, we prove in Corollary \ref{cor: measure} that, if ${\rm Cap}_{1,2}(A)=0$ then $|\DDD F|(A)=0$ for every $F\in {\rm BV}(\U(\R^n)) \cap L^2(\U(\R^n), \p)$, where $|\DDD F|$ is the total variation measure (Definition \ref{defn: TV}) and ${\rm BV}(\U(\R^n))$ is the space of functions of bounded variation (Definition \ref{defn: BVU}).
The latter result will be fundamental for applications  to stochastic analysis of infinite-particle diffusions, which will be the subject of a forthcoming paper.

\subsection{Functions of bounded variations and Caccioppoli sets}

In the second part of this paper we develop the theory of functions of bounded variation and sets of finite perimeter in $\U(\R^n)$.
In Section \ref{sec:BV} we propose three different notions of functions with bounded variation. The first one follows the classical {\it variational approach}, the second one is built upon the  {\it relaxation approach}, while the third one relies on the regularisation properties of the {\it heat semigroup}.
It turns out that they are all equivalent, as shown in Section \ref{sec: EDBV},
and the resulting class is denoted by ${\rm BV}(\U(\R^n))$. For $F\in {\rm BV}(\U(\R^n))$ we define a total variation measure $|\DDD F|$ and prove a {\it Gau\ss--Green formula} (see Theorem below). We remark that in our infinite-dimesional setting, Riesz--Markov--Kakutani's representation theorem is not available due to the lack of local compactness. In particular, the construction of the total variation measure is not straightforward. We follow an unusual path to show its existence: we first develop the theory of {\it sets with finite perimeter} relying on the non-linear dimension reduction. We then employ the {\it coarea formula} to build the total variation measure of a function of bounded variation as a superposition of perimeter measures.

\medskip

Sets of finite perimeter are those Borel sets $E$ such that $\chi_E \in {\rm BV}(\U(\R^n))$, where $\chi_E$ denotes the indicator function of $E$. In Section \ref{sec:reduced boundary}, we study their structure by means of the non-linear reduction approach, a part of which uses a strategy inspired by Hino \cite{H10} for the study of Wiener spaces.
The key result in this regard is Proposition \ref{lem: BV1} saying that if $E$ has finite perimeter then the projection $E_{\eta,r}:=\{ \gamma\in \U(B_r)\, : \, \gamma + \eta \in E \}$ has finite {\it localised total variation} in $B_r$, for $\pi_{B_r^c}$-a.e. $\eta\in B_r^c$. Hence, we can reduce the problem to the study of sections that are sets with finite perimeter in $\U(B_r)$. As already remarked, the latter is essentially a finite dimensional space, so we can appeal to classical tools of geometric measure theory to attack the problem. 

The {\it reduced boundary} $\partial^* E$ of a set of finite perimeter $E\subset \U(\R^n)$ is then defined in terms of the reduced boundary of the sections $E_{\eta,r}$, through a limit procedure. The resulting object allows us to represent the perimeter measure as
\begin{equation*}
	\| E \| = \rho^1|_{\partial^* E} \, ,
\end{equation*}
which is a generalisation of the identity proven in the Euclidean setting by E. De Giorgi \cite{DeGiorgi54,DeGiorgi55}.

Our approach  to the BV theory deviates from the standard one. We define the total variation measure $|\DDD F|$ of a function $F\in {\rm BV}(\U(\R^n))$ by imposing the validity of the coarea formula. 
More precisely, we show that $dt$-a.e. level set $\{F>t\}$ is of finite perimeter and we set
\begin{equation*}
	|\DDD F|:=\int_{-\infty}^{\infty} \ \bigl\| {\{ F > t\}} \bigr\|  d t \, ,
\end{equation*}
taking advantage of the perimeter measure $\|\{F>t\}\|$ that has been already defined using finite dimensional approximations.
The reason for this non-standard treatment is that we are not able to build directly $|\DDD F|$ through a finite dimensional approximation, since the latter does not have a simple expression in terms of $1$-codimensional Poisson measure $\rho^1$ restricted to a suitable subset. 
Our approach is, however, consistent with the standard one, as shown in Corollary \ref{cor: PMTV} and in Theorem \ref{thm: BV}.

We summarise the main results in Section \ref{sec:reduced boundary} and Section \ref{sec: CF} concerning functions of bounded variations and a sets of finite perimeter.  We denote by $\CylV(\U(\R^n))$ the space of cylinder vector fields on $\U(\R^n)$ and by $(T\U, \langle \cdot, \cdot \rangle_{T\U})$ the tangent bundle to $\U(\R^n)$ with the pointwise inner product $\langle \cdot, \cdot \rangle_{T\U}$ (see Section \ref{subsection:Sobolev}).

\begin{thm*}[Theorems \ref{thm: fpr}, \ref{thm: BV}]\label{thm:main intro}
	For $F \in L^2(\U(\R^n),\pi) \cap  {\rm BV}(\U(\R^n))$, there exists a unique positive finite measure $|\DDD F|$ on $\U(\R^n)$ and a  $\pi$-a.e. unique $T\U$-valued measurable function $\sigma$ on $\U(\R^n)$ so that $|\sigma|_{T\U}=1$  $|\DDD F|$-a.e., and 
	\begin{align*} 
		\int_{\U(\R^n)} (\nabla^* V) F d\p = \int_{\U(\R^n)} \langle V, \sigma\rangle_{T\U} d| \DDD F|\, , \quad \forall V \in \CylV(\U) \, .
	\end{align*} 
	If, furthermore, $F=\chi_{E}$, then 
	$$|\DDD \chi_E| = \rho^1|_{\partial^* E}.$$
\end{thm*}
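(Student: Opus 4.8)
The plan is to establish the two assertions separately: first the existence and uniqueness of the pair $(|\DDD F|, \sigma)$ realizing the Gau\ss--Green formula for a general $F \in L^2 \cap {\rm BV}$, and then the special identification $|\DDD \chi_E| = \rho^1|_{\partial^* E}$ in the indicator case. For the first part, the starting point is the non-standard \emph{definition} of the total variation measure as the superposition of perimeter measures,
\begin{equation*}
	|\DDD F| := \int_{-\infty}^{\infty} \| \{F > t\} \| \, dt \, ,
\end{equation*}
which makes sense once one knows (from the coarea machinery developed earlier in Section \ref{sec: CF}) that a.e.\ level set $\{F>t\}$ is of finite perimeter and that $t \mapsto \| \{F>t\} \|(A)$ is measurable with $\int_\R \| \{F>t\} \|(\U(\R^n)) \, dt < \infty$; finiteness of $|\DDD F|$ follows from the BV bound on $F$. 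The identity $\int (\nabla^* V) F \, d\p = \int \langle V, \sigma \rangle_{T\U} \, d|\DDD F|$ should then be obtained by first proving the scalar Gau\ss--Green formula $\int (\nabla^* V) \chi_E \, d\p = \int \langle V, \sigma_E \rangle_{T\U} \, d\|E\|$ for sets of finite perimeter (this is essentially the content of Theorem \ref{thm: fpr} together with the structure of $\partial^* E$), then integrating the layer-cake decomposition $F = \int_0^\infty \chi_{\{F>t\}} \, dt - \int_{-\infty}^0 \chi_{\{F \le t\}} \, dt$ against $\nabla^* V$ and applying Fubini; the vector field $\sigma$ is reconstructed by a Radon--Nikodym / disintegration argument from the family $\{\sigma_{\{F>t\}}\}$, and the normalization $|\sigma|_{T\U} = 1$ $|\DDD F|$-a.e.\ comes from the fact that each $\sigma_{\{F>t\}}$ has unit length $\|\{F>t\}\|$-a.e. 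Uniqueness of $|\DDD F|$ is forced by testing the formula against a rich enough class of cylinder vector fields (density of $\CylV(\U)$ in the relevant sense), and uniqueness of $\sigma$ follows since $|\DDD F|$-a.e.\ the value $\langle V, \sigma \rangle_{T\U}$ is determined for $V$ ranging over a generating set of the fibers of $T\U$.

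For the second assertion, suppose $F = \chi_E$ with $E$ of finite perimeter. The level sets are now trivial: $\{F > t\} = E$ for $t \in [0,1)$, $\{F>t\} = \U(\R^n)$ for $t < 0$, and $\{F>t\} = \emptyset$ for $t \ge 1$. Since the perimeter measure of the whole space and of the empty set vanish, the superposition formula collapses to
\begin{equation*}
	|\DDD \chi_E| = \int_0^1 \| E \| \, dt = \| E \| \, .
\end{equation*}
It then remains to invoke the De Giorgi-type identity $\| E \| = \rho^1|_{\partial^* E}$, which is precisely Theorem \ref{thm: fpr}; so this half of the statement is essentially a bookkeeping reduction once Theorem \ref{thm: fpr} is in hand.

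The genuine work — and the step I expect to be the main obstacle — is \emph{not} the layer-cake bookkeeping but rather the passage, hidden inside Theorem \ref{thm: fpr}, from the finite-dimensional perimeter theory on $\U(B_r)$ to the identity $\|E\| = \rho^1|_{\partial^* E}$ on all of $\U(\R^n)$. Concretely, one must show that the sections $E_{\eta,r} = \{\gamma \in \U(B_r) : \gamma + \eta \in E\}$ have finite localized perimeter for $\pi_{B_r^c}$-a.e.\ $\eta$ (Proposition \ref{lem: BV1}), that the classical reduced boundary $\partial^* E_{\eta,r}$ together with De Giorgi's Euclidean identity gives $\|E\|_r = \rho^1_r|_{\partial^* E \cap (\cdot)}$ at each finite scale, and then that both sides converge \emph{as $r \to \infty$} — the left side by the monotone/consistent structure of the localized perimeters, the right side by the monotonicity of $\rho^1_r \uparrow \rho^1$ established in Theorem \ref{thm: MI}. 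The delicate points are the compatibility of the reduced boundaries $\partial^* E_{\eta,r}$ across scales (so that the limiting $\partial^* E$ is well defined and the inner unit normals glue to a single $T\U$-valued $\sigma_E$), and the measurability bookkeeping needed to run Fubini over $\U(B_r^c)$ — exactly the kind of subtlety the authors flag in the introduction when warning about $\d_\U$-Lipschitz functions failing to be $\p$-measurable and the absence of Riesz--Markov--Kakutani. Once Theorem \ref{thm: fpr} is granted (as the excerpt permits), assembling the statement above is, as indicated, mostly a matter of the coarea decomposition and a disintegration argument for $\sigma$.
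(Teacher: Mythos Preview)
Your overall architecture matches the paper: the total variation measure is \emph{defined} via the coarea superposition $|\DDD F| = \int_\R \|\{F>t\}\|\,dt$, the indicator case reduces to $|\DDD \chi_E| = \|E\|$ by the trivial layer-cake computation you give (this is exactly Corollary \ref{cor: PMTV}), and the identification $\|E\| = \rho^1|_{\partial^* E}$ is simply quoted from Theorem \ref{thm: fpr}. Your diagnosis of where the real work lies (the finite-dimensional reduction and the $r\to\infty$ monotonicity behind Theorem \ref{thm: fpr}) is also accurate.

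The one point where you diverge from the paper, and where your proposal is weakest, is the construction of $\sigma_F$. You suggest assembling $\sigma_F$ from the family $\{\sigma_{\{F>t\}}\}_t$ by a ``Radon--Nikodym / disintegration argument,'' with the unit norm coming from $|\sigma_{\{F>t\}}|=1$. This is delicate: a disintegration would give $\sigma_F(\gamma) = \int \sigma_{\{F>t\}}(\gamma)\,d\mu_\gamma(t)$ as a barycenter in the (infinite-dimensional) fiber $T_\gamma\U$, which yields $|\sigma_F|_{T\U}\le 1$ by Jensen but \emph{not} equality unless you also show that the normals $\sigma_{\{F>t\}}(\gamma)$ are aligned for $\mu_\gamma$-a.e.\ $t$. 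The paper sidesteps this entirely. It observes that the layer-cake plus the set-level Gau\ss--Green formula (Theorem \ref{thm: GGP}) give the bound
\[
\int_{\U(\R^n)} (\nabla^* V)\,F\,d\p \le \|V\|_{L^2(T\U,\,|\DDD F|)}\,,
\]
so $V\mapsto \int(\nabla^* V)F\,d\p$ extends to a bounded linear functional on the Hilbert space $H=\overline{\CylV}^{L^2(T\U,|\DDD F|)}$, and $\sigma_F$ is obtained directly as its Riesz representative. The unit norm then follows from $\|\sigma_F\|_{L^1}=\V(F)=|\DDD F|(\U(\R^n))$ together with $\|\sigma_F\|_{L^2}\le 1$ and the equality case of Jensen. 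This Hilbert-space route is shorter and avoids any vector-valued Radon--Nikodym or alignment issue; your disintegration approach would ultimately need the same $L^1$-vs-$L^2$ norm computation to close the gap.
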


\subsection{Potential applications}

Our theory of functions of bounded variation has several potential applications to related fields such as singular boundary problems of infinite interacting diffusions.  In the case of the Euclidean space $\R^n$ --- the the case of {\it one particle} Brownian motion --- there is a connection between the theory of BV functions and stochastic analysis: the (modified) reflected Brownian motion on an open set $A \subset \R^n$ is {\it semi-martingale} if and only if $A$ is Caccioppoli. Furthermore, the modified reflected  Brownian motion satisfies the {\it generalised Skorokhod equation} and the {\it generalised It\^o's formula}, where the reflection at the boundary is phrased by the local time in terms of the reduced boundary (see, \cite[Theorem 7.1, 7.2]{Fuk99}).   
As an infinite dimensional counterpart,  one can expect that the main results in this paper would be useful to construct infinite particle diffusions with singular boundary conditions (cf.\ \cite[Theorem 4.4.]{FH01} in the case of the Wiener space). 

\subsection{Structure of the paper}
In Section \ref{sec: Pre}, we collect preliminary results regarding the the configuration space, Suslin sets and measurability of sections.
In Section \ref{s:FCP}, we construct the $m$-codimensional measure. Relations with the Bessel capacity are studied in Section \ref{sec: cap}.
Section \ref{sec:BV} is devoted to the study of functions of bounded variation. We introduce three different notion and prove the equivalence. In Section \ref{sec:reduced boundary}, we introduce and study sets of finite perimeter. We build the notion of reduced boundary and the perimeter measure, and we show an integration by parts formula.
In Section \ref{sec: CF}, we introduce the total variation measure of functions with bounded variations by employing the coarea formula, and prove  a Gau\ss--Green type integration-by-parts formula.


\subsection*{Acknowledgements}
The two authors are indebted to Lorenzo Dello Schiavo and the anonymous referee for their very carefull reading that improved the original manuscript.   
The first named author was supported by the Giorgio and Elena Petronio Fellowship at the Institute for Advanced Study at the time of the writing.
The second named author gratefully acknowledges funding by: the JSPS Overseas Research Fellowships, Grant Nr.290142; World Premier International Research Center Initiative (WPI), MEXT, Japan; JSPS Grant-in-Aid for Scientific Research on Innovative Areas “Discrete Geometric Analysis for Materials Design”, Grant Number 17H06465; and the Alexander von Humboldt Stiftung, Humboldt-Forschungsstipendium.

\subsection*{Declarations of interest}
The authors declare that they have no competing
interest.

%

\section{Preliminaries} \label{sec: Pre}
\subsection{Notational convention}
In this paper, the bold fonts $\SS$, $\mathbf{L}, \ldots$ are mainly used for objects in product spaces or vector-valued objects, while the serif fonts $\S, \DDD, \ldots$ are used for objects in the quotient space of product spaces with respect to the $k$-symmetric group $\mathfrak S_k$ or for objects in the configuration space $\U(\R^n)$. 
\smallskip
\\
The lower-case fonts $f, g, h, v, w,\ldots$ are mainly used for functions on the base space $\R^n$, while the upper-case fonts $F, G, H, V, W,\ldots$ are used for functions on the configuration space $\U(\R^n)$.
\smallskip
\\
We denote by $\chi_E$ the indicator function of $E$, i.e., $\chi_E=1$ on $E$ and $\chi_E=0$ on $E^c$. 
Let $\Omega \subset \R^n$ be a closed domain. We denote by $C_*^{\infty}(\Omega)$ the space of smooth functions with compact support in $\Omega\setminus \partial \Omega$ (i.e., functions vanish at the boundary $\partial \Omega$), while $C_c^{\infty}(\Omega)$ denotes the space of compactly supported smooth functions on $\Omega$ (functions do not necessarily vanish at the boundary $\partial \Omega$). Note that $C_*^{\infty}(\Omega) \subset C_c^{\infty}(\Omega)$ in general, but these two function spaces coincide, i.e. $C_c^\infty(\R^n) = C_*^\infty(\R^n)$, when we take $\Omega=\R^n$.

\subsection{Configuration spaces}
Let $\R^n$ be the $n$-dimensional  Euclidean space. Let $B_r:=B_r(0) \subset \R^n$ be the closed ball with radius $r>0$ centred  at the origin $0$. Let $\delta_{x}$ denote the point measure at $x \in \R^n$, i.e. $\delta_x(A)=1$ if and only if $x \in A$.  
We denote by $\U(\R^n)$ the {\it configuration space over $\R^n$ without multiplicity}, i.e.  the set of all locally finite point measures $\gamma$ on $\R^n$ so that $\gamma(\{x\}) \in \{0,1\}$ for every $x \in \R^n$. Elements in $\U(\R^n)$ can be written as $\gamma = \sum_{i=1}^N \delta_{x_i}$ with $N \in \N \cup \{\infty\}$ and $\{x_i\}_{i \in \N} \subset \R^n$.
Let $\U(A)$ denote the configuration space over a Polish subspace $A \subset \R^n$ defined analogously to $\U(\R^n)$, and $\U^k(A)$ denote the space of $k$-configurations on a subset $A$, i.e. $\U^{k}(A)=\{\gamma \in \U(A): \gamma(A)=k\}$.
We equip $\U(\R^n)$ with the vague topology $\tau_v$, i.e., $\gamma_n \in \U(\R^n)$ converges to $\gamma \in \U(\R^n)$ in $\tau_v$ if and only if $\gamma_n(f) \to \gamma(f)$ for any $f \in C_c(\R^n)$. For a subset $A \subset \R^n$, we equip $\U(A)$ with the relative topology as a subset in $\U(\R^n)$. Let $\mathscr B(\U(A), \tau_v)$ denote the Borel $\sigma$-algebra associated with the vague topology $\tau_v$.
For a set $A \subset \R^n$, let ${\rm pr}_A: \U(\R^n) \to \U(A)$ be the projection defined by the restriction of configurations on $A$, i.e. ${\rm pr}_A(\gamma)=\gamma|_{A}$.

Given $A \subset \R^n$, an open or closed domain, we denote by $\pi_A$ the {\it Poisson measure} on $\U(A)$ whose intensity measure is the Lebesgue measure restricted to $A$, namely, $\pi_A$ is the unique Borel probability measure so that, for all $f \in C_c(A)$, the following holds
\begin{align} \label{eq:Int}
\int_{\U(A)} e^{ f^\ast}d\p_{A} = \exp\biggl\{ \int_{A} (e^{f}-1)d \mathbf {L}^n(x)\biggr\} \, .
\end{align}
Here $\mathbf {L}^n$ denotes the $n$-dimensional Lebesgue measure. See \cite{GV64} for a reference for the expression~\eqref{eq:Int}. We write $\p=\p_{\R^n}$.
Note that $\pi_A$ coincides with the push-forward measure $\pi_A=({\rm pr}_A)_\#\pi$.
Let 
$$
\dg_k:=\{(x)_{1 \le i \le m} \in (\R^n)^{\times k}: \exists i, j\  \text{s.t.}\ x_i = x_j\} \, ,
$$ 
denote the set of all sub-diagonals in $ (\R^n)^{\times k}$, and let $\mathfrak S_k$ denote the $k$-symmetric group. For any  set $A \subset \R^n$, we identify 
$$\U^{k}(A) \cong (A^{\times k}\setminus \dg_k) /\mathfrak S_k, \quad k \in \N.$$ 
Let $\s_{k}:A^{\times k} \setminus \dg_k \to \U^{k}(A)$ be the canonical projection with respect to the action of $\mathfrak S_k$, i.e. $\s_{k}: (x_i)_{1\le i \le k} \mapsto \sum_{i=1}^k\delta_{x_i}$. 
We say that a function $f: \sqcup_{k=1}^\infty (\R^n)^{\times k} \to \R$ is {\it symmetric} iff $f(\mathbf x_{\sigma_k}) = f(\mathbf x_k)$ with $\mathbf x_{\sigma_k}:=(x_{\sigma_k(1)}, \ldots, x_{\sigma_k(k)})$ for every permutation $\sigma_k \in \mathfrak S_k$ and every $k \in \N$.

For $\mathbf x _k, \mathbf y_k \in A^{\times k}$ with $\mathbf s_k(\mathbf x_k) = \gamma \in \U^k(A)$ and $\mathbf s_k(\mathbf y_k) = \eta \in \U^k(A)$, define {\it the $L^2$-transportation distance $\d_{\U^k}(\gamma, \eta)$} on $\U^k(A)$ by the quotient metric w.r.t.\ $\mathfrak S_k$:
\begin{align} \label{d:L2T}
\d_{\U^k}(\gamma, \eta)=\inf_{\sigma_k \in \mathfrak S_k} | \mathbf x_{\sigma_k} - \mathbf y_k |_{\R^{nk}} \, .
\end{align}
Here $|\mathbf x_k - \mathbf y_k |_{\R^{nk}}$ denotes the standard Euclidean distance in $\R^{nk}$.
\begin{rem}[Polishness/lack of completeness] \normalfont \  \label{r:PI}
\begin{enumerate}[(a)] 
\item The space~$\U(\R^n)$ equipped with the vague topology is a Polish space. The subpace~$\U^k(A) \subset \U(\R^n)$ is a Polish subspace for every $k \in \N$ if $A$ is a Polish subspace in $\R^n$. 
This fact will play a role later in Section~\ref{s:FCP} to discuss Suslin sets.  
\item The metric space~$(\U^k(A), \d_{\U^k})$ is not complete even if $A$ is closed, 
due to the lack of multiple configurations in $\U^k(A)$. This fact is, however, irrelevant to the rest of arguments. 
\end{enumerate}
\end{rem}

\subsection{Spherical Hausdorff measure} 
Let $(X, d)$ be a metric space and $n$ be the Hausdorff dimension of $X$. 
For $m \le n$, the {\it $m$-dimensional spherical Hausdorff measure}~$\SS^m_X$  on~$X$ is defined as the restriction of the following outer measure $\SS^m_X$ on $\SS^m_X$-measurable sets (i.e., the Carath\'eodory measurable sets):
\begin{align} \label{SH}
\SS^m_X(A):=\lim_{\e \to 0} \SS_{X, \e}^m(A):=\lim_{\e \to 0} \inf \Bigl\{ \sum_{i\in\N} {\sf diam}(B_i)^m:\text{ $B_i$ open ball with ${\sf diam}(B_i)<\e$, $ A \subset \sum_{i\in \N} B_i$}\Bigr\}.
\end{align}
Here ${\sf diam}(B_i)=\sup\{d(x,y): x, y \in B_i\}$ denotes the diameter of $B_i$. We call $\SS^m_{X,\e}$ the {\it $m$-dimensional $\e$-Hausdorff measure}. If $X=\R^n$, we simply write $\SS^m$ and $\SS^m_\e$ instead of $\SS^m_{\R^n}$ and $\SS^m_{\R^n,\e}$ respectively.  
\begin{rem}[Comparison with the standard Hausdorff measure] \normalfont \label{rem: CSH}
In the case of $m<n$, the spherical Hausdorff measure $\SS^m_X$ does not coincide with the standard Hausdorff measure in general, the latter is smaller since it is defined allowing {\it all non-empty coverings} instead of {\it open balls}. In the case of $m=n$ and $X=\R^n$, however, $\SS^m$ coincides with the standard $n$-dimensional Hausdorff measure and also with the $n$-dimensional Lebesgue measure (\cite[2.10.35]{Fed69}). 
Note that $\SS^m$ is a Borel measure, but not $\sigma$-finite for $m<n$. 
\end{rem}
 For a bounded set $A \subset \R^n$, let $\SS^n|_{A}$ be the spherical Hausdorff measure restricted to $A$. The spherical Hausdorff measure $(\SS^n|_{A})^{\otimes k}$ on  $A^{\times k}$ can be push-forward to the $k$-configuration space $\U^k(A)$ by the projection map $\s_{k}$, i.e.
$$
\S_{A}^{k} :=\frac{1}{k!} (\s_{k})_\# (\SS^n|_{A})^{\otimes k} \, .
$$ 
It is immediate by construction to see that $\S_{A}^{k}$ is the spherical Hausdorff measure on $\U^k(A)$ induced by the $L^2$-transportation distance $\d_{\U^k}$ up to constant multiplication.
We introduce the $m$-codimensional spherical Hausdorff measure and the $m$-codimensional $\e$-spherical Hausdorff measure on $\U^k(A)$ as follows
\begin{align} \label{m-H}
\S_{A}^{m, k}=\frac{1}{k!} (\s_{k})_\#(\SS^{nk-m}|_{A^{\times k}}), \quad \S_{A, \e}^{m, k}=\frac{1}{k!} (\s_{k})_\#(\SS_\e^{nk-m}|_{A^{\times k}}).
\end{align}
One can immediately see that $\S_{A, \e}^{m, k}$ is (up to constant multiplication) the $m$-codimensional $\e$-spherical Hausdorff measure on $\U^k(A)$ associated with the $L^2$-transportation distance $\d_{\U^k}$.

 \subsection{Regularity of the spherical Hausdorff measures} 
 In this section, we prove the upper semi-continuity of the $\e$-spherical Hausdorff measure on sections of compact sets, which will be of use in Section \ref{s:FCP}. 
 
\begin{prop} \label{prop: usc}
Let $(X,d_X)$, $(Y, d_Y)$ be metric spaces, and $K\subset X\times Y$ be a compact set. Then, the map $Y\ni y\mapsto \SS_{X, \e}^m(K^y)$ is upper semi-continuous. Here,  $K^y:=\{x\in X\, :\, (x,y)\in K \}$.
\end{prop}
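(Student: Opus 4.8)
The plan is to prove upper semi-continuity directly from the definition of $\SS^m_{X,\e}$ as an infimum over countable coverings by open balls of diameter $<\e$, exploiting that an efficient covering of $K^{y_0}$ remains, after a slight enlargement of the balls, a covering of $K^y$ for all $y$ close to $y_0$. Fix $y_0\in Y$ and $\delta>0$; I want to show $\SS^m_{X,\e}(K^y)\le \SS^m_{X,\e}(K^{y_0})+\delta$ for $y$ in a neighbourhood of $y_0$. If $\SS^m_{X,\e}(K^{y_0})=\infty$ there is nothing to prove, so assume it is finite and choose open balls $\{B_i=B_X(x_i,r_i)\}_{i\in\N}$ with ${\sf diam}(B_i)<\e$, $K^{y_0}\subset\bigcup_i B_i$, and $\sum_i {\sf diam}(B_i)^m \le \SS^m_{X,\e}(K^{y_0})+\delta/2$. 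Since each $r_i<\e/2$ and the sum of $(2r_i)^m$ converges, I can fix a small $\theta>0$ (depending on $\delta$, $\e$, $m$ and the chosen covering) so that replacing each $B_i$ by the enlarged ball $B_i'=B_X(x_i,r_i+\theta)$ still gives ${\sf diam}(B_i')<\e$ for all $i$ and $\sum_i {\sf diam}(B_i')^m \le \sum_i {\sf diam}(B_i)^m + \delta/2$; this uses continuity of $t\mapsto\sum_i(2r_i+2t)^m$ at $t=0$, which follows from dominated convergence since $r_i+\theta<\e/2$ uniformly once $\theta<\e/2-\sup_i r_i$ — and here I should be slightly careful, because $\sup_i r_i$ could equal $\e/2$; to handle this cleanly I would first pass to a covering with ${\sf diam}(B_i)<\e'$ for some $\e'<\e$ without increasing the sum by more than $\delta/4$ (possible because $\SS^m_{X,\e'}\downarrow$ something $\le\SS^m_{X,\e}$ as $\e'\uparrow\e$... actually the monotonicity goes the other way), so let me instead just note that the finitely many ``large'' balls can be enlarged by a controlled amount and the infinitely many small ones contribute an arbitrarily small tail, which is the robust way to run the estimate.

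The geometric heart of the argument is the following compactness claim: there exists a neighbourhood $U$ of $y_0$ in $Y$ such that $K^y\subset \bigcup_i B_i'$ for every $y\in U$. Suppose not; then there are $y_k\to y_0$ and points $x_k\in K^{y_k}$ with $x_k\notin\bigcup_i B_i'$. The pairs $(x_k,y_k)$ lie in the compact set $K$, so a subsequence converges to some $(x_\infty,y_\infty)\in K$; since $y_k\to y_0$ we get $y_\infty=y_0$, hence $x_\infty\in K^{y_0}$. Therefore $x_\infty\in B_i$ for some $i$, i.e.\ $d_X(x_\infty,x_i)<r_i$; but then for $k$ large along the subsequence $d_X(x_k,x_i)<r_i+\theta$, so $x_k\in B_i'$, contradicting the choice of $x_k$. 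This establishes the claim. Consequently, for $y\in U$ the family $\{B_i'\}$ is an admissible covering of $K^y$ in the definition of $\SS^m_{X,\e}$, whence
\begin{align*}
\SS^m_{X,\e}(K^y)\ \le\ \sum_{i\in\N}{\sf diam}(B_i')^m\ \le\ \SS^m_{X,\e}(K^{y_0})+\delta .
\end{align*}
Since $\delta>0$ was arbitrary, $\limsup_{y\to y_0}\SS^m_{X,\e}(K^y)\le\SS^m_{X,\e}(K^{y_0})$, which is exactly upper semi-continuity at $y_0$.

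I expect the only genuine subtlety — and the step I would write most carefully — to be the ball-enlargement estimate, i.e.\ controlling $\sum_i{\sf diam}(B_i')^m$ in terms of $\sum_i{\sf diam}(B_i)^m$ while keeping every enlarged ball of diameter strictly below $\e$. The clean fix is to split the covering into a finite part $i\le N$ carrying all but $\delta/4$ of the sum and an infinite tail $i>N$: shrink nothing, enlarge only the first $N$ balls by $\theta_N$ chosen small enough that their diameters stay $<\e$ and their total $m$-th power mass grows by at most $\delta/4$, and simply keep the tail balls as they are (they already cover whatever part of $K^{y}$ they covered before is irrelevant — one re-runs the no-escape compactness argument using $B_i'$ for $i\le N$ and $B_i$ for $i>N$, which is still a legitimate covering family). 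Everything else — the use of compactness of $K$, the diagonal convergence extracting $y_\infty=y_0$, and the conclusion via arbitrariness of $\delta$ — is routine. Note also that no completeness, separability, or properness of $X$ or $Y$ is needed beyond what is implied by compactness of $K$; in the intended application $X$ and $Y$ are finite products of Euclidean balls, where all of this is automatic.
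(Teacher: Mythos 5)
Your proof is correct, and it takes a genuinely different route from the paper's. The paper uses Blaschke compactness to extract a Hausdorff--convergent subsequence $K^{y_{n_k}}\times\{y_{n_k}\}\to\bar K\times\{y\}$ in $K$, covers the compact limit $\bar K\subset K^y$ with \emph{finitely} many balls, enlarges them by a controlled factor, and shows the enlarged family eventually covers $K^{y_{n_k}}$; the $\delta\to 0$ limit then requires the right-continuity of $\e'\mapsto\SS^m_{X,\e'}(\bar K)$ at $\e'=\e$, which again hinges on compactness of $\bar K$. You instead cover $K^{y_0}$ itself (not a limit set) with a countable family of open balls and run a tube-lemma-style compactness argument in $K$ to show the same family covers nearby sections. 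Your route avoids the Hausdorff-convergence machinery entirely and proves the $\limsup$ bound directly from the definition of upper semi-continuity, at the cost of having to work with a possibly infinite cover.

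That said, the ``ball-enlargement estimate'' you flag as the only genuine subtlety is in fact a red herring: no enlargement is needed at all, and the entire $\theta$/$\theta_N$/split-into-finite-head-and-tail discussion can be deleted. The reason is precisely the one you already use in your compactness claim, applied to the unenlarged balls. If there were $y_k\to y_0$ and $x_k\in K^{y_k}$ with $x_k\notin\bigcup_i B_i$, then $(x_k,y_k)\in K$ has a subsequential limit $(x_\infty,y_0)\in K$, hence $x_\infty\in K^{y_0}\subset\bigcup_i B_i$, say $x_\infty\in B_{i_0}$; since $B_{i_0}$ is \emph{open} and $x_k\to x_\infty$ along the subsequence, $x_k\in B_{i_0}$ for $k$ large, a contradiction --- with no enlargement whatsoever. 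So the cover $\{B_i\}$ of $K^{y_0}$ already covers $K^y$ for $y$ in a neighbourhood of $y_0$, and $\SS^m_{X,\e}(K^y)\le\sum_i{\sf diam}(B_i)^m\le\SS^m_{X,\e}(K^{y_0})+\delta$ follows immediately. Once this is noticed, your argument is shorter and cleaner than the paper's, and the mid-paragraph detour about the monotonicity direction of $\e'\mapsto\SS^m_{X,\e'}$ (which you correctly flag as confused) simply disappears.
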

\begin{proof}
    	Let us fix $y\in Y$ and a sequence $y_n \to y$. The family of compact sets $(K^{y_n}\times \{y_n\})_{n\in\N} \subset K$ is precompact with respect to the Hausdorff topology in $K$ endowed with the product metric (e.g., \cite[Theorem 7.3.8]{BBI01}). In particular, we can take a (non-relabeled) subsequence so that $K^{y_{n}}\times \{y_{n}\}\to \bar K\times \{ y\} \subset K$, as $n \to \infty$ in the Hausdorff topology, and $\bar K \subset K^y$ by the definition of $K^y$.

Let us fix $\delta>0$ and a family of open balls $B_1,\ldots,B_\ell \subset X$ with radius smaller than $\e(1-\delta)>0$ so that 
    	\begin{equation*}
    		\bar K \subset \bigcup_{i=1}^\ell B_i,
    	\end{equation*}
    	and
    	\begin{equation}\label{z1}
    		\SS^m_{X, \e}(\bar K) \ge c(m)\sum_{i=1}^\ell r_i^m - \delta.
    	\end{equation}
	Here $c(m)$ denotes the constant depending on $m$ such that $\mathbf{L}^m(B_i) = c(m)r_i^m$. Note that we can always take $\ell=\ell(\delta)$ to be finite for any $\delta>0$ by the compactness of $\bar{K}$. Let $\underline r=\underline r(\delta):=\min\{r_i: 1 \le i \le l(\delta)\}>0$ be the minimum radius among $\{B_i\}_{1 \le i \le l}$. 
       
    We claim that there exists $\bar{k}=\bar{k}(\delta) \in \N$ so that $K^{y_{n_k}} \subset \cup_{i=1}^\ell B(x_i, \frac{1}{1-\delta}r_i)$ for any $k \ge \bar{k}$. Here $x_i$ and $r_i$ are the centre and the radius of $B_i$. 
       
         Indeed, by the Hausdorff convergence of $K^{y_{n_k}}$ to $\bar{K}$,  there exists $\bar k:=\bar k(\delta)\in \N$ such that, for any $k>\bar k$,  it holds that $K^{y_{n_k}} \subset B_{\underline r \delta} (\bar K)$. Here,  $B_{\underline r \delta }(\bar K)$ denotes the $\underline  r \delta$-neighbourhood of $\bar K$ in $X$, i.e., $B_{\underline r \delta}(\bar K):=\{x \in X: d(x, \bar K) < \underline  r\delta\}$.  Hence,  for any $z\in K^{y_{n_k}}$, we can always find $x\in \bar K$ such that $d(x,z)<\underline  r \delta$. Since $x\in B_i$ for some $i=1,\ldots,\ell$, we conclude $z\in B(x_i, \frac{1}{1-\delta}r_i)$ by noting that $\frac{1}{1-\delta}r_i -r_i = \frac{\delta}{1-\delta} r_i \ge \delta \underline r$. 
        
       By using the claim in the previous paragraph, the monotonicity $\SS^m_{X, a} \ge \SS^m_{X, b}$ whenever $a \le b$, and \eqref{z1}, we obtain that 
        \begin{equation}
        \SS^m_{X, \e}(K^{y_{n_k}}) \le c(m)(1+\delta)^m \sum_{i=1}^\ell r_i^m
        	\le (1+\delta)^m \SS^m_{X, \e(1-\delta)}(\bar K) + \delta(1+\delta)^mc(m),
        \end{equation}
        for any $k \ge \bar{k}(\delta)$. By taking $\delta \to 0$ after taking $k \to \infty$, we conclude that 
        \begin{equation}
        	\limsup_{k\to \infty} \SS^m_{X, \e}(K^{y_{n_k}}) \le \SS^m_{X, \e}(\bar K) \le \SS^m_{X, \e}(K^y),
        \end{equation}
        which is the sought conclusion.
        \end{proof}

\subsection{Differential structure on configuration spaces}\label{subsection:Sobolev}
In this section, $\Omega \subset \R^n$ will denote either a closed domain with smooth boundary or the whole Euclidean space $\R^n$.
Below we review the natural differential structure of $\U(\Omega)$, obtained by lifting the Euclidean one on $\Omega$.
We follow closely the presentation in \cite{AKR98}.

\paragraph{Cylinder functions, vector fields and divergence}

\begin{defn}[Cylinder functions] \normalfont
    We define the class of {\it cylinder functions} as
	\begin{equation} \label{d:Cyl}
		\CylF(\U( \Omega)) := \{\Phi(f^\ast_1,\.,f^\ast_k) \, : \,  \Phi \in C_b^\infty(\R^k), \, f_i \in C_c^\infty(\Omega), \, k\in \N  \} \, ,
	\end{equation}
    where $f^\ast(\gamma) := \int_{\Omega} f d\gamma$ for every $\gamma\in \U(\Omega)$. We call $f_i$ {\it inner function} and $\Phi$ {\it outer function}.    
\end{defn}

The tangent space $T_\gamma \U(\Omega)$ at $\gamma \in \U(\Omega)$ is identified with the Hilbert space of measurable $\gamma$-square-integrable vector fields $V: \Omega \to T(\R^n)$ equipped with the scalar product: for $\gamma$-measurable $V, W:\Omega \to T(\R^n)$, 
 \begin{align*} 
 	\langle V, W \rangle_{T\U} &= \int_{\Omega} \langle V(x), W(x) \rangle_{T\R^n} d\gamma(x) \, ,
	\\
	|V|^2_{T\U}&=\int_{\Omega} \langle V(x), V(x) \rangle_{T\R^n} d\gamma(x) \,.
 \end{align*}
We define the tangent bundle of $\U(\Omega)$ by $T\U(\Omega):=\sqcup_{\gamma \in \U(\Omega)} T_\gamma\U(\Omega)$.

 \begin{defn}[Cylinder vector fields] \normalfont
 	We define two classes of {\it cylinder vector fields} as
 	\begin{equation*}
 		\CylV(\U(\Omega)):=\Bigl\{V(\gamma, x)=\sum_{i=1}^kF_i(\gamma) v_i(x): F_i \in \CylF(\U(\Omega)),\ v_i \in C_c^{\infty}(\Omega; \R^n), k \in \N \Bigr\} \, ,
 	\end{equation*}
 	
 	\begin{equation*}
 		\CylV_*(\U(\Omega)):=\Bigl\{V(\gamma, x)=\sum_{i=1}^kF_i(\gamma) v_i(x): F_i \in \CylF(\U(\Omega)),\ v_i \in C_*^{\infty}(\Omega; \R^n), k \in \N \Bigr\} \, .
 	\end{equation*}    
 \end{defn}
 Notice that
 $\CylV_*(\U(\Omega)) \subset \CylV(\U(\Omega))$, and $\CylV_*(\U(\Omega)) = \CylV(\U(\Omega))$ when $\Omega = \R^n$. Using the tensorial notation, we can write
\begin{equation}
\begin{split}
	&\CylV(\U(\Omega)) = \CylF(\U(\Omega)) \otimes_{\R} C^\infty_c(\Omega;\R^n)
	\\
	&\CylV_*(\U(\Omega)) = \CylF(\U(\Omega)) \otimes_{\R} C^\infty_*(\Omega;\R^n)\, .
\end{split}
\end{equation}

 Let $p\in [1,\infty)$. For $V \in \CylV(\U(\Omega))$, we define 
  \begin{align}\label{d:VFN}
\|V\|_{L^p(T\U(\Omega))}^p:=\|V\|_{L^p(\U(\Omega) \to T\U(\Omega), \pi_{\Omega})}^p:= \int_{\U(\Omega)} | V(\gamma) |_{T_\gamma\U}^p d \pi_{\Omega}(\gamma) \, ,
 \end{align}
and introduce the associated Banach space by
 $$L^p(T\U(\Omega), \pi_\Omega):= \text{the completion of $\CylV(\U(\Omega))$ with respect to~$\|\cdot\|_{L^p(T\U(\Omega))}$} \, .$$
 See \cite[the fifth displayed formula on p.23]{ADL01} in the case of $p=2$.
 \begin{rem}\normalfont
When $p=2$, the closure $L^p(T\U(\Omega), \pi_\Omega)$ coincides with the $L^2$-section of vector fields $L^2(\U(\Omega) \to T\U(\Omega), \pi_\Omega)$ defined as the direct integral of the Hilbert spaces $(T_\gamma\U(\Omega), \langle\cdot, \cdot \rangle_{T\U})$ with respect to $\pi_{\Omega}$. See, for instance, the proof of \cite[p.165, 3rd bullet point]{Ebe99}. 
 \end{rem}
 
 
 \begin{prop}\label{prop:LPV}
 	Let $1 \le p < \infty$.  Then,
 	\begin{equation}
 		\| V \|_{L^p(T\U(\Omega))} < \infty \, ,
 		\quad \text{$V\in \CylV(\U(\Omega))$} \, .
 	\end{equation}
 	Moreover, $\CylV_*(\U(\Omega))$ is dense in $L^p(T\U(\Omega), \pi_\Omega)$.
 \end{prop}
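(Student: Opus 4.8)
The plan is to reduce both assertions to one elementary estimate for the Poisson measure: for every compact $K\subset\Omega$ and every exponent $q\ge 0$ one has
\[
\int_{\U(\Omega)}\gamma(K)^q\,d\pi(\gamma)<\infty .
\]
Indeed, by the Laplace transform identity \eqref{eq:Int}, under $\pi$ the random variable $\gamma\mapsto\gamma(K)$ has a Poisson law with finite parameter $\mathbf L^n(K)$, hence all of its moments are finite (for $0<q<1$ one may alternatively invoke Jensen's inequality). Every analytic step below will rest on this bound, together with the fact that each $F_i\in\CylF(\U(\Omega))$ is bounded because its outer function lies in $C_b^\infty$.

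For the finiteness claim I would fix $V=\sum_{i=1}^k F_i v_i\in\CylV(\U(\Omega))$ with $|F_i|\le C_i$ and $K_i:={\rm supp}(v_i)$ compact. Minkowski's inequality in $L^2(\gamma;\R^n)$ gives the pointwise bound
\[
|V(\gamma)|_{T_\gamma\U}=\Big\|\sum_{i=1}^k F_i(\gamma)v_i\Big\|_{L^2(\gamma;\R^n)}\le\sum_{i=1}^k C_i\Big(\int_\Omega|v_i|^2\,d\gamma\Big)^{1/2}\le\sum_{i=1}^k C_i\|v_i\|_\infty\,\gamma(K_i)^{1/2}.
\]
Raising this to the power $p$, using $(\sum_i a_i)^p\le k^{p-1}\sum_i a_i^p$ (valid since $p\ge1$), and integrating against $\pi$, the inequality $\|V\|_{L^p(T\U)}<\infty$ follows at once from the moment estimate applied with $q=p/2$.

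For the density claim I would first dispose of the case $\Omega=\R^n$: there is nothing to prove, since $\CylV_0(\U(\R^n))=\CylV(\U(\R^n))$ and the latter is dense in $L^p(T\U(\R^n),\pi)$ by definition of the completion. So assume $\Omega$ is a closed domain with smooth boundary. As $L^p(T\U(\Omega),\pi)$ is the completion of $\CylV(\U(\Omega))$, it suffices to approximate an arbitrary $V\in\CylV(\U(\Omega))$ by elements of $\CylV_0(\U(\Omega))$ in $\|\cdot\|_{L^p(T\U)}$; by linearity one reduces to $V=Fv$ with $|F|\le C$ and $v\in C_c^\infty(\Omega;\R^n)$. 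Using the smoothness of $\partial\Omega$, choose cutoffs $\phi_j\in C^\infty(\Omega;[0,1])$ vanishing in a neighbourhood of $\partial\Omega$ and equal to $1$ on $\{x\in\Omega:d(x,\partial\Omega)\ge 1/j\}$, and set $v_j:=\phi_j v\in C_0^\infty(\Omega;\R^n)$, $V_j:=Fv_j\in\CylV_0(\U(\Omega))$. Then $h_j:=|v-v_j|^2=(1-\phi_j)^2|v|^2\le\|v\|_\infty^2\chi_{A_j}$ with $A_j:=\{x\in{\rm supp}(v):d(x,\partial\Omega)\le 1/j\}$, so that
\[
|V(\gamma)-V_j(\gamma)|_{T_\gamma\U}=|F(\gamma)|\Big(\int_\Omega h_j\,d\gamma\Big)^{1/2}\le C\|v\|_\infty\,\gamma(A_j)^{1/2}.
\]
Since $A_j$ decreases to the $\mathbf L^n$-null set ${\rm supp}(v)\cap\partial\Omega$, for $\pi$-a.e.\ $\gamma$ one has $\gamma(A_j)\downarrow\gamma({\rm supp}(v)\cap\partial\Omega)=0$, while $\gamma(A_j)\le\gamma({\rm supp}(v))$ with $\int\gamma({\rm supp}(v))^{p/2}\,d\pi<\infty$; dominated convergence then yields $\|V-V_j\|_{L^p(T\U)}^p\le C^p\|v\|_\infty^p\int\gamma(A_j)^{p/2}\,d\pi\to0$. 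The only genuinely delicate point is this boundary treatment — one must keep $v_j$ inside $C_0^\infty(\Omega;\R^n)$ while simultaneously exploiting that $\partial\Omega$ carries no Poisson mass — whereas the finiteness part and the case $\Omega=\R^n$ are routine.
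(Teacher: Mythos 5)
Your proof is correct, and for the density part it takes a genuinely different route from the paper's. For the finiteness assertion, the paper expands $|V|_{T_\gamma\U}^2$ and bounds the cross terms via Cauchy--Schwarz, then invokes the exponential integrability encoded in \eqref{eq:Int}; your use of Minkowski's inequality in $L^2(\gamma;\R^n)$ followed by the Poisson-moment bound on $\gamma(K_i)$ is essentially the same estimate with cleaner bookkeeping. For the density assertion, the paper argues abstractly: it picks $w_i\in C_0^\infty(\Omega;\R^n)$ with $\sum_i\|v_i-w_i\|_{L^p}<\e$ and sets $W=\sum_iF_iw_i$, asserting that $W$ is the required approximant. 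This step is rather compressed, since $L^p(\Omega;\R^n)$-smallness of $v_i-w_i$ does not by itself control $\|V-W\|_{L^p(T\U)}$ — one needs in addition that $v_i-w_i$ stays uniformly bounded and is supported in a set of shrinking Lebesgue measure. Your construction makes this explicit: you build the boundary cutoffs $\phi_j$, observe that $|V-V_j|_{T_\gamma\U}\le C\|v\|_\infty\gamma(A_j)^{1/2}$ with $A_j\downarrow\operatorname{supp}(v)\cap\partial\Omega$, exploit that the smooth boundary $\partial\Omega$ is $\mathbf L^n$-null (hence carries no Poisson mass a.s.), and conclude by dominated convergence against the integrable envelope $\gamma(\operatorname{supp}(v))^{p/2}$. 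What your approach buys is a self-contained argument that isolates exactly where the geometry of $\Omega$ (smooth boundary, Lebesgue-null) enters; what the paper's buys is brevity and a statement ($C_0^\infty$ dense in $L^p$) that can be reused. Both are valid, but your version supplies the missing quantitative link between closeness of the vector-field factors on $\Omega$ and closeness of the lifted cylinder vector fields in $L^p(T\U)$.
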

    
 \begin{proof}
 	Let $V(\gamma, x)=\sum_{i=1}^kF_i(\gamma) v_i(x)$. Then, we have that 
 	\begin{align*}
 		\int_{\U(\Omega)} |V|^p_{T_\gamma \U(\Omega)} d \p_{\Omega}(\gamma) 
 		&\le \max_{1 \le i \le k}\|F\|_{L^\infty} \sum_{i, j=1}^k \int_{\U(\Omega)} \biggl( \int_{\Omega}|v_i||v_j|d\gamma \biggr)^{p/2} d \p_{\Omega}(\gamma).
 	\end{align*}
 	By the exponential integrability implied by \eqref{eq:Int}, we obtain that the function $\gamma \mapsto G(\gamma):=\int_{\Omega}|v_i||v_j|d\gamma$ is $L^p(\U(\Omega), \p_{\Omega})$ for any $1 \le p <\infty$, which concludes the first assertion.

 	The density of  $\CylV_*(\U(\Omega))$ in $L^p(T\U(\Omega), \pi_\Omega)$ follows from the density of $C^\infty_*(\Omega; \R^n)$ in $L^p(\Omega; \R^n)$.
 	More precisely, we check that for any $V\in \CylV(\U(\R^n))$ and $\e>0$ there exists $W\in \CylV_*(\U(\Omega))$ such that $\int_{\U(\Omega)} | V - W|_{T_\gamma\U}^p d \p_\Omega \le \e$. To this aim we write 
 	$V= \sum_{i=1}^kF_i v_i$ and pick $w_i \in C_*^\infty(\Omega)$ such that $\sum_{i=1}^k \| v_i - w_i\|_{L^p} < \e$ and set $W:= \sum_{i=1}^kF_i w_i$. It is straightforward to see that $W$ satisfies the needed estimate. By noting that $L^p(T\U(\Omega), \pi_\Omega)$ is defined as the completion of $\CylV(\U(\Omega))$ with respect to the norm $\|V\|_{L^p(T\U(\Omega))}$, the proof is complete.
 \end{proof}

\begin{defn}[Directional derivatives.\ {\cite[Def.\ 3.1]{AKR98}}]\normalfont
Let $F = \Phi(f_1^*, \ldots, f_k^*) \in \CylF(\U(\Omega))$ and $v \in C_*^\infty(\Omega, \R^n)$. We denote by $\phi$ the flow associated to $v$, i.e.
$$
\frac{d}{dt} \phi_t(x) = v(\phi_t(x)), \quad \phi_*(x) = x \in \Omega \, .
$$
The {\it directional derivative} $\nabla_v F(\gamma) \in T_\gamma\U(\Omega)$ is defined as
$$
\nabla_v F(\gamma):=\frac{d}{dt} F(\phi_t(\gamma))\Bigl|_{t=0}\, ,
$$
where $\phi_t(\gamma) := \sum_{x\in \gamma} \delta_{\phi_t(x)}$.
\end{defn}

\begin{defn}[Gradient of cylinder functions.\ {\cite[Def.\ 3.3]{AKR98}}] \normalfont \label{defn: grad}
	The gradient $\nabla_{\U(\Omega)}F$ of $F\in \CylF(\U(\Omega))$ at $\gamma\in \U(\Omega)$ is defined as the unique vector field $\nabla_{\U(\Omega)}F$ so that 
	$$\nabla_v F(\gamma) = \langle \nabla_{\U(\Omega)} F, v \rangle_{T_\gamma\U(\Omega)}, \quad \gamma \in \U(\Omega),\ v \in C_*^\infty(\Omega, \R^n).$$
By the expression \eqref{d:Cyl}, the gradient $\nabla_{\U(\Omega)}F$ can be written as
	\begin{equation} \label{eq:GF}
		\nabla_{\U(\Omega)} F(\gamma) = \sum_{i=1}^k \partial_i \Phi(f_1^\ast,\ldots, f_k^\ast)(\gamma) \nabla_{\R^n} f_i \in T_\gamma \U(\Omega) \, ,
	\end{equation}
	where $\nabla_{\R^n}$ is the gradient operator in $\R^n$. 
When $\Omega=\R^n$,  we simply write $\nabla:= \nabla_{\U(\R^n)}$ in the rest of the paper when no confusion occurs.
\end{defn} 
Notice that $\nabla_{\U(\Omega)} F \in \CylV(\U(\Omega))$ for any $F\in \CylF(\U(\Omega))$ by \eqref{eq:GF}.  In particular, for any $F \in \CylF(\Omega)$, it holds that $\nabla_{\U(\Omega)} F\in L^p(T\U(\Omega), \pi_\Omega)$ for any $1\le p<\infty$ by Proposition \ref{prop:LPV}.


 \begin{rem}[Ampleness of $L^\infty$-vector fields]\label{rem: bounded test} \normalfont
 By Proposition \ref{prop:LPV}, $\CylV_*(\U(\Omega))\subset L^p(T\U(\Omega), \pi_\Omega)$ for any $p\in [1,\infty)$, while the inclusion is false for $p=\infty$. See \cite[Example 4.35]{DS21} for a counterexample. However, $\CylV_*(\U(\Omega))$ can be approximated by the subspace of bounded cylinder vector fields with respect to the pointwise convergence and the  convergence in the $L^p(\U(\Omega)\to T\U(\Omega))$-norm for $1\le p<\infty$. Indeed, given $\e>0$ and $V= \sum_{i=1}^k F_i(\gamma) v_i(x) \in \CylV_*(\U(\Omega))$ it holds
 	\begin{equation*}
 		|V|_{T_\gamma\U}^2 = \sum_{i,j=1}^k F_i(\gamma)F_j(\gamma) \int_{\U(\Omega)} v_i(x)\cdot v_j(x) d\gamma(x) \, ,
 		\quad\quad
 		\frac{1}{1 + \e|V|_{T_\gamma\U}^2} \in \CylF(\U(\Omega)) \, ,
 	\end{equation*}
 	hence
 	\begin{equation*}
 		V_\e : = \frac{1}{1+\e |V|_{T_\gamma\U(\Omega)}^2} V \in \CylV(\U(\Omega)) \, .
 	\end{equation*}
    Finally, notice that for $\gamma\in \U(\Omega)$ it holds
    \begin{equation*}
    	| V - V_\e|_{T_\gamma\U(\Omega)} = \e \frac{|V|_{T_\gamma\U(\Omega)}^3}{1+\e |V|_{T_\gamma\U(\Omega)}^2} 
    	\le \e |V|_{T_\gamma\U(\Omega)}^3
    	\to 0 \, ,
    	\quad
    	\text{as $\e\to 0$}\, .
    \end{equation*}
     Moreover, for every $1\le p<\infty$ we have
     \begin{equation*}
     	\| V - V_\e \|_{L^p(T\U(\Omega))} \le \e \| V \|_{L^{3p}(T\U(\Omega))}^3 \to 0
     	\, ,
     	\quad
     	\text{as $\e\to 0$}\, .
     \end{equation*}	     		
 \end{rem}

We now define the adjoint operator of the gradient $\nabla_{\U(\Omega)}$. 
\begin{defn}[Divergence. {\cite[Def.\ 3.5]{AKR98}}] \normalfont 
  	Let $ 1< p <\infty$.
  	We say that $V\in L^p(T\U(\Omega), \pi_\Omega)$ is in the domain $\mathcal D(\nabla^*_{\U(\Omega)})$ of the divergence if there exists a unique function $\nabla^*_{\U(\Omega)}V \in L^p(\U(\Omega), \pi_\Omega)$ such that
	\begin{equation}\label{eq: IbP1}
	\int_{\U(\Omega)} \langle V, \nabla_{\U(\Omega)} F \rangle_{T_\gamma\U} d \pi_{{\Omega}}(\gamma) = - \int_{\U(\Omega)} F (\nabla_{\U(\Omega)}^* V) d \pi_{{\Omega}} \, ,
	\quad F\in \CylF(\U(\Omega)).
\end{equation}
When $\Omega=\R^n$,  we simply write $\nabla^*:= \nabla_{\U(\R^n)}^*$ in the rest of the paper when no confusion occurs.
\end{defn}

\begin{prop} \label{prop: IVPD}
	The following inclusion holds:
	$$\CylV_*(\U(\Omega)) \subset \mathcal D(\nabla^*_{\U(\Omega)}) \, .$$
	 Furthermore, for $V(\gamma, x)=\sum_{i=1}^m F_i(\gamma) v_i(x) \in \CylV_*(\U(\Omega))$, 
	\begin{align} \label{eq: DIV}
		\nabla_{\U(\Omega)}^* V(\gamma) = \sum_{i=1}^m \nabla_{v_i} F_i(\gamma) + \sum_{i=1}^m F_i(\gamma)(\nabla^*_{\R^n} v_i)^\ast \gamma \, ,
	\end{align}
where $\nabla^*_{\R^n}$ is the divergence operator in $\R^n$.
	In particular, $\nabla^*_{\U(\Omega)} V \in L^p(\U(\Omega), \p_\Omega)$ for every $p\in [1, \infty)$.
\end{prop}

\begin{proof}
	Let $r>0$ be such that $\text{supp}(v_i) \subset\Omega_r:= \{x\in \Omega\, : \, d(x, \partial \Omega) > r\}$. For any $\e< r/2$ we define $\phi_\e \in C^\infty_*(\Omega)$ satisfying $\phi=1$ on $\Omega_{\e}$.
	For any $i=1, \ldots, m$ we write $F_i = \Phi_i(f_{1,i}^*, \ldots, f_{k_i, i}^*)$ and set $F_i^\e := \Phi_i((\phi_\e f_{1,i})^*, \ldots, (\phi_\e f_{k_i, i})^*)$. Observe that $V_\e:= \sum_{i=1}^m  F_i^\e(\gamma) v_i \in \CylV_*(\U(\Omega))$ and also $V_\e \in \CylV(\U(\R^n))$ by construction. Furthermore, we note  that $F\in \CylF(\U(\Omega))$ can be extended to $\tilde{F} \in \CylF(\U(\R^n))$ with $F = \tilde{F}$ on $\U(\Omega)$ by extending each inner function $f_i \in C^\infty_c(\Omega)$ to $\tilde{f}_i \in C^\infty_c(\R^n)$ with $f_i = \tilde{f}_i$ on $\Omega$ (e.g., by Whitney's extension theorem). Thus, $\nabla^*_{\U(\Omega)}$ and $\nabla^*_{\U(\R^n)}$ defined in \eqref{eq: IbP1} are consistent, so that $\nabla_{\U(\Omega)}^* V_\e(\gamma) = \nabla_{\U(\R^n)}^* V_\e(\gamma)$. By~\cite[Prop.\ 3.1]{AKR98}, therefore, we have 
	\begin{align*} 
		{\nabla_{\U(\Omega)}^* V_\e(\gamma) = \nabla_{\U(\R^n)}^* V_\e(\gamma)} 
		& = \sum_{i=1}^m \nabla_{v_i} F_i^\e(\gamma) + \sum_{i=1}^m F_i^\e(\gamma)(\nabla^*_{\R^n} v_i)^\ast \gamma
		\\&
		= \sum_{i=1}^m \nabla_{v_i} F_i(\gamma) + \sum_{i=1}^m F_i^\e(\gamma)(\nabla^*_{\R^n} v_i)^\ast \gamma \, .
	\end{align*}
	Here we used the fact that $F_i(\gamma) = F^\e_i(\gamma)$ for any $\gamma$ concentrated on the support of $v_i$. The sought conclusion \eqref{eq: DIV} follows from the observation that $F_i^\e \to F_i$ in $L^p(\U(\Omega), \p_\Omega)$ and $V_\e \to V$ in $L^p(T\U(\Omega), \pi_{\Omega})$ combined with \eqref{eq: IbP1}. The last assertion is then a direct consequence from Proposition \ref{prop:LPV} and \eqref{eq: DIV}.  
\end{proof}

\paragraph{Sobolev spaces} 
We now introduce the $(1,p)$-Sobolev space. The operator
\begin{equation}
	\nabla_{\U(\Omega)} : \CylF(\U(\Omega)) \subset L^p(\U(\Omega), \p_\Omega)  \to \CylV(\U(\Omega)) \,
\end{equation}
is densely defined and closable. The latter fact is a direct consequence of the integration-by-parts formula \eqref{eq: DIV}. Indeed, we observe that, if $F_n\in \CylF(\U(\Omega))$, $F_n \to 0$ in $L^p(\U(\Omega), \p_\Omega)$, and $\nabla_{\U(\Omega)} F_n \to W$ in $L^p(T\U(\Omega), \pi_\Omega)$, then for any $V\in \CylV_*(\U(\Omega))$, it holds
\begin{equation*}
	\int_{\U(\Omega)} \langle V, W \rangle_{T_\gamma \U} d \pi_\Omega(\gamma)
	= 
	\lim_{n \to \infty} \int_{\U(\Omega)} \langle V, \nabla_{\U(\Omega)} F_n \rangle_{T_\gamma \U} d \pi_\Omega(\gamma)
	=
	-\lim_{n \to \infty} \int_{\U(\Omega)} (\nabla_{\U(\Omega)}^*V) F_n d \pi_\Omega(\gamma)
	= 0 \, ,
\end{equation*}
yielding $W=0$ as a consequence of the density of $\CylV_*(\U(\Omega))$ in $L^p(T\U(\Omega), \pi_\Omega)$ by Proposition~\ref{prop:LPV}. The above argument justifies the following definition.

\begin{defn}[$H^{1,p}$-Sobolev spaces] \normalfont \label{defn: WS}
	Let $1<p<\infty$. We define $H^{1,p}(\U(\Omega), \p_\Omega)$ as the closure of $\CylF(\U(\Omega))$ in $L^p(\U(\Omega), \p_\Omega)$ with respect to the following 
	$(1,p)$-Sobolev norm: 
	\begin{equation*}
		\|F\|^p_{H^{1,p}(\U(\Omega))} := \|F\|^p_{L^p(\U(\Omega))} + \|\nabla_{\U(\Omega)} F\|^p_{L^p(T\U(\Omega))} \, .
	\end{equation*}
We set $\|F\|_{H^{1,p}}:=\|F\|_{H^{1,p}(\U(\R^n))}$. When $p=2$, we write the corresponding Dirichlet form (i.e., a closed form satisfying the unit contraction property \cite[Def.\ 4.5]{MR92})  by 
$$
\EE_{\U(\Omega)}(F,G):=\int_{\U(\Omega)} \bigl\langle \nabla_{\U(\Omega)}F, \nabla_{\U(\Omega)}G \bigr\rangle_{T_\gamma\U(\Omega)} d\p_\Omega(\gamma), \quad F, G \in H^{1,2}(\U(\Omega), \p_\Omega) \, . 
$$
We set $\EE:=\EE_{\U(\R^n)}$.
\end{defn}


\begin{rem}[The case of $p=1$]\normalfont
As is indicated by \eqref{eq: DIV}, it is not true in general that $\nabla_{\U(\Omega)}^*V \in L^\infty(\U(\Omega), \p_\Omega)$ since arbitrarily many finite particles can be concentrated on the supports of inner functions of $F \in \CylF(\U(\Omega))$ and vector fields $v_i$. See \cite[Example 4.35]{DS21} for more detail. Due to this fact, the standard integration by part argument for the closability of the operator $\nabla_{\U(\Omega)}: \CylF(\U(\Omega)) \to  \CylV(\U(\Omega)) \subset L^p(T\U(\Omega), \pi_\Omega)$ does not work in the case of $p=1$. For this reason,  we restricted the definition of the $H^{1,p}$-Sobolev spaces to the case $1<p<\infty$ in Definition \ref{defn: WS}.
\end{rem}

Once the closed form $\EE_{\U(\Omega)}$ on $L^2(\U(\Omega), \p_\Omega)$ is constructed, one can define the infinitesimal generator on $L^2(\U(\Omega), \p_\Omega)$ as the unique non-positive definite self-adjoint operator 
\begin{defn}[Laplace operator\ {\cite[Theorem 4.1]{AKR98}}]\normalfont \label{defn: Laplace}
{\it The $L^2(\U(\Omega), \p_\Omega)$-Laplace operator} $\Delta_{\U(\Omega)}$ with domain $\mathcal D(\Delta_{\U(\Omega)})$ is defined as the unique non-positive definite self-adjoint operator $\Delta_{\U(\Omega)}$ so that
$$\mathcal{E}_{\U(\Omega)}(F, G) = -\int_{\U(\Omega)} (\Delta_{\U(\Omega)}F)G d\p_\Omega, \quad F \in \mathcal D(\Delta_{\U(\Omega)}),  \ G \in \mathcal D(\EE_{\U(\Omega)})\, .$$
\end{defn}
In the case of $\Omega=\R^n$, employing \eqref{eq:GF} and \eqref{eq: DIV}, one can compute that 
\begin{align*}
\Delta_{\U(\R^n)} F:=\nabla^*_{\U(\R^n)} \nabla_{\U(\R^n)} F, \quad F \in \CylF(\U(\R^n))
\, .
\end{align*}
When $\Omega=\R^n$, we shortly write $\Delta=\Delta_{\U(\R^n)}$ in the rest of the paper when no confusion occurs.

\medskip

Let $\{T^{\U(\Omega)}_t\}$  and $\{G^{\U(\Omega)}_{\alpha}\}$ be the strongly continuous Markovian $L^2$-semigroup and resolvent, respectively, corresponding to the energy $\EE_{\U(\Omega)}$. 
We set $G_{\alpha}:=G^{\U(\R^n)}_{\alpha}$ and $T_t:=T^{\U(\R^n)}_t$. 
By the Riesz--Thorin Interpolation Theorem, $T^{\U(\Omega)}_t$ and $\{G^{\U(\Omega)}_\alpha\}$ can be uniquely extended to $L^p$ strongly continuous Markovian semigroup and resolvent, respectively, for every $1 \le p <\infty$  (see e.g.\ \cite[Section 2, p.\ 70]{S97}).

\subsection{Product semigroups and exponential cylinder functions}
In this section, we relate the finite-product semigroup on $\Omega^{\times k}$ and the semigroup on $\U^k(\Omega)$ when $\Omega \subset \R^n$ is a bounded closed domain with smooth boundary. To this aim we introduce a class of test functions, which is suitable to compute the semigroups.

  \begin{defn}[Exponential cylinder functions. {\cite[(4.12)]{AKR98}}] \label{defn: Ecyl} \normalfont 
  Let $\Omega \subset \R^n$ be a bounded closed domain with smooth boundary, or $\Omega=\R^n$. The class ${\rm ECyl}(\U(\Omega))$ of {\it exponential cylinder functions} is defined as the vector space spanned by
  $$
  \left\lbrace
  \exp\bigl\{\log (1+f)^\ast \bigr\} \,  : \, 
  f \in \mathcal D(\Delta_{\Omega})  \, , \Delta_{\Omega} f \in L^1(\Omega),\ -\delta \le f \le 0 \, \,  \text{for some $\delta\in (0,1)$} 
  \right\rbrace \, .
  $$  
Here $(\Delta_{\Omega}, \mathcal D(\Delta_{\Omega}))$ denotes the $L^2$-Neumann Laplacian on $\Omega$ when $\Omega \subsetneq \R^n$.
\end{defn}
 The space ${\rm ECyl}(\U(\Omega))$ is dense in $L^p(\U(\Omega), \pi_{\Omega})$ for any $1 \le p <\infty$ (see \cite[p.\ 479]{AKR98}). Noting that $\Delta_{\Omega}$ is essentially self-adjoint on the core $C_c^\infty(\Omega) \cap \{\frac{\partial f}{\partial n}=0\ \text{in} \ \partial \Omega\}$, where $\frac{\partial}{\partial n}$ is the normal derivative on $\partial \Omega$,  and the corresponding $L^2$-semigroup $\{T_t^{\Omega}\}$ is conservative,  we can apply the same argument in the proof of \cite[Prop.\ 4.1]{AKR98} to obtain the following: $T_t^{\U(\Omega)} {\rm ECyl}(\U(\Omega)) \subset {\rm ECyl}(\U(\Omega))$ and 
 \begin{align} \label{e: SE}
 T_t^{\U(\Omega)} \exp\bigl\{\log (1+f)^\ast \bigr\} = \exp\Bigl\{\log \bigl(1+(T_t^{\Omega}f) \bigr)^\ast \Bigr\} \, .
 \end{align}

Let $T_t^{\Omega, \otimes k}$ be the $k$-tensor semigroup of $T_t^\Omega$, i.e. the unique semigroup in $L^p(\Omega^{\times k})$ satisfying
\begin{equation}\label{z51}
	T_t^{\Omega, \otimes k} f(x_1,\ldots, x_k):=T_t^{\Omega}f_1(x_1) \cdot \cdot \cdot T_t^{\Omega}f_k(x_k) \, ,
	\quad \text{for every $k\in \N$}\, ,
\end{equation}
whenever $f(x_1, \ldots , x_k) = f_1(x_1) \cdot \cdot \cdot f_k(x_k)$ with $f_i\in L^\infty(\Omega)$ for  $i=1, \ldots, k$.

 \begin{prop} \label{prop: IDP}
 Let $\Omega \subset \R^n$ be a bounded closed domain with smooth boundary and $1 \le p <\infty$. 
 For $F\in L^p(\U^k(\Omega), \p_{\Omega}|_{\U^k(\Omega)})$, it holds
 \begin{align} \label{eq: TPT}
 T_t^{\Omega, \otimes k} (F \circ \mathsf s_k) = (T_t^{\U^k(\Omega)}F) \circ \mathsf s_k, \quad \text{$\mathbf S^{kn}_{\Omega^{\times k}}$-a.e.}
 \end{align} 
 \end{prop}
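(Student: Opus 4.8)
The plan is to reduce the statement to the corresponding factorisation of the semigroup on the symmetric function level and to exploit the density of exponential cylinder functions. First I would observe that both sides of \eqref{eq: TPT} are bounded linear operators acting on $F \in L^p(\U^k(\Omega),\pi_\Omega|_{\U^k(\Omega)})$: the right-hand side because $T_t^{\U^k(\Omega)}$ is an $L^p$-contraction and $\mathsf s_k$ pushes forward $\mathbf S^{kn}_{\Omega^{\times k}}$ (up to the normalising factor $1/k!$) to $\pi_\Omega|_{\U^k(\Omega)}$ by the definition \eqref{m-H} of $\S^k_A$, and the left-hand side because $T_t^{\Omega,\otimes k}$ is an $L^p$-contraction on $\Omega^{\times k}$ and $F \circ \mathsf s_k$ is the symmetrisation of $F$. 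Hence it suffices to verify \eqref{eq: TPT} on a dense subclass of $L^p(\U^k(\Omega))$, for which the natural choice is the restriction to $\U^k(\Omega)$ of exponential cylinder functions $\exp\{\log(1+f)^\star\}$ with $f$ as in Definition \ref{defn: Ecyl}: these are dense in $L^p(\U(\Omega))$, and restricting to the $k$-particle component preserves density in $L^p(\U^k(\Omega))$.

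\textbf{Key steps.} For $G = \exp\{\log(1+f)^\star\}$ one computes explicitly that on $\U^k(\Omega)$,
\begin{equation*}
	(G \circ \mathsf s_k)(x_1,\ldots,x_k) = \prod_{i=1}^k (1+f(x_i)) \, ,
\end{equation*}
i.e.\ the pulled-back function is a product of the one-variable functions $1+f$. Applying $T_t^{\Omega,\otimes k}$ and using \eqref{z51} gives
\begin{equation*}
	T_t^{\Omega,\otimes k}(G \circ \mathsf s_k)(x_1,\ldots,x_k) = \prod_{i=1}^k T_t^\Omega(1+f)(x_i) = \prod_{i=1}^k \bigl(1 + (T_t^\Omega f)(x_i)\bigr) \, ,
\end{equation*}
where in the last equality I use that $T_t^\Omega$ is conservative (it is the Neumann heat semigroup on the bounded smooth domain $\Omega$), so $T_t^\Omega \1 = \1$. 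On the other hand, by \eqref{e: SE} applied on $\U^k(\Omega)$ — or rather by the corresponding identity $T_t^{\U^k(\Omega)} G = \exp\{\log(1+(T_t^\Omega f))^\star\}$, which follows since the $k$-particle component is invariant under the semigroup — the right-hand side of \eqref{eq: TPT} pulled back through $\mathsf s_k$ equals $\prod_{i=1}^k (1+(T_t^\Omega f)(x_i))$ as well. Thus the two sides agree $\mathbf S^{kn}_{\Omega^{\times k}}$-a.e. on exponential cylinder functions, and the general case follows by density together with the boundedness of both operators noted above.

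\textbf{Main obstacle.} The routine part is the algebraic identity for products; the two points requiring care are (i) the precise justification that $T_t^\Omega(1+f) = 1 + T_t^\Omega f$ at the level needed here, which uses conservativeness of the Neumann semigroup together with the fact that $f \in \mathcal D(\Delta_\Omega) \cap L^2(\Omega)$ with $\Delta_\Omega f \in L^1(\Omega)$ so that $T_t^\Omega f$ is well defined in $L^1 \cap L^2$; and (ii) the measurability/density bookkeeping, namely that $\mathsf s_k$ is a measure-preserving map up to the $1/k!$ factor between $(\Omega^{\times k} \setminus \mathrm{diag}_k, \mathbf S^{kn})$ and $(\U^k(\Omega), k!\,\pi_\Omega|_{\U^k(\Omega)})$, so that $L^p$-density transfers correctly and an $\mathbf S^{kn}_{\Omega^{\times k}}$-a.e.\ equality of symmetric functions descends to a $\pi_\Omega|_{\U^k(\Omega)}$-a.e.\ equality on $\U^k(\Omega)$ and conversely. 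Modulo these standard points, the proof is a direct computation following the template of \cite[Prop.\ 4.1]{AKR98}.
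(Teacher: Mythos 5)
Your proposal is correct and follows essentially the same route as the paper: reduce by density and linearity to exponential cylinder functions $\exp\{\log(1+f)^\star\}$, observe that the pull-back through $\mathsf s_k$ is the product $\prod_i(1+f(x_i))$, apply the factorisation \eqref{z51} of the tensor semigroup together with conservativeness of $T_t^\Omega$, and match this with \eqref{e: SE}. The extra care you take to spell out that $T_t^\Omega(1+f)=1+T_t^\Omega f$ requires conservativeness, and that density transfers between $L^p(\U^k(\Omega))$ and $L^p(\Omega^{\times k})$ via the measure relation built into \eqref{m-H}, are minor elaborations on the same argument rather than a different strategy.
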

 \proof
 Since ${\rm ECyl}(\U^k(\Omega))$ is dense in $L^p(\U^k(\Omega))$ for any $1 \le p <\infty$, it suffices to show \eqref{eq: TPT} only for $F \in {\rm ECyl}(\U^k(\Omega))$. Furthermore, we can reduce the argument to the case $F=\exp\bigl\{\log (1+f)^\ast\}$ by using the linearity of semigroups. From \eqref{z51} and  \eqref{e: SE} we get
 \begin{align*}
 T_t^{\Omega, \otimes k} (F \circ \mathsf s_k)(x_1, \ldots, x_k) & = T_t^{\Omega, \otimes k} (\exp\bigl\{\log (1+f)^\ast\} \circ \mathsf s_k)(x_1, \ldots, x_k)
 \\
 &= T_t^{\Omega, \otimes k} \Bigl(\prod_{i=1}^k (1+f)(\cdot_i)  \Bigr) (x_1, \ldots, x_k)
 \\
 &= \prod_{i=1}^k \Bigl(1+T_t^{\Omega}f(x_i) \Bigr)
 \\
 &=\exp\Bigl\{\log \bigl(1+(T_t^{\Omega}f)^\ast \bigr) \Bigr\} \circ \mathsf s_k(x_1,\ldots, x_k)
 \\
 &= T_t^{\U(\Omega)} \exp\bigl\{\log (1+f)^\ast \bigr\} \circ \mathsf s_k(x_1,\ldots, x_k) \, .\qedhere
 \end{align*}

\subsection{Suslin sets}
Let $X$ be a set.  We denote by $\N^\N$ the space of all infinite sequences $\{n_i\}_{i \in \N}$ of natural numbers. For $\phi \in \N^\N$, we write $\phi|_l \in \N^l$ for the restriction of $\phi$ to the first $l$ elements, i.e., $\phi|_l:=(\phi_i: 1 \le i \le l)$. Let $\mathcal S:=\cup_{l \in \N}\N^l$, and for $\sigma \in \mathcal S$, we denote the length of the sequence $\sigma$ by $\#\sigma:=\#\{\sigma_i\}$. Let $\E \subset 2^{X}$ be a family of subsets in $X$. We write $\mathcal S(\E)$ for the family of sets expressible in the following form:
$$\bigcup_{\phi \in \N^\N}\bigcap_{l \ge 1} E_{\phi|_l},$$
for some family $\{E_{\sigma}\}_{\sigma \in \mathcal S}$ in $\E$. A family $\{E_{\sigma}\}_{\sigma \in \mathcal S}$ is called {\it Suslin scheme}; the corresponding set $\cup_{\phi \in \N^\N}\cap_{l \ge 1} E_{\phi|_l}$ is its {\it kernel}; the operation
$$\{E_{\sigma}\}_{\sigma \in \mathcal S} \mapsto \bigcup_{\phi \in \N^\N}\bigcap_{l \ge 1} E_{\phi|_l},$$
is called {\it Suslin's operation}.  We denote by $\mathcal S(\E)$ the family of sets generated from sets in $\E$ by Suslin's operation, whose elements are called an $\E$-Suslin set (or simply Suslin set). It is known that $\mathcal S(\E)$ is closed under Suslin's operation (\cite{Sou17}, and e.g., \cite[421D Theorem]{Fre03}).  If $E_\sigma$ is compact for all $\sigma \in \mathcal S$, we call $\{E_{\sigma}\}_{\sigma \in \mathcal S}$ a {\it compact} Suslin scheme. 
We say that $\{E_\sigma\}_{\sigma \in \mathcal S}$ is regular if $E_\sigma \subset E_{\tau}$ whenever $\# \tau \le \# \sigma$ and $\sigma_i \le \tau_i$ for any $i<\#\sigma$ (\cite[421X (n) $\&$ 422H Theorem (b)]{Fre03}). 

In the following remark, we list basic properties of Suslin sets in a Polish space and relations to Choquet capacities and Borel measures. In the rest of this section, we assume that 
\begin{align} \label{asmp: SS} 
&\text{$(X, \tau)$ is a Polish space,\ \  $\c$ is a Choquet capacity on $X$,\ \ $\mu$ is a bounded Borel measure,} \ \notag
\\
&\text{$\E:=\mathcal C(X):=\{C: \text{closed set in $X$}$\}}\ .
\end{align}
We refer the readers to, e.g., \cite[432I Definition]{Fre03} for the definition of Choquet capacity.

\begin{rem} \label{rem: SS} \normalfont Under the assumption \eqref{asmp: SS}, the following hold:
\begin{itemize}
\item[(i)] Every Borel set is a Suslin set, i.e., $\mathcal B(\tau) \subset \mathcal S(\E)$ (e.g., \cite[423B(a) and 423F(a)]{Fre03});
\item[(ii)] Every Suslin set is $\mu$-measurable, i.e., $\mathcal S(\E) \subset \overline{\mathcal B}^{\mu}(\tau)$ (e.g., \cite[431B Corollary]{Fre03});
\item[(iii)] Let $A$ be a Suslin set in $X$. Then, $A$ is the kernel of a {\it compact regular} Suslin scheme $\{E_{\sigma}\}_{\sigma \in \mathcal S}$. Furthermore, it holds that 
\begin{align} \label{eq: APX1}
\c(A) = \sup_{\psi \in \N^\N}\c(A_{\psi}), \quad A_{\psi} = \bigcup_{\phi \le \psi}\bigcap_{l \ge 1} E_{\phi|_l},
\end{align}
whereby $\phi \le \psi$ means that $\phi(l) \le \psi(l)$ for all $l \in \N$ (e.g., \cite[423B Theorem $\&$ the proof of 432J Theorem]{Fre03}). By the regularity of $\{E_{\sigma}\}_{\sigma \in \mathcal S}$, \eqref{eq: APX1} can be reduced to the following form:
\begin{equation} \label{eq: APX}
\c(A) = \sup_{\psi \in \N^\N}\c(A_{\psi}), \quad A_{\psi} = \bigcap_{l \ge 1} E_{\psi|_l}, \quad \psi \in \N^\N;
\end{equation}
\item[(iv)] A subset $A \subset X$ is Suslin iff $A$ is analytic iff $A$ is $K$-analytic (\cite[423E Theorem (b)]{Fre03}. See \cite[422F, 423A Definitions]{Fre03} for the definitions of {\it $K$-analyticity} and {\it analyticity} respectively). As every $K$-analytic set is capacitable (e.g.,\cite[432J]{Fre03}),  in particular, we have that $\c(A)$ is well-defined for every Suslin set $A$ as 
\begin{align}\label{e:CB}
\c(A)=\sup\{\c(K): K \subset A \ \text{compact}\} \ .
\end{align}
\end{itemize}
\end{rem}

\section{Finite-codimensional Poisson measures} \label{s:FCP}
In this section, we construct finite-codimensional Poisson measures on $\U(\R^n)$. As a first step we prove measurability results for sections of \textit{Suslin} subsets of the configuration space.

\subsection{Measurability of sections of Suslin sets}
Let $B \subset \R^n$. For $A \subset \U(\R^n)$ and $\eta \in \U(B)$, the section $A_{\eta, B} \subset \U(B^c)$ of $A$ at $\eta$ is defined as 
\begin{align} \label{eq: DIS}
A_{\eta, B}=\{\gamma \in \Upsilon(B^c): \gamma + \eta \in A\}.
\end{align}
The subset of  $A_{\eta, B}$ consisting of $k$-particle space $\U^{k}(B^c)$ is denoted by $A_{\eta, B}^k:=A_{\eta, B} \cap \U^{k}(B^c).$
To shorten the notation we often write $A_{\eta,r}$ in place of $A_{\eta,B_r^c}$, where $\overline{B_r}$ is the closed ball centred at the origin.

\begin{lem} \label{lem: sec}
Let $B \subset \R^n$ be a Borel set. If  $A$ is Suslin in $\U(\R^n)$ then $A_{\eta, B}^k$ is Suslin in $\U^k(B^c)$ for every $\eta \in \U(B)$, $k \in \N$ and $r>0$.
\end{lem}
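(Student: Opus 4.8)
The plan is to build $A_{\eta,B}^k$ as a Suslin set in $\U^k(B^c)$ by pulling back a Suslin scheme for $A$ under a suitable measurable map. First I would observe that, since $A$ is Suslin in $\U(\R^n)$, by Remark \ref{rem: SS}(i)--(iv) (or directly by the definition of Suslin's operation applied to the family of closed sets) there exists a Suslin scheme $\{E_\sigma\}_{\sigma\in\mathcal S}$ of closed subsets of $\U(\R^n)$ whose kernel is $A$, i.e. $A=\bigcup_{\phi\in\N^\N}\bigcap_{l\ge 1}E_{\phi|_l}$. The key point is then that the map
\begin{equation*}
	\iota_\eta\colon \U^k(B^c)\ni\gamma\longmapsto \gamma+\eta\in\U(\R^n)
\end{equation*}
is continuous for the vague topology: for $f\in C_c(\R^n)$ we have $(\gamma+\eta)(f)=\gamma(f)+\eta(f)$, and $\gamma\mapsto\gamma(f)$ is continuous on $\U^k(B^c)$ while $\eta(f)$ is a fixed constant, so $\iota_\eta$ is vaguely continuous, hence Borel measurable. (One should also record that $\U^k(B^c)$ is itself a Polish space in the relative vague topology, so that the Suslin machinery applies there.)

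Next I would use that Suslin sets are stable under preimages by Borel (indeed continuous) maps between Polish spaces: more elementarily, since $\iota_\eta^{-1}$ commutes with arbitrary unions and intersections,
\begin{equation*}
	\iota_\eta^{-1}(A)=\bigcup_{\phi\in\N^\N}\bigcap_{l\ge 1}\iota_\eta^{-1}(E_{\phi|_l}),
\end{equation*}
and each $\iota_\eta^{-1}(E_{\phi|_l})$ is closed in $\U^k(B^c)$ by continuity of $\iota_\eta$. Thus $\{\iota_\eta^{-1}(E_\sigma)\}_{\sigma\in\mathcal S}$ is a Suslin scheme of closed sets in $\U^k(B^c)$ whose kernel is exactly $\iota_\eta^{-1}(A)$. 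Finally I would identify $\iota_\eta^{-1}(A)=\{\gamma\in\U^k(B^c):\gamma+\eta\in A\}=A_{\eta,B}\cap\U^k(B^c)=A_{\eta,B}^k$, which gives the conclusion. (If one prefers to cite Remark \ref{rem: SS}(iv), the statement "preimage of an analytic set under a Borel map between Polish spaces is analytic" can be invoked directly instead of manipulating the scheme by hand.)

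The only genuinely delicate point is making sure the ambient spaces and the map fit the hypotheses of the Suslin formalism: one must check that $B^c$ is Borel in $\R^n$ (immediate, as $B$ is Borel) and hence $\U(B^c)$ and $\U^k(B^c)$ are Polish in the vague topology, and that the embedding $\U^k(B^c)\hookrightarrow\U(\R^n)$ via $\gamma\mapsto\gamma+\eta$ lands in a Borel (in fact closed on $\U^k$) subset so that closedness of preimages is literally closedness in the relative topology of $\U^k(B^c)$. None of this is hard, but it is where the care has to go; the scheme-pullback argument itself is essentially formal. Note also that the role of $r$ in the statement is vacuous here — the section is taken with respect to $B$, and $r$ only enters the notational shorthand $A_{\eta,r}=A_{\eta,B_r^c}$ — so no separate argument is needed for it.
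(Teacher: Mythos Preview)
Your proof is correct and takes a genuinely different --- and in fact more elementary --- route than the paper's. The paper expresses $A_{\eta,B} = \mathrm{pr}_{B^c}\bigl(\mathrm{pr}_B^{-1}(\eta) \cap A\bigr)$ as the continuous \emph{forward} image (under the projection $\mathrm{pr}_{B^c}$ restricted to the fibre $\mathrm{pr}_B^{-1}(\eta)$) of a Suslin set, and then invokes the theorem that continuous images of Suslin/analytic sets in Hausdorff spaces are Suslin (citing Fremlin, 423B and 423E). Your argument instead uses the continuous \emph{preimage} map $\iota_\eta:\gamma\mapsto\gamma+\eta$ and pulls back the Suslin scheme directly: preimages of closed sets under a continuous map are closed, and preimage commutes with arbitrary unions and intersections, so no external structure theorem is needed. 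This is strictly simpler and also makes transparent (in the spirit of Lemma~\ref{lem: secS}) how a given scheme for $A$ induces a scheme for the section. One minor point: you flag Polishness of $\U^k(B^c)$ as a ``delicate point,'' but your scheme-pullback argument does not actually use it --- the construction of a Suslin scheme of closed sets and its kernel makes sense in any topological space --- so you can drop that concern (and for a general Borel $B$ the Polishness of $\U(B^c)$ would itself require a separate argument). Your observation that the ``$r>0$'' in the statement is vestigial is also correct.
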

\proof
We can express $A_{\eta, B}={\rm pr}_{B^c}\bigl({\rm pr}_{B}^{-1}(\eta) \cap A \bigr)$.
The set ${\rm pr}_{B}^{-1}(\eta) \cap A$ is Suslin in $\U(\R^n)$ whenever $A$ is Suslin. Set $\U_{\eta, B}(\R^n)={\rm pr}_B^{-1}(\eta) \cap \U(\R^n)$, which is Suslin.
The map ${\rm pr}_{B^c}: \U_{\eta, B}(\R^n) \to \U(B^c)$ is continuous.
Thus, $A_{\eta, B}$ is the continuous image ${\rm pr}_{B^c}\bigl({\rm pr}_{B}^{-1}(\eta) \cap A \bigr)$ of the Suslin set ${\rm pr}_{B}^{-1}(\eta) \cap A$ in the Suslin Hausdorff space $ \U_{\eta, B}(\R^n)$. Hence, $A_{\eta, B}$ is Suslin (\cite[423B Proposition (b) \& 423E Theorem (b)]{Fre03}). 
Since $A_{\eta,B}^k= A_{\eta,B} \cap \U^k(B^c)$ and $\U^k(B^c)$ is Borel in $\U(B^c)$, we conclude that  $A^k_{\eta, B}$ is Suslin. 
\qed

\begin{lem} \label{lem: secS}
Let $B \subset \R^n$ be an open set. Let $A \subset \U(\R^n)$ be the kernel of a compact Suslin's scheme $\{E_{\sigma}\}_{\sigma \in \mathcal S}$, i.e., $A=\cup_{\phi \in \N^\N}\cap_{l \ge 1} E_{\phi|_l}$ with $E_\sigma$ compact for any $\sigma \in \mathcal S$. Then, $A_{\eta, B}$ is the kernel of the compact Suslin scheme $\{(E_{\sigma})_{\eta, r}\}_{\sigma \in \mathcal S}$. 
\end{lem}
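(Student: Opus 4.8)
The plan is to unwind the definitions and push the Suslin scheme structure through the section operation term by term. First I would recall that by Lemma \ref{lem: sec} (or rather its proof) the section commutes with intersections and unions of families of sets: for a fixed $\eta \in \U(B)$, if $\{A_j\}_{j}$ is any family of subsets of $\U(\R^n)$, then $(\cup_j A_j)_{\eta,B} = \cup_j (A_j)_{\eta,B}$ and $(\cap_j A_j)_{\eta,B} = \cap_j (A_j)_{\eta,B}$. Both identities are immediate from the pointwise description \eqref{eq: DIS}: $\gamma \in (\cap_j A_j)_{\eta,B}$ iff $\gamma + \eta \in A_j$ for all $j$ iff $\gamma \in (A_j)_{\eta, B}$ for all $j$, and similarly for unions. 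Applying this to the kernel representation $A = \cup_{\phi \in \N^\N} \cap_{l \ge 1} E_{\phi|_l}$ gives
\begin{equation*}
	A_{\eta, B} = \Bigl( \bigcup_{\phi \in \N^\N} \bigcap_{l \ge 1} E_{\phi|_l} \Bigr)_{\eta, B} = \bigcup_{\phi \in \N^\N} \bigcap_{l \ge 1} (E_{\phi|_l})_{\eta, B} \, ,
\end{equation*}
which exhibits $A_{\eta, B}$ as the kernel of the Suslin scheme $\{(E_\sigma)_{\eta, B}\}_{\sigma \in \mathcal S}$ (written $\{(E_\sigma)_{\eta, r}\}$ in the statement when $B = B_r^c$).

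It remains to check that each $(E_\sigma)_{\eta, B}$ is compact, so that the scheme is a compact Suslin scheme. Here I would use that $B$ is open, hence $B^c$ is closed in $\R^n$, so the projection ${\rm pr}_{B^c} : \U(\R^n) \to \U(B^c)$ is continuous for the vague topologies (as noted in the proof of Lemma \ref{lem: sec}); moreover the set $\U_{\eta, B}(\R^n) = {\rm pr}_B^{-1}(\eta)$ is closed in $\U(\R^n)$ since $B$ is open (so that ${\rm pr}_B$ is vaguely continuous and $\{\eta\}$ is closed in $\U(B)$). Then $(E_\sigma)_{\eta, B} = {\rm pr}_{B^c}\bigl( {\rm pr}_B^{-1}(\eta) \cap E_\sigma \bigr)$ is the continuous image of the compact set ${\rm pr}_B^{-1}(\eta) \cap E_\sigma$ — compact because it is a closed subset of the compact set $E_\sigma$ — hence compact in $\U(B^c)$. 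This finishes the proof.

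The only subtle point — and the one I would be most careful about — is the topological bookkeeping needed to guarantee that ${\rm pr}_B^{-1}(\eta)$ is closed and ${\rm pr}_{B^c}$ is continuous; this is precisely where the hypothesis that $B$ is open (equivalently $B^c$ closed) is used, and it is the reason the lemma is stated for open $B$ rather than merely Borel $B$ as in Lemma \ref{lem: sec}. Everything else is a purely set-theoretic manipulation of the Suslin operation, which requires no measure theory and no appeal to the capacity $\c$.
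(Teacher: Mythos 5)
Your proof is correct and follows the same architecture as the paper's. Your pointwise treatment of the kernel identity is cleaner than the paper's, which rewrites $A_{\eta,B}={\rm pr}_{B^c}\bigl(\U_{\eta,B}(\R^n)\cap A\bigr)$ and pushes the Suslin operation through the projection: that step tacitly relies on ${\rm pr}_{B^c}$ being injective on the fiber $\U_{\eta,B}(\R^n)$ (images commute with arbitrary intersections only for injective maps), whereas your direct argument from \eqref{eq: DIS} sidesteps the issue entirely. One small caution on the topological bookkeeping in your compactness step: with $\U(B)$ carrying the relative topology inherited from $\U(\R^n)$, which is the topology the paper specifies, the map ${\rm pr}_B$ is \emph{not} continuous even for $B$ open (for $B=(0,1)\subset\R$ one has $\delta_{1/n}\to\delta_0$ vaguely in $\U(\R)$, but ${\rm pr}_B(\delta_{1/n})=\delta_{1/n}$ does not converge to ${\rm pr}_B(\delta_0)=0$ in the relative topology), and similarly ${\rm pr}_{B^c}$ is not globally continuous on $\U(\R^n)$. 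What you actually need, and what is true, is that ${\rm pr}_B^{-1}(\eta)$ is closed whenever $B$ is open — provable directly by testing against bump functions supported in small balls $U\subset B$ around any point where a limit $\gamma|_B$ might differ from $\eta$ — and that ${\rm pr}_{B^c}$ is continuous once restricted to the fiber $\U_{\eta,B}(\R^n)$, which is exactly the formulation used in the proof of Lemma~\ref{lem: sec}. Since you apply these facts only on the fiber, your argument stands; just restate the justifications accordingly.
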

\proof 
By expressing $(E_{\sigma})_{\eta, B} = {\rm pr}_{B^c}\bigl(\U_{\eta, B}(\R^n) \cap E_{\sigma}\bigr)$, where $\U_{\eta, B}(\R^n)={\rm pr}_B^{-1}(\eta) \cap \U(\R^n)$, we see that $(E_{\sigma})_{\eta, B}$ is compact since $\U_{\eta, B}(\R^n)$ is closed, $E_\sigma$ is compact by the hypothesis, ${\rm pr}_{B^c}$ is continuous on $\U_{\eta, B}(\R^n)$ and every continuous image of a compact set is compact. To see that $A_{\eta, B}$ is the kernel of $\{(E_{\sigma})_{\eta, r}\}_{\sigma \in \mathcal S}$, 
\begin{align*}
A_{\eta, B} 
= p_r\bigl(\U_{\eta, B}(\R^n) \cap A \bigr) 
&= p_r\Biggl(\U_{\eta, B}(\R^n) \cap \bigcup_{\phi \in \N^\N}\bigcap_{l \ge 1} E_{\phi|_l} \Biggr)
= p_r\Biggl( \bigcup_{\phi \in \N^\N}\bigcap_{l \ge 1} \U_{\eta, B}(\R^n) \cap E_{\phi|_l} \Biggr)
\\
&= \bigcup_{\phi \in \N^\N}\bigcap_{l \ge 1} p_r \bigl( \U_{\eta, B}(\R^n) \cap E_{\phi|_l}\bigr)
= \bigcup_{\phi \in \N^\N}\bigcap_{l \ge 1} (E_{\sigma})_{\eta, B} \, . \qedhere
\end{align*}

\subsection{Localised finite-codimensional Poisson measures}

 In this section, we construct {\it a localised} version of the $m$-codimensional Poisson measure $\rho_r^m$, which will be used to construct the $m$-condimensional Poisson measure by taking the limit for $r \to \infty$. We also show that Suslin sets are contained in the domain of the finite-codimensional Poisson measure.  
 
Let $A\subset \U(\R^n)$ be a Suslin subset.
By Lemma \ref{lem: sec}, the set $A^k_{\eta,r} = A^k_{\eta, B^c_r}$ is Suslin.  Since $\S^{m, k}_{B_r}$ is a Choquet capacity, the expression $\S^{m, k}_{B_r}(A^k_{\eta,r})$ is well-defined and satisfies~\eqref{e:CB}, which in particular implies that $A^k_{\eta,r}$ is a $\S^{m, k}_{B_r}$-measurable set.
We define the domain $\DD^m$ of the $m$-codimensional measures by
\begin{equation} \label{eq: DM}  
\DD^m:=\bigcap_{r>0} \mathscr D^m_r, 	
\end{equation}
where the localised domain $\DD^m_r$ is defined by
\begin{equation*}
	\DD^m_r:=\{A \subset \U(\R^n) \, :\,  \text{the map $\U(B_r^c) \ni \eta \mapsto \S^{m, k}_{B_r}(A^k_{\eta,r})$ is $\p_{B_r^c}$-measurable for every $k$}\} \, .
\end{equation*}
We first introduce the $m$-codimensional Poisson measure on the configuration space $\U(B_r)$ over the ball $B_r$.
\begin{defn} \normalfont \label{defn: LFP1}
The {\it $m$-codimensional Poisson measure $\rho^m_{\U(B_r)}$ on $\U(B_r)$} is defined as  
\begin{equation}
	\rho^m_{\U(B_r)}(A)
	:= e^{-\SS^{n}(B_r)}\sum_{k=1}^\infty \S^{m, k}_{B_r}(A^k) \quad \text{for every Suslin set $A$ in $\U(B_r)$}\, ,
\end{equation}
where $A^k=A \cap \U^k(B_r)$.
\end{defn}

\begin{rem} \normalfont
Notice that $\rho^0_{\U(B_r)} = \pi_{B_r}$, in other words the $0$-codimension Poisson measure $\rho^0_{\U(B_r)}$ on $\U(B_r)$ is the Poisson measure $\p_{B_r}$ on $\U(B_r)$. It can be shown by noting that the $m$-dimensional spherical Hausdorff measure $\SS^m$ and the $n$-dimensional Lebesgue measure $\mathbf L^n$ coincide when $m=n$ (see Remark \ref{rem: CSH}).
\end{rem}
We introduce the localised $m$-codimensional Poisson measure on $\U(\R^n)$ by averaging the $m$-codimensional Poisson measure $\rho^m_{\U(B_r)}$ by means of $\p_{B_r^c}$.

\begin{defn} \normalfont \label{defn: LFP}
The {\it localised $m$-codimensional Poisson measure $\rho^m_r$ on $\U(\R^n)$} is defined by 
\begin{align} \label{def: sm}
\rho^m_r(A) 
 = \int_{\U(B_r^c)} \rho^m_{\U(B_r)}(A_{\eta,r}) d \p_{B_r^c}(\eta), \quad  A \in \DD^m.
\end{align}
\end{defn}
Before investigating the main properties of $\rho^m_r$, we check that sufficiently many sets are contained in $\DD^m$, i.e. we show that all Suslin sets are contained in the domain $\DD^m$ for $m \le n$.

\begin{prop} \label{prop: meas}
Any Suslin set in $\U(\R^n)$ is contained in $\DD^m$ for $m \le n$. 
\end{prop}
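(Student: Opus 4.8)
The plan is to reduce measurability of the section maps to a capacitability/analyticity argument, exploiting the structure results of Remark~\ref{rem: SS} and the upper semi-continuity established in Proposition~\ref{prop: usc}. Fix $m\le n$, a Suslin set $A\subset\U(\R^n)$, a radius $r>0$ and $k\in\N$; the task is to show that $\eta\mapsto\S^{m,k}_{B_r}(A^k_{\eta,r})$ is $\p_{B_r^c}$-measurable. By Remark~\ref{rem: SS}(iii), $A$ is the kernel of a compact regular Suslin scheme $\{E_\sigma\}_{\sigma\in\mathcal S}$ in $\U(\R^n)$; by Lemma~\ref{lem: secS} (applied with the open set $B=B_r^c$, after an elementary approximation to make $B$ open — or by using that the statement of Lemma~\ref{lem: secS} already covers this), the section $A_{\eta,r}$ is the kernel of the compact Suslin scheme $\{(E_\sigma)_{\eta,r}\}_{\sigma\in\mathcal S}$ in $\U(B_r^c)$, and intersecting with $\U^k(B_r^c)$ keeps compactness.

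First I would handle the $\e$-level quantities. Set $\rho^{m,k}_{r,\e}(\eta):=\S^{m,k}_{B_r,\e}(A^k_{\eta,r})$, the $m$-codimensional $\e$-spherical Hausdorff measure of the section. For a \emph{compact} set $K\subset\U(\R^n)$, Proposition~\ref{prop: usc} (read through the identification $\U^k(B_r)\cong (B_r^{\times k}\setminus\dg_k)/\mathfrak S_k$, so that $\S^{m,k}_{B_r,\e}$ is, up to the constant $1/k!$, the ordinary $\e$-spherical Hausdorff measure $\SS^{nk-m}_\e$ pushed forward by $\s_k$) tells us that $\eta\mapsto \S^{m,k}_{B_r,\e}(K^k_{\eta,r})$ is upper semi-continuous, hence Borel measurable. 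For a general Suslin $A$ with compact regular Suslin scheme $\{E_\sigma\}$, the $\e$-Hausdorff premeasure is a Choquet capacity on the compact metric ambient space, so by the capacitability formula \eqref{eq: APX} applied fibrewise,
\begin{equation*}
\S^{m,k}_{B_r,\e}(A^k_{\eta,r})=\sup_{\psi\in\N^\N}\ \S^{m,k}_{B_r,\e}\Bigl(\bigcap_{l\ge 1}(E_{\psi|_l})^k_{\eta,r}\Bigr).
\end{equation*}
Each inner set is a decreasing intersection of compact sets, so by continuity of $\SS^{nk-m}_\e$ along decreasing sequences of compacts we may write the inner quantity as $\inf_{l}\S^{m,k}_{B_r,\e}((E_{\psi|_l})^k_{\eta,r})$, a countable infimum of Borel functions of $\eta$, hence Borel. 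The outer supremum is over the uncountable set $\N^\N$, so one cannot conclude Borel measurability directly; instead one recognises the whole expression as the value at $\eta$ of a \emph{Suslin operation} applied to a Suslin scheme of Borel sets in the $(\eta,t)$-space $\U(B_r^c)\times\R$ (namely $\{(\eta,t):\S^{m,k}_{B_r,\e}((E_{\psi|_l})^k_{\eta,r})>t\}$), so the superlevel sets $\{\eta:\S^{m,k}_{B_r,\e}(A^k_{\eta,r})>t\}$ are Suslin in $\U(B_r^c)$, hence $\p_{B_r^c}$-measurable by Remark~\ref{rem: SS}(ii).

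Finally I would pass from $\e$ to $0$. Since $\S^{m,k}_{B_r,\e}(A^k_{\eta,r})$ is monotone increasing as $\e\downarrow 0$, the pointwise limit $\S^{m,k}_{B_r}(A^k_{\eta,r})=\lim_{\e\downarrow 0}\S^{m,k}_{B_r,\e}(A^k_{\eta,r})=\sup_{j}\S^{m,k}_{B_r,1/j}(A^k_{\eta,r})$ is a countable supremum of $\p_{B_r^c}$-measurable functions (one may even restrict to $\e=1/j$, $j\in\N$, by monotonicity), hence $\p_{B_r^c}$-measurable. As $k$ and $r$ were arbitrary this gives $A\in\DD^m_r$ for all $r>0$, i.e.\ $A\in\DD^m$. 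The main obstacle is the step where the outer supremum over $\N^\N$ appears: the class of $\p_{B_r^c}$-measurable functions is not closed under uncountable suprema, so one genuinely needs the Suslin-operation bookkeeping — rewriting the fibrewise capacitability formula as a single Suslin operation on Borel sets in product space — to land back inside the $\sigma$-algebra of Suslin (hence universally measurable) sets. A secondary technical point is checking carefully that the identification $\U^k(B_r)\cong(B_r^{\times k}\setminus\dg_k)/\mathfrak S_k$ is compatible with taking sections at $\eta\in\U(B_r^c)$ so that Proposition~\ref{prop: usc} genuinely applies to $\S^{m,k}_{B_r,\e}$ on sections; this is where the diagonal $\dg_k$ and the quotient by $\mathfrak S_k$ must be tracked, but it is routine given the constructions in the preliminary section.
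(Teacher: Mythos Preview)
Your proposal follows the same broad plan as the paper --- compact regular Suslin scheme, capacitability, upper semi-continuity from Proposition~\ref{prop: usc} --- but organises the argument in a more laborious way and overlooks the simplification the paper exploits.

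The paper applies the capacitability formula~\eqref{eq: APX} to the \emph{full} measure $\S^{m,k}_{B_r}$ (which is an outer measure, hence a Choquet capacity), writes
\[
\{\eta:\S^{m,k}_{B_r}(A^k_{\eta,r})>a\}=\bigcup_{\e>0}\bigcup_{\psi\in\N^\N}\{\eta:\S^{m,k}_{B_r,\e}\bigl((A^k_{\eta,r})_\psi\bigr)>a\},
\]
and then uses Proposition~\ref{prop: usc} to conclude that each inner set is \emph{open} (not merely Borel). Since arbitrary unions of open sets are open, the uncountable union over $\N^\N$ is handled trivially, and the countable union over $\e$ keeps the result open. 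No Suslin-operation bookkeeping is needed.

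You instead apply capacitability at the $\e$-level and extract only Borel measurability from upper semi-continuity, which forces you into the delicate step of reorganising $\sup_{\psi}\inf_l$ as a genuine Suslin operation. Two points on this: (i) your assertion that the $\e$-Hausdorff content is a Choquet capacity needs justification --- continuity from below of $\SS^{nk-m}_\e$ on arbitrary increasing sequences is not immediate, and the paper sidesteps this by applying~\eqref{eq: APX} only to $\S^{m,k}_{B_r}$; (ii) your description of the Suslin operation in the $(\eta,t)$-space is imprecise, though it can be made rigorous by writing the superlevel set as $\bigcup_{b>a,\,b\in\mathbb Q}\bigcup_{\psi}\bigcap_l\{\eta:g_{\psi|_l}(\eta)\ge b\}$ with $g_\sigma$ upper semi-continuous, so that the inner sets are closed and the expression is a countable union of kernels of Suslin schemes.

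In short: your argument can be completed, but you are missing the point that upper semi-continuity gives \emph{open} superlevel sets, which makes the uncountable union over $\N^\N$ a non-issue. (Minor slip: you wrote that the section lands in $\U(B_r^c)$; it lands in $\U(B_r)$.)
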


\begin{proof}
Let $A \subset \U(\R^n)$ be a Suslin set.  
Let $\{E_{\sigma}\}_{\sigma \in \mathcal S}$ be a Suslin scheme whose kernel is $A$. Noting that $\U(B_r^c)$ is Polish, by applying (i) of Remark \ref{rem: SS} with $X=\U(B_r^c)$ and $\mu=\p_{B_r^c}$, any Suslin set is $\p_{B_r^c}$-measurable. Hence, it suffices to show that every super-level set $\{\eta: \S^{m, k}_{B_r}(A^k_{\eta,r}) > a\}$ is Suslin for any $a \in \R$, $r>0, k \in \N$ and $m \le n$. 
Note that $A^k_{\eta, r}$ is Suslin by Lemma \ref{lem: sec}, whence the expression $\{\eta: \S^{m, k}_{B_r}(A^k_{\eta,r}) > a\}$ is well-defined as was discussed in the paragraph before \eqref{eq: DM}.

Since $\U(\R^n)$ is Polish, by using (iii) in Remark \ref{rem: SS}, we may assume that  $\{E_{\sigma}\}_{\sigma \in \mathcal S}$ is a compact regular Suslin scheme. By Lemma \ref{lem: secS} and $\U(B_r)=\sqcup_{k \in \N} \U^k(B_r)$, we see that $A^k_{\eta, r} \subset \U^k(B_r)$ is the kernel of the compact regular Suslin scheme $\{(E_{\sigma})^k_{\eta, r}\}_{\sigma \in \mathcal S}$, whereby $(E_{\sigma})^k_{\eta, r}:=(E_{\sigma})_{\eta, B_r^c} \cap \U^k(B_r)$. 
Since $\S^{m, k}_{B_r}$ is an outer measure on $\U^k(B_r)$ by construction, $\S^{m, k}_{B_r}$ is a Choquet capacity on $\U^k(B_r)$.  Hence, by applying \eqref{eq: APX} in (iii) of Remark \ref{rem: SS} with $X=\U^k(B_r)$ and $\c=\S^{m, k}_{B_r}$, we obtain that 
\begin{align*}
\S^{m, k}_{B_r}(A^k_{\eta, r}) = \sup_{\psi \in \N^\N}\S^{m, k}_{B_r}((A^k_{\eta, r})_{\psi}), \quad (A^k_{\eta, r})_{\psi}=\bigcap_{l \ge 1} (E_{\psi|_{l}})^k_{\eta, r}, \quad \psi \in \N^\N.
\end{align*}
Thus, noting the monotonicity $\S^{m, k}_{B_r, \e} \le \S^{m, k}_{B_r, \delta}$ $(\delta \le \e)$ of the $\e$-Hausdorff measure defined in \eqref{SH}, the super-level set $\{\eta: \S^{m, k}_{r}(A^k_{\eta, r}) > a\}$ can be expressed in the following way:
\begin{align*}
\{\eta: \S^{m, k}_{B_r}(A^k_{\eta, r}) > a\} =  \bigcup_{\e>0} \bigcup_{\psi \in \N^\N} \{\eta: \S^{m, k}_{B_r, \e}\bigl(  (A^k_{\eta, r})_{\psi} \bigr) > a\}.
\end{align*}
Since the space $\mathcal S(\E)$ of Suslin sets is closed under Suslin's operation, it suffices to show that $\{\eta: \S^{m, k}_{B_r, \e}\bigl( ( A^k_{\eta, r})_{\psi} \bigr) >a\}$ is Suslin.

We equip $\U^k(B_r)$ with the $L^2$-transportation distance $\d_{\U^k}$ as defined in \eqref{d:L2T}, and equip $\U(B_r^c)$ with some distance $d$ generating the vague topology.  
By Proposition \ref{prop: usc} and noting that $(A^k_{\eta, r})_{\psi}$ is compact and that $\S^{m, k}_{B_r, \e}$ is (up to constant multiplication) the $m$-codimensional $\e$-spherical Hausdorff measure on $\U^k(B_r)$ associated with $\d_{\U^k}$, we conclude that $\{\eta: \S^{m, k}_{B_r, \e}\bigl( ( A^k_{\eta, r})_{\psi} \bigr)>a\}$ is open in $\U(B_r^c)$ for any $a \in \R$, $r>0, k \in \N$ and $m \le n$.  
\end{proof}

\subsection{Finite-codimensional Poisson measures} \label{subsec: finite-dim measure}
In this section, we construct the $m$-codimensional Poisson measure on $\U(\R^n)$, which is the first main result of this paper. 
By Proposition \ref{prop: meas}, the set function $\rho_r^m$ given in \eqref{def: sm} turned out to be well-defined in the sense that the space $\mathcal S(\E)$ of all Suslin sets in $\U(\R^n)$ is contained in its domain $\DD^m$. 
We show the following monotonicity result which allows us to pass to the limit of $\rho_r^m$ as $r\to \infty$.

\begin{thm} \label{thm: MI}
	The map $r \mapsto \rho_r^m(A)$ is monotone non-decreasing for any $A \in \mathcal S(\E)$. 
\end{thm}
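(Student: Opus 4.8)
The statement to prove is the monotonicity $r\mapsto \rho_r^m(A)$ for Suslin $A$. The plan is to reduce everything to a pointwise monotonicity statement for the finite-codimensional spherical Hausdorff measures on the finite-dimensional pieces $\U^k(B_r)$, and then integrate. Fix $r<R$. Writing the shell $S:=B_R\setminus B_r$, every configuration in $\U(B_R)$ decomposes uniquely as $\gamma_0+\zeta$ with $\gamma_0\in\U(B_r)$ and $\zeta\in\U(S)$, and the Poisson measure factorizes: $\pi_{B_R}=\pi_{B_r}\otimes\pi_S$ under this identification, and likewise $\pi_{B_r^c}=\pi_S\otimes\pi_{B_R^c}$. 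The key algebraic identity is that for $A\subset\U(\R^n)$, $\eta\in\U(B_R^c)$, and $\zeta\in\U(S)$, one has $(A_{\eta,R})_{\text{shift by }\zeta}=A_{\zeta+\eta,r}$ in the obvious sense — i.e. sectioning at $B_R^c$ and then at the shell is the same as sectioning at $B_r^c$ directly. Using Fubini (legitimate because all the relevant sections are Suslin by Lemmas \ref{lem: sec} and \ref{lem: secS}, hence measurable, and the integrands are nonnegative), the inequality $\rho_r^m(A)\le \rho_R^m(A)$ will follow once I establish the pointwise bound
\begin{equation*}
	\rho^m_{\U(B_r)}\bigl((A_{\zeta+\eta,r})\bigr)\le \int_{\U(S)}\rho^m_{\U(B_R)}\bigl((A_{\eta,R})_{\zeta',\text{restricted}}\bigr)\,d\pi_S(\zeta')
\end{equation*}
for $\pi_{B_R^c}$-a.e.\ $\eta$, or rather the cleaner two-radius comparison after integrating out $\zeta$. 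So the heart of the matter is: \emph{for a Suslin set $C\subset\U(B_R)$ one has $\rho^m_{\U(B_r)}(\widehat C)\le \rho^m_{\U(B_R)}(C)$}, where $\widehat C$ is suitably interpreted — but this is false as stated, so the right formulation is the averaged one, comparing $\rho^m_{\U(B_r)}$ applied to a shell-section of $C$, integrated against $\pi_S$, with $\rho^m_{\U(B_R)}(C)$.

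The technical core is therefore the following finite-dimensional claim, which I would isolate as a lemma. Recall $\S^{m,k}_{B_R}=\frac1{k!}(\mathsf s_k)_\#\bigl(\SS^{nk-m}|_{B_R^{\times k}}\bigr)$. Decompose $B_R^{\times k}=\bigsqcup_{j=0}^{k}\binom{k}{j}\,(B_r^{\times j}\times S^{\times(k-j)})$ up to the diagonal and a set of measure zero; since $\SS^{nk-m}$ restricted to a product of the form $B_r^{\times j}\times S^{\times(k-j)}$ decomposes (by the Fubini-type property of Hausdorff measure in the codimension-$m$ sheet, using that $\SS^{n}|_S$ is just Lebesgue on $S$, cf.\ Remark \ref{rem: CSH}) as a product measure $\SS^{nj-m}|_{B_r^{\times j}}\otimes\bigl(\mathbf L^{n}|_S\bigr)^{\otimes(k-j)}$, one gets that $\rho^m_{\U(B_R)}(C)$, after unwinding the $e^{-\SS^n(B_R)}=e^{-\SS^n(B_r)}e^{-\SS^n(S)}$ renormalization, equals an integral over $\U(S)$ (w.r.t.\ $\pi_S$) of $\rho^m_{\U(B_r)}$ of the shell-section of $C$. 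In other words I expect an \emph{exact identity}
\begin{equation*}
	\rho^m_{\U(B_R)}(C)=\int_{\U(S)}\rho^m_{\U(B_r)}\bigl(\{\gamma_0\in\U(B_r):\gamma_0+\zeta\in C\}\bigr)\,d\pi_S(\zeta)
\end{equation*}
for Suslin $C\subset\U(B_R)$ that happen to be "supported away from $S$ having particles" — but in general only an inequality $\ge$, because $C$ may contain configurations with particles in $S$ whose contribution to $\rho^m_{\U(B_R)}$ is dropped when we section. Combining this with the sectioning identity $(A_{\eta,R})$ sectioned over $S$ at $\zeta$ equals $A_{\zeta+\eta,r}$, and then using Fubini to reassemble $\int_{\U(S)}\!\int_{\U(B_R^c)}=\int_{\U(B_r^c)}$ (since $\pi_{B_r^c}=\pi_S\otimes\pi_{B_R^c}$), yields $\rho_r^m(A)\le\rho_R^m(A)$.

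The main obstacle I anticipate is twofold. First, the measure-theoretic bookkeeping around the diagonal sets $\dg_k$ and the quotient by $\mathfrak S_k$: one must check that $\SS^{nk-m}$ gives zero mass to the overlaps between the pieces $B_r^{\times j}\times S^{\times(k-j)}$ after symmetrization (boundaries $\partial B_r$ intersected with products — these are lower-dimensional, hence $\SS^{nk-m}$-null provided $m\le n$, which is exactly the standing hypothesis), and that the combinatorial factor $\binom{k}{j}$ interacts correctly with the $1/k!$ normalizations on both sides to reproduce $1/j!$ on $\U(B_r)$ and $1/(k-j)!$ on $\U(S)$. Second, the Fubini/measurability justification: all sections appearing are Suslin (Lemmas \ref{lem: sec}, \ref{lem: secS}) hence universally measurable, and $\rho^m_{\U(B_r)}$ is a countable sum of Choquet capacities evaluated on analytic sets, so the relevant maps $\eta\mapsto\rho^m_{\U(B_r)}(\cdot)$ are measurable by Proposition \ref{prop: meas}; one then applies Tonelli for nonnegative measurable integrands on the product of the Polish spaces $\U(S)\times\U(B_R^c)$. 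Once these two points are in place the monotonicity is immediate — indeed one expects the stronger statement that the limit $\rho^m:=\lim_{r\to\infty}\rho^m_r$ exists as a sup, which is presumably Definition \ref{defn: MCH}.
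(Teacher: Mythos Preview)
Your overall architecture is exactly that of the paper: disintegrate over the shell $S=B_R\setminus B_r$ using $\pi_{B_r^c}=\pi_S\otimes\pi_{B_R^c}$, reduce to the comparison
\[
\int_{\U(S)}\rho^m_{\U(B_r)}\bigl((C)_{\zeta,S}\bigr)\,d\pi_S(\zeta)\ \le\ \rho^m_{\U(B_R)}(C)
\]
for $C=A_{\eta,R}$, and then unpack both sides into a sum over particle numbers $k=j+\ell$ with the correct $\tfrac{1}{k!}=\tfrac{1}{j!}\tfrac{1}{(k-j)!}\binom{k}{j}^{-1}$ bookkeeping and the factorisation $e^{-\SS^n(B_R)}=e^{-\SS^n(B_r)}e^{-\SS^n(S)}$. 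The measurability and Tonelli justifications you outline are also the right ones.

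There is, however, a genuine gap at the analytic core. You assert that ``$\SS^{nk-m}$ restricted to a product $B_r^{\times j}\times S^{\times(k-j)}$ decomposes as the product measure $\SS^{nj-m}|_{B_r^{\times j}}\otimes(\mathbf L^n|_S)^{\otimes(k-j)}$''. This is \emph{false} for general Suslin (or even Borel) sets: a codimension-$m$ Hausdorff measure does not satisfy Fubini as an identity. What is true, and what the paper invokes (Federer \cite[2.10.27]{Fed69}), is only the inequality
\[
\SS^{nk-m}(E)\ \ge\ \int_{S^{\times(k-j)}}\SS^{nj-m}\bigl(E^{\mathbf y}\bigr)\,d\mathbf L^{n(k-j)}(\mathbf y),\qquad E\subset B_r^{\times j}\times S^{\times(k-j)},
\]
where $E^{\mathbf y}$ is the slice. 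This inequality, summed over the combinatorial pieces, is precisely what yields the comparison above and hence the monotonicity. Your diagnosis that the inequality arises because ``contributions from particles in $S$ are dropped when we section'' is incorrect: the integration over $\U(S)$ with respect to $\pi_S$ already accounts for \emph{all} shell configurations (that is what the sum over $\ell=k-j$ does). Nothing is dropped; if the Hausdorff--Fubini identity you claimed were valid, you would in fact obtain equality $\rho^m_r=\rho^m_R$, which is false. The source of the inequality is the intrinsic failure of Fubini for Hausdorff measures---think of a set $E$ that is a tilted $(nj-m)$-plane in $\R^{nj}\times\R^{n(k-j)}$: its slices can all be singletons while $\SS^{nk-m}(E)>0$.

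So the fix is concrete: replace the claimed product decomposition by Federer's slicing inequality and drop the ``lost particles'' heuristic; everything else in your plan then goes through and matches the paper's argument.
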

The proof of Theorem \ref{thm: MI} is given at the end of this section. We can now introduce the {\it $m$-codimensional Poisson measure} on $\U(\R^n)$ as the monotone limit of $\rho^m_r$ on the space $\mathcal S(\E)$ of Suslin sets:
\begin{align} \label{eq: MCH}
	\rho^m(A) = \lim_{r \to \infty}\rho^m_r(A), \quad \forall A \in \mathcal S(\E).
\end{align} 
\begin{defn}[$m$-codimensional Poisson Measure] \label{defn: MCH} \normalfont
	Let $\mathfrak D^m$ be the completion of $\mathcal S(\E)$ with respect to $\rho^m$. 
	The measure $(\rho^m, \mathfrak D^m)$ is called the {\it $m$-codimensional Poisson measure} on $\U(\R^n)$. 
\end{defn}
\begin{rem}\normalfont
We give two remarks below:
\begin{itemize}
\item[(i)] Note $\rho^0 = \pi$, i.e. $0$-codimensional Poisson measure $\rho^0$ on $\U(\R^n)$ is the Poisson measure $\p$ on $\U(\R^n)$ by noting that the $m$-dimensional spherical Hausdorff measure $\SS^m$ and the $n$-dimensional Lebesgue measure $\mathbf L^n$ coincide when $m=n$ (see Remark \ref{rem: CSH}). 
\item[(ii)] The construction of $\rho^m$, a priori, depends on the choice of the exhaustion $\{B_r\} \subset \R^n$. However, in Proposition \ref{prop: IE}, we will see that it is not the case.
\end{itemize}
\end{rem}
\smallskip
The rest of this section is devoted to the proof of Theorem \ref{thm: MI}. 
Let us begin with a definition.
\begin{defn}[Section of Functions, Multi-Section]\label{def: CF} \normalfont
	Let $M, N \subset \R^n$ be two disjoint sets and $L=M \sqcup N$. For every $F: \U(L) \to \R$ and $\xi \in \U(M)$, define $F_{\xi, M}: \U(N)\to \R$ as 
	\begin{align} 
		F_{\xi, M}(\zeta):=F(\zeta + \xi), \quad \zeta\in \U(N). 
	\end{align}
	For a set $A \subset \U(\R^n)$, let $A_{\xi, \eta, M, N}$ denote the {\it multi-section} both at $\xi \in \U(M)$ and $\zeta \in \U(N)$:
	\begin{equation}
		A_{\xi, \zeta, M, N}:=\{\gamma \in \Upsilon(L^c): \gamma + \xi + \zeta \in A\}, \quad \text{and} \quad A_{\xi, \zeta, M, N}^k=A_{\xi, \zeta,M, N} \cap \U^{k}(L^c).
	\end{equation}
\end{defn}

\begin{lem} \label{lem: DI}
Let $A$ be a Suslin set in $\U(\R^n)$. Let $M, N \subset \R^n$ be two disjoint Borel sets. Set $L=M \sqcup N$.  
Let $F:\U(L) \to \R$ be defined by $\gamma \mapsto F(\gamma):=\S^{m, k}_{L^c}(A^k_{\gamma, L})$. 
Then, 
\begin{align} \label{eq: MSE2}
F_{\xi, M}(\zeta) = \S^{m, k}_{L^c}(A^k_{\zeta, \xi, N , M}), \quad \forall \xi \in \U(M),\ \forall \zeta \in \U(N).
\end{align}
\end{lem}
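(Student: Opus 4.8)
The statement \eqref{eq: MSE2} is essentially a bookkeeping identity: it says that taking the $\xi$-section of the function $\gamma\mapsto \S^{m,k}_{L^c}(A^k_{\gamma,L})$ and then evaluating at $\zeta$ gives the same thing as directly computing the $m$-codimensional spherical Hausdorff measure of the multi-section of $A$. The core of the argument is the elementary set-theoretic identity relating iterated sections to the multi-section, namely
\begin{equation*}
	(A_{\gamma, L})\big|_{\gamma = \zeta + \xi} = A_{\zeta, \xi, N, M} \, , \qquad \xi\in\U(M),\ \zeta\in\U(N) \, ,
\end{equation*}
where the left-hand side denotes the section $A_{\gamma, L}$ of $A$ at the configuration $\gamma = \zeta + \xi \in \U(L)$ (recall $\gamma\in\U(L)$ decomposes uniquely as $\gamma = \gamma|_M + \gamma|_N$ since $L = M\sqcup N$). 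First I would unwind both sides from Definition \ref{eq: DIS} and Definition \ref{def: CF}: an element of $A_{\zeta+\xi, L}$ is a configuration $\theta\in\U(L^c)$ with $\theta + \zeta + \xi \in A$, and by commutativity of the sum of point measures this is exactly the defining condition for $\theta\in A_{\zeta, \xi, N, M}$. Intersecting with $\U^k(L^c)$ gives the analogous identity at the level of the $k$-particle pieces: $(A^k_{\gamma, L})\big|_{\gamma=\zeta+\xi} = A^k_{\zeta,\xi,N,M}$.

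Once this identity is in place, the proof is immediate: by the definition of $F_{\xi,M}$ in Definition \ref{def: CF} we have $F_{\xi,M}(\zeta) = F(\zeta+\xi)$, and by the definition of $F$ this equals $\S^{m,k}_{L^c}\big((A^k_{\gamma,L})\big|_{\gamma=\zeta+\xi}\big) = \S^{m,k}_{L^c}(A^k_{\zeta,\xi,N,M})$, which is precisely \eqref{eq: MSE2}. I should note, for the expression $\S^{m,k}_{L^c}(A^k_{\zeta,\xi,N,M})$ to be meaningful, that the multi-section $A^k_{\zeta,\xi,N,M}$ is itself a Suslin (hence analytic, hence $\S^{m,k}_{L^c}$-measurable) subset of $\U^k(L^c)$; this follows by iterating Lemma \ref{lem: sec} — first section $A$ at $\xi$ over $M$ to get a Suslin subset of $\U(M^c)$, then section that at $\zeta$ over $N$ (viewing $N\subset M^c$) — exactly as in the discussion preceding \eqref{eq: DM}. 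Equivalently, one can observe $A_{\zeta,\xi,N,M} = A_{\zeta+\xi, L}$ directly and invoke Lemma \ref{lem: sec} once for the Borel set $L = M\sqcup N$.

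I do not anticipate a genuine obstacle here; the only thing requiring a little care is keeping the roles of $M$ and $N$ (and the order of the subscripts) straight, since the notation $A_{\zeta,\xi,N,M}$ swaps the order relative to $A_{\gamma, L}$ with $\gamma = \zeta+\xi$, but this is purely cosmetic because addition of point measures is commutative. The lemma will be used in the proof of Theorem \ref{thm: MI} to reorganize the iterated integrals defining $\rho^m_r$ and $\rho^m_{r'}$ for $r < r'$, so that the monotonicity reduces to a statement comparing the measure $\S^{m,k}$ of sections over the two balls — this is why the identity is stated at the level of the measure-valued function $F$ rather than just for the sets.
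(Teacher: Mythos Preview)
Your proposal is correct and follows essentially the same approach as the paper: both argue that the multi-section $A^k_{\zeta,\xi,N,M}$ is Suslin (hence the measure is well-defined), then unwind the definitions to get $F_{\xi,M}(\zeta) = F(\zeta+\xi) = \S^{m,k}_{L^c}(A^k_{\zeta+\xi,L}) = \S^{m,k}_{L^c}(A^k_{\zeta,\xi,N,M})$ via the set identity $A_{\zeta+\xi,L} = A_{\zeta,\xi,N,M}$. The paper's proof is a one-line version of exactly what you wrote.
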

\begin{proof}
The set $A^k_{\zeta, \xi, N , M}$ is Suslin by the same argument as in Lemma \ref{lem: sec}. Thus, $\S^{m, k}_{L^c}(A^k_{\zeta, \xi, N , M})$ is well-defined. 
By Definition \ref{def: CF}, we have that 
$$
F_{\xi, M}(\zeta) = F(\zeta+\xi)= \S^{m, k}_{L^c}(A^k_{\zeta+\xi, L})= \S^{m, k}_{L^c}\bigl(\{\gamma \in \Upsilon(L^c): \gamma + \xi + \zeta \in A\}\bigr) =\S^{m, k}_{L^c}(A^k_{\zeta, \xi, N , M}) \, . \qedhere
$$
\end{proof}
The next lemma is straightforward since the Poisson measures $\p_{M}$ and $\p_{N}$ are mutually singular.  
\begin{lem} \label{lem: DI2}
With the same notation $M, N$ and $L$ as in Lemma \ref{lem: DI}. 
For any bounded measurable function $G$ on $\U(L)$, 
\begin{align} \label{eq: DI2}
\int_{\U(L)} G(\eta) d\pi_{L}(\eta) = \int_{\U(N)} \int_{\U(M)}G_{\xi, M}(\zeta) d\p_{M}(\xi) d\p_{N}(\zeta).
\end{align}
\end{lem}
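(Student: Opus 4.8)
The plan is to recognise \eqref{eq: DI2} as the \emph{independence of the Poisson point process over the disjoint regions $M$ and $N$}, repackaged through Fubini's theorem. Since $L=M\sqcup N$, the restriction of any $\omega\in\U(\R^n)$ to $L$ decomposes as $\omega|_L=\omega|_M+\omega|_N$, so with $\pi_A=({\rm pr}_A)_\#\pi$ for Borel $A$ we may rewrite the left-hand side of \eqref{eq: DI2} as
\[
  \int_{\U(L)} G\,d\pi_L=\int_{\U(\R^n)} G\bigl(\omega|_M+\omega|_N\bigr)\,d\pi(\omega),
\]
and the whole identity reduces to the single claim that the law of the pair $\omega\mapsto(\omega|_M,\omega|_N)$ under $\pi$ is the product measure $\pi_M\otimes\pi_N$ on $\U(M)\times\U(N)$; granting this, \eqref{eq: DI2} is exactly Fubini's theorem applied to the bounded measurable integrand $(\xi,\zeta)\mapsto G(\xi+\zeta)$, together with the defining identity $G_{\xi,M}(\zeta)=G(\zeta+\xi)$ of Definition \ref{def: CF}.

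The core step is therefore to verify $(\omega|_M,\omega|_N)_\#\pi=\pi_M\otimes\pi_N$, which I would do via Laplace functionals. Starting from \eqref{eq:Int} and a standard monotone approximation argument, one extends the identity $\int_{\U(A)} e^{h^\star}\,d\pi_A=\exp\{\int_A(e^h-1)\,d\mathbf L^n\}$ from $h\in C_c(A)$ to bounded Borel $h$ with bounded support. Fixing such an $h$ on $L$ and using $h^\star(\xi+\zeta)=(h|_M)^\star(\xi)+(h|_N)^\star(\zeta)$ together with $\int_L=\int_M+\int_N$, one computes
\begin{align*}
  \int_{\U(M)\times\U(N)} e^{h^\star(\xi+\zeta)}\,d(\pi_M\otimes\pi_N)(\xi,\zeta)
  &=\exp\Bigl\{\int_M(e^h-1)\,d\mathbf L^n\Bigr\}\exp\Bigl\{\int_N(e^h-1)\,d\mathbf L^n\Bigr\}\\
  &=\exp\Bigl\{\int_L(e^h-1)\,d\mathbf L^n\Bigr\}=\int_{\U(\R^n)} e^{h^\star(\omega|_M+\omega|_N)}\,d\pi(\omega).
\end{align*}
Taking $h=\sum_j t_j\1_{B_j}$ over finite families of pairwise disjoint bounded Borel sets $B_j\subset L$ and $t_j\in\R$ shows that the two measures have the same joint Laplace transforms for the evaluation vectors $(\gamma(B_1),\dots,\gamma(B_p))$; since the cylinder sets determined by such evaluations form a $\pi$-system generating $\mathscr B(\U(L),\tau_v)$ (equivalently the product Borel $\sigma$-algebra on $\U(M)\times\U(N)$), a monotone class argument gives $(\omega|_M,\omega|_N)_\#\pi=\pi_M\otimes\pi_N$.

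Finally, with the pushforward identity established, the change of variables yields
\begin{align*}
  \int_{\U(L)} G\,d\pi_L
  &=\int_{\U(M)\times\U(N)} G(\xi+\zeta)\,d(\pi_M\otimes\pi_N)(\xi,\zeta)\\
  &=\int_{\U(N)}\int_{\U(M)} G(\zeta+\xi)\,d\pi_M(\xi)\,d\pi_N(\zeta)
  =\int_{\U(N)}\int_{\U(M)} G_{\xi,M}(\zeta)\,d\pi_M(\xi)\,d\pi_N(\zeta),
\end{align*}
which is \eqref{eq: DI2}. The only genuinely delicate point is the identification $(\omega|_M,\omega|_N)_\#\pi=\pi_M\otimes\pi_N$; everything else is routine measurability and Fubini bookkeeping, which is why the lemma can fairly be called ``straightforward''. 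If one prefers to avoid extending \eqref{eq:Int} to Borel test functions, an equivalent route is to check directly that for pairwise disjoint bounded Borel sets the counts $\omega(B_1),\dots,\omega(B_p)$ are independent Poisson variables under $\pi$ (a standard consequence of \eqref{eq:Int}) and then run the same monotone class argument, exploiting $M\cap N=\emptyset$ to separate the $M$-coordinates from the $N$-coordinates.
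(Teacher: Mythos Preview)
Your argument is correct and is exactly the approach the paper has in mind: the paper does not give a proof but simply states that the lemma ``is straightforward by the independence of the Poisson measures $\p_{M}$ and $\p_{N}$'', and you have filled in precisely that independence-plus-Fubini computation. The Laplace-functional verification of $(\omega|_M,\omega|_N)_\#\pi=\pi_M\otimes\pi_N$ is the standard way to make this rigorous, so nothing more is needed.
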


\begin{proof}[Proof of Theorem \ref{thm: MI}]
Let $\mathcal A_{r, \e}:=B_{r+\e}\setminus B_r$ be the annulus of width $\e$ and radius $r$. 
Fix $A \in\mathcal S(\E)$, $r>0$, $\e>0$ and $\zeta \in \U(B^c_{r+\e})$.   
We claim that
    \begin{equation}\label{eq:mainclaim}
        \S^{m, k}_{B_{r + \e}}(A^k_{\zeta, B_{r+\e}^c}) 
        \ge
        \sum_{j=0}^k \int_{\U(\mathcal A_{r, \e})}\S^{m, k}_{B_{r}}(A^{k-j}_{\zeta, \xi, B^c_{r+\e}, \mathcal A_{r, \e}}) d\S^{j}_{\mathcal A_{r, \e}}(\xi).
    \end{equation}
 Let us first show how \eqref{eq:mainclaim} concludes the proof. 
 For simplicity of notation, we set $M= \mathcal A_{r, \e}$, $N=B^c_{r+\e}$ and $L = M \sqcup N$. Then, \eqref{eq:mainclaim} is reformulated as follows: 
      \begin{align*}
        \S^{m, k}_{N^c}(A^k_{\zeta, N}) \ge \sum_{j=0}^k \int_{\U(M)}\S^{m, k}_{L^c}(A^{k-j}_{\zeta, \xi, N, M}) d\S^{j}_{M}(\xi).
    \end{align*}
 Then, by using Lemma \ref{lem: DI2} and Lemma \ref{lem: DI} we deduce
 \begin{align*}
 \rho^m_{r}(A)  & = {e^{-\SS^n(L^c)}}  \sum_{k=0}^\infty \int_{\U(L)}\S^{m, k}_{L^c}(A^k_{\eta, L}) d\p_{L}(\eta)
  \\& = {e^{-\SS^n(L^c)}}\sum_{k=0}^\infty \int_{\U(N)} \int_{\U(M)} \bigl( \S^{m, k}_{L^c}(A^k_{\zeta, L})\bigr)_{\xi,M} d\p_{M}(\xi)  d\p_{N} (\zeta)
   \\
  & = {e^{-\SS^n(L^c)}}\sum_{k=0}^\infty \int_{\U(N)} \int_{\U(M)} \S^{m, k}_{L^c}(A^k_{\zeta, \xi, N, M}) d\p_{M}(\xi) d\p_{N}(\zeta)
 \\
 &= {e^{-\SS^n(L^c)}}e^{-\SS^n(M)}\sum_{k=0}^\infty \sum_{j =0}^k \int_{\U(N)} \int_{\U(M)} \S^{m, k-j}_{L^c}(A^{k-j}_{\zeta, \xi, N, M}) d\S^{j}_{M}(\xi) d\p_{N}(\zeta)
  \\
 &\le {e^{-\SS^n(N^c)}}\sum_{k=0}^\infty \int_{\U(N)}\S^{m, k}_{N^c}(A^k_{\zeta, N}) d\p_{N}(\zeta)
 \\
 &=\rho^m_{r+\e}(A) \, .
	\end{align*}
 To show \eqref{eq:mainclaim},  it is enough to verify that, for any bounded measurable function $F$ on $\U(\R^n)$, 
		\begin{equation} \label{eq: M1}
		\int_{\U(N^c)} F_{\zeta, N}(\gamma) d \S^{m, k}_{N^c}(\gamma) \ge \sum_{j=0}^k \int_{\U(M)} \int_{\U(L^c)}  (F_{\zeta, N})_{\xi, M}(\gamma)d \S^{m,k-j}_{L^c}(\gamma) d \S^{j}_{M}(\xi).
	\end{equation}
 By the definition of $\S^{m, k}_{N^c}$, the L.H.S.\ of \eqref{eq: M1} can be deduced as follows:
\begin{align*}
\int_{\U(N^c)} F_{\zeta, N}(\gamma) d \S^{m, k}_{N^c}(\gamma) = \frac{1}{k!}\int_{(N^c)^{\otimes k}} (F_{\zeta, N}\circ \mathbf s_k)(\mathbf x_k) d \SS^{nk-m}_{N^c}(\mathbf x_k),
\end{align*}
whereby $\mathbf x_k:=(x_*,\ldots, x_{k-1})$ and $\mathbf x_* = x_*$. 
Furthermore, by the definition of $(F_{\zeta, N})_{\xi, M}$, the R.H.S.\ of \eqref{eq: M1} can be deduced as follows:
\begin{align*}
&\int_{\U(M)} \int_{\U(L^c)}  (F_{\zeta, N})_{\xi, M}(\gamma)d \S^{m,k-j}_{L^c}(\gamma) d \S^{j}_{M}(\xi)
\\
&=\int_{\U(M)} \int_{\U(L^c)}  (F_{\zeta, N})(\gamma + \xi)d \S^{m,k-j}_{L^c}(\gamma) d \S^{j}_{M}(\xi)
\\
&= \frac{1}{j!(k-j)!}\int_{M^{\times j}} \int_{(L^c)^{\times (k-j)}}  (F_{\zeta, N} \circ \mathbf s_{k}) (\mathbf x_{k-j}, \mathbf y_j) d \SS^{n(k-j)-m}_{L^c}(\mathbf x_{k-j})d \SS^{nj}_{M}( \mathbf y_j),
\end{align*}
whereby $(\mathbf x_{k-j}, \mathbf y_j) = (x_*, \ldots, x_{k-j-1}, y_*, \ldots, y_{j-1})$. 
	Hence, in order to conclude \eqref{eq: M1}, it suffices to show the following inequality: for any bounded measurable symmetric function $f$ on $(\R^n)^{\times k}$,  
	\begin{align*}
\int_{B_{r+\e}^{\times k}} f( \mathbf x_k) d {\SS}_{B_{r+\e}}^{nk-m}( \mathbf x_k)
	 \ge \sum_{j=0}^k\frac{k!}{j!(k-j)!} \int_{B_r^{\times (k-j)}} \int_{\mathcal{A}_{r,\e}^{\times j}} f (\mathbf x_{k-j}, \mathbf y_j) d \SS^{n(k-j)-m}_{\mathcal{A}_{r,\e}}(\mathbf x_{k-j}) d \SS^{nj}_{B_r}(\mathbf y_j).
	\end{align*}
 By using the symmetry of $f$ and a simple combinatorial argument, we obtain 
	\begin{align*}
\int_{B_{r+\e}^{\times k}} f(\mathbf x_k) d {\SS}_{B_{r+\e}}^{nk-m}(\mathbf x_k) & = \sum_{j=0}^k \frac{k!}{j!(k-j)!} \int_{B_r^{\times (k-j)}} \int_{\mathcal{A}_{r,\e}^{\times j}} f(\mathbf x_{k-j}, \mathbf y_j) d \SS^{nk-m}_{B_{r+\e}} (\mathbf x_{k-j}, \mathbf y_j),
	\end{align*}
	while \cite[2.10.27, p.\ 190]{Fed69} implies 
	\begin{align*}
		&\int_{B_r^{\times (k-j)}} \int_{\mathcal{A}_{r,\e}^{\times j}} f (\mathbf x_{k-j}, \mathbf y_j) d \SS^{nk-m}_{B_{r+\e}}(\mathbf x_{k-j}, \mathbf y_j)
		\\ & \ge \int_{B_r^{\times (k-j)}} \int_{\mathcal{A}_{r,r+\e}^{\times j}}  f (\mathbf x_{k-j}, \mathbf y_j) d \SS^{n(k-j)-m}_{\mathcal{A}_{r,\e}}(\mathbf x_{k-j}) d \SS^{nj}_{B_r}(\mathbf y_j). \qedhere
	\end{align*}
\end{proof}

\subsection{Independence of $\rho^m$ from the exhaustion}
So far we have built the $m$-codimensional measure $\rho^m$ by passing to the limit a sequence of finite dimensional measures $\rho^m_r$. The latter have been constructed by relying on the exhaustion $\{ B_r\}_{r>0}$ of $\R^n$. Hence, a priori, $\rho^m$ depends on the chosen exhaustion. In this subsection we make a remark that this is actually not the case.

\medskip

Let $\Omega \subset \R^n$ be a compact set.
Following closely the proof in section \ref{subsec: finite-dim measure} we can prove that
\begin{align} 
	\rho^m_\Omega(A):= e^{-\SS^n(\Omega)}\sum_{k=1}^\infty\int_{\U(\Omega^c)}\S^{m, k}_{\Omega}(A^k_{\eta,\Omega^c})\ d\p_{\Omega^c}(\eta).
\end{align}
is well defined for any Suslin set $A$.

The next proposition can be proven by arguing as in Theorem \ref{thm: MI}. We omit the proof.
\begin{prop}[Independence from Exhaustion] \label{prop: IE}
	Let $0 < r < R < \infty$ and $\Omega \subset \R^n$ be a compact subset satisfying $B_r\subset \Omega \subset B_R$. Then
	\begin{equation}
		\rho^m_r(A) \le \rho^m_{\Omega}(A) \le \rho^m_R(A) \, ,
		\quad
		\text{for every Suslin set $A$} \, .
	\end{equation}
    In particular $\rho^m$ does not depend on the choice of the exhaustion.
\end{prop}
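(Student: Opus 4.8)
The plan is to prove Proposition~\ref{prop: IE} by mimicking, almost verbatim, the chain of estimates in the proof of Theorem~\ref{thm: MI}, but now comparing the exhaustion by balls $\{B_r\}$ against a fixed compact $\Omega$ sandwiched between $B_r$ and $B_R$. The two inequalities $\rho^m_r(A)\le \rho^m_\Omega(A)$ and $\rho^m_\Omega(A)\le \rho^m_R(A)$ are symmetric in structure, so I will only describe the first; the second is obtained by swapping the roles of the ``small'' and ``large'' region. The heart of the matter is the purely Euclidean inequality coming from \cite[2.10.27]{Fed69} for the spherical Hausdorff measure under the splitting of a product domain into a ``retained'' part and a ``new annular'' part; here the new part is $\Omega\setminus B_r$ rather than $B_{r+\e}\setminus B_r$, but since $\Omega$ is Borel this makes no difference to the argument.

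First I would fix $A\in\mathcal S(\E)$, set $M:=\Omega\setminus B_r$, $N:=\Omega^c$, and $L:=M\sqcup N=B_r^c$, and verify the analogue of the key claim \eqref{eq:mainclaim}, namely
\begin{equation*}
	\S^{m, k}_{\Omega}(A^k_{\zeta, \Omega^c})
	\ge
	\sum_{j=0}^k \int_{\U(M)}\S^{m, k-j}_{B_r}(A^{k-j}_{\zeta, \xi, N, M})\, d\S^{j}_{M}(\xi),
	\qquad \forall\, \zeta\in\U(\Omega^c).
\end{equation*}
As in Theorem~\ref{thm: MI} this reduces, via the definitions of $\S^{m,k}_\Omega$ and of the section of a function, to the combinatorial-geometric inequality: for any bounded measurable symmetric $f$ on $(\R^n)^{\times k}$,
\begin{equation*}
	\int_{\Omega^{\times k}} f(\mathbf x_k)\, d\SS^{nk-m}_{\Omega}(\mathbf x_k)
	\ge
	\sum_{j=0}^k \frac{k!}{j!(k-j)!}\int_{B_r^{\times(k-j)}}\int_{M^{\times j}} f(\mathbf x_{k-j},\mathbf y_j)\, d\SS^{n(k-j)-m}_{B_r}(\mathbf x_{k-j})\, d\SS^{nj}_{M}(\mathbf y_j).
\end{equation*}
The left-hand side is split by symmetry exactly as in Theorem~\ref{thm: MI} into a sum over $j$ of integrals of $f$ over $B_r^{\times(k-j)}\times M^{\times j}$ against $\SS^{nk-m}_{\Omega}$, and then \cite[2.10.27]{Fed69} gives, term by term, the lower bound by the product measure $\SS^{n(k-j)-m}_{B_r}\otimes\SS^{nj}_{M}$. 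Once this claim is in hand, I would run the same displayed chain of equalities and the single inequality as in the proof of Theorem~\ref{thm: MI}: expand $\rho^m_r(A)$ using $L^c=B_r$, apply Lemma~\ref{lem: DI2} to factor the Poisson integral over $\U(L)$ into integrals over $\U(M)$ and $\U(N)$, use Lemma~\ref{lem: DI} to identify the inner function with $\S^{m,k}_{L^c}(A^k_{\zeta,\xi,N,M})$, use the normalization $e^{-\SS^n(L^c)}=e^{-\SS^n(N^c)}e^{-\SS^n(M)}$ together with the reindexing $k\mapsto(j,k-j)$, and finally invoke the claim to collapse everything into $e^{-\SS^n(\Omega)}\sum_k\int_{\U(\Omega^c)}\S^{m,k}_\Omega(A^k_{\zeta,\Omega^c})\,d\p_{\Omega^c}(\zeta)=\rho^m_\Omega(A)$.

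For the well-definedness of $\rho^m_\Omega$ on all Suslin sets one repeats Proposition~\ref{prop: meas} with $B_r$ replaced by $\Omega$ and $B_r^c$ by $\Omega^c$: Lemma~\ref{lem: sec} and Lemma~\ref{lem: secS} are already stated for arbitrary Borel (resp.\ open) $B$, so the sections $A^k_{\eta,\Omega^c}$ are Suslin, $\S^{m,k}_\Omega$ is a Choquet capacity, and the upper semicontinuity of $\e$-spherical Hausdorff measure on sections of compact sets (Proposition~\ref{prop: usc}) yields the measurability of $\eta\mapsto\S^{m,k}_\Omega(A^k_{\eta,\Omega^c})$ exactly as before. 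The last sentence of the proposition is then immediate: given any exhaustion $\{\Omega_j\}$ of $\R^n$ by compacts, for each $j$ choose $r_j<R_j$ with $B_{r_j}\subset\Omega_j\subset B_{R_j}$ and $r_j,R_j\to\infty$; the sandwich $\rho^m_{r_j}(A)\le\rho^m_{\Omega_j}(A)\le\rho^m_{R_j}(A)$ forces $\lim_j\rho^m_{\Omega_j}(A)=\rho^m(A)$.

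I do not expect a genuine obstacle here, since the proposition is explicitly flagged as provable ``by arguing as in Theorem~\ref{thm: MI}.'' The only points requiring a little care are bookkeeping ones: making sure the normalization constants $e^{-\SS^n(\cdot)}$ split correctly across $M$ and $N$ for the non-ball region $\Omega$ (which is fine because $\SS^n$ is just $\mathbf L^n$ and additive over the disjoint Borel decomposition $\Omega^c\sqcup(\Omega\setminus B_r)=B_r^c$), and checking that \cite[2.10.27]{Fed69} applies with $M=\Omega\setminus B_r$ in place of a genuine annulus — which it does, as that statement is about arbitrary measurable sets, not annuli specifically. The symmetric inequality $\rho^m_\Omega(A)\le\rho^m_R(A)$ is obtained by the same computation with the pair $(M,N)$ taken to be $(B_R\setminus\Omega,\ B_R^c)$ and $L=\Omega^c$; I would simply remark that it follows \emph{mutatis mutandis} and omit it, as the authors do.
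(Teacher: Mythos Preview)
Your proposal is correct and follows exactly the route the paper intends: the authors explicitly state that the proposition is proven ``by arguing as in Theorem~\ref{thm: MI}'' and omit the details, and you have supplied precisely those details with the correct choice of $M=\Omega\setminus B_r$, $N=\Omega^c$, $L=B_r^c$ (and the symmetric choice for the second inequality). Your bookkeeping on the normalization constants, the applicability of Lemmas~\ref{lem: sec}--\ref{lem: secS} and Proposition~\ref{prop: meas} with $\Omega^c$ open, and the use of \cite[2.10.27]{Fed69} for an arbitrary Borel ``annular'' region are all in order; you have also caught and silently corrected the typo $\S^{m,k}_{B_r}\to\S^{m,k-j}_{B_r}$ in the paper's display~\eqref{eq:mainclaim}.
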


\section{Bessel capacity and finite-codimensional Poisson measure} \label{sec: cap}

In this section, we discuss a relation between {\it Bessel capacities} and finite-codimensional Poisson measures $\rho^m$. This will play a significant role to develop fundamental relations between potential analysis induced by $(\EE, \mathcal D(\EE))$ and theory of BV functions in Section~\ref{sec:BV} and Section~\ref{sec: CF}.

\begin{defn}[Bessel operator]
	Let $\alpha>0$ and $1\le p < \infty$. We set
	\begin{align} \label{defn: Bes}
		B_{\alpha,p}
		:= \frac{1}{\Gamma(\alpha/2)}\int_*^\infty e^{-t}t^{\alpha/2-1}T^{(p)}_t dt \, ,
	\end{align}
    where $T_t^{(p)}$ is the $L^p$-heat semigroup, see Section \ref{subsection:Sobolev}.
\end{defn}
Notice that $B_{\alpha,p}$ is well defined for $F\in L^p(\U(\R^n),\pi)$ and satisfies
\begin{equation}
	\| B_{\alpha, p} F \|_{L^p} \le \| F \|_{L^p} \, ,
\end{equation}
due to the contractivity of $T^{(p)}_t$ in $L^p(\U(\R^n), \pi)$.

\begin{defn}[Bessel capacity] \normalfont \label{defn: BC}
	Let $\alpha>0$ and $1\le p < \infty$. The {\it $(\alpha, p)$-Bessel capacity} is defined as
	\begin{align} \label{equ: BC}
		{\rm Cap}_{\alpha, p}(E):=\inf\{\|F\|^p_{L^p}: B_{\alpha, p} F \ge 1 \ \text{on $E$},\ F \ge 0\} \, ,
	\end{align}
    for any $E\subset \U(\R^n)$.
\end{defn}
%
%
We are now ready to state the main theorem of this section. 
\begin{thm} \label{thm: CH}
	Let $\alpha p >m$. Then, 
	${\rm Cap}_{\alpha, p}(E) = 0$ implies $\rho^m(E)=0$ for any $E \in \mathcal S(\E)$. 
\end{thm}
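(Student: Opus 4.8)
The strategy is to reduce the statement to the corresponding fact on the compact configuration spaces $\U(B_r)$ and then pass to the limit $r\to\infty$ using the monotone approximation $\rho^m = \lim_{r\to\infty}\rho^m_r$ and the approximation of ${\rm Cap}_{\alpha,p}$ by localised capacities ${\rm Cap}^r_{\alpha,p}$ announced in Section \ref{sec: cap}. First I would establish the finite-dimensional prototype: on $\U(B_r)$, decomposed as $\sqcup_k \U^k(B_r)$ with each $\U^k(B_r)$ a quotient of $B_r^{\times k}$, the classical result in $\R^N$ that $\alpha p > m$ and ${\rm Cap}_{\alpha,p}(E)=0$ imply $\SS^{N-m}(E)=0$ (see e.g. the Euclidean Bessel-capacity theory) transfers to $\U^k(B_r)$ via the projection $\s_k$, giving that ${\rm Cap}^{\U(B_r)}_{\alpha,p}(A)=0$ implies $\rho^m_{\U(B_r)}(A)=0$. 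This is Proposition \ref{lem: MC3} referred to in the text. Then, averaging over $\eta\in\U(B_r^c)$ against $\pi_{B_r^c}$, one gets: ${\rm Cap}^r_{\alpha,p}(A)=0 \implies \rho^m_r(A)=0$, for every $r>0$ and every Suslin $A$; here one must check that the sections $A_{\eta,r}$ inherit zero localised capacity for $\pi_{B_r^c}$-a.e.\ $\eta$, which is a Fubini-type argument analogous to Proposition \ref{lem: BV1}.

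Next I would assemble the limit. Given $E\in\mathcal S(\E)$ with ${\rm Cap}_{\alpha,p}(E)=0$, the localisation result for the heat semigroup (the content of Section \ref{sec: cap}, where ${\rm Cap}_{\alpha,p}$ is shown to be approximated by ${\rm Cap}^r_{\alpha,p}$ as $r\to\infty$) yields ${\rm Cap}^r_{\alpha,p}(E)=0$ for every $r$; indeed the localised capacity should be dominated by the global one, ${\rm Cap}^r_{\alpha,p}\le {\rm Cap}_{\alpha,p}$, which is the easy direction and already suffices. By the previous paragraph $\rho^m_r(E)=0$ for every $r>0$, and hence by the monotone-limit definition $\rho^m(E) = \lim_{r\to\infty}\rho^m_r(E) = 0$. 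Along the way one should note that $E$, being Suslin, lies in $\DD^m$ and in the domain of the $\rho^m_r$'s by Proposition \ref{prop: meas}, so all quantities are legitimately defined, and that zero-capacity sets are contained in the relevant domains as well.

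The main obstacle I expect is the first step: pushing the Euclidean comparison between Bessel capacity and Hausdorff measure through the quotient map $\s_k:B_r^{\times k}\setminus\dg_k \to \U^k(B_r)$ while keeping track of the renormalisations (the $1/k!$ and the $e^{-\SS^n(B_r)}$ factors) built into $\S^{m,k}_{B_r}$ and into the Bessel capacity on $\U(B_r)$. One has to verify that the Bessel operator and heat semigroup on $\U^k(B_r)$ are compatible with the tensorised heat semigroup on $B_r^{\times k}$ — this is exactly Proposition \ref{prop: IDP} — and then that the definition of ${\rm Cap}^{\U(B_r)}_{\alpha,p}$ matches, up to the measure-theoretic isomorphism, a finite sum of Euclidean Bessel capacities on $B_r^{\times k}$; the diagonal $\dg_k$ is $\SS^{nk-m}$-negligible for $m<n$ but one should check it does not interfere with the capacity estimate either. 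A secondary, more routine difficulty is the measurability bookkeeping for sections (handled by Lemmas \ref{lem: sec}, \ref{lem: secS} and Proposition \ref{prop: meas}) and the Fubini argument needed to descend a zero-capacity statement to $\pi_{B_r^c}$-a.e.\ section.
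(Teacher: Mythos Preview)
Your proposal is correct and follows essentially the same route as the paper: reduce to the finite-dimensional statement on $\U^k(B_r)$ via Proposition~\ref{prop: IDP} (this is Proposition~\ref{lem: MC3}), use the domination ${\rm Cap}^r_{\alpha,p}\le {\rm Cap}_{\alpha,p}$ (Lemma~\ref{lem: MC}) together with the disintegration of the localised Bessel operator (Proposition~\ref{prop: 1}) to show the sections $E_{\eta,r}$ have zero ${\rm Cap}^{\U(B_r)}_{\alpha,p}$-capacity for $\pi_{B_r^c}$-a.e.\ $\eta$ (Lemma~\ref{lem: MC2}), and then pass to the monotone limit $\rho^m_r\uparrow\rho^m$. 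The only minor inaccuracy is your reference to Proposition~\ref{lem: BV1} for the Fubini step on capacities---the actual tools are Lemma~\ref{lem: DI1}, Corollary~\ref{lem: II}, and Proposition~\ref{prop: 1}, which together give the sectioning argument for the Bessel operator rather than for total variations.
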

We briefly explain the heuristic idea of proof. In view of the identities 
$$
\rho^m(E) = \lim_{r \to \infty} \rho_r^m(E) \, ,
$$
\begin{equation*}
	\rho^m_r(E) = e^{-\SS^{n}(B_r)}\sum_{k=1}^\infty\int_{\U(B_r^c)}\S_{B_r}^{m,k}(E_{\eta,r}^k) d\pi_{B_r^c}(\eta)\, ,
\end{equation*}
it is enough to prove that $\S_{B_r}^{m,k}(E_{\eta,r}^k)=0$ for $\p_{B_r^c}$-a.e.\ $\eta$, all $k \in \N$ and $r>0$.
This, together with the implication 
\begin{align} \label{eq: HI1}
{\rm Cap}_{\alpha, p}(E)=0 \implies {\rm Cap}^{\eta, r}_{\alpha, p}(E_{\eta,r}^k)=0, \quad \text{for $\p_{B_r^c}$-a.e.\ $\eta$ and all $k \in \N$ and $r>0$} \, ,
\end{align}
where ${\rm Cap}^{\eta, r}_{\alpha, p}$ is the Bessel $(\alpha, p)$-capacity on $\U^k(B_r)$, reduces the problem to the corresponding problem in the finite dimensional setting. To be more precise,  we will show that
$$
{\rm Cap}^{\eta, r}_{\alpha, p}(E_{\eta,r}^k)=0 \implies \S_{B_r}^{m,k}(E_{\eta,r}^k)=0 \, .
$$

In the rest of this section, we implement the aforementioned idea. The key point is to show \eqref{eq: HI1}, for which we introduce localisations of functional-analytic objects in Section~\ref{sec: LSF} and Section~\ref{sec: LE}. We then introduce {\it localised Bessel operators} and {\it localised Bessel capacities} in Section~\ref{sec: LBO}.

\subsection{Localisation of sets and functions} \label{sec: LSF}

\begin{lem} \label{lem: M2}
Let $A \subset \U(\R^n)$ be a $\p$-measurable set. Let $B \subset \R^n$ be a Borel set. Then, $A_{\eta, B}$ is $\p_{B^c}$-measurable for $\p_{B}$-a.e.\ $\eta \in \U(B)$. 
Moreover, if $\pi(A)=0$, then $\pi_{B^c}(A_{\eta,B})=0$ for a.e. $\eta\in \U(B)$.
\end{lem}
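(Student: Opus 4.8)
The plan is to use the product structure $\U(\R^n) \cong \U(B) \times \U(B^c)$ together with the independence $\pi = \pi_B \otimes \pi_{B^c}$ coming from the defining property \eqref{eq:Int} of the Poisson measure (here one uses that for disjoint Borel sets the Laplace transform factorises). First I would reduce to the case where $A$ is genuinely Borel: since $A$ is only assumed $\pi$-measurable, I would pick Borel sets $A_1 \subset A \subset A_2$ with $\pi(A_2 \setminus A_1) = 0$; then $(A_1)_{\eta,B} \subset A_{\eta,B} \subset (A_2)_{\eta,B}$ for every $\eta$, so it suffices to control the Borel sections $(A_i)_{\eta,B}$ and the difference. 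For Borel $A$, the section $A_{\eta,B} = {\rm pr}_{B^c}({\rm pr}_B^{-1}(\eta)\cap A)$ is Borel in $\U(B^c)$ for every $\eta$ (indeed, under the Borel isomorphism $\U(\R^n)\cong \U(B)\times\U(B^c)$ the section is literally the vertical slice of a Borel set, which is Borel), so measurability of sections is immediate in the Borel case, and the general case follows once we know the bad $\eta$ form a $\pi_B$-null set.

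The core computation is Tonelli's theorem for the product measure: for a Borel set $A$,
\begin{align*}
\pi(A) = (\pi_B \otimes \pi_{B^c})(A) = \int_{\U(B)} \pi_{B^c}(A_{\eta,B})\, d\pi_B(\eta),
\end{align*}
where the integrand $\eta \mapsto \pi_{B^c}(A_{\eta,B})$ is $\pi_B$-measurable by the Borel case of Fubini--Tonelli. Applying this to the Borel set $A_2 \setminus A_1$ (which has $\pi$-measure zero) gives $\pi_{B^c}((A_2\setminus A_1)_{\eta,B}) = 0$ for $\pi_B$-a.e.\ $\eta$; for all such $\eta$ the section $A_{\eta,B}$ is sandwiched between the Borel sets $(A_1)_{\eta,B}$ and $(A_2)_{\eta,B}$ differing by a $\pi_{B^c}$-null set, hence $A_{\eta,B}$ is $\pi_{B^c}$-measurable with $\pi_{B^c}(A_{\eta,B}) = \pi_{B^c}((A_1)_{\eta,B})$. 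This proves the first assertion. For the second assertion, if $\pi(A) = 0$ we may take $A_1 = \emptyset$, $A_2 \supset A$ Borel with $\pi(A_2) = 0$, and then $\pi_{B^c}(A_{\eta,B}) \le \pi_{B^c}((A_2)_{\eta,B}) = 0$ for $\pi_B$-a.e.\ $\eta$, as desired.

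The main obstacle is purely bookkeeping: verifying carefully that $\pi = \pi_B \otimes \pi_{B^c}$ under the identification $\gamma \mapsto (\gamma|_B, \gamma|_{B^c})$, i.e.\ that ${\rm pr}_B$ and ${\rm pr}_{B^c}$ are independent under $\pi$. This is standard for Poisson measures (it is the ``complete independence'' / ``Poisson superposition'' property), and can be read off from \eqref{eq:Int} by choosing $f$ supported in $B$ and in $B^c$ separately and using that the Laplace functional determines the law; alternatively it is already recorded in the literature cited in Section \ref{sec: Pre}. Beyond that, one only needs the elementary fact that a $\pi$-null Borel set has $\pi_{B^c}$-null sections for a.e.\ base point, which is immediate from Tonelli once the product representation is in hand.
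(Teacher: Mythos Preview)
Your proposal is correct and follows essentially the same route as the paper: sandwich $A$ between Borel sets $A_1\subset A\subset A_2$ with $\pi(A_2\setminus A_1)=0$, use the product decomposition $\pi=\pi_B\otimes\pi_{B^c}$ (the paper's Lemma~\ref{lem: DI2}) to force $\pi_{B^c}\big((A_2\setminus A_1)_{\eta,B}\big)=0$ for $\pi_B$-a.e.\ $\eta$, and conclude. The only cosmetic difference is that you argue directly that sections of Borel sets are Borel via the Borel isomorphism $\U(\R^n)\cong\U(B)\times\U(B^c)$, whereas the paper instead invokes its Lemma~\ref{lem: sec} to say the sections are Suslin and then appeals to Remark~\ref{rem: SS}(ii) for measurability; both are valid, and your route is slightly more elementary since it avoids the Suslin machinery altogether.
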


\begin{proof}
By hypothesis, there exist Borel sets $\underline{A} \subset A \subset \overline{A}$ so that $\p(\overline{A} \setminus \underline{A})=0$. By (i) in Remark~\ref{rem: SS}, $\underline{A}$ and $\overline{A}$ are Suslin. 
By Lemma~\ref{lem: sec}, $\underline{A}_{\eta, B}$ and $\overline{A}_{\eta, B}$ are Suslin.  
By the standard disintegration argument as in Lemma~\ref{lem: DI2}, it holds that
\begin{align} \label{eq: II-DI}
0= \p(\overline{A} \setminus \underline{A}) = \int_{\U(B)} \p_{B^c}((\overline{A} \setminus \underline{A})_{\eta, B}) d\p_{B}(\eta).
\end{align}
Therefore, there exists a $\p_{B}$-measurable set $\Omega \subset \U(B)$ so that $\p_{B^c}((\overline{A} \setminus \underline{A})_{\eta, B}) =0$ for any $\eta \in \Omega$. By noting that $\underline{A}_{\eta, B} \subset A_{\eta, B} \subset \overline{A}_{\eta, B}$, we conclude that $A_{\eta,B}$ is $\pi_B$-measurable since, up to $\pi_B$ negligible sets, it coincides with a Suslin set and every Suslin set is $\p_B$-measurable by (ii) in Remark~\ref{rem: SS}. The proof of the first assertion is complete. 

If $\pi(A) =0$ the disintegration
\begin{equation}
	0= \p(A) = \int_{\U(B)} \p_{B^c}(A_{\eta, B}) d\p_{B}(\eta) \, ,
\end{equation} 
immediately gives the second assertion.
\end{proof}

\begin{cor}\label{lem: II}
	Let $A \subset \U(\R^n)$ be a $\p$-measurable set, $B \subset \R^n$ a Borel set, and let $g$ be a $\p$-measurable function on $\U(\R^n)$ with $g \ge 1$ $\p$-a.e.\ on $A$. 
	Then, for $\pi_B$-a.e. $\eta$ it holds
	\begin{align} \label{eq: II}
		g_{\eta, B} \ge 1, \quad \text{$\p_{B^c}$-a.e.\ on $A_{\eta, B}$} \, .
	\end{align}   
\end{cor}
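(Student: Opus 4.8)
The plan is to reduce this statement to the measure-theoretic fact proven in Lemma~\ref{lem: M2} about disintegration of $\pi$-null sets. First I would observe that, since $g$ is $\pi$-measurable, the set $N:=\{\gamma\in \U(\R^n)\, :\, \gamma\in A,\ g(\gamma)<1\}$ is $\pi$-measurable, and by hypothesis $\pi(N)=0$ (this is exactly what ``$g\ge 1$ $\pi$-a.e.\ on $A$'' means, up to intersecting with a full-measure set on which $g$ is honestly defined). Applying the second assertion of Lemma~\ref{lem: M2} to $N$ gives a $\pi_B$-full set of $\eta\in \U(B)$ for which $\pi_{B^c}(N_{\eta,B})=0$.

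Next I would identify $N_{\eta,B}$ with the ``bad set'' for the section. By the definition of sections \eqref{eq: DIS} we have $N_{\eta,B}=\{\zeta\in \U(B^c)\, :\, \zeta+\eta\in A,\ g(\zeta+\eta)<1\}=\{\zeta\in A_{\eta,B}\, :\, g_{\eta,B}(\zeta)<1\}$, using that $g_{\eta,B}(\zeta)=g(\zeta+\eta)$ and $A_{\eta,B}=\{\zeta\, :\, \zeta+\eta\in A\}$. Hence for $\pi_B$-a.e.\ $\eta$ the set $\{\zeta\in A_{\eta,B}\, :\, g_{\eta,B}(\zeta)<1\}$ is $\pi_{B^c}$-null, which is precisely \eqref{eq: II}. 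To make this fully rigorous one should also invoke the first assertion of Lemma~\ref{lem: M2} (applied to $A$) to know that $A_{\eta,B}$ is $\pi_{B^c}$-measurable for $\pi_B$-a.e.\ $\eta$, and apply it once more — or rather apply the standard disintegration of $\pi$ into $\pi_B\otimes\pi_{B^c}$ — to $g$ itself so that $g_{\eta,B}$ is $\pi_{B^c}$-measurable for $\pi_B$-a.e.\ $\eta$; intersecting the three $\pi_B$-full sets of good $\eta$ gives the claim.

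The only mild subtlety — and the place I would be most careful — is the measurability bookkeeping: one must check that $N$ really is $\pi$-measurable rather than merely contained in a $\pi$-null set, and that the disintegration identity $\pi(N)=\int_{\U(B)}\pi_{B^c}(N_{\eta,B})\,d\pi_B(\eta)$ is applicable here. This is handled exactly as in the proof of Lemma~\ref{lem: M2}: one sandwiches $N$ between Borel (hence Suslin) sets $\underline N\subset N\subset\overline N$ with $\pi(\overline N\setminus\underline N)=0$, uses Lemma~\ref{lem: sec} to see the sections $\underline N_{\eta,B}\subset N_{\eta,B}\subset \overline N_{\eta,B}$ are Suslin and thus $\pi_{B^c}$-measurable, and then applies the disintegration to $\overline N\setminus\underline N$. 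Since everything here is a direct transcription of Lemma~\ref{lem: M2} with $A$ replaced by $N$, the proof is short; no genuinely new obstacle arises.

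\begin{proof}
Let $A_0 \subset A$ be a Borel subset with $\pi(A\setminus A_0)=0$ and let $g_0$ be a Borel function with $g=g_0$ $\pi$-a.e.\ and $g_0\ge 1$ everywhere on $A_0$ (such choices exist by the measurability hypotheses). Set
\begin{equation*}
	N:=(A\setminus A_0)\cup \{\gamma\in \U(\R^n)\, :\, g(\gamma)\neq g_0(\gamma)\}\, ,
\end{equation*}
so that $N$ is $\pi$-measurable with $\pi(N)=0$, and on $A\setminus N$ we have $g=g_0\ge 1$. By the second assertion of Lemma~\ref{lem: M2} applied to $N$, there is a $\pi_B$-full Borel set $\Omega_1\subset \U(B)$ with $\pi_{B^c}(N_{\eta,B})=0$ for all $\eta\in \Omega_1$. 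By the first assertion of Lemma~\ref{lem: M2} applied to $A$, there is a $\pi_B$-full set $\Omega_2\subset \U(B)$ such that $A_{\eta,B}$ is $\pi_{B^c}$-measurable for $\eta\in \Omega_2$; arguing as in Lemma~\ref{lem: M2} with $g_0$ in place of an indicator (sandwiching $g_0$ between Borel functions and disintegrating), there is a $\pi_B$-full set $\Omega_3\subset \U(B)$ such that $(g_0)_{\eta,B}$, hence $g_{\eta,B}$, is $\pi_{B^c}$-measurable for $\eta\in \Omega_3$.

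Fix $\eta\in \Omega_1\cap\Omega_2\cap\Omega_3$. For $\zeta\in \U(B^c)$ we have $\zeta\in A_{\eta,B}$ iff $\zeta+\eta\in A$, $\zeta\in N_{\eta,B}$ iff $\zeta+\eta\in N$, and $g_{\eta,B}(\zeta)=g(\zeta+\eta)$. Hence
\begin{equation*}
	\{\zeta\in A_{\eta,B}\, :\, g_{\eta,B}(\zeta)<1\}\subset N_{\eta,B}\, ,
\end{equation*}
because if $\zeta+\eta\in A$ and $\zeta+\eta\notin N$ then $g(\zeta+\eta)=g_0(\zeta+\eta)\ge 1$. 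Since $\pi_{B^c}(N_{\eta,B})=0$, we conclude $g_{\eta,B}\ge 1$ $\pi_{B^c}$-a.e.\ on $A_{\eta,B}$, which is \eqref{eq: II}.
\end{proof}
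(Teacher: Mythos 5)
Your proof is correct and takes essentially the same route as the paper: both define the $\pi$-null ``bad set'' $\tilde A = A\setminus\{g\ge 1\}$ (your $N$), apply the second assertion of Lemma~\ref{lem: M2} to conclude its sections are $\pi_{B^c}$-null for $\pi_B$-a.e.\ $\eta$, and note that $\tilde A_{\eta,B}$ is exactly the set where $g_{\eta,B}<1$ on $A_{\eta,B}$. You simply make the measurability bookkeeping (Borel modifications of $A$ and $g$, measurability of $A_{\eta,B}$ and $g_{\eta,B}$) more explicit than the paper's one-line proof.
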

\proof
Taking $\tilde{A} = A\setminus\{ g \ge 1\}$ and applying Lemma~\ref{lem: M2} with $\tilde{A}$ in place of $A$,  we obtain the conclusion.
\qed

\begin{lem} \label{lem: DI1} 
Let $1 \le p <\infty$ and $r>0$. Let $F^n, F\in L^p(\U(\R^n),\p)$ such that $F^n\to F$ in $L^p(\U(\R^n), \p)$ as $n\to \infty$. Then, there exists a subsequence (non-relabelled) of $(F^n)$ and a measurable set $A_r \subset \U(\R^n)$ so that $\p_{B_r^c}(A_r)=1$ and 
$$
F^n_{\eta, r} \to F_{\eta, r}, \quad \text{in $L^p(\p_{B_r})$, for any $\eta \in A_r$}. 
$$ 
Note that $F_{\eta, r}:=F_{\eta, B_r^c}$ was defined in Definition~\ref{def: CF}.
\end{lem}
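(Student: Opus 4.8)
The plan is to disintegrate the Poisson measure along the decomposition $\R^n = B_r \sqcup B_r^c$ and then reduce the assertion to the elementary fact that $L^1$-convergence forces a.e.\ convergence along a subsequence. Concretely, I would first record the product structure $\p = \p_{B_r} \otimes \p_{B_r^c}$ under the identification $\gamma \leftrightarrow (\gamma|_{B_r}, \gamma|_{B_r^c})$; this is legitimate because $\p$-a.e.\ $\gamma$ charges no point of the Lebesgue-null sphere $\partial B_r$, so that $\gamma = \gamma|_{B_r}+\gamma|_{B_r^c}$. It is the same disintegration used in Lemma~\ref{lem: DI2} and in the proof of Lemma~\ref{lem: M2}, now over the unbounded window $B_r^c$, and it passes from bounded measurable integrands to $L^1(\U(\R^n),\p)$ integrands by truncation and monotone convergence.

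Next I would apply this disintegration to the nonnegative integrand $|F^n-F|^p\in L^1(\U(\R^n),\p)$, using the identity $(|F^n-F|^p)_{\eta,r}=|F^n_{\eta,r}-F_{\eta,r}|^p$, which is immediate from Definition~\ref{def: CF}. Tonelli's theorem then gives that the map
\begin{equation*}
	\U(B_r^c)\ni\eta\longmapsto G_n(\eta):=\int_{\U(B_r)}|F^n_{\eta,r}-F_{\eta,r}|^p\,d\p_{B_r}=\|F^n_{\eta,r}-F_{\eta,r}\|_{L^p(\p_{B_r})}^p
\end{equation*}
is $\p_{B_r^c}$-measurable and satisfies $\int_{\U(B_r^c)}G_n\,d\p_{B_r^c}=\|F^n-F\|_{L^p(\U(\R^n),\p)}^p\to 0$, i.e.\ $G_n\to 0$ in $L^1(\U(B_r^c),\p_{B_r^c})$. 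To make sure all the sections $F^n_{\eta,r}$, $F_{\eta,r}$ are honest elements of $L^p(\p_{B_r})$, I would invoke Lemma~\ref{lem: M2}: for each fixed $n$ it gives a $\p_{B_r^c}$-full set on which $F^n_{\eta,r}$ (resp.\ $F_{\eta,r}$) is $\p_{B_r}$-measurable, while Tonelli again yields $\|F_{\eta,r}\|_{L^p(\p_{B_r})}<\infty$ for $\p_{B_r^c}$-a.e.\ $\eta$; intersecting these countably many full sets produces a $\p_{B_r^c}$-full set $A_r^0$ on which all sections are well defined in $L^p(\p_{B_r})$.

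Finally, since $G_n\to 0$ in $L^1$, a standard argument extracts a non-relabelled subsequence along which $G_n\to 0$ $\p_{B_r^c}$-a.e.; letting $A_r\subset A_r^0$ be the (still $\p_{B_r^c}$-full) set where this convergence holds, every $\eta\in A_r$ satisfies $\|F^n_{\eta,r}-F_{\eta,r}\|_{L^p(\p_{B_r})}^p=G_n(\eta)\to 0$, which is exactly the claim. The only point that requires genuine care, rather than being bookkeeping, is the disintegration $\p=\p_{B_r}\otimes\p_{B_r^c}$ over the unbounded complementary window together with the measurability of $\eta\mapsto G_n(\eta)$; once these are secured the remainder is the routine passage from $L^1$-convergence to a.e.-convergence of a subsequence.
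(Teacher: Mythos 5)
Your argument is correct and is essentially the paper's own proof: disintegrate $\p=\p_{B_r}\otimes\p_{B_r^c}$ via Lemma~\ref{lem: DI2} applied to $|F^n-F|^p$, deduce that $\eta\mapsto\|F^n_{\eta,r}-F_{\eta,r}\|_{L^p(\p_{B_r})}^p$ tends to $0$ in $L^1(\p_{B_r^c})$, then pass to a $\p_{B_r^c}$-a.e.\ convergent subsequence. The only thing you add beyond the paper's two-line argument is the (correct, and worth noting) observation that Lemma~\ref{lem: DI2} is stated for bounded integrands and must be extended by truncation/monotone convergence, together with the appeal to Lemma~\ref{lem: M2} for measurability of sections.
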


\begin{proof}
By Lemma~\ref{lem: DI2}, we have that 
\begin{align} \label{eq: L1}
	\int_{\U(B_r^c)} \biggl( \int_{\U(B_r)} |F^n_{\eta,r}-F_{\eta,r}|^p d\p_{B_r} \biggr)d\p_{B_r^c}(\eta)
	=
	\int_{\U(\R^n)} |F^n-F|^p d\p \to 0, \quad \text{as $n \to \infty$} \, .
\end{align}
In particular, up to subsequence $\int_{\U(B_r)} |F^n_{\eta, r}-F_{\eta,r}|^p d\p_{B_r} \to 0$ for $\p_{B_r^c}$-a.e.\ $\eta$, which completes the proof.		
\end{proof}

\subsection{Localisation of energies, resolvents and semigroups}\label{sec: LE}
In this section, we localise differential operators and related objects introduced in Section~\ref{subsection:Sobolev}.
%
%
%
%

Let $r>0$. The {\it localised  energy $(\EE_r, \mathcal D(\EE_r))$} is defined as the following direct integral 
\begin{align} \label{prop: 0}
\EE_r(F) = \int_{\U(B_r^c)}\EE_{\U(B_r)}(F_{\eta, r}) d \pi_{B_r^c}(\eta) \,, \quad \mathcal D(\EE_r):=\{F \in L^2(\U(\R^n), \pi): \EE_r(F)<\infty\} \, .
\end{align}
The form is closed by \cite[Proposition V.3.1.1]{BH91}. For $F \in \CylF(\U(\R^n))$, 
\begin{equation}
	\EE_r(F) = \int_{\U(\R^n)} | \nabla_r F|_{T\U}^2 d \p \, \quad F \in \CylF(\U(\R^n)),
\end{equation}
where
\begin{equation}\label{eq: nabla r}
	\nabla_r F(\gamma,x) := \chi_{B_r}(x) \nabla F(\gamma,x) \, .
\end{equation}
See e.g., \cite[Prop.~3.9]{Suz24}.
 We denote by $\{G_\alpha^{r}\}_{\alpha>0}$ and $\{T_t^{r}\}_{t>0}$ the $L^2$-resolvent operator and the semigroup associated with $(\EE_{r}, \mathcal D(\EE_r))$, respectively. Recall that $\{G_{\alpha}^{\U(B_r)}\}_{\alpha}$ and $\{T^{\U(B_r)}_t\}$ denote the $L^2$-resolvent operator and the semigroup corresponding to $(\EE_{\U(B_r)}, H^{1,2}(\U(B_r), \p))$. The relation between $\{G_\alpha^{r}\}_{\alpha>0}$, $\{T_t^{r}\}_{t>0}$ and $\{G_{\alpha}^{\U(B_r)}\}_{\alpha}$, $\{T^{\U(B_r)}_t\}$ is given below. 

\begin{prop}[{\cite[Corollary 4.11]{Suz23}}]\label{prop: 1}
	Let $\alpha>0$, $t>0$, and $r>0$ be fixed. Then, for any bounded measurable function $F$,  {it holds that}
	\begin{align} \label{eq: R-1}
		G_\alpha^{r} F(\gamma) & = G_\alpha^{\U(B_r)} F_{\gamma|_{B_r^c}, r}(\gamma|_{B_r}) \, ,
		\\
		\label{eq: R-2}
		T_t^{r} F(\gamma) & = T_t^{\U(B_r)} F_{\gamma|_{B_r^c}, r}(\gamma|_{B_r}) \, ,
	\end{align}
	for $\pi$-a.e.\ $\gamma\in \U(\R^n)$.
\end{prop}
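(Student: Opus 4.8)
The plan is to prove \eqref{eq: R-1} and \eqref{eq: R-2} by recognising $\EE_r$ as a ``direct integral over the fibre $\U(B_r^c)$ with constant fibre form $\EE_{\U(B_r)}$'', and identifying the associated semigroup and resolvent with the ones obtained by applying $T^{\U(B_r)}_t$, resp.\ $G^{\U(B_r)}_\alpha$, sectionwise. Concretely, for $F\in L^2(\U(\R^n),\pi)$ define
\[
	S_t F(\gamma):=\bigl(T^{\U(B_r)}_t F_{\gamma|_{B_r^c},\,r}\bigr)(\gamma|_{B_r}),
	\qquad
	R_\alpha F(\gamma):=\bigl(G^{\U(B_r)}_\alpha F_{\gamma|_{B_r^c},\,r}\bigr)(\gamma|_{B_r}),
\]
which are well defined as $L^2$-functions since $F_{\eta,r}\in L^2(\U(B_r),\pi_{B_r})$ for $\pi_{B_r^c}$-a.e.\ $\eta$ (Lemma \ref{lem: M2}). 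The goal is to show $S_t=T^r_t$ and $R_\alpha=G^r_\alpha$ as $L^2(\U(\R^n),\pi)$-operators; the two displayed definitions then become exactly \eqref{eq: R-2} and \eqref{eq: R-1}.

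The first step is the unitary identification $L^2(\U(\R^n),\pi)\cong L^2\bigl(\U(B_r^c),\pi_{B_r^c};L^2(\U(B_r),\pi_{B_r})\bigr)$ via $F\mapsto(\eta\mapsto F_{\eta,r})$, which is an isometry by the disintegration formula of Lemma \ref{lem: DI2}. Transporting the properties of $T^{\U(B_r)}_t$ through this isometry — using $(S_tF)_{\eta,r}=T^{\U(B_r)}_t F_{\eta,r}$ for the semigroup law, self-adjointness of $T^{\U(B_r)}_t$ for symmetry, dominated convergence in $\eta$ for strong continuity and contractivity, and the fact that $0\le F\le 1$ $\pi$-a.e.\ forces $0\le F_{\eta,r}\le1$ $\pi_{B_r}$-a.e.\ for $\pi_{B_r^c}$-a.e.\ $\eta$ (cf.\ Corollary \ref{lem: II}) for Markovianity — one checks routinely that $(S_t)_{t>0}$ is a strongly continuous, symmetric, Markovian $L^2$-contraction semigroup on $L^2(\U(\R^n),\pi)$ with resolvent $R_\alpha=\int_0^\infty e^{-\alpha t}S_t\,dt$ (Fubini). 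Let $(\EE^S,\mathcal D(\EE^S))$ be its Dirichlet form. By monotone convergence applied to the sectionwise difference quotients $\tfrac1t\langle F_{\eta,r}-T^{\U(B_r)}_tF_{\eta,r},F_{\eta,r}\rangle_{L^2(\pi_{B_r})}$, which increase to $\EE_{\U(B_r)}(F_{\eta,r})\in[0,\infty]$ as $t\downarrow0$ (with $\EE_{\U(B_r)}:=+\infty$ off its domain), one obtains
\[
	\EE^S(F)=\int_{\U(B_r^c)}\EE_{\U(B_r)}(F_{\eta,r})\,d\pi_{B_r^c}(\eta),\qquad F\in L^2(\U(\R^n),\pi).
\]
By Proposition \ref{prop: 0} (equation \eqref{eqP: 2}) the right-hand side equals $\EE_r(F)$ for $F\in\Cyl$; since $\Cyl$ is, by definition, a form core for $\EE_r$ and $\EE^S$ is closed, $(\EE_r,\mathcal D(\EE_r))$ is a closed restriction of $(\EE^S,\mathcal D(\EE^S))$ and the two forms agree on $\mathcal D(\EE_r)$.

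The remaining and main point is the reverse inclusion $\mathcal D(\EE^S)\subseteq\mathcal D(\EE_r)$, i.e.\ that $\Cyl$ is $\EE^S_1$-dense in $\mathcal D(\EE^S)$; granted this, $\EE^S=\EE_r$ as Dirichlet forms and uniqueness of the $L^2$-semigroup (resp.\ resolvent) associated with a closed symmetric form gives $S_t=T^r_t$ and $R_\alpha=G^r_\alpha$. The difficulty is that one must approximate, in the $\EE_{r,1}$-norm, an $F$ whose sections $F_{\eta,r}$ lie in $\mathcal D(\EE_{\U(B_r)})$ with $\int\EE_{\U(B_r)}(F_{\eta,r})\,d\pi_{B_r^c}<\infty$ by cylinder functions on the whole space $\U(\R^n)$. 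I would reduce, via $\alpha R_\alpha F\to F$ in $\EE^S_1$ together with the bound $\|\alpha R_\alpha f\|_{\EE^S_1}^2\le(\alpha+1)\|f\|_{L^2}^2$, to approximating only the functions $R_\alpha\phi$ with $\phi\in\Cyl$, and then perform a sectionwise approximation using that $\{f|_{B_r}:f\in C_0^\infty(\R^n)\}=C^\infty(\overline{B_r})$ is dense in $W^{1,2}(B_r)$ (smoothness of $\partial B_r$), while arranging the approximating inner and outer functions to depend measurably on $\eta$ so that the approximation is controlled in $L^2(\pi_{B_r^c};\mathcal D(\EE_{\U(B_r)}))$ — a point where a measurable-selection/separability argument is needed, because $\EE_{\U(B_r)}$ encodes the Neumann behaviour at $\partial B_r$, which is not directly visible to an arbitrary cylinder function. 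Alternatively one can try to bypass the full domain identification: for bounded $F$ and $v\in\Cyl$ one has $v_{\eta,r}\in\CylF(\U(B_r))$, so the polarised form of Proposition \ref{prop: 0} and the defining equation of $G^{\U(B_r)}_\alpha$ give $\EE_r(R_\alpha F,v)+\alpha\langle R_\alpha F,v\rangle_{L^2(\pi)}=\langle F,v\rangle_{L^2(\pi)}$, whence $R_\alpha F=G^r_\alpha F$ would follow from $R_\alpha F\in\mathcal D(\EE_r)$; but verifying this last membership (for $u=R_\alpha F$, whose sections $G^{\U(B_r)}_\alpha F_{\eta,r}$ lie in $\mathcal D(\EE_{\U(B_r)})$ with energy $\le\alpha^{-1}\|F\|_{L^2}^2$) is again exactly the obstacle just described.
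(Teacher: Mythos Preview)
Your ``alternative'' route is exactly the paper's proof: for $F,H\in\Cyl$, set $R^r_\alpha F(\gamma):=G^{\U(B_r)}_\alpha F_{\gamma|_{B_r^c},r}(\gamma|_{B_r})$, compute via the polarised disintegration formula of Proposition~\ref{prop: 0} that
\[
\EE_r(R^r_\alpha F,H)=\int_{\U(B_r^c)}\EE_{\U(B_r)}\bigl(G^{\U(B_r)}_\alpha F_{\eta,r},H_{\eta,r}\bigr)\,d\pi_{B_r^c}(\eta)
=\int_{\U(\R^n)}(F-\alpha R^r_\alpha F)H\,d\pi,
\]
and invoke uniqueness of the resolvent; then \eqref{eq: R-2} is obtained from \eqref{eq: R-1} by injectivity of the Laplace transform in $t$. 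The paper therefore bypasses your construction of $(S_t)$ and $(\EE^S,\mathcal D(\EE^S))$ entirely and never needs the semigroup identity first.

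The domain obstacle you flag --- that $R^r_\alpha F\in\mathcal D(\EE_r)$ is required for the display above to make sense and for the uniqueness argument to bite --- is genuine, and the paper is terse here: it writes $\EE_r(R^r_\alpha F,H)$ without comment, whereas Proposition~\ref{prop: 0} is stated only for cylinder functions. The implicit claim is the domain identification $\mathcal D(\EE_r)=\{u\in L^2(\pi):\int\EE_{\U(B_r)}(u_{\eta,r})\,d\pi_{B_r^c}(\eta)<\infty\}$ together with the polarised version of \eqref{eqP: 2} on that domain; this stronger statement is contained in the references cited for Proposition~\ref{prop: 0} (\cite[Prop.~4.1]{Osa96}, \cite[Prop.~3.45]{DS21}). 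Granted it, the membership $R^r_\alpha F\in\mathcal D(\EE_r)$ follows at once from the elementary energy bound $\int\EE_{\U(B_r)}(G^{\U(B_r)}_\alpha F_{\eta,r})\,d\pi_{B_r^c}(\eta)\le\alpha^{-1}\|F\|^2_{L^2(\pi)}$. So your measurable-selection worry is not resolved inside the paper but is absorbed into those references; modulo that, your analysis and the paper's proof coincide.
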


\begin{rem}\label{rem: p-sem} \normalfont
Although Proposition \ref{prop: 1} provides the statement only for the $L^2$-semigroups and resolvents, it is straightforward to extend it to the $L^p$-semigroups and resolvents for any $1 \le p<\infty$.
\end{rem}

\begin{prop}\label{prop: MD}
The form $(\EE_r, \mathcal D(\EE_r))$ is monotone non-decreasing in $r$, i.e. for any $s \le r$, 
$$\mathcal D(\EE_r) \subset \mathcal D(\EE_s), \quad \EE_s(F) \le \EE_r(F), \quad F \in \mathcal D(\EE_r). $$
Furthermore, the following two forms coincide: letting $\bar{\EE}(F):=\lim_{r \to \infty}\EE_r(F)$ and $\mathcal D(\bar{\EE})=\{F \in \cap_{r>0} \mathcal D(\EE_r): \lim_{r \to \infty}\EE_r(F)<\infty\}$,
$$(\bar\EE,  \mathcal D(\bar\EE)) = (\EE, H^{1,2}(\U(\R^n),\p)).$$
\end{prop}
\begin{proof}
The monotone increasing property is a direct application of \cite[Proposition 4.13]{Suz23}. 
The second assertion follows from the fact that $(\Delta, \CylF(\U(\R^n)))$ is essentially self-adjoint by~\cite[Theorem 5.3]{AKR98} and that $\bar\EE$ and $\EE$ coincide on~$\CylF(\U(\R^n))$.
\end{proof}
\begin{rem}\normalfont
 In \cite[Corollary 4.11, Proposition 4.13]{Suz23}, the statements deal with the case where the reference measure is the law of the $\mathsf{sine}_\beta$ point process. The case of the Poisson point process corresponds to $\beta=0$, and the same proofs there apply to the case of the Poisson point process in this paper.
\end{rem}

The next proposition shows the monotonicity property for
the resolvent operator $G_\alpha^r$ and the semigroup $T^r_t$.

\begin{prop} \label{prop: MGS}
The resolvent operator $\{G_\alpha^{r}\}_{\alpha}$ and the semigroup $\{T_t^{r}\}_t$ are monotone non-increasing on non-negative functions, i.e.,
\begin{align} \label{ineq: RM}
G_\alpha^{r} F \le G_\alpha^{s} F,  \quad T_t^{r} F \le T_t^{s} F,   \quad \text{for every non-negative}\, \, F \in L^2(\U(\R^n), \p), \quad s \le r.
\end{align}
Furthermore, $\lim_{r \to \infty} G_\alpha^{r} F = G_\alpha F$ and  $\lim_{r \to \infty} T_t^{r} F = T_t F$ for $F \in  L^2(\U(\R^n), \p)$ and $\alpha, t>0$.
\end{prop}

\begin{proof}
Thanks to the identity 
\begin{equation*}
	G_\alpha^r = \int_*^\infty e^{-\alpha t} T^r_t dt\, ,
\end{equation*}
it suffices to show \eqref{ineq: RM} only for $T^r_t$. By a direct application of~\cite[Theorem 3.3]{Ouh96} and the monotonicity of the Dirichlet form in Proposition \ref{prop: MD}, we obtain the monotonicity of the semigroup. 
The second part of the statement follows from the monotone convergence $\EE_r\uparrow \EE$ combined with \cite[S.14, p.372]{RS77}.
\end{proof}

\subsection{Localised Bessel operators} \label{sec: LBO}
Let $B^r_{\alpha, p}$ and $B^{\U(B_r)}_{\alpha, p}$ be the $(\alpha, p)$-Bessel operators corresponding to $\{T_t^r\}_{t>0}$ and $\{T_t^{\U(B_r)}\}_{t>0}$, respectively defined in the analogous way as in \eqref{defn: Bes}. The corresponding $(\alpha, p)$-Bessel capacities are denoted by ${\rm Cap}^r_{\alpha, p}$ and ${\rm Cap}^{\U(B_r)}_{\alpha, p}$ defined in the analogous way as in \eqref{equ: BC}

\begin{lem} \label{lem: MC}
${\rm Cap}^r_{\alpha, p}(E) \le {\rm Cap}_{\alpha, p}(E)$ for every $E\subset \U(\R^n)$ and $r>0$. 
\end{lem}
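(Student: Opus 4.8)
The plan is to deduce the inequality from the pointwise domination of the localised heat semigroup by the global one. Recall that, by Definition~\ref{defn: BC}, ${\rm Cap}_{\alpha,p}(E)$ is the infimum of $\|F\|_{L^p}^p$ over nonnegative $F\in L^p(\U(\R^n),\pi)$ with $B_{\alpha,p}F\ge 1$ on $E$, while ${\rm Cap}^r_{\alpha,p}(E)$ is the analogous infimum with $B_{\alpha,p}$ replaced by the localised Bessel operator $B^r_{\alpha,p}$. Hence it suffices to show that every competitor $F$ for ${\rm Cap}_{\alpha,p}(E)$ is also a competitor for ${\rm Cap}^r_{\alpha,p}(E)$; then ${\rm Cap}^r_{\alpha,p}(E)\le \|F\|_{L^p}^p$ for each such $F$, and passing to the infimum gives the claim.

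The key step is the domination $T_t^{r}F\ge T_t F$ $\pi$-a.e.\ for every nonnegative $F$ and every $t>0$. This follows from Proposition~\ref{prop: MGS} (in its $L^p$-version, cf.\ Remark~\ref{rem: p-sem}): for $s\ge r$ one has $T_t^{r}F\le T_t^{s}F$, and $s\mapsto T_t^sF$ is non-increasing with $T_t^{s}F\to T_t F$ as $s\to\infty$; letting $s\to\infty$ yields $T_t F\le T_t^{r}F$. Since for each fixed $t$ the exceptional $\pi$-null set may a priori depend on $t$, I would invoke Fubini (using joint measurability of $(t,\gamma)\mapsto T_t^r F(\gamma)$ and of $(t,\gamma)\mapsto T_tF(\gamma)$) to extract a single $\pi$-null set outside of which $T_t^{r}F(\gamma)\ge T_tF(\gamma)$ for a.e.\ $t>0$.

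Integrating this inequality against the nonnegative kernel $\tfrac{1}{\Gamma(\alpha/2)}e^{-t}t^{\alpha/2-1}\,dt$ from \eqref{defn: Bes} gives
\begin{equation*}
	B^r_{\alpha,p}F \ \ge\ B_{\alpha,p}F \qquad \pi\text{-a.e., for every nonnegative } F\in L^p(\U(\R^n),\pi).
\end{equation*}
Therefore, if $F\ge 0$ and $B_{\alpha,p}F\ge 1$ on $E$, then also $B^r_{\alpha,p}F\ge 1$ on $E$, so $F$ is admissible in the definition of ${\rm Cap}^r_{\alpha,p}(E)$ and ${\rm Cap}^r_{\alpha,p}(E)\le\|F\|_{L^p}^p$. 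Taking the infimum over all admissible $F$ completes the proof.

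I do not expect any serious obstacle: the statement is essentially a one-line consequence of the form monotonicity recorded in Proposition~\ref{prop: MGS}. The only point requiring a little care is the meaning of ``$B_{\alpha,p}F\ge 1$ on $E$'' for a function defined only up to $\pi$-null sets; this is handled by working with the canonical lower semicontinuous representatives of the Bessel potentials (for which the a.e.\ inequality $B^r_{\alpha,p}F\ge B_{\alpha,p}F$ upgrades to a genuine pointwise inequality), or, equivalently, by recalling that the $(\alpha,p)$-Bessel capacity is insensitive to modifications of competitors on $\pi$-null sets.
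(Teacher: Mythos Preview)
Your approach is correct and is exactly the paper's: reduce to the pointwise domination $B^r_{\alpha,p}F \ge B_{\alpha,p}F$ for nonnegative $F$, which follows from the semigroup monotonicity in Proposition~\ref{prop: MGS} together with the definition~\eqref{defn: Bes}. One slip in your write-up: you assert ``for $s\ge r$ one has $T_t^{r}F\le T_t^{s}F$'' and in the same breath that ``$s\mapsto T_t^{s}F$ is non-increasing''---these contradict; Proposition~\ref{prop: MGS} gives $T_t^{s}F\le T_t^{r}F$ for $s\ge r$, and your final conclusion $T_tF\le T_t^{r}F$ (hence $B_{\alpha,p}F\le B^r_{\alpha,p}F$) is the correct one.
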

\proof
It suffices to show that $B^r_{\alpha, p} F \le B_{\alpha, p} F$ for any $F \ge 0$ with $F \in L^p(\U(\R^n), \p)$, which immediately follows from Proposition~\ref{prop: MGS} and \eqref{defn: Bes}. 
\qed
\begin{lem} \label{lem: MC2}
If ${\rm Cap}_{\alpha, p}(E)=0$, then ${\rm Cap}^{\U(B_r)}_{\alpha, p}(E_{\eta, r})=0$ for $\p_{B_r^c}$-a.e.\ $\eta$ and every $r>0$. 
\end{lem}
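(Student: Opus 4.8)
The plan is to transfer the vanishing of the capacity from $\U(\R^n)$ down to the essentially finite-dimensional space $\U(B_r)$ by sectioning, relying on the localisation identity of Proposition~\ref{prop: 1} (in its $L^p$ form, Remark~\ref{rem: p-sem}) together with the measure-theoretic lemmas of Section~\ref{sec: LSF}. As a preliminary reduction, Lemma~\ref{lem: MC} gives ${\rm Cap}^r_{\alpha,p}(E)=0$, so I may work throughout with the localised operators $B^r_{\alpha,p}$, $T^r_t$ --- which, unlike the global ones, behave well under sectioning. Note also that $E$ is automatically $\p$-measurable, indeed $\p$-negligible, since zero-capacity sets have vanishing outer measure.

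First I would produce a single potential that equals $+\infty$ on $E$. Since ${\rm Cap}^r_{\alpha,p}(E)=0$, for each $j\in\N$ pick $G_j\in L^p(\U(\R^n),\p)$ with $G_j\ge 0$, $B^r_{\alpha,p}G_j\ge 1$ $\p$-a.e.\ on $E$, and $\|G_j\|_{L^p}^p\le 2^{-j}$, and set $F:=\sum_j G_j\in L^p(\U(\R^n),\p)$. Since $B^r_{\alpha,p}=\frac{1}{\Gamma(\alpha/2)}\int_0^\infty e^{-t}t^{\alpha/2-1}T^r_t\,dt$ is linear and order preserving (the $T^r_t$ being Markovian), $\sum_{j\le N}G_j\le F$ yields $B^r_{\alpha,p}F\ge\sum_{j\le N}B^r_{\alpha,p}G_j\ge N$ $\p$-a.e.\ on $E$ for every $N\in\N$; intersecting the corresponding $\p$-conull sets over $N$ gives $B^r_{\alpha,p}F=+\infty$ $\p$-a.e.\ on $E$.

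Next I would section this potential. For each $N\in\N$, Corollary~\ref{lem: II} applied with $g=N^{-1}B^r_{\alpha,p}F$, $A=E$ and $B=B_r^c$ produces a $\p_{B_r^c}$-conull set of $\eta$ on which $(B^r_{\alpha,p}F)_{\eta,r}\ge N$ $\p_{B_r}$-a.e.\ on $E_{\eta,r}$. Intersecting these over $N$, together with the $\p_{B_r^c}$-conull set on which the $L^p$-version of Proposition~\ref{prop: 1} gives $(B^r_{\alpha,p}F)_{\eta,r}=B^{\U(B_r)}_{\alpha,p}F_{\eta,r}$ $\p_{B_r}$-a.e., and with the $\p_{B_r^c}$-conull set on which $F_{\eta,r}\in L^p(\U(B_r),\p_{B_r})$ (a disintegration as in Lemma~\ref{lem: DI2}), I obtain a $\p_{B_r^c}$-conull set of $\eta$ on which $F_{\eta,r}\ge 0$, $F_{\eta,r}\in L^p(\U(B_r),\p_{B_r})$, and $B^{\U(B_r)}_{\alpha,p}F_{\eta,r}=+\infty$ $\p_{B_r}$-a.e.\ on $E_{\eta,r}$. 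For such $\eta$ and any $\e>0$, $\e F_{\eta,r}$ is admissible in the definition of ${\rm Cap}^{\U(B_r)}_{\alpha,p}(E_{\eta,r})$, whence ${\rm Cap}^{\U(B_r)}_{\alpha,p}(E_{\eta,r})\le\e^p\|F_{\eta,r}\|^p_{L^p(\U(B_r),\p_{B_r})}$; letting $\e\to 0$ gives ${\rm Cap}^{\U(B_r)}_{\alpha,p}(E_{\eta,r})=0$.

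The step requiring the most care is the commutation of the Bessel operator with sectioning. Proposition~\ref{prop: 1} is stated only for the $L^2$ semigroup and resolvent and for bounded $F$, so one must first extend it to $L^p$ and to general $F\in L^p$ (Remark~\ref{rem: p-sem}, e.g.\ via the monotone approximation $F\wedge M\uparrow F$), and then deduce $(B^r_{\alpha,p}F)_{\eta,r}=B^{\U(B_r)}_{\alpha,p}F_{\eta,r}$ from the corresponding identity for $T^r_t$ by Tonelli's theorem applied to the non-negative kernel $e^{-t}t^{\alpha/2-1}$. The remaining ingredients --- choosing a summable family of potentials and keeping track of the countably many $\p_{B_r^c}$-negligible exceptional sets so that one conull set of $\eta$ works simultaneously for the lower bound, the commutation identity and the integrability of $F_{\eta,r}$ --- are routine.
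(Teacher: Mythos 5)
Your proof is correct and follows the same overall architecture as the paper's: reduce to the localised capacity via Lemma~\ref{lem: MC}, commute the Bessel operator with sectioning via Proposition~\ref{prop: 1} and Remark~\ref{rem: p-sem}, and section the admissibility condition via Corollary~\ref{lem: II}. The one genuine deviation is in how you handle the limit: the paper keeps a \emph{sequence} of potentials $F_n$ with $\|F_n\|_{L^p}\to 0$, invokes Lemma~\ref{lem: DI1} to extract a subsequence along which $(F_n)_{\eta,r}\to 0$ in $L^p(\U(B_r),\p_{B_r})$ for $\p_{B_r^c}$-a.e.\ $\eta$, and then concludes by admissibility of $(F_n)_{\eta,r}$. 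You instead build a \emph{single} potential $F=\sum_j G_j$ with $B^r_{\alpha,p}F=+\infty$ a.e.\ on $E$, section it once, and conclude by scaling with $\e F_{\eta,r}$. Your route is a classical potential-theoretic shortcut: it avoids Lemma~\ref{lem: DI1} entirely and collapses the bookkeeping into a single countable intersection of conull sets (over $N$ rather than over an $n$-indexed exceptional set with a subsequence), at the modest cost of requiring the sum $\sum_j G_j$ to converge in $L^p$ — guaranteed by the geometric choice $\|G_j\|_{L^p}^p\le 2^{-j}$. Both arguments are valid; yours is marginally leaner, while the paper's sequence-plus-disintegration version reuses Lemma~\ref{lem: DI1}, which is stated for independent use elsewhere. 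Your explicit remark that $E$ is automatically $\p$-measurable (hence $\p$-negligible) is a small but welcome clarification: Corollary~\ref{lem: II} requires measurability of the set being sectioned, a hypothesis the paper's proof invokes silently.
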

\proof
By Lemma~\ref{lem: MC} we may assume ${\rm Cap}^r_{\alpha, p}(E)=0$ for any $r>0$. Let $\{F_n\} \subset L^p(\U(\R^n), \p)$ be a sequence so that $F_n \ge 0$, $B^r_{\alpha, p}F_n \ge 1$ on $E$, and $\|F_n\|_{L^p}^p \to 0$. By Lemma~\ref{lem: II}, $(F_n)_{\eta, r} \ge 0$ for $\p_{B_r^c}$-a.e.\ $\eta$. 
Furthermore, by Lemma~\ref{lem: DI1}, there exists $A_r \subset \U(B_r^c)$ and a (non-relabelled) subsequence $(F_n)_{\eta, r}$ so that $\p_{B_r^c}(A_r)=1$, and for every $\eta \in A_r$, 
\begin{align} \label{eq: SCZ}
(F_n)_{\eta, r} \to 0, \quad \text{in $L^p(\U(B_r), \p_{B_r})$} \, . 
\end{align}
By Proposition~\ref{prop: 1} and Remark~\ref{rem: p-sem}, we have that
\begin{align} \label{eq: idL}
(B^r_{\alpha, p}F_n)_{\eta, r}  \notag
&= \biggl( \frac{1}{\Gamma(\alpha/2)}\int_*^\infty e^{-t}t^{\alpha/2-1}T^{r}_t F_ndt \biggr)_{\eta, r}
\\
&= \frac{1}{\Gamma(\alpha/2)}\int_*^\infty e^{-t}t^{\alpha/2-1}\bigl(T^{r}_t F_n\bigr)_{\eta, r} dt \notag
\\
&= \frac{1}{\Gamma(\alpha/2)}\int_*^\infty e^{-t}t^{\alpha/2-1}T^{\U(B_r)}_t (F_{n})_{\eta, r} dt \notag
\\
&=B^{\U(B_r)}_{\alpha, p}(F_n)_{\eta, r}.
\end{align}
Note that we dropped the specification of $p$ in the semigroups for notational simplicity in \eqref{eq: idL}. 

Since $B^r_{\alpha, p}F_n \ge 1$ on $E$,  by applying Corollary~\ref{lem: II}, we obtain that $(B^r_{\alpha, p}F_n)_{\eta, r} \ge 1$ on $E_{\eta, r}$ for $\p_{B_r^c}$-a.e.\ $\eta$. Thus, by \eqref{eq: idL}, $B^{\U(B_r)}_{\alpha, p}(F_n)_{\eta, r} \ge 1$ on $E_{\eta, r}$ for $\p_{B_r^c}$-a.e.\ $\eta$.
By \eqref{eq: SCZ}, we conclude that ${\rm Cap}^{\U(B_r)}_{\alpha, p}(E_{\eta, r})=0$ for $\p_{B_r^c}$-a.e.\ $\eta$ and any $r>0$. 
\qed

\subsection{Finite-dimensional counterpart}
In this section, we develop the finite-dimensional counterpart of Theorem~\ref{thm: CH}. 
The goal is to prove the following proposition.

\begin{prop} \label{lem: MC3}
Let $\alpha p>m$. 
If ${\rm Cap}^{\U^k(B_r)}_{\alpha, p}(E)=0$, then $\S_{B_r}^{m, k}(E)=0$ for any $k \in \N$.
\end{prop}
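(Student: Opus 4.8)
The statement is the finite-dimensional counterpart of Theorem~\ref{thm: CH}, so the plan is to reduce the capacity--Hausdorff comparison on $\U^k(B_r)$ to the classical one on the Euclidean product $B_r^{\times k}$ via the canonical projection $\s_k$. First I would unwind the definitions: by \eqref{m-H} the measure $\S^{m,k}_{B_r}$ is (up to a constant) the pushforward under $\s_k$ of the codimension-$m$ spherical Hausdorff measure $\SS^{nk-m}$ on $B_r^{\times k}$, and by Proposition~\ref{prop: IDP} the semigroup $T^{\U^k(B_r)}_t$ is intertwined with the tensor semigroup $T^{B_r,\otimes k}_t$ through $\s_k$; consequently the Bessel operator $B^{\U^k(B_r)}_{\alpha,p}$ and hence the capacity ${\rm Cap}^{\U^k(B_r)}_{\alpha,p}$ pull back along $\s_k$ to the corresponding Euclidean objects on $B_r^{\times k}$ (with Neumann boundary conditions on the closed ball, but this plays no role in the capacity--measure comparison). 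Since $\s_k$ is a local diffeomorphism off the diagonal $\dg_k$ (which is both $\SS^{nk-m}$-measurable and of zero $(1,p)$-capacity when $nk-m\ge 1$, because it is a smooth submanifold of codimension $n\ge 1$), zero capacity and zero Hausdorff measure transfer in both directions between $\U^k(B_r)\setminus\s_k(\dg_k)$ and $B_r^{\times k}\setminus \dg_k$.

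Thus it suffices to prove the Euclidean statement: if ${\rm Cap}_{\alpha,p}(\tilde E)=0$ for a set $\tilde E\subset B_r^{\times k}=\R^{N}$ (here $N=nk$) with the Bessel capacity associated to the heat semigroup, then $\SS^{N-m}(\tilde E)=0$ provided $\alpha p>m$. This is entirely classical. The key steps are: (i) pass from the heat-semigroup Bessel capacity to the usual Bessel-kernel capacity on $\R^N$ (they are comparable up to multiplicative constants on bounded domains, since $G_\alpha F = B_{\alpha}*F$ with $B_\alpha$ the Bessel kernel, whose singularity at the origin is $\asymp |x|^{\alpha-N}$); (ii) invoke the standard fact (see e.g.\ Adams--Hedberg, or \cite[Section~2.6]{Ziemer} / Federer) that ${\rm Cap}_{\alpha,p}(\tilde E)=0$ together with $\alpha p>m$ forces the Hausdorff measure $\mathcal H^{s}(\tilde E)=0$ for every $s>N-\alpha p$, in particular for $s=N-m$ since $N-m>N-\alpha p$; (iii) recall that the spherical Hausdorff measure $\SS^{N-m}$ is comparable to the ordinary Hausdorff measure $\mathcal H^{N-m}$ (Remark~\ref{rem: CSH}), so $\SS^{N-m}(\tilde E)=0$ as well. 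Running this on each coordinate chart of $\s_k$ and summing gives $\S^{m,k}_{B_r}(E)=0$.

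For the pull-back of the capacity I would argue as in Lemma~\ref{lem: MC2}: if $\{F_n\}$ is admissible for ${\rm Cap}^{\U^k(B_r)}_{\alpha,p}(E)$ with $\|F_n\|_{L^p}\to 0$, then $F_n\circ\s_k$ (suitably normalized by $(k!)^{1/p}$ to account for the $\tfrac{1}{k!}$ in \eqref{m-H}) satisfies $B^{B_r,\otimes k}_{\alpha,p}(F_n\circ\s_k)=(B^{\U^k(B_r)}_{\alpha,p}F_n)\circ\s_k\ge 1$ on $\s_k^{-1}(E)$ by Proposition~\ref{prop: IDP}, and $\|F_n\circ\s_k\|_{L^p(B_r^{\times k})}^p = k!\,\|F_n\|_{L^p(\U^k(B_r))}^p\to 0$; hence ${\rm Cap}_{\alpha,p}(\s_k^{-1}(E)\setminus\dg_k)=0$ in $\R^N$.

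\textbf{Main obstacle.} The genuinely new point, compared with the linear Euclidean setting, is handling the quotient singularity: the diagonal $\dg_k$ (equivalently $\s_k(\dg_k)$) must be shown to be negligible for \emph{both} $\S^{m,k}_{B_r}$ and ${\rm Cap}^{\U^k(B_r)}_{\alpha,p}$, and the semigroup/Bessel-operator intertwining of Proposition~\ref{prop: IDP} must be verified to be compatible with the Neumann boundary of the closed ball $B_r$; everything else is a transcription of classical finite-dimensional potential theory. The codimension of $\dg_k$ in $B_r^{\times k}$ is $n$, so $\SS^{nk-m}(\dg_k)=0$ as soon as $nk-m>nk-n$, i.e.\ $m<n$, while the borderline $m=n$ is excluded or handled separately since $\SS^0$ is counting measure on a set with empty interior; and ${\rm Cap}_{\alpha,p}(\dg_k)=0$ since $\dg_k$ is a finite union of affine subspaces of codimension $n\ge \lceil m\rceil$ when $\alpha p>m$. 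Once the diagonal is discarded, $\s_k$ is a genuine covering map and the comparison descends coordinatewise without further difficulty.
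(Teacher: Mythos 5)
Your plan matches the paper's proof step for step: pull the capacity back to $B_r^{\times k}$ via $\s_k$ using Proposition~\ref{prop: IDP}, apply the classical Euclidean capacity--Hausdorff comparison (\cite[Theorem 2.6.16]{Z89}), and push forward through~\eqref{m-H}. The one thing worth noting is that the ``main obstacle'' you single out is not actually there: by construction $\s_k$ is a map from $B_r^{\times k}\setminus\dg_k$ onto $\U^k(B_r)$, so $\s_k^{-1}(E)\subset B_r^{\times k}\setminus\dg_k$ automatically, and the diagonal never needs to be discarded or shown negligible for either $\S^{m,k}_{B_r}$ or the capacity.
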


\begin{proof}
Recall that $T_t^{\Omega, \otimes k}$ is the $k$-tensor semigroup of $T_t^\Omega$ as defined in \eqref{z51}. Let $B_{\alpha, p}^{B_r^{\times k}}$ be the corresponding Bessel operator defined analogously as in \eqref{defn: Bes}, and ${\rm Cap}^{B_r^{\times k}}_{\alpha, p}$ be the corresponding $(\alpha, p)$-capacity.  

Let $\{F_m\} \subset L^p(\U(B_r), \p_{B_r})$ be a sequence so that $F_m \ge 0$ and $B_{\alpha, p}^{\U^k(B_r)} F_m \ge 1$ on $E\subset \U^k(B_r)$, and $\|F_m\|_{L^p} \to 0$. By Proposition~\ref{prop: IDP} and the definition of Bessel operator, we have
$$
B_{\alpha, p}^{\U^k(B_r)} F_m \circ \mathsf s_k = B_{\alpha, p}^{B_r^{\times k}} (F_m \circ \mathsf s_k) \, ,
$$
hence $F_m \circ \mathsf s_k \ge 0$, $B_{\alpha, p}^{B_r^{\times k}} (F_m \circ \mathsf s_k) \ge 1$ on $\mathsf s_k^{-1}(E)$. 
Furthermore, 
$$\|F_m \circ \mathsf s_k\|_{L^p(B_r^{\times k})} = C(k, n, r)\|F_m\|_{L^p(\U^k(B_r))} \to 0 \, ,
\quad
\text{as $m \to \infty$} \, ,
$$ 
where $C(k,n, r)>0$ comes from the constant appearing in front of the Hausdorff measure in the definition of $\p_{B_r}$. This implies that ${\rm Cap}^{B_r^{\times k}}_{\alpha, p}(\mathsf s_k^{-1}(E))=0$. 
We can now rely on standard capacity estimates in the Euclidean setting (see, e.g. \cite[Theorem 2.6.16]{Z89}) 
to conclude that $\SS^{nk-m}(\mathsf s_k^{-1}(E))=0$. 
Recalling \eqref{m-H}, we have that 
$$
\S_{B_r}^{m, k}(E) = \frac{1}{k!}(\mathsf s_k)_{\#}\SS^{nk-m}(E) = \frac{1}{k!}\SS^{nk-m}(\mathsf s_k^{-1}(E))=0 \, . \qedhere
$$
\end{proof}

\subsection{Proof of Theorem \ref{thm: CH}}
Let $E \in \mathcal S(\E)$ such that ${\rm Cap}_{\alpha, p}(E) = 0$. Thanks to Lemma~\ref{lem: MC2} we have ${\rm Cap}^{\U(B_r)}_{\alpha, p}(E) = 0$ for any $r>0$, hence $\S_{B_r}^{m,k}(E_{\eta,r}^k) =0$ for any $k\in \N$ as a consequence of Proposition~\ref{lem: MC3}. It implies
\begin{equation*}
	\rho^m_r(E) = e^{-\SS^{n}(B_r)}\sum_{k=1}^\infty\int_{\U(B_r^c)}\S_{B_r}^{m,k}(E_{\eta,r}^k) d\pi_{B_r^c}(\eta) =0 \, ,
\end{equation*}
for any $r>0$. Recalling that $\rho^m_r (E) \uparrow \rho^m(E)$ by \eqref{eq: MCH}, we obtain the sought conclusion.

 \section{Functions of Bounded Variation}\label{sec:BV}

 In this section, we introduce functions of bounded variations (called {\it BV functions}) on $\U(\R^n)$ following three different approaches: the variational approach (\Sec\ref{subs: VA}), the relaxation approach (\Sec\ref{subs: RA}), and the semigroup approach (\Sec\ref{subs: HA}). In Section \ref{sec: EDBV}, we prove that they all coincide.

 \subsection{Variational approach} \label{subs: VA}
 Let us begin by introducing a class of $BV$ functions through integration by parts. We then discuss localisation properties.

 \begin{defn}[BV functions I: variational approach]\normalfont \label{defn: BV}
 Let $\Omega \subset  \R^n$ be either a closed domain with smooth boundary or $\R^n$.
 For $F \in \cup_{p>1}L^p(\U(\Omega), \p_\Omega)$, we define the total variation as 
 \begin{align}\label{equ: TV}
 \V_{\U(\Omega)}(F)
 :=\sup\biggl\{ \int_{\U(\Omega)} (\nabla_{\U(\Omega)}^* V) F d\p_\Omega: V\in \CylV_*(\U(\Omega)),\ | V |_{T\U(\Omega)} \le 1 \biggr\}.
 \end{align}
 When $\Omega=\R^n$, we simply write $\V(F) := \V_{\U(\R^n)}(F)$. We say that {\it $F$ is BV in the variational sense} if $\V(F)<\infty$.
 \end{defn}

\begin{rem} \normalfont 
	The assumption $F\in \cup_{p>1}L^p(\U(\Omega), \p_\Omega)$ plays an important role in Definition \ref{defn: BV}, ensuring that $\int_{\U(\Omega)} (\nabla^*_{\U(\Omega)} V) F d \p_\Omega$ is well defined for any $V\in \CylV(\U(\Omega))$. Indeed, one can easily prove that $\nabla_{\U(\Omega)}^* V \in \cup_{1\le p < \infty} L^p(\U(\Omega), \p_\Omega)$ for any $V\in \CylV_*(\U(\Omega))$, but it is not $L^\infty(\U(\Omega), \p_\Omega)$ in general.
\end{rem}

\begin{rem} \normalfont 
  As it was shown in Remark \ref{rem: bounded test}, the set of $V\in \CylV_*(\U(\Omega))$ with $|V|_{T\U}\le 1$ is dense in $\CylV_*(\U(\Omega))$ with respect to the topology of point-wise convergence and the $L^p(\U(\Omega) \to T\U(\Omega), \pi_\Omega)$ topology for $1 \le  p<\infty$.
\end{rem}

 In order to localise the total variation we employ a family of cylinder vector fields concentrated on $B_r$, for some $r>0$.
	
 \begin{defn}\normalfont \label{defn: LBV}
 For $F \in \cup_{p>1}L^p(\U(\R^n), \p)$, we define the {\it localised total variation} as
  \begin{align}\label{defn: TVR}
 \V_r(F):=\sup\biggl\{ \int_{\U(\R^n)} (\nabla^* V) F d\p: V\in \CylV^r_*(\U(\R^n)),\ |V|_{T\U(\R^n)} \le 1 \biggr\} \, ,
 \end{align}
 where
 \begin{equation*}
 	\CylV^r_*(\U(\R^n)):=\Bigl\{V(\gamma, x)=\sum_{i=1}^kF_i(\gamma) v_i(x): F_i \in \CylF(\U(\R^n)),\ v_i \in C_*^{\infty}(B_r; \R^n), k \in \N \Bigr\} \, .
 \end{equation*}
 \end{defn}

The next result shows that $\V_{\U(B_r)}(F_{\eta,r}) < \infty$ for $\pi_{B_r^c}$-a.e. $\eta$ whenever $\V_r(F)<\infty$. It is the key step to perform our nonlinear dimension reduction. Indeed it allows to reduce the study of BV functions on $\U(\R^n)$ to their sections, which live on the finite dimensional space $\U(B_r)$.

\begin{prop} \label{lem: BV1}
	Let $r>0$ and $p>1$. For $F \in L^p(\U(\R^n), \p)$ with $\V_r(F)<\infty$, it holds
	\begin{align} \label{ineq: BV1}
		\int_{\U(B_r^c)}\V_{\U(B_r)}(F_{\eta, r}) d\p_{B_r^c}(\eta) = \V_r(F) \, .
	\end{align}
\end{prop}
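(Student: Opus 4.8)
\emph{Strategy.} The plan is to prove the two inequalities
\begin{align*}
\V_r(F) \le \int_{\U(B_r^c)}\V_{\U(B_r)}(F_{\eta,r})\,d\p_{B_r^c}(\eta)
\qquad\text{and}\qquad
\V_r(F) \ge \int_{\U(B_r^c)}\V_{\U(B_r)}(F_{\eta,r})\,d\p_{B_r^c}(\eta)
\end{align*}
separately; the identity \eqref{ineq: BV1}, and the finiteness of the right-hand side, then follow from the hypothesis $\V_r(F)<\infty$. The basic structural observation is that if $V=\sum_{i=1}^k F_i v_i\in\CylV^r_0(\U(\R^n))$ (so $v_i\in C_0^\infty(B_r;\R^n)$), then for every $\eta\in\U(B_r^c)$ the section $V_{\eta,r}=\sum_i (F_i)_{\eta,r}v_i$ lies in $\CylV_0(\U(B_r))$ — here $(F_i)_{\eta,r}\in\CylF(\U(B_r))$ is obtained from $F_i$ by restricting its inner functions to $B_r$ and shifting the arguments of its outer function by the constants $f^\star\eta$ — with $|V_{\eta,r}|_{T\U(B_r)}\le1$ whenever $|V|_{T\U}\le1$, and $(\nabla^*V)_{\eta,r}=\nabla^*_{\U(B_r)}V_{\eta,r}$. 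This last identity is read off from formula \eqref{eq: DIV}, using that the flow of $v_i$ and the function $\nabla^*v_i$ are supported in $B_r$ and hence do not interact with the part of a configuration lying in $B_r^c$.

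\emph{The inequality ``$\le$''.} Given $V\in\CylV^r_0(\U(\R^n))$ with $|V|_{T\U}\le1$, I would disintegrate $\int_{\U(\R^n)}(\nabla^*V)F\,d\p$ over $\U(B_r^c)$ via Lemma~\ref{lem: DI2} (applied to the $L^1$ integrand, since $\nabla^*V\in\bigcup_{q<\infty}L^q$ and $F\in L^p$) and use the structural facts above to rewrite it as $\int_{\U(B_r^c)}\bigl(\int_{\U(B_r)}(\nabla^*_{\U(B_r)}V_{\eta,r})F_{\eta,r}\,d\p_{B_r}\bigr)d\p_{B_r^c}(\eta)$. For $\p_{B_r^c}$-a.e.\ $\eta$ one has $F_{\eta,r}\in L^p(\U(B_r),\p_{B_r})$, so $V_{\eta,r}$ is an admissible competitor for $\V_{\U(B_r)}(F_{\eta,r})$ and the inner integral is $\le\V_{\U(B_r)}(F_{\eta,r})$; taking the supremum over $V$ gives the bound.

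\emph{The inequality ``$\ge$''.} This is the main point; it requires a measurable selection. Since $L^{p'}(\U(B_r),\p_{B_r})$ with $p'=p/(p-1)<\infty$ is separable, I would fix a countable family $\{W_j\}_{j\in\N}\subset\CylV_0(\U(B_r))$ with $W_1=0$, $|W_j|_{T\U(B_r)}\le1$, and such that $\{\nabla^*_{\U(B_r)}W_j\}_j$ is $L^{p'}$-dense in $\{\nabla^*_{\U(B_r)}W:W\in\CylV_0(\U(B_r)),\,|W|\le1\}$; by duality this gives $\V_{\U(B_r)}(G)=\sup_j\int_{\U(B_r)}(\nabla^*_{\U(B_r)}W_j)G\,d\p_{B_r}$ for every $G\in L^p(\U(B_r),\p_{B_r})$ (a routine interior cutoff lets one take the inner functions of the $W_j$ supported in the open ball, which is needed below). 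Put $a_j(\eta):=\int_{\U(B_r)}(\nabla^*_{\U(B_r)}W_j)F_{\eta,r}\,d\p_{B_r}$; each $a_j$ is $\p_{B_r^c}$-measurable, being the disintegration of the $L^1(\U(\R^n),\p)$ function $\gamma\mapsto(\nabla^*_{\U(B_r)}W_j)(\gamma|_{B_r})F(\gamma)$, whence $\eta\mapsto\V_{\U(B_r)}(F_{\eta,r})=\sup_j a_j(\eta)$ is measurable. For fixed $J$, split $\U(B_r^c)$ (up to a $\p_{B_r^c}$-null set) into measurable pieces $\Omega^J_1,\dots,\Omega^J_J$ on which $a_j$ attains $\max_{i\le J}a_i$, with ties broken by the smallest index; then $a_j\ge0$ on $\Omega^J_j$ (since $a_1\equiv0$) and $\int_{\U(B_r^c)}\max_{j\le J}a_j\,d\p_{B_r^c}=\sum_{j\le J}\int_{\Omega^J_j}a_j\,d\p_{B_r^c}$.

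\emph{Gluing and conclusion.} Given $\delta>0$, choose $\phi^J_j\in\CylF(\U(B_r^c))$ with $\phi^J_j\ge0$, $\sum_{j\le J}\phi^J_j\le1$, and $\|\phi^J_j-\chi_{\Omega^J_j}\|_{L^{p'}(\p_{B_r^c})}$ arbitrarily small — obtained from the density of cylinder functions in $L^{p'}(\p_{B_r^c})$, followed by a smooth truncation into $[0,1]$ and a renormalization by $1/\max(1,\sum_{j\le J}\phi^J_j)$, each step costing an arbitrarily small $L^{p'}$-error. Setting $V^J(\gamma,x):=\sum_{j\le J}\phi^J_j(\gamma|_{B_r^c})W_j(\gamma|_{B_r},x)$, one checks that $V^J\in\CylV^r_0(\U(\R^n))$, that $|V^J(\gamma)|_{T_\gamma\U}\le\sum_{j\le J}\phi^J_j(\gamma|_{B_r^c})|W_j(\gamma|_{B_r})|_{T\U(B_r)}\le1$ by Minkowski's inequality in $L^2(\gamma|_{B_r})$, and that (as in the first paragraph) $\nabla^*V^J(\gamma)=\sum_{j\le J}\phi^J_j(\gamma|_{B_r^c})(\nabla^*_{\U(B_r)}W_j)(\gamma|_{B_r})$, so that $\int_{\U(\R^n)}(\nabla^*V^J)F\,d\p=\sum_{j\le J}\int_{\U(B_r^c)}\phi^J_j(\eta)a_j(\eta)\,d\p_{B_r^c}(\eta)$ by Lemma~\ref{lem: DI2}. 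A Hölder estimate, together with $\int_{\U(B_r^c)}\|F_{\eta,r}\|_{L^p(\p_{B_r})}^p\,d\p_{B_r^c}=\|F\|_{L^p(\p)}^p$, bounds the difference between this quantity and $\sum_{j\le J}\int_{\Omega^J_j}a_j\,d\p_{B_r^c}$ by a constant (depending on $J$ and the $\|\nabla^*_{\U(B_r)}W_j\|_{L^{p'}}$) times $\max_{j\le J}\|\phi^J_j-\chi_{\Omega^J_j}\|_{L^{p'}}$. Hence $\V_r(F)\ge\int_{\U(\R^n)}(\nabla^*V^J)F\,d\p\ge\int_{\U(B_r^c)}\max_{j\le J}a_j\,d\p_{B_r^c}-\delta$; letting $\delta\to0$ and then $J\to\infty$ (monotone convergence, $\max_{j\le J}a_j\uparrow\sup_j a_j$) gives $\V_r(F)\ge\int_{\U(B_r^c)}\V_{\U(B_r)}(F_{\eta,r})\,d\p_{B_r^c}(\eta)$, completing the proof. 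I expect the gluing step to be the main obstacle: one must produce cylinder functions on $\U(B_r^c)$ that simultaneously approximate indicators of arbitrary measurable sets, take values in $[0,1]$, and have sum $\le1$, so that the glued field $V^J$ remains an admissible competitor with $|V^J|_{T\U}\le1$; each requirement is routine, but reconciling them takes some care.
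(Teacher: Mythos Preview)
Your proof is correct and follows essentially the same approach as the paper's: both establish the two inequalities separately, use the disintegration identity $(\nabla^*V)_{\eta,r}=\nabla^*_{\U(B_r)}V_{\eta,r}$ (the content of Lemma~\ref{lem: BV0}) for the bound $\V_r(F)\le\int\V_{\U(B_r)}(F_{\eta,r})\,d\p_{B_r^c}$, and for the reverse bound use a countable exhausting family in $\CylV_0(\U(B_r))$ followed by a measurable selection over $\eta$ and a gluing by cylinder functions on $\U(B_r^c)$. Your construction of the countable family via separability of $L^{p'}(\U(B_r),\p_{B_r})$ is shorter than the paper's explicit Step~1 cutoff construction, and your gluing step is more explicit in enforcing $\phi^J_j\ge 0$, $\sum_j\phi^J_j\le 1$ (hence $|V^J|_{T\U}\le 1$), a constraint the paper's Step~2 leaves implicit.
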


Let us begin with a simple technical lemma.

 \begin{lem} \label{lem: BV0}
	Let $r>0$. For $V \in \CylV^r_*(\U(\R^n))$, and $F\in \CylF(\U(\R^n))$ it holds
	\begin{align} \label{ineq: BV1a}
		\int_{\U(B_r^c)} \biggl( \int_{\U(B_r)} F_{\eta,r}(\gamma) \nabla^*_{\U(B_r)}V_{\eta, r}(\gamma)  d\p_{B_r}(\gamma) \biggr) d\p_{B_r^c}(\eta) = \int_{\U(\R^n)} F \nabla^* V d\p \, .
	\end{align}
\end{lem}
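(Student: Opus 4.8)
The plan is to reduce the identity \eqref{ineq: BV1a} to the disintegration formula of Lemma \ref{lem: DI2} applied to a suitable bounded measurable function on $\U(\R^n)$, combined with the explicit formula \eqref{eq: DIV} for the divergence of a cylinder vector field. First I would write $V(\gamma,x)=\sum_{i=1}^k F_i(\gamma) v_i(x)$ with $F_i\in\CylF(\U(\R^n))$ and $v_i\in C_0^\infty(B_r;\R^n)$; since $\mathrm{supp}(v_i)\subset B_r$ lies in the interior of $B_r$, the section $V_{\eta,r}$ is a well-defined element of $\CylV_0(\U(B_r))$ for every $\eta\in\U(B_r^c)$, with inner functions $(F_i)_{\eta,r}$ and vector fields $v_i$. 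Similarly $F_{\eta,r}\in\CylF(\U(B_r))$. By Proposition \ref{prop: IVPD} applied on $\U(B_r)$ we have the pointwise formula
\begin{equation*}
	\nabla^*_{\U(B_r)} V_{\eta,r}(\gamma)=\sum_{i=1}^k \nabla_{v_i}(F_i)_{\eta,r}(\gamma)+\sum_{i=1}^k (F_i)_{\eta,r}(\gamma)\,(\nabla^* v_i)^\star\gamma\,,
\end{equation*}
and likewise Proposition \ref{prop: IVPD} on $\U(\R^n)$ gives $\nabla^* V(\gamma)=\sum_{i=1}^k \nabla_{v_i}F_i(\gamma)+\sum_{i=1}^k F_i(\gamma)(\nabla^* v_i)^\star\gamma$.

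The key observation is that all the relevant operations commute with taking sections: for $\gamma\in\U(B_r)$ and $\eta\in\U(B_r^c)$ one has $(\nabla_{v_i}F_i)(\gamma+\eta)=\nabla_{v_i}\big((F_i)_{\eta,r}\big)(\gamma)$, because the flow $\phi_t$ generated by $v_i$ fixes every point outside $B_r$ (hence fixes $\eta$), so $\phi_t(\gamma+\eta)=\phi_t(\gamma)+\eta$; and $(\nabla^* v_i)^\star(\gamma+\eta)=(\nabla^* v_i)^\star\gamma$ since $\nabla^* v_i$ is supported in $B_r$ and thus $\int \nabla^*v_i\,d\eta=0$. Consequently
\begin{equation*}
	\big(F\,\nabla^* V\big)(\gamma+\eta)=F_{\eta,r}(\gamma)\,\nabla^*_{\U(B_r)} V_{\eta,r}(\gamma)\,,\qquad \gamma\in\U(B_r),\ \eta\in\U(B_r^c)\,.
\end{equation*}
In other words, writing $G:=F\,\nabla^* V$ on $\U(\R^n)$ and taking $M=B_r^c$, $N=B_r$ in the notation of Lemma \ref{lem: DI2}, we have $G_{\eta,M}(\gamma)=F_{\eta,r}(\gamma)\nabla^*_{\U(B_r)}V_{\eta,r}(\gamma)$. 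Applying Lemma \ref{lem: DI2} to $G$ yields exactly \eqref{ineq: BV1a}.

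The only point requiring a little care is the applicability of Lemma \ref{lem: DI2}, which is stated for bounded measurable $G$: here $G=F\nabla^* V$ need not be bounded since $(\nabla^* v_i)^\star\gamma$ grows with the number of particles of $\gamma$ in $\mathrm{supp}(v_i)$. I would handle this either by invoking the obvious extension of Lemma \ref{lem: DI2} to $G\in L^1(\U(\R^n),\pi)$ — note $F\in\CylF$ is bounded and $\nabla^* V\in\bigcup_{p\ge 1}L^p(\U(\R^n),\pi)$ by Proposition \ref{prop: IVPD}, so $G\in L^1$ and Fubini–Tonelli applies — or, more elementarily, by truncating: replace $v_i$ by $v_i$ itself but multiply $V$ by the bounded cutoff $\big(1+\e|V|_{T\U}^2\big)^{-1}$ as in Remark \ref{rem: bounded test}, apply the bounded case, and pass to the limit $\e\to0$ using dominated convergence on both sides (the dominating function on the right being $|F_{\eta,r}|\,(|\nabla^*_{\U(B_r)}V_{\eta,r}|+C)$, integrable by the same $L^p$ bound disintegrated via Lemma \ref{lem: DI2}). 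I expect the main — though still routine — obstacle to be precisely this integrability bookkeeping; the algebraic heart of the statement, namely the commutation of sectioning with $\nabla_{v_i}$ and with $(\nabla^*v_i)^\star$, is immediate from the support condition $v_i\in C_0^\infty(B_r;\R^n)$.
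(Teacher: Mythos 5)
Your proposal is correct and follows essentially the same route as the paper: substitute the explicit divergence formula \eqref{eq: DIV} on both $\U(B_r)$ and $\U(\R^n)$, observe that $\nabla^*$ commutes with taking sections because each $v_i$ is supported strictly inside $B_r$, and then apply the disintegration Lemma \ref{lem: DI2} to $G=F\,\nabla^*V$. You correctly flag that Lemma \ref{lem: DI2} as stated requires $G$ bounded while here $G$ is merely $L^1$ --- a point the paper's proof quietly glosses over --- and either of your proposed fixes (Fubini--Tonelli for $L^1$ integrands, or cutting off $V$ and passing to the limit) closes this cleanly.
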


\begin{proof}[Proof of Lemma \ref{lem: BV0}]
Recall that for $r>0$ and $\eta\in \U(B_r^c)$ we have $V_{\eta,r} \in \CylV_*(B_r)$.
By the divergence formula \eqref{eq: DIV} and the disintegration Lemma \ref{lem: DI2},
we have that 
\begin{align*} 
&\int_{\U(B_r^c)} \biggl( \int_{\U(B_r)} F_{\eta,r}(\gamma)\nabla^*_{\U(B_r)}V_{\eta, r}(\gamma)   d\p_{B_r}(\gamma) \biggr) d\p_{B_r^c}(\eta) 
\\
&=-\int_{\U(B_r^c)} \biggl( \int_{\U(B_r)} F_{\eta,r}(\gamma) \Bigl(\sum_{i=1}^k \nabla_{v_i} (F_i)_{\eta, r}(\gamma) + \sum_{i=1}^k(F_i)_{\eta, r}(\gamma)(\nabla^*_{\R^n} v_i)^\ast (\gamma) \Bigr)d\p_{B_r}(\gamma) \biggr) d\p_{B_r^c}(\eta) 
\\
&=-\int_{\U(B_r^c)}  \int_{\U(B_r)} \biggl( F\Bigl(\sum_{i=1}^k \nabla_{v_i} F_i + \sum_{i=1}^kF_i(\nabla^*_{\R^n} v_i)^\ast  \Bigr) \biggr)_{\eta, r}(\gamma)d\p_{B_r}(\gamma) d\p_{B_r^c}(\eta) 
\\
&=-\int_{\U(\R^n)}   F\Bigl(\sum_{i=1}^k \nabla_{v_i} F_i + \sum_{i=1}^kF_i(\nabla^*_{\R^n} v_i)^\ast  \Bigr) d\p 
\\
&= \int_{\U(\R^n)} F \nabla^* V d\p \, . \qedhere
\end{align*}
\end{proof}

\begin{proof}[Proof of Proposition \ref{lem: BV1}]
We first prove that
\begin{align}\label{eq:ge}
	\int_{\U(B_r^c)}\V_{\U(B_r)}(F_{\eta, r}) d\p_{B_r^c}(\eta) \ge  \V_r(F) \, .
\end{align}
Let $V_i \in \CylV^r_*(\U(\R^n))$ with $|V_i|_{T\U}\le 1$ so that 
$$ 
\V_r(F) = \lim_{i \to \infty} \int_{\U(\R^n)} (\nabla^* V_i) F d\p.
$$
Observe that $(V_i)_{\eta, r} \in \CylV_*(\U(B_r))$, then by definition of $\V_{\U(B_r)}(F_{\eta, r})$ we get
$$
\int_{\U(B_r)} ((\nabla^* V_i) F )_{\eta,r} d\p_{B_r}
=
\int_{\U(B_r)} (\nabla_{\U(B_r)}^* (V_i)_{\eta,r}) F_{\eta, r} d\p_{B_r}  \le \V_{\U(B_r)}(F_{\eta, r}), \quad i \in \N \, .
$$
Therefore, by Lemma \ref{lem: BV0}, 
\begin{align*}
\V_r(F) &=  \lim_{i \to \infty} \int_{\U(\R^n)} (\nabla^* V_i) F d\p 
\\
&=  \lim_{i \to \infty} \int_{\U(B_r^c)}  \int_{\U(B_r)} (\nabla_{\U(B_r)}^*  (V_i)_{\eta,r}) F_{\eta, r} d\p_{B_r} d\p_{B_r^c}(\eta) 
\\
&\le  \int_{\U(B_r^c)}   \V_{\U(B_r)}(F_{\eta, r}) d\p_{B_r^c}(\eta) \, ,
\end{align*}
which completes the proof of \eqref{eq:ge}.

\medskip

Let us now pass to the proof of the opposite inequality
\begin{align}\label{eq:le}
	\int_{\U(B_r^c)}\V_{\U(B_r)}(F_{\eta, r}) d\p_{B_r^c}(\eta) \le  \V_r(F) \, .
\end{align}
The idea of the proof is inspired by \cite[Proposition 3.2]{H10} in the case of the Wiener space. 
We divide it into three steps.

\noindent
{\bf Step 1.} We show the existence of $\{ V_i\, : \, i\in \N\} \subset \CylV_*(\U(B_r))$ such that $|V_i|_{T\U} \le 1$ and
\begin{equation} \label{eq: 5.6.1}
	\V_{\U(B_r)} (G)
	= \sup_{i\in \N} \int_{\U(B_r)} (\nabla_{\U(B_r)}^*V_i) G d\p_{B_r} \, ,
\end{equation}
for any $G\in \cup_{p>1} L^p(\U(B_r), \p_{B_r})$.

First we observe that there exists $\mathcal{F}F:=\{G_i\, : \, i\in \N \} \subset \CylF(\U(B_r))$ such that any cylinder function can be approximated strongly in $H^{1,q}(\U(B_r))$ for any $q<\infty$, by elements of $\mathcal{F}F$. 
Let $D\subset C_*^\infty(B_r; \R^n)$ be a countable dense subset, w.r.t.\ the $C^1$-norm:~$\|v\|_{C^1(B_r)}:=\|\nabla_{\R^n} v\|_{L^{\infty}(B_r)}~+~\|v\|_{L^{\infty}(B_r)}$.
We define the countable family
\begin{equation*}
	\mathcal{F}V:= \left\lbrace 
	 \beta V(\gamma,x) \phi_\alpha(|V|_{T_\gamma\U}) \, : \, V(\gamma, x)= \sum_{i=1}^m w_i(x) G_i(\gamma), \, \, \alpha, \beta\in \mathbb{Q}^+, \, \, m\in \N, \, \, w_i\in D , 
	 \, \, G_i\in  \mathcal{F}F  
	 \right\rbrace \, ,
\end{equation*}
where $\phi_\alpha \in C^\infty([0, \infty))$ satisfies $0\le \phi_\alpha \le 1$, $|\phi'_\alpha| \le 2/\alpha$ and $\phi_\alpha(t) = 1$ on $[0,1 + \alpha]$, $\phi(t) = 0$ on $[1 + 2\alpha, \infty)$.

Fix $\delta>0$, $q\in [1, \infty)$ and $V\in \CylV_*(\U(B_r))$ with $|V|_{T_\gamma \U}\le 1$. To prove \eqref{eq: 5.6.1} it suffices to show that there exists $W\in \mathcal{F}V$ with $|W|_{T\U}\le 1$ such that $\| \nabla^*_{\U(B_r)} (V-W) \|_{L^q(\U(B_r))} \le \delta$.

Fix $t\in (q, 2q)$ and $\e\in (0, 1/9)$. 
Letting $V= \sum_{i=1}^m F_i v_i \in \CylV_*(\U(B_r))$,  we pick $G_i\in \mathcal{F}F$ and $w_i\in D$ such that
\begin{equation}
  \sum_{i=1}^m  \bigl( \| v_i - w_i \|_{C^1(B_r)} + \| F_i - G_i\|_{L^t(\U(B_r))} + \| \nabla_{\U(B_r)} (F_i - G_i) \|_{L^t(\U(B_r))} \bigr) < \e ,
\end{equation}
and consider $\bar W:= \sum_{i=1}^m w_i G_i$. By using the divergence formula \eqref{eq: DIV}, we can obtain that 
\begin{equation}\label{eq: z}
	\int_{\U(B_r)}  |\nabla^*_{\U(B_r)} ( \bar W - V)|^{t} d\p_{B_r} 
	+
	\int_{\U(B_r)} \bigl| | \bar W|_{T_\gamma \U} - |V|_{T_\gamma \U}\bigr|^{t} d \pi_{B_r}
	\le C \e^{t} \, ,
\end{equation}
where $C = \max\{\|w_i\|_{C^1}, \|G_i\|_{L^t(\U(B_r))}, \|\nabla G_i\|_{L^t(\U(B_r))} : 1 \le i \le m\}$ does not depend on $\e$.
We assume without loss of generality that $\e, \e^{\frac{1}{10t}}\in \mathbb{Q}$ and set
\begin{equation} \label{eq: CPA}
	W := (1-2\e^{\frac{1}{10 t}})\phi_{\e^{\frac{1}{10t}}}\bigl(|\bar W|_{T_\gamma \U}^2 \bigr) \bar W \in \mathcal{F}V
	\, ,
\end{equation}
which satisfies 
$$
|W|_{T_\gamma\U}
=(1-2\e^{\frac{1}{10t}})\phi_{\e^{\frac{1}{10t}}}\bigl( |\bar W|_{T_\gamma \U}^2 \bigr)|\bar W |_{T_\gamma \U} \le (1-2\e^{\frac{1}{10t}})(1+ 2\e^{\frac{1}{10t}}) \le 1 \, .
$$
We now check that $\| \nabla^*_{\U(B_r)}(V-W)\|_{L^q(\U(B_r))} \le \delta$.
From the identity
\begin{equation*}
	\nabla^*_{\U(B_r)} W =
	(1-2\e^{\frac{1}{10t}}) \phi_{\e^{\frac{1}{10t}}} \bigl( |\bar W|_{T_\gamma \U}^2 \bigr) (\nabla_{\U(B_r)}^* \bar W) 
	-2(1-2\e^{\frac{1}{10t}}) \phi_{\e^{\frac{1}{10t}}}'\bigl(|\bar W|_{T_\gamma \U}^2 \bigr) |\bar W|_{T_\gamma \U}^2 \, ,
\end{equation*}
and the inequality 
\begin{equation*}
\bigl|\phi'_{\e^{\frac{1}{10t}}}\bigl(|\bar W|_{T_\gamma \U}^2 \bigr)\bigr| |\bar W|_{T_\gamma \U}^2
\le 2\e^{-\frac{1}{10t}}\chi_{\{1 + \e^{\frac{1}{10t}}  \le |\bar W|_{T_\gamma \U}^2 \le 1 + 2\e^{\frac{1}{10t}}\}} |\bar W|_{T_\gamma \U}^2
\le 5 \e^{-\frac{1}{10t}} \chi_{\{|\bar W|_{T_\gamma \U}^2 \ge 1+ \e^{\frac{1}{10t}}\}} \, ,
\end{equation*}
we obtain
\begin{align}
  &\|\nabla^*_{\U(B_r)} ( W - \bar W)\|_{L^q}  \le 
  \Bigl\| \Bigl( (1-2\e^{\frac{1}{10t}}) \phi_{\e^{\frac{1}{10t}}}(|\bar W|_{T_\gamma \U}^2) - 1\Bigr) (\nabla_{\U(B_r)}^* \bar W)\Bigr\|_{L^q(\U(B_r))} 
  + 5\e^{-\frac{1}{10t}}\Bigl\|\chi_{\{|\bar W|_{T_\gamma \U}^2 \ge 1+ \e^{\frac{1}{10t}}\}} \Bigr\|_{L^q(\U(B_r))} \notag
  \\& \le 5\e^{\frac{1}{10t}} \|\nabla^*_{\U(B_r)} \bar W\|_{L^q(\U(B_r))} + \Bigl\|\chi_{\{ |\bar W|_{T_\gamma \U}^2 \ge 1 + \e^{\frac{1}{10t}}\}} (\nabla_{\U(B_r)} ^* \bar W) \Bigr\|_{L^q(\U(B_r))} + 5\e^{-\frac{1}{10t}}\Bigl\|\chi_{\{|\bar W|_{T_\gamma \U}^2 \ge 1+ \e^{\frac{1}{10t}}\}} \Bigr\|_{L^q(\U(B_r))} \notag
  \\&
   \le C\Bigl(\| \nabla_{\U(B_r)}^* \bar W \|_{L^t(\U(B_r))}, t, q\Bigr) \Bigl( \e^{\frac{1}{10t}} + \e^{-\frac{1}{10t}}\Bigl\|\chi_{\{|\bar W|_{T_\gamma \U}^2 \ge 1+ \e^{\frac{1}{10t}}\}} \Bigr\|_{L^t(\U(B_r))} \Bigr)
   \, ,
\end{align}
where we estimated $\|\chi_{\{ |\bar W|_{T_\gamma \U}^2 \ge 1 + \e^{\frac{1}{10t}}\}} (\nabla_{\U(B_r)} ^* \bar W) \|_{L^q(\U(B_r))}$ by means of the H\"older inequality and using that $t<2q$.
The Chebyshev inequality and \eqref{eq: z} give
\begin{align*}
	\Bigl\|\chi_{\{|\bar W|_{T_\gamma \U}^2 \ge 1+ \e^{\frac{1}{10t}}\}} \Bigr\|_{L^t(\U(B_r))}
	& \le \Bigl\|\chi_{\{|\bar W|_{T_\gamma \U} \ge 1+ \e^{\frac{1}{3t}}\}} \Bigr\|_{L^t(\U(B_r))}
	 \le \Bigl\|\chi_{\{||\bar W|_{T_\gamma \U}- |V|_{T_\gamma\U}| \ge \e^{\frac{1}{3t}}\}} \Bigr\|_{L^t(\U(B_r))}
	\\& \le \left( \e^{-\frac{1}{3t}} \bigl\| |\bar W|_{T_\gamma \U}- |V|_{T_\gamma\U} \bigr\|_{L^1(\U(B_r))} \right)^{1/t}
	\le C \e^{\frac{1}{t} - \frac{1}{3t^2}}
	\le C \e^{\frac{1}{2t}} \quad (\e<1),
\end{align*}
where $C = \max\{\|w_i\|_{C^1}, \|G_i\|_{L^1(\U(B_r))}, \|\nabla G_i\|_{L^1(\U(B_r))} : 1 \le i \le m\}$ is independent of $\e$.
Therefore, we conclude
\begin{align*}
	\|\nabla^*_{\U(B_r)} ( W - V)\|_{L^q(\U(B_r))}
	& \le
    \|\nabla^*_{\U(B_r)} ( W - \bar W)\|_{L^q(\U(B_r))} + \|\nabla^*_{\U(B_r)} ( \bar W - V)\|_{L^q(\U(B_r))}
    \\&
    \le C( \e^{\frac{1}{10t}} + \e^{\frac{1}{5t}}) + \e \le \delta \, , 
\end{align*}
provided $\e$ is small enough. The proof of \eqref{eq: 5.6.1} is complete.
\smallskip

\noindent
{\bf Step 2.} We conclude the proof of \eqref{eq:le}.

Note that the map $\gamma \mapsto F(\gamma) \nabla^*_{\U(B_r)}V(\gamma|_{B_r})$ is $\p$-measurable. Furthermore, by Lemma \ref{lem: M2}, $F_{\eta, B_r^c}$ is $\p_{B_r}$-measurable and 
the map 
$$ \U(B_r^c)\ni  \eta\mapsto \int_{\U(B_r)} (\nabla_{\U(B_r)}^* V) F_{\eta, B_r^c}d\pi_{B_r}$$
is $\p_{B_r^c}$-measurable. Therefore, the map $\eta \mapsto \V_{\U(B_r)}(F_{\eta, B_r^c})$ is $\p_{B_r^c}$-measurable.

Fix now $\e>0$ and define a sequence $\{C_j\, : \, j\in \N\}$ of subsets in $\U(B_r^c)$ so that $C_* = \emptyset$, and 
\begin{equation*}
	\begin{split}
	C_j :=
	 \Big\{
	 \eta \in \U(B_r^c):\,  F_{\eta, r}\, \text{is}\,  \ \p_{B_r}\text{-measurable and},  
	 \qquad\qquad\qquad\qquad\qquad\qquad\qquad\qquad
	 \\ 
	 \int_{\U(B_r)}(\nabla_{\U(B_r)}^* V_j) F_{\eta, r} d\p_{B_r} \ge (1-\e) \V_{\U(B_r)}(F_{\eta, r}) \wedge \e^{-1}
	 \Big\} 
	\setminus \bigcup_{i=1}^{j-1}C_i \, ,
    \end{split}
\end{equation*}
where the family $\{V_i\, : \, i\in\N\}$ has been built in Step 1.

Then, $C_j$ is $\p_{B_r^c}$-measurable for any $j$ and $\pi_{B_r^c}(\U(B_r^c) \setminus \cup_{j=1}^\infty C_j)=0$. 
Set 
$$
W_n^\eta(\gamma) := W_n(\gamma + \eta) := \sum_{j=1}^n V_j(\gamma) \chi_{C_j}(\eta), \quad \gamma \in \U(B_r),\ \eta \in \U(B_r^c) \, .
$$  
We approximate $\chi_{C_j}$ by $\{F^i_j\}_{i \in \N} \subset \CylF(\U(B_r^c))$ with $|F^i_j| \le 1$ in the strong $L^{p'}(\U(B_r^c), \p_{B_r^c})$ topology, where $\frac{1}{p'} + \frac{1}{p} = 1$. Thus, setting $W_n^i(\gamma+\eta):=\sum_{j=1}^n V_j(\gamma) F^i_j(\eta)$, we see that 
$$
\int_{\U(B_r^c)} \| \nabla^*_{\U(B_r)}(W_n - W_n^i)(\cdot + \eta) \|_{L^{p'}(\U(B_r))} d \pi_{B_r^c}(\eta) \to 0
\quad
\text{as $i \to \infty$} \, .
$$
Notice that $W^i_n \in \CylV^r_*(\U(\R^n))$, hence
\begin{align} \label{eq: BV1-1}
	\lim_{i \to \infty}\int_{\U(B_r^c)} &\left( \int_{\U(B_r)} (\nabla_{\U(B_r)}^* W_n^i(\cdot+\eta)) f_{\eta,r} d\p_{B_r} \right) d\p_{B_r^c}(\eta)
	 \\& = \int_{\U(B_r^c)} \int_{\U(B_r)} (\nabla_{\U(B_r)}^* W^\eta_n) f_{\eta,r} d \p_{B_r} \p_{B_r^c}(\eta) \notag
	 \\& = \notag
	  \int_{\U(B_r^c)} \left( \sum_{j=1}^n \chi_{C_j}(\eta) \int_{\U(B_r)}(\nabla_{\U(B_r)}^* V_j) f_{\eta,r} d \p_{B_r} \right) d \p_{B_r^c}(\eta)
	 \\& 
	 \ge (1-\e) \int_{\U(B_r^c)} \left( \sum_{j=1}^n \chi_{C_j}(\eta) \V_{\U(B_r)}(f_{\eta, r}) \wedge \e^{-1}   \right) d \p_{B_r^c} \notag
	 \\&
	 = (1-\e) \int_{\cup_{j=1}^n C_j} \V_{\U(B_r)}(f_{\eta,r})\wedge \e^{-1} d \pi_{B_r^c}(\eta).  \notag
\end{align}
By Lemma \ref{lem: BV0}, 
\begin{equation}
	\int_{\U(\R^n)} (\nabla^* W_n^i) f d \p
	=
	\left( \int_{\U(B_r)} (\nabla_{\U(B_r)}^* W_n^i(\cdot+\eta)) f_{\eta,r} d\p_{B_r} \right) d\p_{B_r^c}(\eta) \, ,
\end{equation}
which along with \eqref{eq: BV1-1} gives the claimed inequality by letting $i\to \infty$ and $n\to \infty$.
\end{proof}

\subsection{Relaxation approach} \label{subs: RA}
In this subsection we introduce a second notion of functions with bounded variations. We rely on a relaxation approach.

\begin{defn}[BV functions II: Relaxation]\label{defn: BVrelaxation} \normalfont
	Let $F\in L^1(\U(\R^n), \pi)$, we define {\it the total variation of $F$} by 
	\begin{equation} \label{defn: BVR}
		| \DDD_* F |(\U(\R^n)) := \inf \{  \liminf_{n\to \infty} \| \nabla F_n \|_{L^1(T\U)} \, : \,  F_n \to F \, \text{in }\, L^1(\U(\R^n), \pi)\, , \,  F_n\in \CylF(\U(\R^n)) \} \, .
	\end{equation}   
   If $|\DDD_*F|(\U(\R^n)) < \infty$, we say that $F$ has finite {\it relaxed total variation}.
\end{defn}

\begin{defn}[Total variation pre-measure]
	\label{def: totvarpremeas}
	\normalfont
	If $|\DDD_*F|(\U(\R^n)) < \infty$, we define a map
	$$
	|\DDD_*F|: \{G\in \CylF(\U(\R^n))\, : \, \text{$G$ is non-negative}\} \to \R \, ,
	$$ 
	\begin{equation} \label{defn: BVRlocal}
		| \DDD_* F |[G] := \inf \left\lbrace  \liminf_{n\to \infty} \int_{\U(\R^n)} G |\nabla F_n |_{T\U}d \pi \, : \,  F_n \to F \, \text{in }\, L^1(\U(\R^n),\pi)\, , \,  F_n\in \CylF(\U(\R^n)) \right\rbrace \, .
	\end{equation}
\end{defn}

Notice that $|\DDD_*F|[G] \le \| G \|_{L^\infty} |\DDD_*F|$ and $|\DDD_*F|[G_1 + G_2] \ge |\DDD_*F|[G_1] + |\DDD_*F|[G_2]$. By construction,  $| \DDD_* F |[G]$ is the lower semi-continuous envelope of the functional $\CylF(\U(\R^n)) \ni F \mapsto \int_{\U(\R^n)} G |\nabla F |_{T\U}d \pi$. Therefore, the map $F \mapsto |\DDD_*F|(G)$ is lower semi-continuous with respect to the $L^1$-convergence for any non-negative $G\in \CylF(\U(\R^n))$.

\medskip
It will be shown in Corollary~\ref{cor: measure} that $|\DDD_*F|$ is represented by a finite measure $|\DDD F|$, i.e. 
\begin{equation*}
	|\DDD_*F|[G] = \int_{\U(\R^n)} G  d|\DDD F| \, \quad
	\text{for any non-negative $G\in \CylF(\U(\R^n))$} \, .
\end{equation*}


\subsection{Heat semigroup approach} \label{subs: HA}
In this subsection we present the third approach to ${\rm BV}$ functions. We employ the heat semigroup to define the total variation of a function $F\in L^p(\U(\R^n), \pi)$, $p>1$.

\begin{prop} \label{prop: Tf}
	Let $F \in \cup_{p>1} L^p(\U(\R^n), \p)$. Then $\| \nabla T_t F\|_{L^1}<\infty$ for $t>0$ and the following limit exists
	\begin{equation}
		\T(F):= \lim_{t\to 0}\|\nabla T_t F\|_{L^1} \, .
	\end{equation}  
\end{prop}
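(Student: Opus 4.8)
The plan is to split the statement into two parts: finiteness of $\|\nabla T_t F\|_{L^1}$ for each fixed $t>0$, and existence of the limit as $t\to 0$. For the first part, I would exploit the $L^p$-regularization of the heat semigroup. Since $F\in L^p(\U(\R^n),\pi)$ for some $p>1$, a Bakry--\'Emery type gradient estimate (which the introduction announces as Theorem \ref{rem: BE}, the $p$-Bakry--\'Emery inequality on $\U(\R^n)$) should give $|\nabla T_t F|_{T\U}\le C(t)\,T_t(|F|^p)^{1/p}$ or at least $\|\nabla T_t F\|_{L^p}\le C(t)\|F\|_{L^p}$ for every $t>0$. Combined with the fact that $\pi$ is a probability measure (so $L^p\hookrightarrow L^1$), this yields $\|\nabla T_t F\|_{L^1}\le \|\nabla T_t F\|_{L^p}<\infty$. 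Alternatively, since $T_{t}F = T_{t/2}(T_{t/2}F)$ and $T_{t/2}F\in \mathcal D(\Delta_{\U(\R^n)})\subset H^{1,2}$, one gets $\nabla T_t F\in L^2(T\U)\subset L^1(T\U)$ directly; either route closes part one quickly.

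For the existence of the limit, the key is \emph{monotonicity} of $t\mapsto \|\nabla T_t F\|_{L^1}$. I would first prove this for $F$ in a dense, regular class (say $F\in \CylF(\U(\R^n))$, or $F$ bounded in $H^{1,2}$), where one can compute freely. The standard argument: for $0<s<t$ write $T_tF = T_{t-s}(T_sF)$ and use that the heat semigroup, acting on vector fields via the induced (Bochner-type) semigroup $\overrightarrow{T}_{t-s}$, satisfies the commutation/contraction property $|\nabla T_{t-s}G|_{T\U}\le \overrightarrow{T}_{t-s}|\nabla G|_{T\U}$ pointwise — again this is exactly the content of the $p=1$ endpoint of the $p$-Bakry--\'Emery inequality, obtained by letting $p\downarrow 1$ in Theorem \ref{rem: BE}. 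Integrating and using that $\overrightarrow{T}_{t-s}$ is an $L^1$-contraction (a consequence of the Markov property and conservativeness of $T_t$, which holds since $T_t^{\Omega}$ is conservative and $\U(\R^n)$ is obtained by the exponentiation in \eqref{e: SE}) gives
\begin{equation*}
	\|\nabla T_t F\|_{L^1}
	= \int_{\U(\R^n)} |\nabla T_{t-s}(T_sF)|_{T\U}\,d\pi
	\le \int_{\U(\R^n)} \overrightarrow{T}_{t-s}|\nabla T_s F|_{T\U}\,d\pi
	\le \|\nabla T_s F\|_{L^1}\, .
\end{equation*}
Hence $t\mapsto \|\nabla T_t F\|_{L^1}$ is non-increasing, so $\T(F):=\lim_{t\to 0}\|\nabla T_t F\|_{L^1}=\sup_{t>0}\|\nabla T_t F\|_{L^1}\in[0,\infty]$ exists (possibly $+\infty$).

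To extend from the dense class to general $F\in\cup_{p>1}L^p(\U(\R^n),\pi)$, fix $p>1$ with $F\in L^p$ and apply the monotonicity inequality above to $G=T_sF$ for $s>0$ small: since $T_sF\in\mathcal D(\Delta_{\U(\R^n)})$ is regular enough to be in the class where monotonicity was established, and $T_{t}F=T_{t-s}(T_sF)$, the map $t\mapsto\|\nabla T_tF\|_{L^1}$ is non-increasing on $(s,\infty)$ for every $s>0$, hence non-increasing on $(0,\infty)$; the monotone limit as $t\to0$ therefore exists. I expect the \textbf{main obstacle} to be the $p=1$ / $L^1$ endpoint of the gradient-contraction property: the cleanest commutation estimate $|\nabla T_tG|_{T\U}\le \overrightarrow{T}_t|\nabla G|_{T\U}$ is naturally proven in $L^2$ (or $L^p$, $p>1$) via the $p$-Bakry--\'Emery inequality of Theorem \ref{rem: BE}, and one must pass carefully to $p=1$ by a limiting argument, using the $L^p$-contractivity of $\overrightarrow{T}_t$ uniformly in $p$ and a Fatou/dominated convergence step; once that pointwise inequality is secured, the rest is a routine integration and density argument.
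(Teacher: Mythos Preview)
Your proposal is correct and matches the paper's approach: both use the pointwise $1$-Bakry--\'Emery inequality $|\nabla T_{t+s}F|_{T\U}\le T_t|\nabla T_sF|_{T\U}$ (stated as \eqref{eq: 1-BE} and proved in Theorem~\ref{rem: BE}(iv)) to show that $t\mapsto\|\nabla T_tF\|_{L^1}$ is non-increasing, whence the monotone limit exists. The only minor difference is that the paper obtains the $p=1$ endpoint of Bakry--\'Emery directly via the intertwining $\nabla T_t=\mathbf T_t\nabla$ and the domination $|\mathbf T_tV|_{T\U}\le T_t|V|_{T\U}$ (Theorems~\ref{thm: IT} and~\ref{thm: GE1}), rather than by letting $p\downarrow 1$ as you suggest.
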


\begin{defn}[BV functions III: Heat semigroup] \normalfont \label{defn: HS}
A function $F\in \cup_{p>1}L^p(\U(\R^n), \p)$  is {\it BV in the sense of the heat semigroup} if $\T(F)<\infty$. We define {\it the total variation of $F$} by $\T(F)$.
\end{defn}
To prove Proposition \ref{prop: Tf}, we need the {\it Bakry--\'Emery inequality} with exponent $q=1$, i.e. for any $t,s>0$, $F\in \cup_{p>1}L^p(\U(\R^n),\pi)$, it holds
\begin{equation}\label{eq: 1-BE}
	\int_{\U(\R^n)} |\nabla T_t F| d\p < \infty \, ,
	\quad\quad
	|\nabla T_{t+s} F| \le T_t|\nabla T_s F|
	\quad \p\text{-a.e.} \, .
\end{equation}
The inequality \eqref{eq: 1-BE} will be proven in Corollary \ref{rem: BE} in Section \ref{sec:BE}. Let us now use it to show Proposition \ref{prop: Tf}.

\begin{proof}[Proof of Proposition \ref{prop: Tf}]
Let $F \in L^p(\U, \p)$ for $p>1$. 
By \eqref{eq: 1-BE}, we see that 
\begin{align*}
	\|\nabla T_tF\|_{L^1} \le \liminf_{s \to 0}\|\nabla T_{t+s} F\|_{L^1}\le  \liminf_{s \to 0} \|\nabla T_sF\|_{L^1} \, .
\end{align*}
By taking $\limsup_{t \to 0}$, we obtain $\limsup_{t \to 0}\|\nabla T_tF\|_{L^1} \le  \liminf_{s \to 0}\|\nabla T_sF\|_{L^1}$, which concludes the proof. 
\end{proof}

\subsection{$p$-Bakry--\'Emery inequality}\label{sec:BE}
In order to complete the proof of Proposition \ref{prop: Tf}, 
we show the $p$-Bakry--\'Emery inequality for the {\it Hodge heat flow}, which implies in turn the scalar version \eqref{eq: 1-BE} of the $p$-Bakry--\'Emery inequality. 
It will play a significant role also in the proof of Theorem~\ref{prop: IBV}.
Recall that, for $F=\Phi(f_1^*,\ldots, f_k^*) \in \CylF(\U(\R^n))$, 
\begin{align} \label{eq:GLC}
\nabla F(\gamma, x)&=\sum_{i=1}^k\partial_i\Phi(f_1^*\gamma,\ldots, f_k^*\gamma)\nabla_{\R^n}f_i(x) \ ,
\\
\Delta F(\gamma)&= \sum_{i, j=1}^k \partial_{ij}^2 \Phi (f_1^*\gamma, \ldots, f_k^*\gamma) \langle \nabla_{\R^n}f_i, \nabla_{\R^n}f_j \rangle_{T_\gamma \U}
 + \sum_{i=1}^k \partial_{i} \Phi(f_1^*\gamma, \ldots, f_k^*\gamma) (\Delta_{\R^n} f_i)^*\gamma \ , \notag
\end{align}
where $\langle \nabla_{\R^n}f_i, \nabla_{\R^n}f_j \rangle_{T_\gamma \U}:=(\langle \nabla_{\R^n}f_i, \nabla_{\R^n}f_j \rangle_{T_x \U})^*\gamma := \int_{\R^n} \langle \nabla_{\R^n}f_i, \nabla_{\R^n}f_j \rangle_{\R^n}(x) d\gamma(x)$. See e.g., \cite[(4.7)]{AKR98} for the proofs.

\begin{defn}[Hodge Laplacian] \label{defn: HL1} \normalfont
	For $V=\sum_{k=1}^mF_k v_k$ with $F_k=\Phi_k ((f^k_1)^\ast, \ldots, (f^k_\ell)^\ast)$, define Hodge Laplacian of $V$ as
	\begin{align} \label{defn: HL}
		\Delta_H V(\gamma, x)
		&:=\sum_{k=1}^m \sum_{i,j=1}^\ell \partial_{ij}^2\Phi_k\bigl( (f^k_1)^\ast\gamma,\ldots ,(f^k_\ell)^\ast\gamma \bigr)\Bigl(\bigl\langle \nabla_{\R^n} f^k_i, \nabla_{\R^n} f^k_j \bigr\rangle_{T_x\R^n}\Bigr)^{*}\gamma \ v_k(x) 
		\\
		&\quad +\sum_{k=1}^m \sum_{i=1}^\ell \partial_{i}\Phi_k \bigl( (f^k_1)^\ast\gamma,\ldots,(f^k_\ell)^\ast\gamma \bigr)(\Delta_{\R^n} f_k(x))^{*}\gamma \ v_k(x) \notag
		\\
		&\quad +\sum_{k=1}^m\Phi_k\bigl((f^k_1)^\ast\gamma,\ldots,(f^k_\ell)^\ast\gamma\bigr) \Delta_{H, \R^n} v_k(x), 
		\\
		&\quad +2\sum_{k=1}^m \sum_{i=1}^\ell \partial_i\Phi_k \bigl( (f^k_1)^\ast\gamma,\ldots,(f^k_\ell)^\ast\gamma \bigr)
		(\nabla_{\R^n} f_i^k \cdot \nabla_{\R^n}) v_k(x)
		\, \notag
	\end{align}
    where $\Delta_{H, \R^n} v_k$ is the Hodge Laplacian of $v_k\in C^\infty(\R^n;\R^n)$, and $(\nabla_{\R^n} f_i^k \cdot \nabla_{\R^n}) v_k(x)$ is the vector field whose $i$th coordinate coincides with $\bigl\langle \nabla_{\R^n} f^k_i, \nabla_{\R^n} (v_k)_i \bigr\rangle_{T_x\R^n}$.
     It turns out that $\Delta_H V$ does not depend on the choice of both the representative of $V$ and the inner and outer functions of $F_k$ (see \cite[Theorem 3.5]{ADL01}). 
\end{defn}

For the proof of Theorem~\ref{thm: IT} below, we introduce the following space of exponential cylinder functions {\it with Schwartz inner functions}:
  $$
  \ECylF_{\mathcal S}(\U(\R^n)):=\mathrm{Span}_{\R}\Bigl\{
  \exp\bigl\{\log (1+f)^\ast \bigr\} \,  : \, 
  f \in \mathcal S \,,\ -\delta \le f \le 0 \, \,  \text{for some $\delta\in (0,1)$} 
  \Bigr\} \, ,
  $$  
  where $\mathcal S$ is the space of Schwartz functions in $\R^n$ (i.e., functions in $\R^n$ whose derivatives are all rapidly decreasing). We note that $T_t \ECylF_{\mathcal S}(\U(\R^n)) \subset \ECylF_{\mathcal S}(\U(\R^n))$ for every $t>0$, and that $(\Delta, \ECylF_{\mathcal S}(\U(\R^n)))$ is essentially self-adjoint in $L^2(\U(\R^n), \pi)$ exactly by the same proof as in \cite[Theorem 4.2]{AKR98}. 
\begin{rem}\normalfont
Exponential cylinder functions have been originally discussed in~\cite{AKR98}, where they choose a larger class of inner functions. We introduced $\ECylF_{\mathcal S}(\U(\R^n))$ with inner functions in the space $\mathcal S$ of Schwartz functions for the proof of Theorem~\ref{thm: IT}, where we need to choose a smaller class of inner functions to approximate $\ECylF_{\mathcal S}(\U(\R^n))$ by cylinder functions in a sufficiently good way. See the last paragraph of the proof of~Theorem~\ref{thm: IT}. 
\end{rem}

We define the corresponding energy functional:
\begin{equation}
	\EE_H(V, W) := \langle - \Delta_H V, W \rangle_{L^2(T\U, \pi)}
	= \int_{\U(\R^n)}  \mathbf \Gamma^\U(V, W) d\p \, , 
	\quad V,W \in  \CylV(\U(\R^n))\, ,
\end{equation}
where $\mathbf \Gamma^\U$ denotes the square field operator associated with $\Delta_H$.
By \cite[Theorem 3.5]{ADL01}, the form $\EE_H$ is closable on $\CylV(\U(\R^n))$ and the corresponding closure is denoted by $\mathcal D(\EE_H)$ and the corresponding (Friedrichs) extension of $\CylV(\U(\R^n))$ is denoted by $\mathcal D(\Delta_H)$. Let $\{\mathbf T_t\}$ denote the corresponding $L^2$-semigroup. It holds that
\begin{equation}\label{eq: important inclusion}
	\mathbf T_t V \in \mathcal D(\EE_H) \, , \quad
	\text{for any $t\ge 0$ and $V\in \CylV(\U(\R^n))$} \,.
\end{equation}

The following intertwining property holds.

\begin{thm} \label{thm: IT}
	$\nabla T_t F =\mathbf T_t \nabla F$ for any $t \ge 0$ and for any $F \in H^{1,2}(\U(\R^n), \p)$.
\end{thm}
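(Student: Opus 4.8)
The plan is to prove the identity first on a dense class of test functions by an ODE/uniqueness argument, and then extend by density. First I would reduce to $F \in \CylF(\U(\R^n))$; since such $F$ are dense in $W^{1,2}(\U(\R^n),\pi)$ and both sides of the claimed identity are continuous in the relevant topologies ($T_t$ is a contraction on $L^2$, $\mathbf T_t$ is a contraction on $L^2(T\U,\pi)$, and $\nabla$ is closed), it suffices to establish the equality for cylinder functions and then pass to the limit. So fix $F \in \CylF(\U(\R^n))$ and set $u(t) := \nabla T_t F$ and $v(t) := \mathbf T_t \nabla F$, both viewed as curves in $L^2(T\U,\pi)$. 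Note $v(0) = u(0) = \nabla F$, so it is enough to show the two curves solve the same well-posed Cauchy problem.

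The key step is the commutation of the generators. For $v(t)$ we have, by definition of the Hodge semigroup, $\frac{d}{dt} v(t) = \Delta_H \mathbf T_t \nabla F = \Delta_H v(t)$ with $v(t) \in \mathcal D(\Delta_H)$ for $t>0$ (using \eqref{eq: important inclusion} and the smoothing of $\mathbf T_t$). For $u(t)$, I would differentiate: $\frac{d}{dt} u(t) = \frac{d}{dt}\nabla T_t F = \nabla \Delta_{\U(\R^n)} T_t F$, where I use that $T_t F$ stays in (a core for) $\mathcal D(\Delta_{\U(\R^n)})$ and that $\nabla$ commutes with differentiation in $t$ for this smooth curve. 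The crucial algebraic identity is then the pointwise Weitzenböck-type relation $\nabla \Delta_{\U(\R^n)} G = \Delta_H \nabla G$ for $G \in \CylF(\U(\R^n))$; this should be checked by a direct computation using the explicit formula \eqref{eq:GF} for $\nabla G$, the formula \eqref{defn: HL} for $\Delta_H$, and the fact that on $\R^n$ the Hodge Laplacian acting on exact $1$-forms $\nabla f_i$ equals $\nabla \Delta f_i$ (the flat Bochner identity, since the Ricci curvature of $\R^n$ vanishes). Granting this, $u(t)$ solves $\frac{d}{dt}u(t) = \Delta_H u(t)$, $u(0) = \nabla F$, the same equation as $v(t)$. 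By uniqueness of solutions to the abstract heat equation associated with the self-adjoint operator $\Delta_H$ (standard semigroup theory — compute $\frac{d}{dt}\|u(t)-v(t)\|_{L^2(T\U)}^2 = 2\langle \Delta_H(u-v), u-v\rangle \le 0$), we get $u(t) = v(t)$ for all $t \ge 0$, which is the claim for cylinder $F$.

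The main obstacle I anticipate is the justification of the differentiation of $t \mapsto \nabla T_t F$ and the identity $\nabla \Delta_{\U(\R^n)} G = \Delta_H \nabla G$ at the level of rigor: one must know that $T_t F$ lands in a space on which $\nabla \Delta_{\U(\R^n)}$ is meaningful and that $\nabla T_t F \in \mathcal D(\Delta_H)$, so that the ODE computation in $L^2(T\U,\pi)$ is legitimate rather than merely formal. The cleanest route is probably to work with the exponential cylinder functions of Definition \ref{defn: Ecyl} (or a spectral/finite-dimensional approximation via $\U(B_r)$ and Proposition \ref{prop: IDP}), on which both semigroups act explicitly, verify the intertwining there by the finite-dimensional flat Bochner identity, and then use the density of these classes together with the closedness of $\nabla$ and the $L^2$-contractivity of $T_t$ and $\mathbf T_t$ to conclude in general. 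Once the intertwining holds on cylinder functions, the extension to $W^{1,2}(\U(\R^n),\pi)$ is routine: take $F_n \in \CylF(\U(\R^n))$ with $F_n \to F$ in $W^{1,2}$, so $\nabla F_n \to \nabla F$ in $L^2(T\U,\pi)$, apply $\mathbf T_t$ (contraction) to the right-hand side and $\nabla T_t$ (with $T_t F_n \to T_t F$ in $W^{1,2}$, using that $T_t$ is bounded on $W^{1,2}$ by Proposition \ref{prop: MD}-type energy estimates) to the left-hand side, and pass to the limit.
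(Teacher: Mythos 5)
Your plan correctly identifies the key algebraic input: the commutation $\nabla\Delta_{\U(\R^n)}G = \Delta_H\nabla G$ on cylinder functions $G$, obtained by lifting the flat Bochner/Weitzenb\"ock identity on $\R^n$ through the explicit formulas \eqref{eq:GF} and \eqref{defn: HL}. The density-based extension to $W^{1,2}$ at the end is also fine and is essentially what the paper does. The problem is the middle step, exactly where you flag your own concern.

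The ODE/uniqueness argument, as written, has a genuine gap. To claim $\tfrac{d}{dt}\nabla T_tF=\nabla\Delta T_tF=\Delta_H\nabla T_tF$ you must apply the Weitzenb\"ock identity to $G=T_tF$ for $t>0$, but $T_tF$ is not a cylinder function (the semigroup does not preserve $\CylF(\U(\R^n))$), so the pointwise identity you established is not available there. Likewise, your Gr\"onwall-type computation $\tfrac{d}{dt}\|u-v\|^2=2\langle\Delta_H(u-v),u-v\rangle$ requires $u(t)=\nabla T_tF\in\mathcal D(\Delta_H)$, which is not known a priori and is more or less what one is trying to prove. The suggested remedies (exponential cylinder functions, finite-dimensional approximation) are left as speculation and are not what closes the gap. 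The ingredient you are missing is the \emph{essential self-adjointness} of $(\Delta_{\U(\R^n)},\CylF(\U(\R^n)))$ (\cite[Theorem 5.3]{AKR98}). This is precisely the hypothesis that lets one pass from a commutation relation that holds only on a core to the intertwining of the semigroups: the paper does not run an ODE argument by hand, but instead invokes Shigekawa's abstract intertwining theorem (\cite[Theorem 2.1]{S06}) with $A=\Delta$, $\hat A=\Delta_H$, $\mathcal D=\CylF$, $R=0$, which packages exactly the domain-theoretic issues you stumbled on. Without the essential self-adjointness input (or an equivalent device), the argument does not close.
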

\begin{proof}
We apply~\cite[Theorem 2.1]{S06} with $\mathcal D=\CylF(\U(\R^n))$, $D = \nabla$, $A=\Delta$, $\hat{A} = \Delta_H$, $\hat{T}_t = \mathbf T_t$, $R=0$, which concludes the sought statement. To do so, we verify Conditions (i)--(iv) of  ~\cite[Theorem 2.1]{S06}.
Condition (i) and (ii) are straightforward by construction. Using the commutation $\nabla_{\R^n} \Delta_{\R^n} = \Delta_{H, \R^n} \nabla_{\R^n}$ and the representation \eqref{eq:GLC} and \eqref{defn: HL}, we can readily verify Condition~(iv), i.e., $\nabla \Delta F = \Delta_H \nabla F$ for any $F \in \CylF(\U(\R^n))$.  

We now verify Condition (iii), viz., $(\lambda - \Delta) \CylF(\U(\R^n)) \subset  H^{1,2}(\U(\R^n), \pi)$ is dense for sufficiently large $\lambda>0$. We prove it with $\lambda =0$, viz., $\Delta \CylF(\U(\R^n)) \subset  H^{1,2}(\U(\R^n), \pi)$ is dense. 
We first prove that $\Delta \ECylF_\mathcal S(\U(\R^n)) \subset  H^{1,2}(\U(\R^n), \pi)$ is dense.
Define $L:=\{F \in \Delta \mathcal D(\Delta): F \in H^{1,2}(\U(\R^n), \pi)\}$. By Lemma~\ref{lem: ACS111} below, $\Delta \mathcal D(\Delta) \subset L^2(\U(\R^n), \pi)$ is dense. Furthermore, 
$$T_t\Delta \mathcal D(\Delta) = \Delta T_t \mathcal D(\Delta) \subset \Delta \mathcal D(\Delta) \cap H^{1,2}(\U(\R^n), \pi)\ .$$ In particular, $T_t\Delta \mathcal D(\Delta)  \subset L$. Combining \cite[(4.26)]{AmbGigSav14} with the fact that $\EE$ coincides with the Cheeger energy associated with the $L^2$-transportation distance~${\sf d}_\U$ and the Poisson measure $\pi$ (see \cite[Proposition 2.3]{EH15}), we have the following regularisation inequality
\begin{align}\label{e:gi}
\EE(T_t F) \le \frac{\|F\|_{L^2}^2}{2t} \qquad t>0 \ .
\end{align}
Therefore, combined with the density $\Delta \mathcal D(\Delta) \subset L^2(\U(\R^n), \pi)$, the space~$\mathcal T:=\cup_{t>0}T_t\Delta \mathcal D(\Delta)$ is weakly dense in $H^{1,2}(\U(\R^n), \pi)$. As $\mathcal T$ is a convex subset in $H^{1,2}(\U(\R^n), \pi)$, by Mazur's lemma, 
\begin{align}\label{s:SD}
\text{$\mathcal T$ is  strongly dense in $H^{1,2}(\U(\R^n), \pi)$} \, .
\end{align} 
For every $G \in \mathcal T = \cup_{t>0}T_t \Delta  \mathcal D(\Delta) = \cup_{t>0}\Delta  T_t \mathcal D(\Delta)$ with an expression $G = \Delta T_t F$ with $F \in \mathcal D(\Delta)$ for some $t>0$, we can take $F_n \in \ECylF_\mathcal S(\U(\R^n))$ so that 
\begin{align}\label{e:ESA1}
\|\Delta F_n-\Delta F\|_{L^2} + \|F_n - F\|_{L^2} \to 0
\end{align} 
by the essential self-adjointness of $(\Delta, \ECylF_\mathcal S(\U(\R^n)))$. Furthermore, it can be readily verified that 
\begin{align}\label{e:ESA1}
\|\Delta T_t F_n-\Delta T_t F\|_{L^2} + \|T_t F_n - T_t F\|_{L^2} \to 0
\end{align} 
by the $L^2$-contraction property of $T_t$ and the commutation $\Delta T_t = T_t \Delta$ for $t>0$. Noting $T_{t} F_n \in \ECylF_\mathcal S(\U(\R^n))$ by the stability of $\ECylF_\mathcal S(\U(\R^n))$ under the action of $T_t$, the formula~\eqref{e:ESA1} particularly shows that the sequence~$(\Delta T_t F_n)_{n \in \N} \subset \Delta\ECylF_\mathcal S(\U(\R^n))$ approximates $G = \Delta T_t F\in \mathcal T$ in the strong $L^2$-topology. Furthermore, by using~\eqref{e:gi} again, we have the uniform energy bound:
\begin{align}\label{in:gib}
\sup_{n \in \N}\EE(\Delta T_tF_n) =\sup_{n \in \N}\EE(T_t\Delta  F_n)  \le \sup_{n \in \N}\frac{1}{2t} \|\Delta F_n\| <\infty.\end{align}
 For every $H \in \mathcal D(\Delta)$, 
\begin{align} \label{e:IBP}
&\int_{\U(\R^n)} \bigl\langle \nabla(\Delta T_t F_n-G), \nabla H \bigr\rangle_{T_\gamma \U} d\pi(\gamma)  + \int_{\U(\R^n)} (\Delta T_t F_n-G) H d\pi  
\\
&= - \int_{\U(\R^n)} (\Delta T_t F_n-\Delta T_t F)\Delta H d\pi  +\int_{\U(\R^n)} (\Delta T_t F_n-\Delta T_t F) H d\pi \notag
\\
&\xrightarrow{n \to \infty} 0 \ . \notag
\end{align}
By the uniform bound~\eqref{in:gib} and the fact that $\mathcal D(\Delta)$ is dense in $H^{1,2}(\U(\R^n), \pi)$, \eqref{e:IBP} shows that $(\Delta T_t F_n)_{n \in \N} \subset \Delta\ECylF_\mathcal S(\U(\R^n))$ converges to $G = \Delta T_t F\in \mathcal T$ weakly in $H^{1,2}(\U(\R^n), \pi)$. Thus, $\Delta\ECylF_\mathcal S(\U(\R^n))$ approximates $\mathcal T$ in the weak $H^{1,2}(\U(\R^n), \pi)$ topology.  By \eqref{s:SD} and the fact that  $\Delta\ECylF_\mathcal S(\U(\R^n))$ is a convex subspace in $H^{1,2}(\U(\R^n), \pi)$, by applying Mazur's lemma again, we conclude that  $\Delta\ECylF_\mathcal S(\U(\R^n))$ is strongly dense in $H^{1,2}(\U(\R^n), \pi)$. 

Therefore, to complete the verification of Condition (iii), it suffices to prove that $\Delta\CylF(\U(\R^n))$ approximates $\Delta\ECylF_\mathcal S(\U(\R^n))$ in $H^{1,2}(\U(\R^n), \pi)$. The idea of the proof is, however, the same as in that of \cite[Proposition~4.1]{AKR98}: for $F= \exp\bigl\{\log (1+f)^\ast \bigr\} \in \ECylF_\mathcal S(\U(\R^n))$, we can take an approximation  $f_n \in C_c^\infty(\R^n)$ of the inner function $f \in \mathcal S$ so that $F_n= \exp\bigl\{\log (1+f_n)^\ast \bigr\} \in \CylF(\U(\R^n))$ converges to $F$ in a sufficiently good way to conclude that $\Delta\CylF(\U(\R^n))$ approximates $\Delta\ECylF_\mathcal S(\U(\R^n))$ in $H^{1,2}(\U(\R^n), \pi)$. As this proof is mostly a repetition of~\cite[Proposition~4.1]{AKR98}, we omit the details here. 
\end{proof}

\begin{lem} \label{lem: ACS111}
For $F \in L^2(\U(\R^n), \p)$, there exists $F_n \in \mathcal D(\Delta)$ so that $\|\Delta F_n - F\|_{L^2} \to 0.$
\end{lem}
\begin{proof}
We first show that $\Delta G_{\alpha}F \to \Delta G_{\beta}F$ in $L^2(\U(\R^n), \p)$ for every $F \in L^2(\U(\R^n), \pi)$ as $\alpha \to \beta$ for $\alpha, \beta>0$. 
By the resolvent equality $G_\alpha - G_\beta = (\beta-\alpha)G_\alpha G_\beta$, we have that 
$$
\|\Delta(G_\alpha-G_\beta) F\|_{L^2} = (\beta-\alpha)\|\Delta G_\alpha G_\beta F\|_{L^2} = (\beta-\alpha)\| G_\alpha \Delta G_\beta F\|_{L^2}.
$$
By the $L^2$-contraction of $\alpha G_\alpha$, 
we obtain 
$$
(\beta-\alpha)\| G_\alpha \Delta G_\beta F\|_{L^2} \le \frac{\beta-\alpha}{\alpha^2}\|\Delta G_\beta F\|_{L^2} \to 0, \quad \alpha \to \beta.
$$
Thus, $\Delta G_{\alpha}F \to \Delta G_{\beta}F$ as $\alpha \to \beta$ in $L^2(\U(\R^n), \p)$. 

We now prove the sought statement. 
Let $F_n:=(1/(\alpha-1))G_{\alpha + 1/n}F \in \mathcal D(\Delta)$. Then, by the general identity $(\alpha-\Delta)G_{\alpha} = {\rm Id}$, and by the convergence $\Delta G_{\alpha}F \to \Delta G_{\beta}F$ in $L^2(\U(\R^n), \p)$ proven above, we have 
$$\Delta F_n = \frac{1}{\alpha-1}\Delta G_{\alpha + 1/n}F \xrightarrow{n \to \infty}  \frac{1}{\alpha-1}\Delta G_{\alpha}F = \frac{(\alpha-1)}{(\alpha-1)}F = F, \quad F \in L^2(\U(\R^n), \p). \qedhere$$
\end{proof}

\bigskip

\begin{thm}\label{thm: GE1}
	Let $F \in \D(\EE_H)$. Then $|\mathbf T_t F|_{T\U} \le T_t|F|_{T\U}$ $\p$-a.e.\ for every $t \ge 0.$
	In particular ${\bf T}_t$ can be extended to the $L^p$-velocity fields $L^p(T\U(\Omega), \pi_\Omega)$ for every $1\le p < \infty$.	
\end{thm}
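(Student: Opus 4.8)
The plan is to deduce the pointwise domination $|\mathbf T_t F|_{T\U} \le T_t |F|_{T\U}$ from the intertwining identity of Theorem \ref{thm: IT} together with a ``carré du champ'' / Kato-type inequality comparison between the Hodge semigroup and the scalar heat semigroup. The heuristic is that the Bochner-type identity $\Delta_H = \nabla\Delta\nabla^{-1}$ at the configuration level produces a nonnegative Ricci-type term which, combined with essential self-adjointness, forces the sub-commutation $|\mathbf{T}_t V|_{T\U} \le T_t |V|_{T\U}$. The cleanest route avoids computing the curvature term explicitly: instead one compares the two semigroups through a duality/minimality argument.

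First I would fix $V \in \CylV(\U(\R^n))$ and a nonnegative $\phi \in \CylF(\U(\R^n))$, and consider the function $t \mapsto \int_{\U(\R^n)} (T_{T-t}\phi)\, |\mathbf T_t V|_{T\U}\, d\pi$ on $[0,T]$, the standard interpolation used to prove Bakry--Émery estimates. Differentiating in $t$ and using that $\frac{d}{dt}\mathbf T_t V = \Delta_H \mathbf T_t V$ (valid by \eqref{eq: important inclusion}) and $\frac{d}{dt} T_{T-t}\phi = -\Delta T_{T-t}\phi$, one obtains a sign for the derivative provided the pointwise inequality
\[
\langle \Delta_H W, \tfrac{W}{|W|_{T\U}}\rangle_{T\U} \le \Delta\bigl(|W|_{T\U}\bigr)
\]
holds in a suitable weak sense (a Kato inequality for $\Delta_H$). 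This in turn follows from the representation \eqref{defn: HL} of $\Delta_H$ together with the corresponding Euclidean Bochner/Kato inequality $\langle \Delta_H w, w/|w|\rangle \le \Delta|w|$ for vector fields $w$ on $\R^n$, lifted to $\U(\R^n)$ via the square-field operator $\mathbf\Gamma^\U$ and the scalar $\Gamma$ for $\Delta_{\U(\R^n)}$. The key algebraic input is that the ``extra'' terms in \eqref{defn: HL} involving $\langle \nabla f_i^k, \nabla f_j^k\rangle_{T\U}$ and $(\Delta f_k)^\star$ are precisely those appearing when one differentiates $|\mathbf T_t V|_{T\U}$, so they cancel, leaving only the nonnegative Euclidean Ricci contribution (which is $\ge 0$ since $\R^n$ is flat, indeed it vanishes). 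After integrating the differential inequality from $0$ to $T$ and using $\mathbf T_0 V = V$, $T_0\phi=\phi$, one gets $\int (T_T\phi)|V|_{T\U}\,d\pi \ge \int \phi\, |\mathbf T_T V|_{T\U}\, d\pi$; by self-adjointness of $T_T$ this rearranges to $\int \phi\,(T_T|V|_{T\U} - |\mathbf T_T V|_{T\U})\,d\pi \ge 0$, and since $\phi \ge 0$ is arbitrary in the dense class $\CylF(\U(\R^n))$ we conclude $|\mathbf T_t V|_{T\U} \le T_t|V|_{T\U}$ $\pi$-a.e.

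For the $L^p$-extension, I would argue as follows: by the just-proven domination and Jensen's inequality for the Markovian operator $T_t$, for $V \in \CylV(\U(\R^n))$ and $1 \le p < \infty$,
\[
\|\mathbf T_t V\|_{L^p(T\U)}^p = \int |\mathbf T_t V|_{T\U}^p\, d\pi \le \int (T_t|V|_{T\U})^p\, d\pi \le \int T_t(|V|_{T\U}^p)\, d\pi = \int |V|_{T\U}^p\, d\pi = \|V\|_{L^p(T\U)}^p,
\]
using the contractivity (indeed conservativity) of $T_t$ on $L^1$. Hence $\mathbf T_t$ is an $L^p(T\U,\pi)$-contraction on the dense subspace $\CylV(\U(\R^n))$ (dense by Proposition \ref{prop:LPV}), so it extends uniquely to a contraction semigroup on $L^p(T\U,\pi)$.

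I expect the main obstacle to be making the Kato-type inequality for $\Delta_H$ rigorous along the flow $t\mapsto \mathbf T_t V$: one must justify differentiating $|\mathbf T_t V|_{T\U}$ where it may vanish, and control the a priori only-$L^2$ regularity of $\mathbf T_t V$ and its image under $\Delta_H$. The standard remedy is to regularize $|\cdot|_{T\U}$ by $\sqrt{|\cdot|_{T\U}^2 + \delta}$, carry out the computation with this smooth convex function (so all terms stay in $\CylF$ by the stability noted after \eqref{defn: HL}), obtain the inequality with a $\delta$-dependent error, and let $\delta \to 0$; the flatness of $\R^n$ ensures no curvature term survives to obstruct the limit. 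A secondary technical point is that one should first establish the inequality for $V$ in the form core $\CylV(\U(\R^n))$, where \eqref{defn: HL} applies verbatim and $\mathbf T_t V \in \mathcal D(\EE_H)$, and only afterwards pass to general $F \in \mathcal D(\EE_H)$ by approximation in the graph norm of $\EE_H$, using lower semicontinuity of $V \mapsto \int \phi |V|_{T\U}\,d\pi$ under $L^2(T\U)$-convergence.
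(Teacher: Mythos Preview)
Your proposal is correct and follows a genuinely different, more self-contained route than the paper. The paper proceeds in two lines: it quotes the Weitzenb\"ock formula from \cite[Theorem 3.7]{ADL01}, $\Delta_H = \nabla^*\nabla + R^\U$, observes that $R^\U = 0$ by flatness of $\R^n$, and then invokes Shigekawa's general domination theorem \cite[Theorem 3.1, Proposition 3.5]{S97} as a black box. What you do instead is essentially \emph{reproduce} the content of Shigekawa's argument in this concrete setting: the interpolation $t\mapsto \int (T_{T-t}\phi)\,|\mathbf T_t V|_{T\U}\,d\pi$ together with the Kato-type inequality (regularised via $\sqrt{|\cdot|^2+\delta}$) is precisely the mechanism underlying \cite{S97}, and your observation that the ``extra'' terms in \eqref{defn: HL} cancel against the derivatives of $|\mathbf T_t V|_{T\U}$, leaving only the (vanishing) Euclidean Ricci term, is the Weitzenb\"ock identity computed by hand. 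Your approach buys independence from the external references at the cost of length; the paper's buys brevity at the cost of opacity. For the $L^p$-extension your contraction argument via Jensen is equivalent to the paper's Cauchy-sequence argument.

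One minor point: where you write ``valid by \eqref{eq: important inclusion}'' to justify $\frac{d}{dt}\mathbf T_t V = \Delta_H \mathbf T_t V$, note that \eqref{eq: important inclusion} only gives $\mathbf T_t V \in \mathcal D(\EE_H)$; you need $\mathbf T_t V \in \mathcal D(\Delta_H)$ for $t>0$, which holds by analytic-semigroup regularisation rather than by that inclusion. This is a citation issue, not a gap.
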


\begin{proof}
By the Weitzenb\"ock formula \cite[Theorem 3.7]{ADL01} on $\U(\R^n)$, we can express $\Delta_H = \nabla^*  \nabla + R^\U$, where $R^\U$ is the lifted curvature tensor from the base space $\R^n$. Since $\R^n$ is flat, we can easily deduce $R^\U = 0$. 

Now, setting $\mathbf \Gamma(V, W):=\mathbf \Gamma^\U(V, W) + 2 R^\U(V, W) = \mathbf \Gamma^\U(V, W)$ we can apply \cite[Theorem 3.1]{S97} (see the proof of \cite[Theorem 3.1]{S97} for $p=1$) and \cite[Proposition 3.5]{S97}, to get the sought conclusion of the first assertion.

We now prove the second assertion. Let $V\in L^p(T\U(\Omega), \pi_\Omega)$. Then, the density of cylinder vector fields gives the existence of a sequence $V_n\in \CylF(\R^n) \subset \D(\EE_H)$ such that $|V_n - V|_{T\U}\to 0$ in $L^p(\U(\R^n), \p)$ as $n\to \infty$. We can define
\begin{equation}
	{\bf T}_t V := \lim_{n\to \infty} {\bf T}_t V_n \, .
\end{equation}
The existence of the limit follows from 
\begin{equation}
	|{\bf T}_t V_n - {\bf T}_t V_m|_{T\U} \le T_t|V_n - V_m|_{T\U} \, ,
\end{equation}
as well as the independence of the limit from the approximating sequence $(V_n)_{n\in\N}$.
\end{proof}

\begin{thm}[$p$-Bakry--\'Emery estimate]\label{rem: BE}


Let $p>1$. The following assertions hold:
\begin{itemize}
\item[(i)] $T_t: H^{1,p}(\U(\R^n), \p) \to H^{1,p}(\U(\R^n), \p)$ is a continuous operator for every $t>0$.

\item[(ii)] For every $F\in H^{1, p}(\U(\R^n), \p)$, 
	\begin{equation} \label{eq: SWP}
		|\nabla T_t F |_{T\U}^p \le T_t | \nabla F |_{T\U}^p \quad \text{$\pi$-a.e.}\, .
	\end{equation}

\item[(iii)] Let $1<p \le 2$. For every $F\in L^p(\U(\R^n),\p)$ it holds that 
	\begin{align} \label{ineq: REH}
	 \|\nabla T_t F\|_{L^p(T\U)} \le  C(p) t^{-1/2} \| F \|_{L^p}, \quad  t>0.
	\end{align}
In particular, $T_t: L^p(\U(\R^n), \p) \to H^{1,p}(\U(\R^n), \p)$ is a continuous operator for every $t>0$.

\item[(iv)]  For every $t,s>0$, $F\in L^p(\U(\R^n),\pi)$, it holds that $\|\nabla T_t F\|_{L^1(T\U(\R^n))}<\infty$ and 
\begin{equation}\label{eq: 1-BE-2}
	|\nabla T_{t+s} F|_{T\U} \le T_t|\nabla T_s F|_{T\U}
	\quad \p\text{-a.e.} \, .
\end{equation}
\end{itemize}
	\end{thm}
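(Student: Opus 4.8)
The plan is to derive all four assertions from the intertwining property of Theorem \ref{thm: IT} and the pointwise vector-field domination of Theorem \ref{thm: GE1}, combined with standard interpolation and semigroup estimates. First I would prove (ii). For $F \in \CylF(\U(\R^n)) \subset H^{1,2}(\U(\R^n),\p)$, Theorem \ref{thm: IT} gives $\nabla T_t F = \mathbf T_t \nabla F$, and Theorem \ref{thm: GE1} yields $|\nabla T_t F|_{T\U} = |\mathbf T_t \nabla F|_{T\U} \le T_t |\nabla F|_{T\U}$ $\p$-a.e. Since $T_t$ is Markovian (hence a contraction preserving order) and Jensen's inequality applies to the probability kernel $T_t$, we get $|\nabla T_t F|_{T\U}^p \le (T_t|\nabla F|_{T\U})^p \le T_t(|\nabla F|_{T\U}^p)$ $\p$-a.e., which is \eqref{eq: SWP}. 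For general $F \in H^{1,p}(\U(\R^n),\p)$ one approximates by cylinder functions $F_n \to F$ in $H^{1,p}$; along a subsequence $\nabla F_n \to \nabla F$ $\p$-a.e., the RHS passes to the limit by dominated/monotone convergence for $T_t$, and the LHS by closability of $\nabla$ together with the $L^p$-continuity of $T_t$. Integrating \eqref{eq: SWP} over $\U(\R^n)$ against $\p$ and using that $T_t$ is $\p$-invariant (mass-preserving) gives $\|\nabla T_t F\|_{L^p} \le \|\nabla F\|_{L^p}$, and combined with the $L^p$-contraction of $T_t$ this gives the continuity statement (i).

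Next I would prove (iii) by the classical Stein-type interpolation argument. For $p=2$ the estimate $\|\nabla T_t F\|_{L^2} \le C t^{-1/2}\|F\|_{L^2}$ is just the spectral bound $\|\sqrt{-\Delta}\, e^{t\Delta}\|_{L^2 \to L^2} \le C t^{-1/2}$ coming from $\sup_{\lambda \ge 0}\sqrt{\lambda}\,e^{-t\lambda} \le C t^{-1/2}$, together with $\|\nabla G\|_{L^2}^2 = \EE(G,G) = \|\sqrt{-\Delta}\,G\|_{L^2}^2$. For $1<p<2$ one interpolates: writing $T_t = T_{t/2}\circ T_{t/2}$, one uses the gradient bound for the Hodge semigroup. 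More precisely, from $|\mathbf T_{t/2} W|_{T\U} \le T_{t/2}|W|_{T\U}$ (Theorem \ref{thm: GE1}) and $\nabla T_t F = \mathbf T_{t/2}(\nabla T_{t/2}F)$ (Theorem \ref{thm: IT}), one gets $|\nabla T_t F|_{T\U} \le T_{t/2}|\nabla T_{t/2}F|_{T\U}$ $\p$-a.e.; taking $L^p$-norms and using the $L^p$-contractivity of $T_{t/2}$ reduces matters to bounding $\|\nabla T_{t/2}F\|_{L^p}$. One then runs the standard argument combining the $L^2$-bound, the $L^p$-$L^2$ smoothing of $T_{t/4}$ (hypercontractivity/ultracontractivity is not available, so instead one uses the $L^p$-spectral calculus: $\|\sqrt{-\Delta^{(p)}}\,T_t^{(p)}\|_{L^p\to L^p}\le C(p)t^{-1/2}$, which follows from the boundedness of the imaginary powers / the $H^\infty$-calculus for the $L^p$-generator of a symmetric Markovian semigroup, see e.g. the references to \cite{S97}). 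Then (iii) follows, and $T_t$ maps $L^p$ continuously into $H^{1,p}$.

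Finally, (iv) combines (ii) and (iii): for $F \in L^p(\U(\R^n),\p)$ with $1<p\le 2$ and $s>0$, assertion (iii) gives $T_s F \in H^{1,p}(\U(\R^n),\p)$, so \eqref{eq: SWP} applied to $T_s F$ (and the semigroup property $T_{t+s} = T_t T_s$) gives $|\nabla T_{t+s}F|_{T\U}^p \le T_t(|\nabla T_s F|_{T\U}^p)$; taking $p$-th roots and applying Jensen for the Markov kernel $T_t$ gives $|\nabla T_{t+s}F|_{T\U} \le (T_t(|\nabla T_s F|_{T\U}^p))^{1/p} $, but we want the stronger $L^1$-type statement $|\nabla T_{t+s}F|_{T\U}\le T_t|\nabla T_s F|_{T\U}$; this last one comes directly from the vector-field domination: $\nabla T_{t+s}F = \mathbf T_t(\nabla T_s F)$ by Theorem \ref{thm: IT} (valid since $T_s F \in H^{1,2}$ when... — here one must be careful and may need $p=2$ or an approximation, which is the delicate point, see below), and Theorem \ref{thm: GE1} then gives $|\mathbf T_t(\nabla T_s F)|_{T\U}\le T_t|\nabla T_s F|_{T\U}$. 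Finiteness $\|\nabla T_t F\|_{L^1}<\infty$ follows from (iii) (for $p\le 2$, $L^p \subset L^1_{loc}$ is not automatic on an infinite measure space, but $\p$ is a probability measure so $L^p(\p)\subset L^1(\p)$ and $\|\nabla T_tF\|_{L^1}\le \|\nabla T_tF\|_{L^p}<\infty$); for $p>2$ one first applies the case $p=2$ after noting $L^p(\p)\subset L^2(\p)$.

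\textbf{Main obstacle.} The delicate point is the interpolation step in (iii): unlike the Euclidean or Wiener setting, we do not have ultracontractivity on $\U(\R^n)$, so one cannot get the $t^{-1/2}$ bound for $p<2$ by the cheap route of Riesz–Thorin between an $L^1$- and an $L^\infty$-bound. Instead one must invoke the bounded $H^\infty$-functional calculus (equivalently, boundedness of imaginary powers, or square-function estimates) for the $L^p$-generator of the symmetric Markovian semigroup $\{T_t\}$ — this is where the results and techniques of \cite{S97} enter — and verify that the Littlewood–Paley/$g$-function machinery used there applies verbatim to $\{T_t\}$ on $\U(\R^n)$, using only that it is a symmetric Markovian semigroup (which it is) together with the domination $|\mathbf T_t W|\le T_t|W|$ from Theorem \ref{thm: GE1} to control the vector-valued square functions. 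A secondary subtlety, already flagged above, is justifying $\nabla T_{t+s}F = \mathbf T_t \nabla T_s F$ in (iv) when $F$ is merely $L^p$: one resolves it by first using (iii) to land $T_s F$ in $H^{1,p}\subset$ (after a further time step) a space where Theorem \ref{thm: IT} and its $L^p$-extension from Theorem \ref{thm: GE1} apply, then passing to the limit $s'\downarrow 0$ inside $T_{s'}$.
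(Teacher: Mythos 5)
Your treatment of parts (i), (ii) and (iv) matches the paper's proof closely: use the intertwining $\nabla T_t F = \mathbf T_t\nabla F$ and domination $|\mathbf T_t W|_{T\U}\le T_t|W|_{T\U}$ to get the pointwise Bakry--\'Emery inequality on cylinder functions, apply Jensen for the Markov kernel to pass to exponent $p$, extend to $H^{1,p}$ by density and lower semicontinuity, and for (iv) first reduce $p>2$ to $p\le 2$ via $L^p(\p)\subset L^2(\p)$ (since $\p$ is a probability) and then approximate $T_sF\in H^{1,p}$ by cylinder functions to justify the commutation.

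Part (iii) is where you deviate from the paper, and where there is a genuine gap. You propose the classical route $\nabla T_t = \bigl(\nabla(-\Delta)^{-1/2}\bigr)\circ\bigl(\sqrt{-\Delta}\,T_t\bigr)$, getting $\bigl\|\sqrt{-\Delta}\,T_t\bigr\|_{L^p\to L^p}\le C t^{-1/2}$ from Stein's Littlewood--Paley theory / $H^\infty$-calculus for symmetric Markovian semigroups. The problem is that the first factor is the (vector-valued) Riesz transform $\nabla(-\Delta)^{-1/2}$, whose $L^p$-boundedness for $p\ne 2$ is precisely what is \emph{not} available on $\U(\R^n)$: the authors themselves single this out as an open issue at the end of Section 5.5 (the remark after Theorem~\ref{prop: IBV}, where the $L^p$-boundedness of vector-valued Riesz transforms is explicitly deferred to future work). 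The identity $\|\nabla G\|_{L^p}\simeq\|\sqrt{-\Delta}\,G\|_{L^p}$ only holds for $p=2$ via the Dirichlet form, so the $H^\infty$-calculus bound on the scalar operator $\sqrt{-\Delta}\,T_t$ does not transfer to a bound on $\|\nabla T_t F\|_{L^p}$; you flag the Littlewood--Paley machinery as the obstacle but the deeper obstacle is this Riesz-transform step, which you do not address.

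The paper's actual proof of (iii) is much more elementary and sidesteps this entirely. Starting from the Dirichlet form identity one computes, for nonnegative cylinder $F$,
\begin{equation*}
p(p-1)\int_0^t\int_{\U(\R^n)}|\nabla T_s F|_{T\U}^2\,|T_sF|^{p-2}\,d\p\,ds
= \|F\|_{L^p}^p - \|T_tF\|_{L^p}^p \le \|F\|_{L^p}^p \, ,
\end{equation*}
by differentiating $\|T_s(F+\e)\|_{L^p}^p$ and letting $\e\to 0$. H\"older's inequality (with exponents $\tfrac{2}{2-p}$ and $\tfrac{2}{p}$, using the $L^p$-contraction of $T_s$) then gives $\int_0^t\int|\nabla T_sF|_{T\U}^p\,d\p\,ds\le C\,t^{(2-p)/2}\|F\|_{L^p}^p$, and the monotonicity of $s\mapsto\|\nabla T_sF\|_{L^p}^p$ (which follows from \eqref{eq: SWP} and $\p$-invariance of $T_t$) converts this average bound into the pointwise-in-time bound $\|\nabla T_tF\|_{L^p}\le C(p)\,t^{-1/2}\|F\|_{L^p}$. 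This uses only the Bakry--\'Emery inequality from parts (i)--(ii), Markovianity, and H\"older --- no functional calculus and no Riesz transforms. You should adopt this argument; your route as stated does not close.
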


\begin{proof}
{\bf (i).} By Theorem \ref{thm: IT} and Theorem \ref{thm: GE1}, for any $F\in \CylF(\U(\R^n))$ it holds that 
	\begin{equation} \label{BEC-1}
		|\nabla T_t F|_{T\U} = |\mathbf T_t \nabla F|_{T\U} \le T_t |\nabla F|_{T\U}
		\quad\pi \text{-a.e.} \, .
	\end{equation}
	A simple application of Jensen's inequality to \eqref{BEC-1} gives
	\begin{equation} \label{BEC-2}
		|\nabla T_t F |_{T\U}^p \le T_t | \nabla F |_{T\U}^p \,,
		\quad
		\text{for $F \in \Cyl$ and $p\ge 1$} \, .
	\end{equation}
Let $F_n \in \Cyl$ be a $H^{1,p}(\U(\R^n), \p)$-Cauchy sequence. Then, by \eqref{BEC-2} and the invariance $\p(T_tf)=\p(f)$,
\begin{align} \label{eq: BE-1}
\int_{\U(\R^n)}|\nabla T_t(F_n-F_m)|^p_{T\U} d\p \le \int_{\U(\R^n)}T_t |\nabla (F_n - F_m)|^p_{T\U} d\p =  \int_{\U(\R^n)}|\nabla (F_n - F_m)|^p_{T\U} d\p\to 0. 
\end{align} 
Since $H^{1,p}(\U(\R^n), \p)$ is the closure of $\Cyl$ w.r.t.\ the norm $\|\nabla \cdot \|_{L^p(T\U)}+\| \cdot\|_{L^p(\U, \p)}$, by \eqref{eq: BE-1}, the operator $T_t$ is extended to $H^{1,p}(\U(\R^n), \p)$ continuously. The proof of the first assertion is complete.

{\bf (ii).} Let $F \in H^{1,p}(\U(\R^n), \p)$ and take $F_n \in \Cyl$ converging to $F$ in $H^{1,p}(\U(\R^n), \p)$. Then, by the lower semi-continuity of $|\nabla \cdot|^p_{T\U}$ w.r.t.\ the $L^p$-strong convergence,  the continuity of the $L^p$-semigroup $T_t$ and the inequality \eqref{BEC-2}, we obtain
\begin{align*}
|\nabla T_{t+s} F|^p_{T\U} &= |\nabla T_{t} T_s F|^p_{T\U} \le \liminf_{n \to \infty}|\nabla T_{t}T_s F_n|^p_{T\U}  
\le  \liminf_{n \to \infty}T_t|\nabla T_s F_n|_{T\U}^p  
\le  T_t|\nabla T_s F|_{T\U}^p.
\end{align*}
Here the last equality follows from the assertion (i).

{\bf (iii).} Let $p>1$ be fixed.
	For any $F\in \CylF(\U(\R^n))$ satisfying $F\ge 0$,  it holds
	\begin{align*}
		p(p-1)\int_*^t \int_{\U(\R^n)}|\nabla T_s F|_{T\U}^2|T_sF|^{p-2} d\p ds 
		= \int_{\U(\R^n)} | F |^{p} d \p - \int_{\U(\R^n)} | T_t F |^p d \p
		\le \int_{\U(\R^n)} | F |^p d \p \, ,
	\end{align*}
where the first equality follows by the following argument:
 \begin{align*}
 \frac{d}{dt} \int_{\U(\R^n)} |T_t F|^p d \p &= p \int_{\U(\R^n)} |T_t F|^{p-1}\Delta T_t F d \p 
 \\
 &= - p \int_{\U(\R^n)} \Bigl\langle \nabla |T_t (F)|^{p-1}, \nabla T_t F\Bigr\rangle_{T_\gamma \U} d \p(\gamma)
  \\
 &= - p(p-1) \int_{\U(\R^n)} \Bigl\langle |T_t F|^{p-2} \nabla T_t F , \nabla T_t F\Bigr\rangle_{T_\gamma\U} d\p(\gamma)
   \\
 &= - p(p-1) \int_{\U(\R^n)} |T_t F|^{p-2}  \bigl| \nabla T_t F \bigr|^2_{T_\gamma \U} d \p(\gamma)\ .
 \end{align*}
 
    
    By the contraction property of $T_t$, we obtain
    \begin{align*}
    	\int_*^t \int_{\U(\R^n)} |\nabla T_s F|_{T\U}^p d\pi ds
    	& \le \left( \int_*^t \int_{\U(\R^n)} |T_s F|^{p} d \pi  d s \right)^{\frac{2-p}{2}} 
    	\left(   \int_*^t \int_{\U(\R^n)} |\nabla T_s F|_{T\U}^2|T_sF|^{p-2} d\p ds  \right)^{\frac{p}{2}}
    	\\& 
    	\le C t^{\frac{2-p}{2}} \| F \|_{L^p}^{p} \, .
    \end{align*}
    We now employ the Bakry--\'Emery inequality \eqref{BEC-2} combined with the contraction property of $T_t$ to show that $s \to \int_{\U(\R^n)} |\nabla T_s F |_{T\U}^p d \p$ is non-increasing, which yields 
    \begin{equation}
    	t \int_{\U(\R^n)}  |\nabla T_{t} F|_{T\U}^p d\pi
    	\le \int_*^t \int_{\U(\R^n)}|\nabla T_{s} F|_{T\U}^p d\pi ds
    	\le  C t^{\frac{2-p}{2}} \| F \|_{L^p}^{p}  \, .
    \end{equation}
This implies our conclusion for cylinder functions.
    We extended it to any $F\in L^p(\U(\R^n),\p)$ by means of a density argument. Indeed, given $F\in L^p(\U(\R^n), \pi)$, we can find $F_n\in \CylF(\U(\R^n))$ such that $F_n\to F$ in $L^p$. The continuity of the semigroup $T_t$ gives $T_t F_n \to T_t F$ in $L^p$, while the lower semi-continuity of the functional $G \to \int_{\U(R^n)} |\nabla G|_{T\U(\R^n)}^p d \pi$ with respect to the $L^p$ convergence for $p>1$ yields
     \begin{equation*}
     	\int_{\U(\R^n)}  |\nabla T_{t} F|_{T\U}^p d\pi
     	\le 
     	\liminf_{n\to \infty}
     	 \int_{\U(\R^n)}  |\nabla T_{t} F_n|_{T\U}^p d\pi
     	\le C t^{-1/2} \| F \|_{L^p} \, .
     \end{equation*}     

{\bf (iv).} Note that the assertion in the case of $1 < p \le 2$ implies the one in the case of $p>2$ by  $L^p(\U(\R^n), \pi) \subset L^q(\U(\R^n), \p)$ whenever $1 \le q \le p$. 
Thus, we only need to prove it in the case of $1 < p \le 2$. Let $F \in L^p(\U(\R^n), \p)$. Then, by the assertion (iii), $T_s F \in H^{1,p}(\U(\R^n), \p)$. Take $G_n$ converging to $T_sF$ in $H^{1,p}(\U(\R^n), \p)$. Then, up to taking a subsequence from $\{G_n\}$, and by making use of \eqref{BEC-1}, we conclude that 
\begin{align*}
|\nabla T_{t+s} F|_{T\U} &= |\nabla T_{t} T_s F|_{T\U} = \lim_{n \to \infty}|\nabla T_{t}G_n|_{T\U}  
\le  \lim_{n \to \infty}T_t|\nabla G_n|_{T\U}  
= T_t|\nabla T_s F|_{T\U} \, . \qedhere
\end{align*}
\end{proof}

 \begin{rem} \normalfont 
 In \cite{EH15} (see also \cite{DS22}), the $2$-Bakry--\'Emery estimate was proved in the case of the configuration space over a complete Riemannian manifold with Ricci curvature bound. For the purpose of the current paper, however, we need a stronger estimate, i.e., the $p$-Bakry--\'Emery estimate \eqref{eq: SWP} for arbitrary $1<p<\infty$ and also the regularity estimate \eqref{ineq: REH} of the heat semigroup, both of which do not follow only from the $2$-Bakry--\'Emery inequality.  
 \end{rem}

   \subsection{Equivalence of BV functions}
   \label{sec: EDBV}

   In Section \ref{sec:BV}, we introduced the three different definitions (the variational/the relaxation/the semigroup approaches) of BV functions. In this section we show that the three different definitions of BV functions
   are equivalent.

   \begin{thm}[Equivalence of BV functions] \label{prop: IBV}
   	Let $F\in L^2(\U(\R^n), \pi)$. Then, 
   	$$\V(F) = |\DDD_*F|(\U(\R^n)) = \T(F)\, .$$ 
   \end{thm}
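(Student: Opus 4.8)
The plan is to prove the chain of equalities $\V(F) = |\DDD_*F|(\U(\R^n)) = \T(F)$ by establishing a cycle of inequalities, splitting the argument into the ``easy'' direction (each quantity dominates the next, going around) and using the $p$-Bakry--\'Emery estimates of Theorem \ref{rem: BE} to close the loop.

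\textbf{Step 1: $\V(F) \le |\DDD_*F|(\U(\R^n))$.} Take any $V \in \CylV_0(\U(\R^n))$ with $|V|_{T\U} \le 1$, and any sequence $F_n \in \CylF(\U(\R^n))$ with $F_n \to F$ in $L^1$. Since $F_n$ is a cylinder function, the integration-by-parts formula \eqref{eq: IbP1} gives $\int (\nabla^* V) F_n\, d\p = -\int \langle V, \nabla F_n\rangle_{T\U}\, d\p \le \int |\nabla F_n|_{T\U}\, d\p = \|\nabla F_n\|_{L^1(T\U)}$. Now I need $\int (\nabla^* V) F_n\, d\p \to \int (\nabla^* V) F\, d\p$; since $\nabla^* V \in L^q$ for all $q < \infty$ (Proposition \ref{prop: IVPD}) but convergence is only in $L^1$, one should truncate: approximate, or use that $F \in L^2$ and first reduce to bounded $F_n$ (the relaxation can be taken with $F_n$ bounded by truncating outer functions, since truncation does not increase the relaxed energy by the standard chain-rule/lower-semicontinuity argument). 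Passing to the liminf in $n$ and then taking the supremum over $V$ yields $\V(F) \le |\DDD_*F|(\U(\R^n))$.

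\textbf{Step 2: $|\DDD_*F|(\U(\R^n)) \le \T(F)$.} Here one uses the heat semigroup as the regularizing sequence. For $F \in L^2$, set $F_t := T_t F$. By Theorem \ref{rem: BE}(iii) with $p = 2$, $T_t F \in H^{1,2}(\U(\R^n),\p)$, so it can be approximated in $H^{1,2}$ by cylinder functions; more importantly $\|\nabla T_t F\|_{L^1} < \infty$. Since $T_t F \to F$ in $L^2$ hence in $L^1_{\mathrm{loc}}$ — but to land in the definition \eqref{defn: BVR} I need $L^1$ convergence on all of $\U(\R^n)$, which requires $F \in L^1$; this is where $F \in L^2$ and finiteness of $\p$ help: $L^2(\U(\R^n),\p) \subset L^1(\U(\R^n),\p)$ since $\p$ is a probability measure. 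Then $T_t F \to F$ in $L^1$, each $T_t F$ lies in $H^{1,2}$ (approximable by cylinder functions in a way that preserves the $L^1$-gradient bound, using \eqref{eq: SWP}), so $|\DDD_* F|(\U(\R^n)) \le \liminf_{t\to 0}\|\nabla T_t F\|_{L^1} = \T(F)$. A small technical point: Definition \ref{defn: BVR} uses $F_n \in \CylF$, not $H^{1,2}$ functions, so one must first replace $T_t F$ by a cylinder approximation $G_n \to T_t F$ in $H^{1,2}$, giving $\|\nabla G_n\|_{L^1} \to \|\nabla T_t F\|_{L^1}$, and diagonalize.

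\textbf{Step 3: $\T(F) \le \V(F)$.} This is the direction that genuinely uses the structure and I expect it to be the main obstacle. Assuming $\V(F) < \infty$, I want to bound $\|\nabla T_t F\|_{L^1}$. Write, for a suitable test vector field, $\int |\nabla T_t F|_{T\U}\, d\p = \sup\{ \int \langle \nabla T_t F, V\rangle_{T\U}\, d\p : V \in \CylV_0, |V|_{T\U}\le 1\}$ (using ampleness, Remark \ref{rem: bounded test}, and density of $\CylV_0$). For such $V$, integrate by parts: $\int \langle \nabla T_t F, V\rangle_{T\U}\, d\p = -\int (T_t F)(\nabla^* V)\, d\p = -\int F\, T_t(\nabla^* V)\, d\p$ by self-adjointness of $T_t$ on $L^2$. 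Now the issue is that $T_t(\nabla^* V)$ need not be of the form $\nabla^* W$ for an admissible $W$. The fix is the intertwining/commutation: one wants $T_t \nabla^* V = \nabla^* \mathbf{T}_t V$ (the Hodge semigroup on vector fields, dual to Theorem \ref{thm: IT}), and by Theorem \ref{thm: GE1}, $|\mathbf{T}_t V|_{T\U} \le T_t |V|_{T\U} \le T_t 1 = 1$ $\p$-a.e.\ since $|V|_{T\U}\le 1$ and $T_t$ is Markovian. Thus $-\int F\, T_t(\nabla^* V)\, d\p = -\int F\, \nabla^*(\mathbf{T}_t V)\, d\p \le \V(F)$, because $\mathbf{T}_t V$ is an admissible competitor (up to approximating it by cylinder vector fields with $L^\infty$-norm $\le 1 + \e$, which is legitimate by the density results in Proposition \ref{prop:LPV} and Remark \ref{rem: bounded test}). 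Taking the supremum over $V$ gives $\|\nabla T_t F\|_{L^1} \le \V(F)$ for all $t$, hence $\T(F) = \lim_{t\to 0}\|\nabla T_t F\|_{L^1}\le \V(F)$. The delicate points here are (a) justifying the integration by parts $\int (T_t F)(\nabla^* V)\,d\p = -\int \langle \nabla T_t F, V\rangle_{T\U}\, d\p$, which needs $T_t F \in H^{1,2}$ — granted by Theorem \ref{rem: BE}(iii) — and the fact that \eqref{eq: IbP1} extends from cylinder functions to $H^{1,2}$ functions by density; (b) the duality $T_t \nabla^* = \nabla^* \mathbf{T}_t$ on a suitable domain, which follows by adjoint of Theorem \ref{thm: IT} together with \eqref{eq: important inclusion}; and (c) approximating $\mathbf{T}_t V$ by genuine cylinder vector fields while keeping the pointwise norm controlled, so it can be plugged into the supremum defining $\V(F)$. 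With Steps 1--3 the three quantities are sandwiched and therefore all equal.
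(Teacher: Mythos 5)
Your proof is correct in substance and uses the same underlying technology as the paper (the $p$-Bakry--\'Emery estimates, the intertwining $T_t\nabla^* = \nabla^*\mathbf T_t$ of Lemma~\ref{lem: divheat}, and the boundedness $|\mathbf T_t V|_{T\U}\le 1$ from Theorem~\ref{thm: GE1}), but it organizes the argument into a minimal three-inequality cycle $\V\le |\DDD_*|\le \T\le \V$, whereas the paper establishes four separate inequalities (first $\V\le|\DDD_*|$ and $|\DDD_*|\le\V$ to get $\V=|\DDD_*|$, and only then $\T\le|\DDD_*|$ and $\V\le\T$). Your Step~2, $|\DDD_*F|\le \T(F)$ via lower semicontinuity of the relaxed functional along $T_tF\to F$ in $L^1$ together with $|\DDD_*(T_tF)|\le \|\nabla T_tF\|_{L^1}$ (which holds because $T_tF\in H^{1,2}$), is a genuine shortcut that the paper does not take; the paper instead proves the opposite arrow $\T\le |\DDD_*|$ directly from the $1$-Bakry--\'Emery inequality on cylinder approximants. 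Your Step~3 is essentially the content of the paper's second step ($|\DDD_*|\le\V$) with the conclusion stated in terms of $\T$ rather than passing through the lower semicontinuity of $|\DDD_*|$, and it relies on the same mechanism. The one place where you are somewhat imprecise is point~(c) of Step~3: plugging $\mathbf T_t V$ into the supremum that defines $\V(F)$ requires more than the raw density statements of Proposition~\ref{prop:LPV} and Remark~\ref{rem: bounded test}; one needs to approximate $\mathbf T_t V$ by cylinder vector fields $W_n$ with $|W_n|_{T\U}\le 1$ \emph{and} $\nabla^*W_n\to \nabla^*\mathbf T_t V$ in $L^2$, which is precisely the content of Lemma~\ref{lem: DP} (proved via the cut-off construction of~\eqref{eq: CPA}). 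You correctly flagged this as the delicate point, and the gap is one of citation rather than a missing idea. The net effect is a modest gain in economy: your cycle shows directly that all three quantities coincide, without first proving a stand-alone identity between any two of them.
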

   The proof of Theorem \ref{prop: IBV} will be given later in this section. 
   Thanks to Theorem \ref{prop: IBV}, we can introduce a universal definition of BV functions for $L^2(\U(\R^n), \p)$-functions. 
   \begin{defn}[BV functions] \label{defn: BVU}
   	A function $F\in  L^2(\U(\R^n))$ belongs to ${\rm BV}(\U(\R^n))$ if 
   	$$
   	\V(F) = | \DDD_*F|(\U(\R^n)) = \T(F) < \infty\, .
   	$$	
   \end{defn}

   We prepare several lemmas for the proof of Theorem \ref{prop: IBV}.
   \begin{lem}\label{lem: divheat}
   	For any $V\in \CylV(\U(\R^n))$ and $t\ge 0$ it holds
   	\begin{equation}\label{z11}
   		(\nabla^* \mathbf T_{t} V) = T_{t} (\nabla^* V) \, .
   	\end{equation}
   	In particular $(\nabla^*\mathbf T_{t} V)\in L^p(\U(\R^n))$ for every $1< p<\infty$.
   \end{lem}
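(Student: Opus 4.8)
The plan is to establish the intertwining identity $\nabla^*\mathbf{T}_t V = T_t(\nabla^* V)$ by duality against cylinder functions, using the adjoint relationship between $\nabla$ and $\nabla^*$ together with the already-established intertwining $\nabla T_t F = \mathbf{T}_t \nabla F$ from Theorem \ref{thm: IT} and the self-adjointness of both semigroups. First I would fix an arbitrary $F \in \CylF(\U(\R^n))$ and compute, using the definition of the divergence \eqref{eq: IbP1},
\begin{equation*}
	\int_{\U(\R^n)} F\,(\nabla^* \mathbf{T}_t V)\,d\pi = -\int_{\U(\R^n)} \langle \nabla F, \mathbf{T}_t V\rangle_{T\U}\,d\pi \, .
\end{equation*}
Then I would move the semigroup $\mathbf{T}_t$ onto the other factor using the $L^2$-self-adjointness of $\{\mathbf{T}_t\}$ (valid since $\EE_H$ is a symmetric closed form), obtaining $-\int \langle \mathbf{T}_t \nabla F, V\rangle_{T\U}\,d\pi$. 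By Theorem \ref{thm: IT}, $\mathbf{T}_t\nabla F = \nabla T_t F$, so this becomes $-\int \langle \nabla T_t F, V\rangle_{T\U}\,d\pi$. Note here I should be slightly careful: Theorem \ref{thm: IT} is stated for $F \in W^{1,2}(\U(\R^n),\pi)$, which contains $\CylF(\U(\R^n))$, so this is legitimate. Applying \eqref{eq: IbP1} once more (now $V$ plays the role of the test vector field, which requires $V \in \CylV(\U(\R^n)) \subset \mathcal{D}(\nabla^*)$ by Proposition \ref{prop: IVPD} — here I would use that $\CylV(\U(\R^n)) = \CylV_0(\U(\R^n))$ since $\Omega = \R^n$), this equals $\int T_t F\,(\nabla^* V)\,d\pi$. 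Finally, using the $L^2$-self-adjointness of $\{T_t\}$, this is $\int F\, T_t(\nabla^* V)\,d\pi$.

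Since $F \in \CylF(\U(\R^n))$ was arbitrary and cylinder functions are dense in $L^2(\U(\R^n),\pi)$ (indeed dense in every $L^q$), I conclude $\nabla^*\mathbf{T}_t V = T_t(\nabla^* V)$ as elements of $L^2(\U(\R^n),\pi)$, hence $\pi$-a.e. The final assertion that $(\nabla^*\mathbf{T}_t V) \in L^p(\U(\R^n))$ for all $1 < p < \infty$ then follows immediately: $\nabla^* V \in L^p(\U(\R^n),\pi)$ for all $1 \le p < \infty$ by Proposition \ref{prop: IVPD} (applied with $\Omega = \R^n$, so that $V \in \CylV_0(\U(\R^n))$), and the $L^p$-heat semigroup $T_t = T_t^{(p)}$ is a contraction on $L^p(\U(\R^n),\pi)$, so $T_t(\nabla^* V) \in L^p(\U(\R^n),\pi)$.

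The main subtlety — not so much an obstacle as a point requiring care — is the compatibility of the various extensions of $\mathbf{T}_t$: it is defined as the $L^2$-semigroup associated with $\EE_H$, but in the displayed computation we need $\mathbf{T}_t V$ to be a genuine element of $L^2(T\U,\pi)$ for $V \in \CylV(\U(\R^n))$ (true by \eqref{eq: important inclusion}), and we implicitly pair it against $\nabla F \in \CylV(\U(\R^n)) \subset L^2(T\U,\pi)$, which is unproblematic. One should also double-check that the two invocations of \eqref{eq: IbP1} are applied to objects in the correct domains: in the first, $\mathbf{T}_t V$ need not be a cylinder vector field, but \eqref{eq: IbP1} holds for any $V$ in $\mathcal{D}(\nabla^*)$ — and to be safe one can instead argue purely by the $L^2$-adjoint relation $\langle \nabla F, W\rangle_{L^2(T\U)} = -\langle F, \nabla^* W\rangle_{L^2}$ valid for $F$ in the domain of the closure of $\nabla$ and $W \in \mathcal{D}(\nabla^*)$, and verify $\mathbf{T}_t V \in \mathcal{D}(\nabla^*)$ separately, or simply avoid the issue by noting that $\mathbf{T}_t$ maps $\CylV$ into $\mathcal{D}(\EE_H)$ and the identity $\nabla^*\mathbf{T}_t = T_t \nabla^*$ can first be checked on a core and then extended. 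For the purposes of this lemma, the cleanest route is the symmetric duality computation sketched above, reading everything as an identity of linear functionals on the dense set $\CylF(\U(\R^n))$.
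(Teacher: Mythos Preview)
Your proposal is correct and follows essentially the same approach as the paper: a duality computation against cylinder functions $F$, using the self-adjointness of $T_t$ and $\mathbf T_t$ together with the intertwining $\nabla T_t F = \mathbf T_t \nabla F$ from Theorem~\ref{thm: IT}. The paper traverses the chain of equalities in the reverse order (starting from $\int F\,T_t(\nabla^* V)\,d\pi$ and ending at $\int F\,(\nabla^*\mathbf T_t V)\,d\pi$), but the argument is otherwise identical; your discussion of the domain issue for $\mathbf T_t V$ is a welcome clarification that the paper leaves implicit.
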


   \begin{proof}
   	Let $F\in \Cyl$. By the $\p$-symmetry of $T_t$ and Theorem \ref{thm: IT}, we have that 
   	\begin{align*}
   		\int_{\U(\R^n)} F\,  T_t(\nabla^* V) d \pi
   		&=
   		\int_{\U(\R^n)} T_t F\,  (\nabla^* V) d \pi
   		=
   		-\int_{\U(\R^n)} \langle V(\gamma,\cdot ) , \nabla T_tF(\gamma) \rangle_{T\U} d \pi
   		\\
   		&=
   		-\int_{\U(\R^n)} \langle V(\gamma,\cdot ) , \mathbf T_{t} \nabla F(\gamma) \rangle_{T\U} d \pi
   		=
   		-\int_{\U(\R^n)} \langle \mathbf T_{t} V(\gamma,\cdot ) , \nabla F(\gamma) \rangle_{T\U} d \pi
   		\\
   		&=
   		\int_{\U(\R^n)} F (\nabla^*\mathbf T_t V)  d \pi,
   	\end{align*}
   	which immediately implies \eqref{z11}.
   \end{proof}

   Let us now introduce $\mathbf D^p(T\U(\R^n), \p)$, the space of vector fields with divergence in $L^p(\U(\R^n), \p)$, as the closure of 
        $\CylV(\U(\R^n)) \subset L^p(T\U(\R^n), \pi)$ with respect to the norm $\| V \|_{L^p} + \| \nabla^* V \|_{L^p}$.
        
        In the case $p=2$, we have the following inclusion
        \begin{equation}\label{eq: important inclusion2}
        \mathcal{D}(\EE_H) \subset 	\mathbf D^2(T\U(\R^n), \p) \, ,
        \end{equation}
       as a consequence of the inequality $\| \nabla^* V\|_{L^2} \le \EE_H(V,V)$ for 
     every $V\in \CylV(\U(\R^n))$. 
    
   	\begin{lem} \label{lem: DP}
   		Let $1<p<\infty$ and $1<p'<\infty$ such that $1/p + 1/p' = 1$.
   		If $F\in L^{p'}(\U(\R^n),\p)$ then
   		\begin{equation}
   			\V(F) = \sup\biggl\{ \int_{\U(\R^n)} (\nabla^* V) F d\p: V\in \mathbf D^p(T\U(\R^n), \p), \ | V |_{T\U} \le 1 \biggr\} \, .
   		\end{equation}
   	\end{lem}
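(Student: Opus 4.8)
The plan is to prove the two inequalities separately. For ``$\le$'' I expect it to be immediate: since $\CylV_0(\U(\R^n))=\CylV(\U(\R^n))$ and every $V\in\CylV(\U(\R^n))$ has $\|V\|_{L^p(T\U)}<\infty$ and $\|\nabla^*V\|_{L^p}<\infty$ by Proposition~\ref{prop:LPV} and Proposition~\ref{prop: IVPD}, one has $\CylV(\U(\R^n))\subset\mathbf D^p(T\U(\R^n),\p)$, so the supremum defining $\V(F)$ is taken over a subclass of the one on the right; note that for $V\in\mathbf D^p$ the integral $\int(\nabla^*V)F\,d\p$ is meaningful because $\nabla^*V\in L^p$ and $F\in L^{p'}$.

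For ``$\ge$'' I would fix $V\in\mathbf D^p(T\U(\R^n),\p)$ with $|V|_{T\U}\le1$ $\p$-a.e.\ and reduce to producing cylinder vector fields $W_k$ with $|W_k|_{T\U}\le1$ and $\int(\nabla^*W_k)F\,d\p\to\int(\nabla^*V)F\,d\p$: then each left side is $\le\V(F)$ by definition, and taking the supremum over $V$ concludes. To build the $W_k$, I would pick $U_k\in\CylV(\U(\R^n))$ with $\e_k:=\|U_k-V\|_{L^p(T\U)}+\|\nabla^*U_k-\nabla^*V\|_{L^p}\to0$, set $\lambda_k:=\e_k^{1/3}$, and truncate exactly as in Step~1 of the proof of Proposition~\ref{lem: BV1}:
\[
W_k:=(1-\lambda_k)\,\phi_{\lambda_k}\!\big(|U_k|_{T\U}^{2}\big)\,U_k\,,
\]
where $\phi_\lambda\in C^\infty([0,\infty))$, $0\le\phi_\lambda\le1$, $\phi_\lambda\equiv1$ on $[0,1+\lambda]$, $\phi_\lambda\equiv0$ on $[1+2\lambda,\infty)$ and $|\phi_\lambda'|\le2/\lambda$. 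As in Remark~\ref{rem: bounded test}, $\phi_{\lambda_k}(|U_k|_{T\U}^2)$ is a cylinder function, so $W_k\in\CylV(\U(\R^n))$, and $|W_k|_{T\U}\le(1-\lambda_k)\sqrt{1+2\lambda_k}\le1$.

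It then remains to check $\nabla^*W_k\to\nabla^*V$ in $L^p$, after which H\"older's inequality with $F\in L^{p'}$ closes the argument. By the product rule for $\nabla^*$ coming from \eqref{eq: DIV} (the computation carried out in the cited Step~1), $\nabla^*W_k=(1-\lambda_k)\phi_{\lambda_k}(|U_k|_{T\U}^2)\nabla^*U_k+R_k$ with $|R_k|\le C\lambda_k^{-1}\chi_{\{|U_k|_{T\U}^2\ge1+\lambda_k\}}$, $C$ absolute. Since $|V|_{T\U}\le1$, the triangle inequality and Chebyshev yield $\p(\{|U_k|_{T\U}^2\ge1+\lambda_k\})\le(3/\lambda_k)^p\e_k^p$, hence $\|R_k\|_{L^p}\le3C\e_k/\lambda_k^{2}=3C\e_k^{1/3}\to0$ and the exceptional set has $\p$-measure $\to0$; writing the main summand as $\nabla^*U_k+[(1-\lambda_k)\phi_{\lambda_k}(|U_k|_{T\U}^2)-1]\nabla^*U_k$, the bracket is bounded and supported on that exceptional set, so the uniform integrability of $\{|\nabla^*U_k|^p\}$ (a consequence of $\nabla^*U_k\to\nabla^*V$ in $L^p$) sends the second piece to $0$ in $L^p$. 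The step I expect to be the main obstacle — and the only place the nonlinearity of the configuration space really matters — is this divergence identity for the truncation: the remainder $R_k$ produced by differentiating the cut-off must be dominated by $C\lambda_k^{-1}\chi_{\{|U_k|_{T\U}^2\ge1+\lambda_k\}}$ with \emph{no} factor involving $\nabla U_k$, because $\nabla^*V$ lies only in $L^p$ and one cannot take the approximants with uniform higher integrability. Granting this (it is exactly what is used in Proposition~\ref{lem: BV1}), everything else is routine.
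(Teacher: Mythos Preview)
Your proposal is correct and follows essentially the same approach as the paper's proof: both argue that $\le$ is immediate from $\CylV(\U(\R^n))\subset\mathbf D^p(T\U(\R^n),\p)$, and for $\ge$ both take $V\in\mathbf D^p$ with $|V|_{T\U}\le 1$, pick cylinder approximants converging in the $\mathbf D^p$-norm, and then truncate exactly via the construction \eqref{eq: CPA} from Step~1 of the proof of Proposition~\ref{lem: BV1}. The paper's proof is a two-line sketch pointing to that construction; you have simply unwound it, and the remainder bound you single out as the crux is precisely the identity for $\nabla^*W$ asserted there.
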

   
   \begin{proof}
   	Let $V\in \mathbf D^p(T\U(\R^n), \p)$ with $|V|_{T\U}\le 1$,
   	to conclude the proof we just need to build a sequence $(W_n)_{n\in \N} \subset\CylV(\U(\R^n))$ such that $|W_n|\le 1$ and $\| \nabla^*V - \nabla^*W_n\|_{L^p} \to 0$ as $n\to \infty$.
   	To that aim we first consider a sequence $V_n\in \CylV(\U(\R^n))$ such that $\| V -V_n \|_{L^p} + \| \nabla^*V - \nabla^* V_n\|_{L^p} \to 0$ as $n\to \infty$, which exists by definition. We now define $W_n$ by cutting $V_n$ of as we did in \eqref{eq: CPA} in the proof of Proposition \ref{lem: BV1}.
	  \end{proof}

   \begin{proof}[Proof of Theorem \ref{prop: IBV}]

   We first show the inequality $|\DDD_* F|(\U(\R^n)) \le \V(F)$ for $F \in  L^2(\U(\R^n), \p)$. We assume without loss of generality that $\V(F)<\infty$.
   Let $F \in L^2(\U(\R^n), \p)$. Set $F_n=T_{1/n}F \in  H^{1, 2}(\U(\R^n), \p)$. 
   By the symmetry of $\mathbf T_t$ in $L^2(T\U, \p)$ and Lemma \ref{lem: divheat}, we have that, for any $V \in \CylV(\U(\R^n))$ with $|V|_{T\U} \le 1$, it holds 
   \begin{align} \label{eq: ABF}
   	\int_{\U(\R^n)} F_n \nabla^* V d \pi 
   	=
   	\int_{\U(\R^n)}  T_{1/n}( \nabla^* V)F d \pi   	
   	=
   	\int_{\U(\R^n)} \nabla^*(\mathbf T_{1/n} V)  F d \pi. 
   \end{align}
   The inclusion \eqref{eq: important inclusion} and \eqref{eq: important inclusion2} imply that $\mathbf T_{1/n}V\in \mathbf D^2(T\U(\R^n), \p)$, while Theorem \ref{thm: GE1} ensures that $|\mathbf T_{1/n} V|_{T\U}  \le T_{1/n}|V|_{T\U} \le 1$.
   Therefore,  we can apply Lemma \ref{lem: DP} to \eqref{eq: ABF} to obtain $\|\nabla F_n\|_{L^1} \le \V(F)$.  
   Since $F_n \in H^{1,2}(\U(\R^n), \pi)$ and $\CylF(\U(\R^n))$ is dense in $H^{1,2}(\U(\R^n), \pi)$, we have 
   $
   |\DDD_*F_n|(\U(\R^n)) \le \|\nabla F_n\|_{L^1}
   $,
   by definition.  
   By the lower semi-continuity of $|\DDD_*F|(\U(\R^n))$ with respect to the $L^2$-convergence, it holds
   \begin{align*}
   	|\DDD_*F|(\U(\R^n)) 
   	\le  \liminf_{n \to \infty} |\DDD_*F_n|(\U(\R^n))  
   	\le  \liminf_{n \to \infty} \|\nabla F_n\|_{L^1(T\U, \p)}  
   	\le \V(F).
   \end{align*}	

   We now prove $\T(F) \le |\DDD_*F|(\U(\R^n))$.  Let $F_n\in \Cyl$ such that $F_n \to F$ in $L^1(\U(\R^n), \p)$ and $\| \nabla F_n \|_{L^1(T\U)} \to |\DDD_* F|(\U(\R^n))$. 
   Then, by the $1$-Bakry--\'Emery inequality \eqref{BEC-1} on cylinder functions,  
   \begin{align*}
   	\|\nabla T_tF\|_{L^1} \le \liminf_{n \to \infty}\|\nabla T_{t} F_n\|_{L^1}\le  \liminf_{n \to \infty} \|\nabla F_n\|_{L^1} = |\DDD_*F|(\U(\R^n)).
   \end{align*}
   Thus, $\T(F) \le |\DDD_*F|(\U(\R^n))$.

   \bigskip
   
   Finally we prove $\V(F) \le \T(F)$ for every $F\in L^p(\U(\R^n))$. For $F \in L^p(\U(\R^n), \p)$ and $V \in \CylV(\U(\R^n))$ with $|V|_{T\U} \le 1$, we have that
   $$
   \int_{\U(\R^n)} T_t F \nabla^* V d\p = \int_{\U(\R^n)} \langle \nabla T_t F, V \rangle d\p \le  \int_{\U(\R^n)} |\nabla T_t F|_{T\U} d\p.
   $$
   Since $T_t F \to F$ in $L^p(\U(\R^n), \p)$, we obtain that 
   $$\int_{\U(\R^n)} F \nabla^* V d\p \le \lim_{t \to 0} \int_{\U(\R^n)} |\nabla T_t F|_{T\U} d\p.$$
   Thus, we conclude $\V(F) \le \T(F)$.
   
   \bigskip

    The proof of Theorem \ref{prop: IBV} was given above. However, for the sake of completeness, we include a proof of the inequality $\V(F) \le |\DDD_*F|(\U(\R^n))$, which holds in the more general case of $F \in L^p(\U(\R^n),\p)$ with $1 < p \le \infty$.
   	
   	Let $F \in L^p(\U(\R^n), \pi)$ for some $p>1$ and $|\DDD_* F|(\U(\R^n))<\infty$. Let $F_n\in \Cyl$ such that $F_n \to F$ in $L^1(\U(\R^n))$ and $\| \nabla F_n \|_{L^1(T\U)} \to |\DDD_* F|(\U(\R^n))$. Let $F_{n, M}:=(F_n \vee -M)\wedge M$ and $F_{M}:=(F\vee -M)\wedge M$. Then, $F_{n,M} \to F_M$ in $L^1(\U(\R^n), \p)$ and $\| \nabla F_{n,M} \|_{L^1(T\U)}  \le \| \nabla F_n \|_{L^1(T\U)}$. Thus, $\limsup_{n\to\infty}\| \nabla F_{n,M} \|_{L^1(T\U)} \le  |\DDD_* F|(\U(\R^n))$. By the integration by parts formula \eqref{eq: IbP1}, it holds	
   	\begin{equation*}
   		\int_{\U(\R^n)} F_{n,M} \nabla^* V d \pi
   		=
   		-\int_{\U(\R^n)} \langle V , \nabla F_{n,M}\rangle_{T\U} d \pi
   		\le 
   		\| \nabla F_{n,M} \|_{L^1(T\U)}
   		\le
   		\| \nabla F_{n} \|_{L^1(T\U)}  \, ,
   	\end{equation*}
   	for any $V\in \CylV(\U(\R^n))$ with $|V|_{T\U} \le 1$.
   	By taking a (non-relabelled) subsequence from $\{F_{n,M}\}$ so that $F_{n,M} \to F_M$ $\pi$-a.e.,
   	and using the dominated convergence theorem (note that $|F_{n,M} \nabla^* V| \le M |\nabla^* V| \in L^1(\U(\R^n), \p)$ uniformly in $n$), we obtain that
   	\begin{equation*}
   		\int_{\U(\R^n)} F_{M} \nabla^* V d \pi = \lim_{n \to \infty}\int_{\U(\R^n)} F_{n,M} \nabla^* V d \pi \le \liminf_{n \to \infty} \| \nabla F_{n} \|_{L^1(T\U)} \le |\DDD_*F|(\U(\R^n)),
   	\end{equation*}
   	for any $V\in \CylV(\U(\R^n))$ with $|V|_{T\U}\le 1.$
   	Since $F_M \to F$ in $L^p(\U(\R^n), \p)$ as $M \to \infty$ by the hypothesis $F \in L^p(\U(\R^n), \p)$, we conclude  $\V(F) \le |\DDD F|(\U(\R^n))$. 
   \end{proof}
   
  \begin{rem}\normalfont
  The proof of all the inequalities except $|\DDD_*F|(\U(\R^n)) \le \V(F)$ remains true for every $1<p<\infty$.
   In order to prove the inequality $|\DDD_*F|(\U(\R^n)) \le \V(F)$ in full generality following the same strategy we need show that $\mathbf T_t V \in\mathbf D^p(T\U(\R^n), \p)$ for $1<p<\infty$ and $V \in \CylV(T\U)$. This should follow, for instance, from the $L^p$-boundedness of vector-valued Riesz transforms, and will be addressed in a future work.
   \end{rem}

\section{Sets of finite perimeter}
\label{sec:reduced boundary}

In this section we introduce and study the notion of {\it set with finite perimeter}. Let us begin with a definition

%

\begin{defn}[Sets of finite perimeter] \normalfont
	Let $\Omega \subset \R^n$ be either a closed domain or the Euclidean space $\R^n$. A Borel set $E\subset \U(\Omega)$ is said to have finite perimeter if $\V_{\U(\Omega)}(\chi_E) < \infty$.
\end{defn}
We refer the reader to Definition \ref{defn: BV} for the introduction of the total variation $\V_{\U(\Omega)}(\cdot)$.

\subsection{Sets of finite perimeter in $\U(B_r)$}
We first develop the necessary theory in the configuration space $\U(B_r)$, in which every argument essentially comes down to finite-dimensional geometric analysis since only finitely many particles are allowed to belong to $B_r$.  

Let us recall the decomposition $\U(B_r) = \bigsqcup_{k\ge 0} \U^k(B_r)$, where $(\U^k(B_r), \d_{\U^k} , \pi_{B_r}^k)$ is the $k$-particle configuration space $\U^k(B_r)$ over $B_r$ equipped with the $L^2$-transportation distance $\d_{\U^k}$ and $\pi_{B_r}^k:=\p_{B_r}|_{\U^k(B_r)}$.  
We introduce the reduced boundary in $\U(B_r)$. 
\begin{defn}[Reduced boundary in $\U(B_r)$] \normalfont
	Fix $r>0$. Given $E\subset \U(B_r)$, set
	$E^k:= E\cap \U^{k}(B_r)$ and define
	\begin{equation*}
		\partial_{\U(B_r)}^* E := \bigsqcup_{k\ge 0} \partial_{\U^{k}(B_r)}^* E^k \, ,
	\end{equation*}
	\begin{equation*}
		\partial_{\U^{k}(B_r)}^* E^k 
		:= \left\lbrace  \gamma\in \U^{k}(B_r) \, : \, 
		\limsup_{s\to 0} \frac{\pi_{B_r}^k( {\sf B}^k_s(\gamma)\cap E^k)}{\pi_{B_r}^k({\sf B}^k_s(\gamma))}> 0,
		\quad
		\limsup_{s\to 0} \frac{\pi_{B_r}^k({\sf B}^k_s(\gamma)\setminus E^k)}{\pi_{B_r}^k({\sf B}^k_s(\gamma))}> 0
		\right\rbrace \, ,
	\end{equation*}	
    where ${\sf B}^k_s(\gamma)$ denotes the metric ball of radius $s>0$ centred at $\gamma\in \U^k(B_r)$ w.r.t.\ $\d_{\U^k}$.
\end{defn}

We can readily show that the $m$-codimensional Hausdorff measure $\rho^m_{\U^k(B_r)}$ w.r.t. $\d_{\U^k}$ coincides with the push-forward measure of the $m$-codimensional spherical Hausdorff measure $\rho^m_{B_r^{\times k}}$ on $B_r^{\times k}$ w.r.t. the quotient map ${\mathbf s}_k$: 
\begin{align} \label{eq: QHHQ}
\rho^m_{\U^k(B_r)} =  ({\mathbf s}_k)_{\#} \rho^m_{B_r^{\times k}}  = ({\mathbf s}_k)_{\#} \mathbf S_{B_r^{\times k}}^{nk-m},
\end{align}
where $\mathbf S_{B_r^{\times k}}^{nk-m}$ is the $m$-codimensional spherical Hausdorff measure on $B_r^{\times k}$ and ${\mathbf s}_k$ is the quotient map $B_r^{\times k} \to \U^k(B_r)$ as defined in Section \ref{sec: Pre}. Having this in mind, we prove the following Gau\ss--Green formula in $\U(B_r)$.

\begin{prop}[Gau\ss--Green formula in $\U(B_r)$]\label{lemma:finiteperimeterBr}
	Fix $r>0$. If $E \subset \U(B_r)$ is a set of finite perimeter then there exists a vector field $\sigma_E : \U(B_r) \to T\U(B_r)$ such that $|\sigma_E|_{T\U(B_r)}=1$ $\rho^1_{\U(B_r)}$-a.e. on $\partial^*_{\U(B_r)}E$, and 
	\begin{equation}
		\int_{E} (\nabla^* V) d \p_{B_r} 
		= \int_{\partial^*_{\U(B_r)}E} \langle V, \sigma_E \rangle d \rho^1_{\U(B_r)}
		\quad \text{for  $V\in \CylV(\U(B_r))$.}
	\end{equation}  
    Moreover $\V_{\U(B_r)}(\chi_E) = \rho^1_{\U(B_r)}(\partial^*_{\U(B_r)}E)$.
\end{prop}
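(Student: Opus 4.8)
The plan is to reduce the statement to the classical De Giorgi structure theorem in Euclidean space, exploiting the decomposition $\U(B_r)=\bigsqcup_{k\ge0}\U^k(B_r)$ and the identification $\U^k(B_r)\cong (B_r^{\times k}\setminus\dg_k)/\mathfrak S_k$ via the quotient map $\mathbf s_k$. First I would observe that $\V_{\U(B_r)}(\chi_E)<\infty$ together with the disintegration $\pi_{B_r}=e^{-\SS^n(B_r)}\sum_k \S^{0,k}_{B_r}$ forces $\V_{\U^k(B_r)}(\chi_{E^k})<\infty$ for every $k$, where $E^k=E\cap\U^k(B_r)$; indeed the spaces $\U^k(B_r)$ are at mutually positive distance in $\d_{\U^k}$ inside $\U(B_r)$, so a cylinder vector field can be tested on each component separately, and the total variation splits as a sum over $k$. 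Pulling back along $\mathbf s_k$ and using that $\mathbf s_k$ is a local isometry off the diagonal (so it sends $\pi_{B_r}^k$ to $\frac1{k!}$ times Lebesgue measure and pushes forward the divergence correctly, cf.\ Proposition \ref{prop: IDP} and Proposition \ref{prop: IDP}'s companion computations), the set $\hat E^k:=\mathbf s_k^{-1}(E^k)\subset B_r^{\times k}$ is a (symmetric) set of finite perimeter in the Euclidean sense.

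Next I would invoke the classical De Giorgi theorem in $\R^{nk}$: $\hat E^k$ admits an essential reduced boundary $\partial^*\hat E^k$, the perimeter measure equals $\SS^{nk-1}\lfloor \partial^*\hat E^k$, and there is a measure-theoretic inner normal $\nu_{\hat E^k}$ with $|\nu_{\hat E^k}|=1$ on $\partial^*\hat E^k$ realizing the Gau\ss--Green formula for smooth compactly supported vector fields on $B_r^{\times k}$. Because $\hat E^k$ is $\mathfrak S_k$-invariant, its reduced boundary and normal are equivariant, hence descend to $\U^k(B_r)$: one checks that $\mathbf s_k(\partial^*\hat E^k)$ coincides $\rho^1_{\U^k(B_r)}$-a.e.\ with $\partial^*_{\U^k(B_r)}E^k$ as defined via density points of metric balls — here one uses that $\d_{\U^k}$-balls off the diagonal are isometric images of Euclidean balls, so Euclidean Lebesgue density points and $\d_{\U^k}$-density points agree, and the diagonal $\dg_k$ is $\SS^{nk-1}$-null. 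This defines $\sigma_E$ on $\partial^*_{\U(B_r)}E$ componentwise as the descent of $\nu_{\hat E^k}$ through the (locally isometric) differential of $\mathbf s_k$, giving $|\sigma_E|_{T\U(B_r)}=1$ $\rho^1_{\U(B_r)}$-a.e.

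Then I would transfer the Gau\ss--Green identity: for $V\in\CylV(\U(B_r))$, decompose the integral over $E$ as $\sum_k\int_{E^k}(\nabla^*V)\,d\pi_{B_r}$, pull each term back to $B_r^{\times k}$ using $\nabla^*_{\U(B_r)}V\circ\mathbf s_k = \nabla^*(V\circ\mathbf s_k)$ in the appropriate sense together with the renormalization constant from \eqref{m-H}, apply the Euclidean Gau\ss--Green formula on $\hat E^k$ (after a routine approximation of the symmetrized pullback of $V$ by smooth compactly supported vector fields away from $\partial B_r$, noting the inner functions of $V$ in $C_c^\infty(B_r)$ behave well), and push forward the boundary integral back to $\partial^*_{\U^k(B_r)}E^k$ against $\rho^1_{\U^k(B_r)}=(\mathbf s_k)_\#\SS^{nk-1}\lfloor\partial^*\hat E^k/k!$, which by \eqref{eq: QHHQ} assembles to $\rho^1_{\U(B_r)}\lfloor\partial^*_{\U(B_r)}E$. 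Summing over $k$ gives the formula, and taking $V$ with $|V|_{T\U}\le1$ approximating $\sigma_E$ (using Remark \ref{rem: bounded test}-type truncations and density of cylinder vector fields) yields $\V_{\U(B_r)}(\chi_E)\ge\rho^1_{\U(B_r)}(\partial^*_{\U(B_r)}E)$, while the reverse inequality is immediate from the formula and $|\sigma_E|=1$. The main obstacle I anticipate is the careful bookkeeping at the boundary $\partial B_r$ and near the diagonal $\dg_k$: one must ensure that cylinder vector fields (whose inner functions lie in $C_c^\infty(B_r)$, not $C_0^\infty(B_r)$) pull back to vector fields on $B_r^{\times k}$ to which the Euclidean divergence theorem applies without spurious boundary contributions, and that the $\SS^{nk-1}$-negligibility of $\dg_k$ genuinely lets one ignore the diagonal both in identifying reduced boundaries and in the change of variables; this is where the $p=2$ essential self-adjointness / conservativeness facts and the explicit structure of $\pi_{B_r}$ near $\partial B_r$ must be used with care.
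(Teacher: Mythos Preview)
Your proposal is correct and follows essentially the same approach as the paper: decompose $\U(B_r)=\bigsqcup_k\U^k(B_r)$, pull each $E^k$ back to $\hat E^k=\mathbf s_k^{-1}(E^k)\subset B_r^{\times k}$, apply the classical Euclidean De Giorgi/Gau\ss--Green theorem there, and push forward to $\U^k(B_r)$ via \eqref{eq: QHHQ}. The paper's proof is considerably terser---it simply asserts that the pullback $\mathbf V$ of a cylinder vector field lies in $C_0^\infty(B_r^{\times k};\R^{nk})$ and that $\hat E^k$ has finite Euclidean perimeter, then quotes the Euclidean result---whereas you spell out the equivariance under $\mathfrak S_k$, the identification of reduced boundaries via the local isometry off $\dg_k$, and the $\SS^{nk-1}$-negligibility of the diagonal; these are exactly the details the paper leaves implicit. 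Your flagged obstacle about $\partial B_r$ (that inner functions lie in $C_c^\infty(B_r)$ rather than $C_0^\infty(B_r)$) is a genuine technical point that the paper's proof glosses over; note however that the total variation $\V_{\U(B_r)}$ in Definition~\ref{defn: BV} is defined using $\CylV_0$, so for the equality $\V_{\U(B_r)}(\chi_E)=\rho^1_{\U(B_r)}(\partial^*_{\U(B_r)}E)$ the issue does not arise, and the extension of the integration-by-parts identity to $\CylV$ is then a density/approximation step.
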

\begin{proof}
	Exploiting the decomposition $\U(B_r) = \bigsqcup_{k\ge 0} \U^k(B_r)$, where each $\U^k(B_r)$ is a connected component, we reduce our analysis to the study of $E^k:=E\cap \U^k(B_r)$.

   	Set $\mathbf E^k:=\mathbf s_k^{-1}(E^k)$. Given 
   	\begin{equation*}
   		V = \sum_{k=1}^m \Phi(f_{1,k}^*, \ldots, f_{n_k,k}^*) v_k
   		\in \CylV(\U(B_r)) \, ,
   	\end{equation*}
   	we can define $\mathbf V\in C_*^\infty(B_r^{\times k} ;\R^{nk})$ as 
   	\begin{equation*}
   			\mathbf V(x_1, \ldots , x_k) = \sum_{k=1}^m \Phi(f_{1,k}(x_1) + \ldots + f_{n_k,k}(x_k), \ldots , f_{n_k,k}(x_1)+ \ldots + f_{n_k,k}(x_k)) v_k(x_1, \ldots, x_k)
   			\, .
   	\end{equation*}
   	Notice that $| \mathbf V|_{\R^{nk}}\le 1$ whenever $| V |_{T\U} \le 1$. It is now immediate that $\mathbf E^k$ is of finite perimeter on $B_r^{\times k}$. Thus, standard results og geometric measure theory on the Euclidean space $\R^{nk}$ (see e.g., \cite[Thm.\ 5.8.2]{Z89}), we obtain 
\begin{equation} \label{eq: FPGG}
		\int_{\mathbf E^k} (\nabla^* \mathbf V) d \SS_{B_r^{\times k}}^{nk}
		= \int_{\partial^*_{B_r^{\times k}}\mathbf E^k} \langle \mathbf V, \sigma_{\mathbf E^k} \rangle d \rho^1_{B_r^{\times k}}
		\quad \text{for $\mathbf V\in C_*^\infty(B_r^{\times k};\R^{nk})$} \, .
	\end{equation}  
Here $\sigma_{\mathbf E^k}$ is a vector field $\sigma_{\mathbf E^k} : B_r^{\times k} \to \R^{nk}$ such that $|\sigma_{\mathbf E^k}|_{\R^{nk}}=1$ $\rho^1_{B_r^{\times k}}$-a.e. on $\partial^*_{B_r^{\times k}}\mathbf E^k$.
By passing to the quotient by means of the map $\mathbf s_k$ in  both sides of \eqref{eq: FPGG} and using \eqref{eq: QHHQ}, we get the sought conclusion.
\end{proof}

\begin{rem}
	\normalfont
	An alternative proof of Proposition \ref{lemma:finiteperimeterBr} can be given by employing the theory of $\text{RCD}$ spaces (see \cite{Ambrosio18} and references therein). Indeed $(\U^k(B_r), \d^k , \pi_{B_r}^k)$ is 
	an $\text{RCD}(0,kn)$ space and $E^k$ is of finite perimeter. Hence we can apply \cite[Theorem 2.2]{BPS} to get the integration by parts formula, written in terms of the total variation measure $|D\chi_{E^k}|$. From \cite[Corollary 4.7]{ABS} we deduce the identity $|D\chi_{E^k}| = \rho^1_{\U(B_r)}|_{\partial^*_ {\U^k(B_r)}E^k}$.
\end{rem}

Let us now prove a measurability statement. The proof follows arguing exactly in the same way as in the proof of Proposition \ref{prop: meas}, thus, we omit it.
\begin{lem}\label{lemma:measurability1}
	Fix $r>0$. If $F: \U(\R^n) \to \R$ is a Borel function, then 
	\begin{equation*}
		\U(B_r^c)\ni \eta \to \int_{\U(B_r)} F_{\eta,r} d \rho^1_{\U(B_r)} \quad \, \text{is $\pi_{B_r^c}$-measurable}\, .
	\end{equation*}	
\end{lem}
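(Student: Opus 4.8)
The plan is to reduce the statement to the case of indicator functions and then invoke Proposition~\ref{prop: meas}.

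First I would observe that for each fixed $\eta\in\U(B_r^c)$ the map $\gamma\mapsto \gamma+\eta$ is continuous from $\U(B_r)$ to $\U(\R^n)$, so $F_{\eta,r}$ is a Borel function on $\U(B_r)$ and the integral $\int_{\U(B_r)}F_{\eta,r}\,d\rho^1_{\U(B_r)}$ is well defined as an element of $[-\infty,\infty]$ after splitting into positive and negative parts (if one wants a finite-valued statement one restricts to $F$ with $\int|F_{\eta,r}|\,d\rho^1_{\U(B_r)}<\infty$). By the standard linearity-plus-monotone-convergence argument — using that $\rho^1_{\U(B_r)}$ is a positive measure, so the monotone convergence theorem applies to the functional $F\mapsto \int F_{\eta,r}\,d\rho^1_{\U(B_r)}$ — it suffices to prove the claim when $F=\chi_A$ for a Borel set $A\subset\U(\R^n)$, i.e. that $\eta\mapsto \rho^1_{\U(B_r)}(A_{\eta,r})$ is $\pi_{B_r^c}$-measurable.

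Next, by Definition~\ref{defn: LFP1} we have $\rho^1_{\U(B_r)}(A_{\eta,r})=e^{-\SS^n(B_r)}\sum_{k=1}^\infty\S^{1,k}_{B_r}(A^k_{\eta,r})$, a countable sum; since a countable sum of $\pi_{B_r^c}$-measurable functions is again $\pi_{B_r^c}$-measurable, it is enough to prove that each $\eta\mapsto \S^{1,k}_{B_r}(A^k_{\eta,r})$ is $\pi_{B_r^c}$-measurable. A Borel set is Suslin by (i) of Remark~\ref{rem: SS}, and Proposition~\ref{prop: meas} (applied with $m=1\le n$) asserts precisely that every Suslin set lies in $\DD^1\subset\DD^1_r$; by the very definition of $\DD^1_r$ this means that $\eta\mapsto \S^{1,k}_{B_r}(A^k_{\eta,r})$ is $\pi_{B_r^c}$-measurable for every $k$. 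This completes the argument for indicators, and hence for general Borel $F$ by the reduction of the previous paragraph.

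There is no genuine obstacle beyond reusing the earlier results; the only point deserving care is that $\rho^1_{\U(B_r)}$ fails to be $\sigma$-finite (each $\S^{1,k}_{B_r}$ is a codimension-one Hausdorff-type measure on the $nk$-dimensional space $\U^k(B_r)$), so a naive Fubini-type measurability argument is unavailable; this is exactly why the proof must be funneled through the Choquet-capacity and compact-regular-Suslin-scheme machinery already deployed in the proof of Proposition~\ref{prop: meas}, via the upper semicontinuity statement of Proposition~\ref{prop: usc}. A secondary bookkeeping point is the treatment of signed or non-integrable $F$, handled by decomposing $F=F^+-F^-$ and applying the nonnegative case to each part.
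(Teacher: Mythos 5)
Your proposal is correct and matches the paper's intended argument: the paper omits the proof with the remark that it follows exactly as in Proposition~\ref{prop: meas}, and what you do is precisely that --- reduce to indicator functions by monotone convergence, observe $(\chi_A)_{\eta,r}=\chi_{A_{\eta,r}}$ and hence $\int_{\U(B_r)}(\chi_A)_{\eta,r}\,d\rho^1_{\U(B_r)}=\rho^1_{\U(B_r)}(A_{\eta,r})=e^{-\SS^n(B_r)}\sum_k \S^{1,k}_{B_r}(A^k_{\eta,r})$, and invoke Proposition~\ref{prop: meas} (Borel $\Rightarrow$ Suslin $\Rightarrow$ member of $\DD^1\subset\DD^1_r$) for the measurability of each summand. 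Your closing remark about the non-$\sigma$-finiteness of $\rho^1_{\U(B_r)}$ being the reason the Suslin/Choquet machinery is unavoidable is also apt.
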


\subsection{Sets of finite perimeter on $\U(\R^n)$}
We now study sets of finite perimeter on the configuration space $\U(\R^n)$ by employing the already developed theory for the space $\U(B_r)$. The main idea is to reduce a set $E \subset \U(\R^n)$ to its sections $E_{\eta, r} \subset \U(B_r)$ and apply the results for sets of finite perimeter in $\U(B_r)$, combined with the disintegration argument. We finally let $r \to \infty$ to recover the information on the perimeter of the original set $E$.

Let us begin by introducing the definition of the reduced boundary in $\U(\R^n)$.

\begin{defn}[Reduced boundary in $\U(\R^n)$] \label{defn: RB}
	Let $E\subset \U(\R^n)$ be a Borel set. For every $r>0$ we set
	\begin{equation}
		\partial_r^*E := \{ \gamma\in \U(\R^n) \, : \, \gamma|_{B_r} \in \partial_{\U(B_r)}^* E_{\gamma|_{B_r^c},r}    \} \, .
	\end{equation}
   The reduced boundary of $E$ is defined as 
   \begin{equation} \label{defn: RB1}
   	\partial^* E := \liminf_{i\to \infty,\, i\in \N} \partial_i^* E = \bigcup_{i>0}\, \, \bigcap_{j>i,\, j\in \N} \partial_j^* E \, .
   \end{equation}
\end{defn}

\begin{rem} \normalfont
	We defined $\partial^*E$ by taking the liminf along the sequence $\{\partial^*_{i}E\}_{i\in \N}$. This choice is completely arbitrary and, as we will see in the sequel (cf. Theorem \ref{thm: fpr}), if we change the defining sequence, then the reduced boundary can change, but only up to an $\| E \|$-negligible set, where $\| E\|$ is the perimeter measure that will be defined later. Thus, the reduced boundary is well-defined up to $\| E \|$-negligible sets. 
\end{rem}

Notice that, for every $\eta\in \U(B_r^c)$ it holds
\begin{equation}\label{z3}
	(\partial_r^* E)_{\eta,r} = \partial_{\U(B_r)}^* E_{\eta,r} \, .
\end{equation}

\begin{lem}
	If $E$ is a Borel subset of $\U(\R^n)$, then $\partial_r^* E$ and $\partial^* E$ are Borel.
\end{lem}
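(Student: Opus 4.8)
The plan is to establish Borel measurability of $\partial_r^*E$ first, and then deduce the statement for $\partial^*E$ from the explicit liminf formula \eqref{defn: RB1}, since countable unions and intersections preserve the Borel property.

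\textbf{Measurability of $\partial_r^*E$.} By definition $\gamma\in \partial_r^*E$ if and only if $\gamma|_{B_r}\in \partial^*_{\U(B_r)}E_{\gamma|_{B_r^c},r}$. Using the decomposition $\U(B_r)=\bigsqcup_{k\ge 0}\U^k(B_r)$ it suffices to treat, for each fixed $k$, the set of $\gamma$ with $\gamma(B_r)=k$ for which $\gamma|_{B_r}$ lies in the reduced boundary of the section $E^k_{\gamma|_{B_r^c},r}$. Writing the two density conditions defining $\partial^*_{\U^k(B_r)}$ in terms of countable limits (it is enough to let $s\to 0$ along rationals, using monotonicity of the ball measures in $s$ together with inner/outer regularity of $\pi^k_{B_r}$), the membership condition becomes a countable combination of conditions of the form
\[
\limsup_{s\to 0,\, s\in \Q^+} \frac{\pi^k_{B_r}({\sf B}^k_s(\gamma|_{B_r})\cap E^k_{\gamma|_{B_r^c},r})}{\pi^k_{B_r}({\sf B}^k_s(\gamma|_{B_r}))}>0 \, .
\]
So the task reduces to showing that, for fixed $s$ and fixed $k$, the map $\gamma\mapsto \pi^k_{B_r}({\sf B}^k_s(\gamma|_{B_r})\cap E^k_{\gamma|_{B_r^c},r})$ is Borel measurable on $\{\gamma:\gamma(B_r)=k\}$. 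I would handle this by lifting to the product space via $\mathbf s_k$: the Euclidean ball measure is jointly continuous in the center and the radius, and the section $E^k_{\eta,r}$, as a function of both $\gamma|_{B_r}$ and $\eta$, corresponds (through $\mathbf s_k$ and $\mathbf L^{nk}$) to a Borel set in $B_r^{\times k}\times \U(B_r^c)$; an application of Fubini/the joint measurability of the integral $\int \chi_{\mathbf E}(x,\eta)\chi_{{\sf B}_s(x_0)}(x)\,d\mathbf L^{nk}(x)$ in $(x_0,\eta)$ gives the claim. Composing with the Borel map $\gamma\mapsto(\gamma|_{B_r},\gamma|_{B_r^c})$ then shows $\partial_r^*E\cap\{\gamma(B_r)=k\}$ is Borel, and summing over $k$ gives that $\partial_r^*E$ is Borel.

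\textbf{Measurability of $\partial^*E$.} Once each $\partial_i^*E$ is Borel, the formula $\partial^*E=\bigcup_{i>0}\bigcap_{j>i,\,j\in\N}\partial_j^*E$ exhibits $\partial^*E$ as a countable union of countable intersections of Borel sets, hence Borel.

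\textbf{Main obstacle.} The genuinely delicate point is the joint measurability in the previous paragraph: one must be careful that the section map $\eta\mapsto E_{\eta,r}$ and the ball-intersection behave measurably \emph{simultaneously} in the base point $\gamma|_{B_r}$ and the tail $\eta=\gamma|_{B_r^c}$. This is essentially the same mechanism already used in the proof of Proposition \ref{prop: meas} and in Lemma \ref{lemma:measurability1} (upper semi-continuity of $\e$-Hausdorff-type quantities on sections of compact sets, Proposition \ref{prop: usc}, together with the Suslin machinery of Remark \ref{rem: SS}); indeed, if one prefers, the whole argument can be run verbatim along the lines of Proposition \ref{prop: meas}, replacing the $\e$-spherical Hausdorff content by the volume ratios above, and I would expect the authors to simply refer to that proof. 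Everything else — reducing $s\to0$ to rationals, and passing from $\partial_r^*E$ to $\partial^*E$ — is routine.
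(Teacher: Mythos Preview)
Your proposal is correct and follows essentially the same route as the paper: reduce to fixed $k$, show Borel measurability of the density ratio at each fixed $s$ via a Fubini argument on the product $\U^k(B_r)\times \U(B_r^c)$, discretize the radii, and then pass to $\partial^*E$ by the countable $\liminf$ formula. The only substantive difference is in the discretization step: the paper replaces $s\to 0$ by the dyadic sequence $2^{-j}$ and justifies this via a volume--doubling comparison $C(k,n)^{-1}\le \pi^k_{B_r}({\sf B}^k_{2^{-j}})/\pi^k_{B_r}({\sf B}^k_s)\le C(k,n)$, whereas you invoke rationals together with continuity of $s\mapsto \pi^k_{B_r}({\sf B}^k_s)$ and $s\mapsto \pi^k_{B_r}({\sf B}^k_s\cap E)$ (which holds since metric spheres in $\U^k(B_r)$ have zero $\pi^k_{B_r}$-measure). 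Both work. Your closing remark that the argument ``can be run verbatim along the lines of Proposition~\ref{prop: meas}'' using the Suslin/upper-semicontinuity machinery is a red herring: the paper does \emph{not} appeal to that mechanism here but gives the direct Fubini argument you yourself sketch; the Hausdorff-content machinery of Proposition~\ref{prop: meas} is neither needed nor naturally adapted to the density ratios.
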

\begin{proof}
	Since $\partial^* E = \liminf_{r \to \infty} \partial^*_r E$, it suffices to show the Borel measurability of $\partial^*_r E$ for every $r>0$.
	
	{\bf Step 1:} We prove the following statement: for every $k\in \N$ and $s > 0$ the function
	\begin{equation}
		\{\gamma \in \U(\R^n) \, : \, \gamma(B_r) = k \}     \ni \gamma \mapsto \frac{\pi_{B_r}^k({\sf B}^k_s(\gamma|_{B_r})\cap E^k_{\gamma|_{B_r^c}, r})}{\pi_{B_r}^k({\sf B}^k_s(\gamma|_{B_r}))}
	\end{equation}
    is Borel. 
    
    \medskip
Since the Borel measurability of the map $\gamma \mapsto \pi_{B_r}^k({\sf B}^k_s(\gamma|_{B_r}))$ is easy, we only give a proof of the Borel measurability of the map $\gamma \mapsto \pi_{B_r}^k({\sf B}^k_s(\gamma|_{B_r})\cap E^k_{\gamma|_{B_r^c}, r})$.     
    
    Let us identify $\{\gamma \in \U(\R^n) \, : \, \gamma(B_r) = k \} \simeq
    \U^k(B_r) \times \U(B_r^c)$. It allows us to introduce the product topology $\tau_p$ on $\{\gamma \in \U(\R^n) \, : \, \gamma(B_r) = k \}$, that is coarser than the vague topology $\tau_v$ as a consequence of the following observation: since $B_r^c$ is open, the vague topology $\tau_v$ on $\U(B_r^c)$ coincides with the relative topology induced by $\U(\R^n)$. Thus, it suffices to see that the vague topology on $\U(B_r)$ is coarser than the relative topology induced by $\U(\R^n)$. For this purpose, we only need to show that, for any $\phi \in C_c(B_r)$ (note that $\phi$ does not necessarily vanish at the boundary of $B_r$), there exists an extension $\tilde{\phi} \in C_c(\R^n)$ so that $\tilde{\phi}= \phi$ on $B_r$.  Given $\phi \in C_c(B_r)$,  we take $\Phi\in C(\R^n)$ which is the extension of $\phi$ to $\R^n$ given by the Tietze extension theorem. Let us now pick $\kappa \in C_c(\R^n)$ such that $\kappa=1$ on $B_r$ and $\kappa=0$ on $B_{2r}^c$. Then, it holds $\tilde{\phi}:=\kappa\Phi\in C_c(\R^n)$ and $\tilde{\phi}=\phi$ in $B_r$, which concludes the sought statement.

By the inclusion $\tau_p \subset \tau_v$ of the topologies, we have the inclusion of the corresponding Borel $\sigma$-algebras $\mathscr B(\tau_p) \subset \mathscr B(\tau_v)$. Since the map
    \begin{equation}
    	\U^k(B_r)\times \U^k(B_r) \times \U(B_r^c)
    	\ni (\gamma_1, \gamma_2, \eta)
    	\to
    	\chi_{E}(\gamma_1 + \eta) \chi_{{\sf B}^k_s(\gamma_2)}(\gamma_1) \, ,
    \end{equation}
    is $\mathscr B(\tau_p)$-measurable, it is also $\mathscr B(\tau_v)$-measurable. Hence,  Fubini's theorem gives that
    \begin{equation}
    	 \U^k(B_r) \times \U(B_r^c)
    	\ni (\gamma_2, \eta)
    	\to
    	\int_{\U^k(B_r)}\chi_{E}(\gamma_1 + \eta) \chi_{{\sf B}^k_s(\gamma_2)}(\gamma_1) d \p^k_{{\sf B}_r}(\gamma_1)
    	=
    	\pi_{B_r}^k( {\sf B}^k_s(\gamma_2|_{B_r})\cap E^k_{\eta, r}) \, ,
    \end{equation}
    is $\mathscr B(\tau_v)$-measurable as well.

    \bigskip

    {\bf Step 2:} Fix $k\in \N$ and set
    \begin{align*}
    	&A^{k, r}_1  := \left\lbrace\gamma\in \U(\R^n) \, : \, \limsup_{s\to 0} \frac{\pi_{B_r}^k({\sf B}^k_s(\gamma|_{B_r})\cap E^k_{\gamma|_{B_r^c}, r})}{\pi_{B_r}^k({\sf B}_s(\gamma|_{B_r}))} > 0\right\rbrace \, ,
    	\\&
    	A^{k, r}_2 :=
    	\left\lbrace \gamma\in \U(\R^n) \, : \, \limsup_{j\to \infty}  \frac{\pi_{B_r}^k({\sf B}^k_{2^{-j}}(\gamma|_{B_r})\cap E^k_{\gamma|_{B_r^c}, r})}{\pi_{B_r}^k({\sf B}^k_{2^{-j}}(\gamma|_{B_r}))} > 0 \right\rbrace \, .
    \end{align*}
    Then $A_1^{k, r} = A_2^{k, r}$.
    
    \medskip
    
    Observe that $A^{k, r}_2\subset A^{k, r}_1$. The converse inequality follows from the following observation. If $2^{-j} \le s \le 2^{-j + 1}$ then
    \begin{align*}
    	\frac{\pi_{B_r}^k({\sf B}^k_{s}(\gamma|_{B_r})\cap E^k_{\gamma|_{B_r^c}, r})}{\pi_{B_r}^k({\sf B}^k_{s}(\gamma|_{B_r}))}
    	& \ge 
    	\frac{\pi_{B_r}^k({\sf B}^k_{2^{-j}}(\gamma|_{B_r})\cap E^k_{\gamma|_{B_r^c}, r})}{\pi_{B_r}^k({\sf B}^k_{2^{-j}}(\gamma|_{B_r}))} \, \frac{\pi_{B_r}^k({\sf B}^k_{2^{-j}}(\gamma|_{B_r}))}{\pi_{B_r}^k({\sf B}^k_{s}(\gamma|_{B_r}))}
    	\\& \ge 
    	C(k,n) \frac{\pi_{B_r}^k({\sf B}^k_{2^{-j}}(\gamma|_{B_r})\cap E^k_{\gamma|_{B_r^c}, r})}{\pi_{B_r}^k({\sf B}^k_{2^{-j}}(\gamma|_{B_r}))} \, ,
    \end{align*}
    where we used the estimate $C(n,k)^{-1}e^{-\mathbf{L}^n(B_r)} s^{nk} \le \pi_{B_r}^k({\sf B}^k_{s}(\gamma)) \le C(n,k)e^{-\mathbf{L}^n(B_r)} s^{nk}$ for any $s< r/5$, $\gamma\in \U(B_r)$ and some constant $C(n,k)\ge 1$ depending only on $n$ and $k$. Indeed, the latter estimate can be obtained by the following observation: letting $\gamma=\{x_1, \ldots, x_k\}$, we have
    \begin{equation*}
    	B_r^{\times k} \cap \mathbf s_k^{-1}({\sf B}^k_{s}(\gamma))
    	= B_r^{\times k} \cap \bigcup_{\sigma_k \in \mathfrak S_k}
    	B_s(\mathbf x_{\sigma_k}) \, ,
    \end{equation*} 
    hence 
    \begin{equation*}
    	\pi_{B_r}^k({\sf B}_s^k(\gamma))
    	= \frac{e^{-\mathbf{L}^n(B_r)}}{k!} \mathbf{L}^{kn}(B_r^{\times k}  \cap \mathbf s_k^{-1}({\sf B}^k_{s}(\gamma)))
    	\le e^{-\mathbf{L}^n(B_r)} C(n,k) s^{nk} \, ,
    \end{equation*}
    recall that $\mathbf L^n$ denotes the $n$-dimensional Lebesgue measure. 
    The opposite inequality follows from 
           \begin{align*}
    	\pi_{B_r}^k({\sf B}_s^k(\gamma))
    	&= \frac{e^{-\mathbf{L}^n(B_r)}}{k!} \mathbf{L}^{kn}(B_r^{\times k} \cap \mathbf s_k^{-1}({\sf B}^k_{s}(\gamma)))
	\\
	& \ge  \frac{e^{-\mathbf{L}^n(B_r)}}{k!} \mathbf{L}^{kn}(B_r^{\times k}  \cap B_s(\mathbf x_{\sigma_k}))
    	\ge e^{-\mathbf{L}^n(B_r)} C(n,k) s^{nk} \, .
    \end{align*}

    \bigskip

    {\bf Step 3:} We conclude the proof. Thanks to Step 1 and Step 2 we know that $A_1^{k, r}$ is Borel for every $k\in \N$ and $r>0$. The same arguments as in Step 1 and Step 2 apply to the Borel measurability for the following set:
    \begin{equation}
    	\left\lbrace\gamma\in \U(\R^n) \, : \, \limsup_{s\to 0} \frac{\pi_{B_r}^k( {\sf B}^k_s(\gamma|_{B_r})\setminus E^k_{r,\gamma|_{B_r^c}})}{\pi_{B_r}^k({\sf B}^k_s(\gamma|_{B_r}))} > 0\right\rbrace \, ,
    \end{equation}
    hence, $\partial^*_r E$ is a Borel set.
\end{proof}

\subsection{Perimeter measures} In this subsection, based on the variational approach, we introduce the perimeter measure $\|E\|$ for a set $E \subset \U(\R^n)$ satisfying $\mathcal V(\chi_E)<\infty$. In order to construct $\|E\|$, we first introduce a localised perimeter measure $\|E\|_r$ on $\U(\R^n)$, and show the monotonicity of $\|E\|_r$ as $r \to \infty$.

\begin{defn}\normalfont \label{defn: PME0}
 For every Borel set $E\subset \U(\R^n)$ with $\V_r(\chi_E) < \infty$, we define 
 \begin{equation} \label{defn: PME}
		\| E \|_r := \rho^1_{\U(B_r)}|_{(\partial_r^*E)_{\eta,r}} \otimes \pi_{B_r^c}(\eta)
		\quad
		\text{on}\, \,  \U(\R^n) \, ,
	\end{equation}
	which is equivalently defines as follows: for every bounded Borel measurable function $F$ on $\U(\R^n)$, 
 \begin{align} \label{eq: DOE}
 \int_{\U(\R^n)} F d\| E \|_r := \int_{\U(B_r^c)} \left( \int_{\U(B_r)} F_{\eta, r} d \rho^1_{\U(B_r)}|_{\partial^*_{\U(B_r)} E_{\eta,r}} \right) d \p_{B_r^c}(\eta).
 \end{align}
 \end{defn}
\begin{lem} \label{lem: BV2-1}
	Let $r>0$. For every Borel set $E \subset \U(\R^n)$ with $\mathcal V(\chi_E)<\infty$, $\|E\|_r$ is a well-defined finite Borel measure. 
\end{lem}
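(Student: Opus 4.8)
The plan is to check, in this order: (1) for each $\eta\in\U(B_r^c)$ the restriction $\rho^1_{\U(B_r)}|_{(\partial_r^*E)_{\eta,r}}$ is a genuine Borel measure on $\U(B_r)$; (2) for every Borel $A\subset\U(\R^n)$ the map $\eta\mapsto\rho^1_{\U(B_r)}((\partial_r^*E)_{\eta,r}\cap A_{\eta,r})$ is $\p_{B_r^c}$-measurable, so that the formula \eqref{defn: PME}, read through \eqref{eq: DOE}, assigns a value in $[0,\infty]$ to $A$; (3) the resulting set function is countably additive; and (4) it is finite. The extension of \eqref{eq: DOE} from indicators to all bounded Borel $F$, and the identification of the resulting functional with the disintegrated measure written in \eqref{defn: PME}, are then routine and I would only sketch them. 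Note that the hypothesis $\V(\chi_E)<\infty$ is all we need, since $\CylV^r_0(\U(\R^n))\subset\CylV_0(\U(\R^n))$ gives $\V_r(\chi_E)\le\V(\chi_E)<\infty$, which is the condition under which Definition \ref{defn: PME0} operates.

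For step (1) I would fix $\eta$ and observe that the addition map $\U(B_r)\ni\gamma\mapsto\gamma+\eta\in\U(\R^n)$ is continuous for the vague topologies: any $f\in C_c(\R^n)$ restricts to a valid test function $f|_{B_r}\in C_c(B_r)$, so $\gamma_n\to\gamma$ in $\U(B_r)$ forces $(\gamma_n+\eta)(f)=\gamma_n(f)+\eta(f)\to(\gamma+\eta)(f)$. Hence the section $(\partial_r^*E)_{\eta,r}$, being the preimage of the Borel set $\partial_r^*E$ (Borel because $E$ is, by the preceding lemma) under this continuous map, is Borel in $\U(B_r)$, and $\rho^1_{\U(B_r)}|_{(\partial_r^*E)_{\eta,r}}$ is a well-defined Borel measure; by \eqref{z3} it equals $\rho^1_{\U(B_r)}|_{\partial^*_{\U(B_r)}E_{\eta,r}}$. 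For step (2) I would use that $\partial_r^*E\cap A$ is Borel and that $\rho^1_{\U(B_r)}((\partial_r^*E)_{\eta,r}\cap A_{\eta,r})=\int_{\U(B_r)}(\chi_{\partial_r^*E\cap A})_{\eta,r}\,d\rho^1_{\U(B_r)}$, so the claimed measurability is exactly Lemma \ref{lemma:measurability1}. Step (3) is immediate: $\|E\|_r(\emptyset)=0$, and for a disjoint Borel union $A=\bigsqcup_jA_j$ the sections $A_{\eta,r}=\bigsqcup_j(A_j)_{\eta,r}$ are disjoint, so $\sigma$-additivity of $\rho^1_{\U(B_r)}$ together with the monotone convergence theorem give $\|E\|_r(A)=\sum_j\|E\|_r(A_j)$. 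Thus $\|E\|_r$ is a Borel measure on $\U(\R^n)$.

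The one substantive point is step (4), finiteness, where the earlier results enter. Evaluating on $A=\U(\R^n)$ gives $\|E\|_r(\U(\R^n))=\int_{\U(B_r^c)}\rho^1_{\U(B_r)}(\partial^*_{\U(B_r)}E_{\eta,r})\,d\p_{B_r^c}(\eta)$. Since $\V_r(\chi_E)\le\V(\chi_E)<\infty$, Proposition \ref{lem: BV1} applied with $F=\chi_E$ (and $(\chi_E)_{\eta,r}=\chi_{E_{\eta,r}}$) yields $\int_{\U(B_r^c)}\V_{\U(B_r)}(\chi_{E_{\eta,r}})\,d\p_{B_r^c}(\eta)=\V_r(\chi_E)<\infty$; in particular $E_{\eta,r}$ has finite perimeter in $\U(B_r)$ for $\p_{B_r^c}$-a.e.\ $\eta$, and for each such $\eta$ the Gau\ss--Green formula of Proposition \ref{lemma:finiteperimeterBr} gives $\rho^1_{\U(B_r)}(\partial^*_{\U(B_r)}E_{\eta,r})=\V_{\U(B_r)}(\chi_{E_{\eta,r}})$. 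The remaining $\p_{B_r^c}$-null set of $\eta$'s does not affect the integral, so $\|E\|_r(\U(\R^n))=\int_{\U(B_r^c)}\V_{\U(B_r)}(\chi_{E_{\eta,r}})\,d\p_{B_r^c}(\eta)=\V_r(\chi_E)\le\V(\chi_E)<\infty$, finishing the proof. I do not expect a genuine obstacle here; the only care needed is in threading together $\V_r\le\V$, Proposition \ref{lem: BV1}, Proposition \ref{lemma:finiteperimeterBr} and \eqref{z3}, and in not forgetting the $\p_{B_r^c}$-null set of $\eta$ where $E_{\eta,r}$ might fail to have finite perimeter (on which $\rho^1_{\U(B_r)}|_{(\partial_r^*E)_{\eta,r}}$ could be infinite but is integrated against a null set).
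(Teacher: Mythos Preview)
Your proof is correct and follows essentially the same approach as the paper: measurability of $\eta\mapsto\rho^1_{\U(B_r)}((\partial_r^*E\cap A)_{\eta,r})$ via Borelness of $\partial_r^*E$ together with Lemma~\ref{lemma:measurability1} and \eqref{z3}, and finiteness via the chain $\V_r(\chi_E)\le\V(\chi_E)<\infty$, Proposition~\ref{lem: BV1}, and Proposition~\ref{lemma:finiteperimeterBr}. You are simply more explicit about countable additivity and the $\p_{B_r^c}$-null set of bad $\eta$'s, which the paper leaves implicit.
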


\begin{proof}
Let us first show that $\|E\|_r$ is well-defined. The map $\gamma \mapsto F_{\eta, r}(\gamma)$ is $\rho^1_{\U(B_r)}|_{\partial^* E_{\eta,r}}$-measurable by Lemma \ref{lem: sec}. On account of the definition \eqref{eq: DOE}, we only need to show that the map
\begin{equation}\label{z2}
 \U(B_r^c)  \ni	\eta \to \int_{\U(B_r)} F_{\eta, r} d \rho^1_{\U(B_r)}|_{\partial^*_{\U(B_r)} E_{\eta,r}} \, ,
\end{equation}
is $\pi_{B_r^c}$-measurable for any Borel function $F: \U(\R^n) \to \R$.
To show it,  we use \eqref{z3} and rewrite
\begin{equation*}
	\int_{\partial_{\U(B_r)}^* E_{\eta,r}} F_{\eta, r} \,  d \rho^1_{\U(B_r)}
	=
	\int_{(\partial_r^* E)_{\eta,r}} F_{\eta, r} \,  d \rho^1_{\U(B_r)}
	=
	\int_{\U(B_r)} (\chi_{\partial_r^* E} F)_{\eta, r} \, d \rho^1_{\U(B_r)} \, .
\end{equation*}
Now, the claimed conclusion follows from Lemma \ref{lemma:measurability1} by observing that $\chi_{\partial_r^* E} F$ is a Borel function.

The finiteness of the measure $\|E\|_r$ is immediate by Proposition \ref{lemma:finiteperimeterBr} and Proposition \ref{lem: BV1}, indeed
\begin{equation*}
 \| E \|_r(\U(\R^n)) = \int_{\U(B_r^c)} \V_{\U(B_r)}((\chi_E)_{\eta,r}) d \pi_{B_r^c}(\eta) = \V_r(\chi_E) \le \V(\chi_E) < \infty. \qedhere
\end{equation*}
\end{proof}

\begin{lem} \label{lem: BV2}
Let $r>0$. For every Borel set $E \subset \U(\R^n)$ with $\mathcal V_r(\chi_E)<\infty$,  there exists a vector field $\sigma_{E, r} :\U(\R^n) \to T \U(\R^n)$ such that
	\begin{itemize}
		\item[(i)] $\sigma_{E, r}(\gamma) \in T_\gamma \U(\R^n)$ satisfies $\sigma_{E, r}(\gamma,x)=0$ for $x\in B_r^c$;
		\item[(ii)] $|\sigma_{E, r}|_{T\U}=1$, $\|E\|_r$-a.e.;
		\item[(iii)] for every $V\in \CylV^r_*(\U(\R^n))$, 
		\begin{align} \label{eq: IP2}
			\int_{E} (\nabla^* V)  d\p = \int_{\U(\R^n)} \langle V, \sigma_{E, r}\rangle_{T\U} d\|E\|_r \, .
		\end{align} 
		\item[(iv)] $\V_r(\chi_E) = \|E\|_r(\U(\R^n))$, and for every non-negative function $F\in \CylF(\U(\R^n))$ it holds
		\begin{equation}\label{z9}
			\int_{\U(\R^n)} F d \|E\|_r
			=
			\sup\left\lbrace \int_{E} (\nabla^* F V)  d\p \, : \, V\in \CylV^r_*(\U(\R^n)),\, |V|_{T\U}\le 1 \right\rbrace \, .
		\end{equation}
	\end{itemize}
\end{lem}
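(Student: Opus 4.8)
The plan is to run the fiberwise reduction along the splitting $\R^n = B_r \sqcup B_r^c$, in the same spirit as the nonlinear dimension reduction used for Proposition~\ref{lem: BV1}, and to glue the finite-dimensional objects produced by the Gau\ss--Green formula on $\U(B_r)$ (Proposition~\ref{lemma:finiteperimeterBr}). Since $\V_r(\chi_E) < \infty$, Proposition~\ref{lem: BV1} gives $\int_{\U(B_r^c)} \V_{\U(B_r)}(\chi_{E_{\eta,r}})\, d\p_{B_r^c}(\eta) = \V_r(\chi_E) < \infty$, so for $\p_{B_r^c}$-a.e.\ $\eta$ the section $E_{\eta,r}\subset \U(B_r)$ has finite perimeter and Proposition~\ref{lemma:finiteperimeterBr} yields a vector field $\sigma_{E_{\eta,r}} : \U(B_r)\to T\U(B_r)$ with $|\sigma_{E_{\eta,r}}|_{T\U(B_r)} = 1$ $\rho^1_{\U(B_r)}$-a.e.\ on $\partial^*_{\U(B_r)} E_{\eta,r}$ and $\int_{E_{\eta,r}} \nabla^*_{\U(B_r)} W\, d\p_{B_r} = \int_{\partial^*_{\U(B_r)}E_{\eta,r}} \langle W,\sigma_{E_{\eta,r}}\rangle\, d\rho^1_{\U(B_r)}$ for all $W\in \CylV(\U(B_r))$. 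I then set $\sigma_{E,r}(\gamma,x) := \sigma_{E_{\gamma|_{B_r^c},r}}(\gamma|_{B_r},x)$ for $x\in B_r$ and $\sigma_{E,r}(\gamma,x) := 0$ for $x\in B_r^c$; item (i) is then built in. Its $\|E\|_r$-measurability is obtained by choosing $\eta\mapsto \sigma_{E_{\eta,r}}$ $\p_{B_r^c}$-measurably, which can be arranged exactly as in Step~1 of the proof of Proposition~\ref{lem: BV1}, working with a fixed countable family of cylinder vector fields that computes the fiberwise total variation.

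For item (ii): since $\sigma_{E,r}(\gamma,\cdot)$ is supported in $B_r$ and $\gamma|_{B_r}$ carries all its mass, $|\sigma_{E,r}(\gamma)|_{T_\gamma\U} = |\sigma_{E_{\gamma|_{B_r^c},r}}(\gamma|_{B_r})|_{T_{\gamma|_{B_r}}\U(B_r)}$. By Definition~\ref{defn: PME0} and \eqref{z3}, $\|E\|_r$ disintegrates over $\p_{B_r^c}$ into the measures $\rho^1_{\U(B_r)}|_{\partial^*_{\U(B_r)}E_{\eta,r}}$, and in particular $\|E\|_r$ is concentrated on $\partial^*_r E = \{\gamma:\gamma|_{B_r}\in \partial^*_{\U(B_r)}E_{\gamma|_{B_r^c},r}\}$; combining this with $|\sigma_{E_{\eta,r}}| = 1$ $\rho^1_{\U(B_r)}$-a.e.\ on $\partial^*_{\U(B_r)}E_{\eta,r}$ gives (ii). The identity $\V_r(\chi_E) = \|E\|_r(\U(\R^n))$ in (iv) is already contained in the proof of Lemma~\ref{lem: BV2-1} (integrate the last assertion of Proposition~\ref{lemma:finiteperimeterBr} over $\eta$ and use Proposition~\ref{lem: BV1}). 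For item (iii) the key local fact is that for $V\in \CylV^r_0(\U(\R^n))$, $\zeta\in \U(B_r)$, $\eta\in \U(B_r^c)$ one has $(\nabla^* V)(\zeta+\eta) = (\nabla^*_{\U(B_r)}V_{\eta,r})(\zeta)$ with $V_{\eta,r}\in \CylV_0(\U(B_r))$: the divergence formula \eqref{eq: DIV} reduces both sides to the same expression because the vector parts of $V$ are supported in $B_r$, so their flows fix $B_r^c$ and $(\nabla^* v_i)^\star(\zeta+\eta) = (\nabla^* v_i)^\star\zeta$. Applying the disintegration of Lemma~\ref{lem: DI2} (extended to $L^1$ integrands) to $\chi_E\,(\nabla^* V)$, then Proposition~\ref{lemma:finiteperimeterBr} fiberwise, and finally the observation $\langle V,\sigma_{E,r}\rangle_{T\U}(\zeta+\eta) = \langle V_{\eta,r}(\zeta),\sigma_{E_{\eta,r}}(\zeta)\rangle_{T_\zeta\U(B_r)}$ together with the defining formula \eqref{eq: DOE} of $\|E\|_r$, one obtains $\int_E (\nabla^* V)\, d\p = \int_{\U(\R^n)} \langle V,\sigma_{E,r}\rangle_{T\U}\, d\|E\|_r$; the only point to check is $\|E\|_r$-integrability of $\langle V,\sigma_{E,r}\rangle_{T\U}$, which follows since fiberwise it coincides with the integrable quantity $\eta\mapsto \int_{E_{\eta,r}}\nabla^*_{\U(B_r)}V_{\eta,r}\,d\p_{B_r}$, and for the bounded vector fields $FV$ relevant below it is immediate because $|FV|_{T\U}\le \|F\|_{L^\infty}$ when $|V|_{T\U}\le 1$.

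It remains to prove formula \eqref{z9} in (iv). The inequality ``$\le$'' is easy: for nonnegative $F\in \CylF(\U(\R^n))$ and $V\in \CylV^r_0(\U(\R^n))$ with $|V|_{T\U}\le 1$, applying (iii) to $FV\in \CylV^r_0(\U(\R^n))$ and then (ii) gives $\int_E (\nabla^* FV)\, d\p = \int_{\U(\R^n)} F\langle V,\sigma_{E,r}\rangle_{T\U}\, d\|E\|_r \le \int_{\U(\R^n)} F\, d\|E\|_r$. For the reverse I would use the weighted Gau\ss--Green formula on $\U(B_r)$ --- a classical fact on each connected component $\U^k(B_r)\cong B_r^{\times k}/\mathfrak S_k$ --- namely, for $\p_{B_r^c}$-a.e.\ $\eta$, $\int_{\U(B_r)} F_{\eta,r}\, d\rho^1_{\U(B_r)}|_{\partial^*_{\U(B_r)}E_{\eta,r}} = \sup\bigl\{\int_{E_{\eta,r}}\nabla^*_{\U(B_r)}(F_{\eta,r}W)\,d\p_{B_r} : W\in \CylV_0(\U(B_r)),\ |W|_{T\U}\le 1\bigr\}$; then integrate over $\eta$, select near-optimal $W^\eta$ measurably out of a fixed countable family, glue them into $W^\eta(\gamma) = \sum_j V_j(\gamma)\chi_{C_j}(\gamma|_{B_r^c})$ over a $\p_{B_r^c}$-measurable partition $\{C_j\}$ of $\U(B_r^c)$, and approximate each $\chi_{C_j}$ by cylinder functions to land in $\CylV^r_0(\U(\R^n))$; passing to the limit and invoking Lemma~\ref{lem: BV0} and (iii) yields $\int_{\U(\R^n)} F\, d\|E\|_r \le \sup\bigl\{\int_E (\nabla^* FV)\, d\p : V\in \CylV^r_0(\U(\R^n)),\ |V|_{T\U}\le 1\bigr\}$. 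This last step --- matching the fiberwise suprema with the single supremum over $\CylV^r_0(\U(\R^n))$ via measurable selection and gluing --- is the main obstacle, but it is a faithful reprise of the machinery already developed in Steps~1--2 of the proof of Proposition~\ref{lem: BV1}, so no essentially new idea is required.
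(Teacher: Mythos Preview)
Your construction of $\sigma_{E,r}$ and the proofs of (i)--(iii) and of the equality $\V_r(\chi_E)=\|E\|_r(\U(\R^n))$ match the paper's argument essentially verbatim: fiberwise Gau\ss--Green on $\U(B_r)$, glue along the disintegration, and read off the properties.

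The one genuine difference is in the proof of \eqref{z9}. You propose to rerun the measurable-selection/gluing machinery of Proposition~\ref{lem: BV1} with the weight $F$ inserted: prove a weighted Gau\ss--Green formula fiberwise, select near-optimal $W^\eta$ from a countable family, partition $\U(B_r^c)$, and approximate indicators by cylinder functions. This works, but the paper avoids all of it. Once $\V_r(\chi_E)=\|E\|_r(\U(\R^n))$ is known, take any maximizing sequence $V_k\in\CylV^r_0(\U(\R^n))$ with $|V_k|_{T\U}\le1$; by (iii) this means $\int\langle V_k,\sigma_{E,r}\rangle\,d\|E\|_r\to\|E\|_r(\U(\R^n))$. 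Since $|\sigma_{E,r}|=1$ $\|E\|_r$-a.e., the elementary identity $|V_k-\sigma_{E,r}|^2=|V_k|^2+1-2\langle V_k,\sigma_{E,r}\rangle\le 2(1-\langle V_k,\sigma_{E,r}\rangle)$ forces $V_k\to\sigma_{E,r}$ in $L^2(T\U,\|E\|_r)$. Hence $\int F\langle V_k,\sigma_{E,r}\rangle\,d\|E\|_r\to\int F\,d\|E\|_r$ for every bounded $F$, and applying (iii) with $FV_k$ gives the ``$\ge$'' in \eqref{z9} directly. The payoff is that the paper's route needs no new selection argument and no second pass through the approximation of indicators; your route is correct but does more work than necessary.
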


\begin{proof}
By Proposition \ref{lem: BV1}, there exists a measurable set $\Omega_r \subset \U(B_r^c)$ so that  $\p_{B_r^c}(\Omega_r)=1$ and $\V_{\U(B_r)}(\chi_{E_{\eta, r}})<\infty$ for every $\eta \in \Omega_r$.
By Proposition \ref{lemma:finiteperimeterBr}, for every $\eta \in \Omega_r$, there exists a unique $T\U(B_r)$-valued Borel measurable map $\sigma_{\eta, r}$ on $\U(B_r)$ so that $|\sigma_{\eta, r}|_{T\U(B_r)}=1$ $\rho^1_{\U(B_r)}|_{\partial^* E_{\eta,r}}$-a.e., and
\begin{align} \label{eq: IP2-1}
	\int_{E_{\eta, r}} (\nabla^* V_{\eta, r})  d\p_{B_r} 
	=
	\int_{\partial^*_{\U(B_r)} E_{\eta,r}} \langle V_{\eta, r}, \sigma_{\eta, r}\rangle_{T\U(B_r)} d\rho^1_{\U(B_r)}, \quad V \in \CylV^r_*(\U(\R^n)) \, ,
\end{align} 
where we used $V_{\eta, r} \in \CylV_*(\U(B_r))$ whenever $V \in \CylV^r_*(\U(\R^n))$. 
By taking the integral with respect to $\p_{B_r^c}$, and arguing as in~\eqref{prop: 0} we obtain
\begin{align} \label{eq: IP2-2}
	\int_{E} (\nabla^* V)  d\p
	& =
	\int_{\U(B_r^c)} \int_{E_{\eta,r}} (\nabla_{r}^* V_{\eta, r}) d\p_{B_r} d\p_{B^c_r}(\eta) 
	\\& 
	=
	\int_{\U(B_r^c)} \int_{\partial^*_{\U(B_r)} E_{\eta,r}} \langle V_{\eta, r}, \sigma_{\eta, r}\rangle_{T\U(B_r)} d\rho^1_{\U(B_r)} d\p_{B_r^c}(\eta) \, . \notag
\end{align} 
Note that the map $\eta  \mapsto \int_{\partial^*_{\U(B_r)} E_{\eta,r}} \langle V_{\eta, r}, \sigma_{\eta, r}\rangle_{T\U(B_r)} d\rho^1_{\U(B_r)}$ is $\p_{B_r^c}$-measurable since, in view of \eqref{eq: IP2-1}, it is equal to a $\pi_{B_r^c}$-measurable function, and therefore, the argument \eqref{eq: IP2-2} is justified. 
For $\gamma\in \U(\R^n)$ we define 
\begin{equation} \label{eq: DUV}
	\sigma_{E, r}(\gamma):=
	\begin{cases}
	\sigma_{\gamma|_{B_r^c}, r}(\gamma|_{B_r}) \, \, &\text{if $\gamma|_{B_r^c} \in \Omega_r$},
	\\
	\sigma_r(\gamma)=0 \quad & \text{otherwise}\, .
	\end{cases}
\end{equation}
Let us now observe that, for any $V\in \CylV(\U(\R^n))$, we have
   \begin{equation}\label{z4}
		(\langle V, \sigma_{E, r} \rangle_{T\U(\R^n)})_{\eta,r} 
		= 
		\langle V_{\eta,r}, \sigma_{\eta,r} \rangle_{T\U(B_r)} \, .
	\end{equation}	 
By combining the definition \eqref{defn: PME} of $\|E\|_r$ with \eqref{eq: IP2-2}, \eqref{eq: DUV} and \eqref{z4},  we deduce the assertion (iii).

\medskip

The assertion (i) follows from the definition \eqref{eq: DUV}, and the assertion (ii) follows from 
$$(|\sigma_{E, r}|_{T\U(\R^n)})_{\eta,r} = | \sigma_{\eta,r}| = 1, \quad \text{$ \rho^1_{\U(B_r)}|_{\partial^* E_{\eta,r}}$-a.e..}$$

\medskip

We now prove (iv). We first prove the equality $\V_r(\chi_E)  = \|E\|_r(\U(\R^n))$.  From (iii) and (ii) we deduce 
\begin{align}
	\V_r(\chi_E) 
	&=\sup\biggl\{ \int_{\U(\R^n)} (\nabla^* V) f d\p: V\in \CylV^r(\U(\R^n)),\ |V|_{T\U(\R^n)} \le 1 \biggr\}  \notag
	\\
	&=\sup\biggl\{\int_{\U(\R^n)} \langle V, \sigma_{E, r}\rangle_{T\U} d\|E\|_r: V\in \CylV^r(\U(\R^n)),\ |V|_{T\U(\R^n)} \le 1 \biggr\}  \notag
	\\
	& \le  \|E\|_r(\U(\R^n)) \, . \notag
\end{align}
Furthermore, Proposition \ref{lem: BV1} and Lemma \ref{lemma:finiteperimeterBr} imply
\begin{equation}
	\V_r(\chi_E) 
	\ge \int_{\U(B_r^c)} \V_{\U(B_r)}((\chi_{E})_{\eta, r}) d\p_{B_r^c}(\eta)
	=
	\int_{\U(B_r^c)} \rho^1_{\U(B_r)}(\partial^*_{\U(B_r)} E_{\eta,r}) d\pi_{B_r^c}(\eta)
	= \| E \|_r(\U(\R^n)) \, .
\end{equation}
Thus, the proof of the equality $\V_r(\chi_E)  = \|E\|_r(\U(\R^n))$ is complete. 

Let us finally address \eqref{z9}. From the equality $\V_r(\chi_E)  = \|E\|_r(\U(\R^n))$,  we deduce the existence of a sequence $V_k\in \CylV^r_*(\U(\R^n))$ such that $|V_k|_{T\U}\le 1$, and
\begin{equation*}
	\lim_{k\to \infty} \int_{\U(\R^n)} \langle V_k, \sigma_{E, r} \rangle_{T\U} d \|E\|_r = \int_{\U(\R^n)} d \|E\|_r \, ,
\end{equation*}
hence, 
\begin{align*}
	\lim_{k\to \infty} \int_{\U(\R^n)}| V_k -\sigma_{E, r}|^2_{T\U} d \|E\|_r
	& =
	\lim_{k\to \infty} \int_{\U(\R^n)}( |V_k|^2_{T\U} + |\sigma_{E, r}|^2_{T\U} -2\langle V_k, \sigma_{E, r} \rangle_{T\U}) d \|E\|_r
	\\& \le 
	\lim_{k\to \infty} 2\int_{\U(\R^n)} (1-\langle V_k, \sigma_{E, r} \rangle_{T\U}) d \|E\|_r
	 =0 \, .
\end{align*}
Therefore, for every $F \in \Cyl$
\begin{align*}
	\lim_{k\to \infty} & \int_{\U(\R^n)} F \langle V_k, \sigma_{E, r} \rangle_{T\U} d \|E\|_r
	= \int_{\U(\R^n)} F\,  d \|E\|_r \, ,
\end{align*}
in particular, by making use of \eqref{eq: IP2} with $V=FV_k$,  it holds that
\begin{equation}
	\int_{\U(\R^n)} F\,  d \|E\|_r
	\le 
	\sup\left\lbrace \int_{E} (\nabla^* F V)  d\p \, : \, V\in \CylV^r_*(\U(\R^n)),\, |V|_{T\U}\le 1 \right\rbrace \, .
\end{equation}
The converse inequality follows form $|\sigma_{E,r}|_{T\U}=1$ $\| E\|_r$-a.e. and the fact that $F$ is non-negative:
\begin{equation}
	\int_{E} (\nabla^* F V)  d\p 
	= \int_{\U(\R^n)} F \langle V_k, \sigma_{E, r} \rangle_{T\U} d \|E\|_r
	\le \int_{\U(\R^n)} |F| d \|E\|_r
	= \int_{\U(\R^n)} F d \|E\|_r \, . \qedhere
\end{equation}
\end{proof}

\begin{cor}\label{cor:mon}
	If $\V(\chi_E)<\infty$, then $r \mapsto \| E \|_r(A)$ is monotone non-decreasing for every Borel measurable set $A$.
\end{cor}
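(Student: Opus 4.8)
The plan is to mimic the monotonicity argument of Theorem \ref{thm: MI}, but now at the level of perimeter measures rather than codimensional Poisson measures, using the characterisation \eqref{z9} from Lemma \ref{lem: BV2}(iv) to compare $\|E\|_r$ and $\|E\|_{r'}$ for $r\le r'$. The key point is that \eqref{z9} expresses $\int F\, d\|E\|_r$ as a supremum of $\int_E (\nabla^* FV)\, d\pi$ over test vector fields $V\in \CylV^r_0(\U(\R^n))$ with $|V|_{T\U}\le 1$, and since $B_r\subset B_{r'}$ we have the inclusion of test classes $\CylV^r_0(\U(\R^n)) \subset \CylV^{r'}_0(\U(\R^n))$. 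Because this inclusion holds and the functional $\int_E(\nabla^* FV)\, d\pi$ is the same, taking the supremum over the larger class can only increase the value; this gives
\begin{equation*}
	\int_{\U(\R^n)} F\, d\|E\|_r \le \int_{\U(\R^n)} F\, d\|E\|_{r'}
	\qquad \text{for all nonnegative } F\in \CylF(\U(\R^n)) \text{ and } r\le r'.
\end{equation*}
(One should first note that $\V_r(\chi_E)\le \V_{r'}(\chi_E)\le \V(\chi_E)<\infty$, so that both $\|E\|_r$ and $\|E\|_{r'}$ are well-defined finite Borel measures by Lemma \ref{lem: BV2-1}; the monotonicity $\V_r\le \V_{r'}$ is again immediate from the inclusion of test classes.)

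The remaining step is to upgrade the inequality on nonnegative cylinder functions to an inequality on Borel sets, i.e.\ $\|E\|_r(A)\le \|E\|_{r'}(A)$ for every Borel $A$. Since $\|E\|_r$ and $\|E\|_{r'}$ are finite Borel measures on the Polish space $\U(\R^n)$, and cylinder functions $\CylF(\U(\R^n))$ are dense (in a suitable sense) and separate points, one argues as follows: first extend the inequality from nonnegative cylinder functions to all nonnegative bounded lower semicontinuous functions (via monotone approximation by cylinder functions, using that $\CylF$ generates the vague topology), then to arbitrary nonnegative bounded Borel functions by outer regularity of finite Borel measures on Polish spaces, and finally specialise to $F=\chi_A$. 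Concretely, for an open set $U$ one writes $\chi_U=\sup_j F_j$ with $F_j\in\CylF(\U(\R^n))$, $0\le F_j\le 1$ increasing, and applies monotone convergence to both sides; then one passes to general Borel sets by approximating from outside by open sets, using the finiteness (hence regularity) of the two measures.

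The main obstacle is the passage from the class-inclusion inequality for cylinder test functions to a genuine measure inequality, because one must be careful that the cylinder-function characterisation \eqref{z9} is only asserted for nonnegative $F\in\CylF(\U(\R^n))$, and cylinder functions do not directly approximate indicators of open sets in a monotone fashion without some work on the vague topology of $\U(\R^n)$. This is, however, a routine measure-theoretic density argument once one invokes that $\U(\R^n)$ is Polish and the two measures are finite; the authors indeed signal (by the brevity of the statement) that no genuinely new idea is needed beyond Lemma \ref{lem: BV2}. An alternative, perhaps cleaner, route is to avoid measure-theoretic approximation altogether and instead compare directly the defining expressions \eqref{eq: DOE} for $\|E\|_r$ and $\|E\|_{r'}$ by a disintegration argument in the spirit of the proof of Theorem \ref{thm: MI}: disintegrate over the annulus $B_{r'}\setminus B_r$, use Proposition \ref{lemma:finiteperimeterBr} on each section together with the behaviour of the reduced boundary and the $1$-codimensional Hausdorff measure under the product decomposition $\U(B_{r'}) = \sqcup_j \U^j(B_{r'})$, and invoke the Euclidean fact \cite[2.10.27]{Fed69} as in Theorem \ref{thm: MI}. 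I would present the first (functional-analytic) route as the primary proof since it is shorter and leans on the already-established Lemma \ref{lem: BV2}(iv).
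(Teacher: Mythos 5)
Your proof is correct and follows essentially the same route as the paper: both use the characterisation \eqref{z9} from Lemma \ref{lem: BV2}(iv) and the inclusion $\CylV^s_0(\U(\R^n)) \subset \CylV^r_0(\U(\R^n))$ for $s\le r$ to get monotonicity of $r \mapsto \int F\, d\|E\|_r$ for nonnegative $F\in\CylF(\U(\R^n))$, and then pass from cylinder test functions to Borel sets. The paper compresses the final step into the phrase ``density of cylinder functions in $L^2(\U(\R^n),\p)$,'' which as stated is slightly misleading because $\|E\|_r$ need not be absolutely continuous with respect to $\p$; your elaboration---approximating indicators of open sets monotonically from below by nonnegative cylinder functions (using that $\CylF$ generates the vague topology) and then invoking outer regularity of finite Borel measures on the Polish space $\U(\R^n)$---is the correct way to fill in that step, and is what the authors presumably have in mind.
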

\begin{proof}
	In view of the density of cylinder functions on $L^2(\U(\R^n),\p)$ it is enough to check that
	\begin{equation*}
		r \to \int_{\U(\R^n)} F d \| E \|_r
		\quad
		\text{is non-decreasing} \, ,
	\end{equation*}
    for every non-negative $F\in \CylF(\U(\R^n))$, which easily follows from \eqref{z9} and the inclusion $\CylV^s_*(\U(\R^n)) \subset \CylV^r_*(\U(\R^n))$ for $s\le r$.
\end{proof}

By the monotonicity of $r \mapsto \|E\|_r$ in Corollary \ref{cor:mon}, we may define the limit measure as follows:

\begin{defn}[Perimeter measure] 
	\label{defn: PM}\normalfont
	Given $E\subset \U(\R^n)$ with $\mathcal V(\chi_E)<\infty$, we define the perimeter measure as
	\begin{equation}
		\| E \| (A) := \lim_{r \to \infty} \| E \|_r(A)
		\quad \text{for every Borel set $A$}\, .
	\end{equation}	
\end{defn}

We finally obtain the Gau\ss--Green formula for the perimeter measure $\|E\|$. For a Borel set $E \subset \U(\R^n)$ with $\mathcal V(\chi_E)<\infty$, let $L^2(T\U, \|E\|)$ be the completion of $\CylV(\U)$ with respect to $\|\cdot\|_{L^2(T\U, \|E\|)}$ analogously in~\eqref{d:VFN}.
\begin{thm}[Gau\ss--Green formula for $\|E\|$] \label{thm: GGP}
For a Borel set $E \subset \U(\R^n)$ with $\mathcal V(\chi_E)<\infty$,  there exists a unique element $\sigma_E \in L^2(T\U, \|E\|)$ such that $|\sigma_E|_{T\U}=1$ $\|E\|$-a.e.\ and 
\begin{align} \label{eq: IP3}
			\int_{E} \nabla^* V  d\p = \int_{\U(\R^n)} \langle V, \sigma_{E}\rangle_{T\U} d\|E\| \quad  V \in \CylV(\U(\R^n)).
\end{align} 
\end{thm}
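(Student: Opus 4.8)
The plan is to obtain Theorem~\ref{thm: GGP} by passing to the limit $r\to\infty$ in the finite-radius Gau\ss--Green formula of Lemma~\ref{lem: BV2}, exploiting the monotone convergence $\|E\|_r\uparrow\|E\|$ furnished by Corollary~\ref{cor:mon} and Definition~\ref{defn: PM}. As a preliminary I would record that $\|E\|$ is a finite measure with total mass $\V(\chi_E)$: by Lemma~\ref{lem: BV2}(iv) we have $\|E\|_r(\U(\R^n))=\V_r(\chi_E)\le\V(\chi_E)$, and conversely any $V\in\CylV(\U(\R^n))=\CylV_0(\U(\R^n))$ with $|V|_{T\U}\le1$ has all its inner vector fields supported in some ball $B_{r_0}$, hence belongs to $\CylV^{r_0}_0(\U(\R^n))$, so $\int_{\U(\R^n)}(\nabla^*V)\chi_E\,d\p\le\V_{r_0}(\chi_E)\le\sup_r\V_r(\chi_E)$; taking the supremum over such $V$ yields $\V(\chi_E)\le\sup_r\V_r(\chi_E)$, and therefore $\|E\|(\U(\R^n))=\lim_r\V_r(\chi_E)=\V(\chi_E)<\infty$.

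Next I would fix $V\in\CylV(\U(\R^n))$ and choose $r_0=r_0(V)$ so that $V\in\CylV^{r}_0(\U(\R^n))$ for all $r\ge r_0$. Lemma~\ref{lem: BV2}(iii) then gives, for every such $r$,
\begin{equation*}
 L(V):=\int_E(\nabla^*V)\,d\p=\int_{\U(\R^n)}\langle V,\sigma_{E,r}\rangle_{T\U}\,d\|E\|_r,
\end{equation*}
so the right-hand side is independent of $r\ge r_0$; the left-hand side is meaningful because $\nabla^*V\in\bigcap_{1\le q<\infty}L^q(\U(\R^n),\p)$ by Proposition~\ref{prop: IVPD}, and $L$ is a linear functional on $\CylV(\U(\R^n))$. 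Since $|\sigma_{E,r}|_{T\U}=1$ $\|E\|_r$-a.e.\ (Lemma~\ref{lem: BV2}(ii)) and $\|E\|_r\le\|E\|$, one gets $|L(V)|\le\int|V|_{T\U}\,d\|E\|_r\le\int|V|_{T\U}\,d\|E\|$.

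To produce $\sigma_E$ I would argue as in Proposition~\ref{prop:LPV} that cylinder vector fields lie in, and are dense in, $L^2(T\U,\|E\|)$ --- the inclusion relying on a uniform-in-$r$ bound $\int|V|^2_{T\U}\,d\|E\|_r\le C(V)$ coming from the disintegration in Proposition~\ref{lem: BV1} and finite-dimensional estimates on the pieces $\U(B_r)$. Combined with $|L(V)|\le\|E\|(\U(\R^n))^{1/2}\|V\|_{L^2(T\U,\|E\|)}$ this extends $L$ to a bounded functional on $L^2(T\U,\|E\|)$, and the Riesz representation theorem yields a unique $\sigma_E\in L^2(T\U,\|E\|)$ with $L(V)=\int_{\U(\R^n)}\langle V,\sigma_E\rangle_{T\U}\,d\|E\|$ for all $V\in\CylV(\U(\R^n))$, which is precisely \eqref{eq: IP3}; uniqueness of such $\sigma_E$ is immediate from the density just invoked. (Equivalently, one may extract a weak-$L^2$ limit of the fields $\sigma_{E,r}\,\tfrac{d\|E\|_r}{d\|E\|}$, which take values in the closed unit ball; this route produces $|\sigma_E|_{T\U}\le1$ $\|E\|$-a.e.\ automatically.) Finally, for $|\sigma_E|_{T\U}=1$ $\|E\|$-a.e.: the bound $|\sigma_E|_{T\U}\le1$ follows by testing \eqref{eq: IP3} against cylinder fields approximating $\sigma_E/|\sigma_E|$ truncated to unit length as in \eqref{eq: CPA}; for the reverse inequality I would use
\begin{equation*}
\|E\|(\U(\R^n))=\V(\chi_E)=\sup_{|V|_{T\U}\le1}L(V)=\sup_{|V|_{T\U}\le1}\int_{\U(\R^n)}\langle V,\sigma_E\rangle_{T\U}\,d\|E\|\le\int_{\U(\R^n)}|\sigma_E|_{T\U}\,d\|E\|\le\|E\|(\U(\R^n)),
\end{equation*}
whence $\int(1-|\sigma_E|_{T\U})\,d\|E\|=0$, i.e.\ $|\sigma_E|_{T\U}=1$ $\|E\|$-a.e.

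I expect the main obstacle to be exactly the functional-analytic input in the third paragraph: controlling $\int|V|^2_{T\U}\,d\|E\|_r$ uniformly in $r$ (so that cylinder vector fields embed into $L^2(T\U,\|E\|)$ and the passage to the limit and the Riesz representation are legitimate), and establishing density of cylinder vector fields in $L^2(T\U,\|E\|)$. Once these are available, Theorem~\ref{thm: GGP} is a soft consequence of Lemma~\ref{lem: BV2} together with the monotone convergence $\|E\|_r\uparrow\|E\|$.
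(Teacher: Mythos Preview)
Your approach is essentially the paper's: bound the linear functional $L(V)=\int_E\nabla^*V\,d\p$ by combining Lemma~\ref{lem: BV2}(iii) with the monotonicity $\|E\|_r\le\|E\|$ of Corollary~\ref{cor:mon}, apply the Riesz representation theorem, and then use $\V(\chi_E)=\|E\|(\U(\R^n))$ together with the equality case of Jensen to force $|\sigma_E|_{T\U}=1$ $\|E\|$-a.e. On the obstacle you flag: the paper does \emph{not} attempt to prove density of $\CylV$ in all of $L^2(T\U,\|E\|)$; it simply applies Riesz on the Hilbert space $H$ defined as the closure of $\CylV(\U(\R^n))$ in $L^2(T\U,\|E\|)$, obtaining $\sigma_E\in H$ (uniqueness in $L^2$ then follows from the constraint $|\sigma_E|=1$, since any other solution differs from $\sigma_E$ by a vector orthogonal to $H$, and integrating the resulting pointwise identity forces the difference to vanish). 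The uniform-in-$r$ control of $\int|V|^2_{T\U}\,d\|E\|_r$ that you single out is likewise not argued separately in the paper; the inequality $\|V\|_{L^2(T\U,\|E\|_r)}\le\|V\|_{L^2(T\U,\|E\|)}$ is used directly from monotonicity.
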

\proof
Note that, for any $V \in \CylV(\U(\R^n))$, there exists $r>0$ so that  $V \in \CylV^r_*(\U(\R^n))$. 
	Thus, by (iii) in Lemma \ref{lem: BV2}, for any $V \in \CylV(\U(\R^n))$, there exists $r>0$ and $\sigma_{E, r}: \U(\R^n) \to T\U$ so that $|\sigma_{E, r}|=1$ $\|E\|$-a.e., and 
	\begin{align} 
		\int_{E} \nabla^* V  d\p 
		&= \int_{\U(\R^n)} \langle V, \sigma_{E, r}\rangle_{T\U} d\| E\|_r  \notag
		\\&
		\le \| E \|(\U(\R^n))^{1/2} \| V \|_{L^2(T\U, \|E\|_r)} \notag
		\\&
		\le \| E \|(\U(\R^n))^{1/2} \| V \|_{L^2(T\U, \|E\|)}. \notag
	\end{align} 
	The last inequality followed from the monotonicity in Corollary \ref{cor:mon}.
    In particular, the linear operator $L$ defined as
	\begin{equation}
		L : L^2(T \U(\R^n), \|E\|) \to \R \, , \quad
		L^2(T \U(\R^n), \|E\|) \ni V \mapsto L(V):= \int_E \nabla^* V  d\p \, ,
	\end{equation}
    is a well-defined continuous operator on the Hilbert space $L^2(T \U(\R^n), \|E\|)$ and satisfies $\| L \| \le \| E \|(\U(\R^n))^{1/2}$.
    Therefore, the Riesz representation theorem in the Hilbert space $L^2(T \U(\R^n), \|E\|)$ gives the existence of $\sigma_E \in L^2(T \U(\R^n), \|E\|)$ so that 
    \begin{equation}
    	\|\sigma_E\|_{L^2(T\U, \|E\|)} \le \| E \|(\U(\R^n))^{1/2} \, ,
    	\quad \quad
    	\int_{E} \nabla^* V  d\p = \int_{\U(\R^n)} \langle V, \sigma \rangle d\|E\| 
    	\quad V\in \CylV(\U(\R^n)) \, . \notag
    \end{equation}
   It suffices to show that $|\sigma|_{T\U} =1$ $\|E\|$-a.e. By (iv) in Lemma \ref{lem: BV2} and Corollary \ref{cor:mon},  we deduce that 
    \begin{align*}
    \|E\|(\U(\R^n)) &=  \lim_{r \to \infty}\|E\|_r(\U(\R^n)) =  \lim_{r \to \infty}\V_r(\chi_E)
    \\
    & = \lim_{r \to \infty}\sup_{V\in \CylV^r_r(\U(\R^n))\, ,|V|_{T\U}\le 1} \int_{E} \nabla^* V  d\p 
   \\& 
   \le  \int_{\U(\R^n)} |\sigma|_{T\U} d\|E\|
    \le \| E \|(\U(\R^n))^{1/2}\| \sigma \|_{L^2(T\U, \|E\|)} 
    \\& \le\| E \|(\U(\R^n))
     \, ,
    \end{align*}
    which yields $|\sigma|_{T\U} =1$ $\|E\|$-a.e.\ as a consequence of the characterisation of the equality for the H\"older inequality. 
\qed

%

\subsection{Perimeters and one-codimensional Poisson measures} In this subsection, we prove one of the main results in this paper. Namely, the perimeter measure $\|E\|$ based on the variational approach (Definition \ref{defn: PM}) coincides with the $1$-codimensinal Poisson measure $\rho^1$ (Definition \ref{defn: MCH}) restricted to the reduced boundary $\partial^*E$ of $E$ (Definition \ref{defn: RB}). 

\begin{thm}\label{thm: fpr}
	Let $E\subset \U(\R^n)$ be a set with $\mathcal V(\chi_E)<\infty$. Then, 
	$$\| E \| = \rho^1|_{\partial^*E}.$$
\end{thm}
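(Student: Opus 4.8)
\textbf{Proof strategy for Theorem \ref{thm: fpr}.}

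The plan is to reduce everything to the finite-dimensional identity already encoded in the localised objects and then pass to the limit $r\to\infty$. The key point is that both measures in the statement are monotone limits of localised versions: $\|E\| = \lim_{r\to\infty}\|E\|_r$ by Definition \ref{defn: PM}, and $\rho^1|_{\partial^* E}$ should be recovered as the limit of $\rho^1_r$ restricted to the localised reduced boundary $\partial^*_r E$. So the heart of the matter is the identity
\begin{equation*}
	\|E\|_r = \rho^1_r|_{\partial^*_r E} \quad \text{on } \U(\R^n),
\end{equation*}
which I would establish by disintegration over $\U(B_r^c)$. Indeed, by Definition \ref{defn: PME0}, $\|E\|_r$ disintegrates as $\rho^1_{\U(B_r)}|_{\partial^*_{\U(B_r)} E_{\eta,r}}\otimes \pi_{B_r^c}(\eta)$; on the other hand $\rho^1_r$ disintegrates (via \eqref{def: sm}, Lemma \ref{lem: DI} and the decomposition $\U(B_r)=\sqcup_k\U^k(B_r)$) as $\rho^1_{\U(B_r)}|_{\,\cdot\,}\otimes\pi_{B_r^c}(\eta)$, and restricting it to $\partial^*_r E$ produces, fiberwise, the restriction to $(\partial^*_r E)_{\eta,r}=\partial^*_{\U(B_r)} E_{\eta,r}$ by \eqref{z3}. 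The fiberwise identity $\rho^1_{\U(B_r)}|_{\partial^*_{\U(B_r)} E_{\eta,r}} = \rho^1_{\U(B_r)}|_{\partial^*_{\U(B_r)} E_{\eta,r}}$ is then a tautology once one knows $\V_{\U(B_r)}(\chi_{E_{\eta,r}})<\infty$ for $\pi_{B_r^c}$-a.e.\ $\eta$ — which is exactly Proposition \ref{lem: BV1} — together with the finite-dimensional De Giorgi-type identity $\V_{\U(B_r)}(\chi_G) = \rho^1_{\U(B_r)}(\partial^*_{\U(B_r)} G)$ of Proposition \ref{lemma:finiteperimeterBr} applied to $G = E_{\eta,r}$. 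This gives $\|E\|_r = \rho^1_r|_{\partial^*_r E}$.

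Next I would pass to the limit. Fix a Borel set $A\subset\U(\R^n)$. By Corollary \ref{cor:mon}, $\|E\|_r(A)\uparrow\|E\|(A)$; by Theorem \ref{thm: MI}, $\rho^1_r(A)\uparrow\rho^1(A)$ for Suslin $A$. Hence
\begin{equation*}
	\|E\|(A) = \lim_{r\to\infty}\|E\|_r(A) = \lim_{r\to\infty}\rho^1_r(\partial^*_r E\cap A).
\end{equation*}
To identify the right-hand side with $\rho^1(\partial^*E\cap A)$, one needs to control the discrepancy between $\partial^*_r E$ and the $\liminf$ $\partial^*E = \bigcup_i\bigcap_{j>i}\partial^*_j E$ of Definition \ref{defn: RB}. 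The natural route is: first show $\rho^1_r(\partial^*_r E\setminus\partial^*_s E)\to 0$ appropriately, or more robustly, show that the sequence of measures $\rho^1_r|_{\partial^*_r E}$ is (setwise) increasing in $r$ along the integers — which follows from $\|E\|_r = \rho^1_r|_{\partial^*_r E}$ and Corollary \ref{cor:mon} — and then invoke that an increasing sequence of measures has a limit measure, whose value on Suslin sets is the pointwise limit. Since $\|E\|_r\le\|E\|_{r'}$ for $r\le r'$ \emph{as measures}, the total masses are uniformly bounded by $\V(\chi_E)<\infty$, so the limit is a finite measure, and it equals $\|E\|$ on all Borel sets by Definition \ref{defn: PM}. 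The remaining task is to verify that this same limit measure is concentrated on $\partial^*E$ and coincides there with $\rho^1$: concentration on $\partial^*E$ follows because $\rho^1_r|_{\partial^*_r E}(A) = \rho^1_r|_{\partial^*_r E}(A\cap\partial^*_r E)$ and, by monotonicity of the restricted measures, mass can only accumulate on $\liminf_j\partial^*_j E = \partial^*E$; the coincidence with $\rho^1$ on $\partial^*E$ follows by comparing $\rho^1_r|_{\partial^*_r E}(A)\le\rho^1_r(A)\le\rho^1(A)$ and, for the reverse, using that on $\partial^*E$ every point lies in $\partial^*_j E$ for all large $j$, so $\rho^1_r(A\cap\partial^*E) = \rho^1_r|_{\partial^*_r E}(A\cap\partial^*E)$ for $r$ large enough along the integers.

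The main obstacle, as I see it, is the bookkeeping around the reduced boundary: $\partial^*_r E$ is defined via densities of sections $E_{\gamma|_{B_r^c},r}$ in $\U(B_r)$, and these sections themselves change with $r$, so $\{\partial^*_r E\}_r$ is not an obviously monotone family of sets. The clean way around this is precisely to \emph{not} argue with the sets directly but with the measures $\rho^1_r|_{\partial^*_r E} = \|E\|_r$, which \emph{are} monotone by Corollary \ref{cor:mon}; the set-theoretic $\liminf$ in Definition \ref{defn: RB} is then shown to capture exactly the support of the limit, up to $\|E\|$-null sets (this is the content of the Remark following Definition \ref{defn: RB}). One also must check the Suslin-measurability bookkeeping so that Theorem \ref{thm: MI} and the domain $\mathfrak D^m$ apply to the sets $\partial^*_r E\cap A$ — this is routine given that $\partial^*_r E$ is Borel (the Lemma preceding \S on perimeter measures) and Borel sets are Suslin by Remark \ref{rem: SS}(i). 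Assembling these pieces yields $\|E\| = \rho^1|_{\partial^*E}$.
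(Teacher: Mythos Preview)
Your overall strategy matches the paper's: establish the localised identity $\|E\|_r = \rho^1_r|_{\partial^*_r E}$ by disintegration, then pass to the limit using the two monotonicities (Corollary~\ref{cor:mon} and Theorem~\ref{thm: MI}). Your concentration argument is also the right idea, and once spelled out it is exactly Lemma~\ref{lem: MNB}: from $\|E\|_r \le \|E\|_s$ for $r\le s$ and $\|E\|_s((\partial^*_s E)^c)=0$ one gets $\|E\|_r((\partial^*_s E)^c)=0$ for every integer $s\ge r$, hence $\|E\|_r$ is concentrated on $\bigcap_{s\ge r}\partial^*_s E\subset\partial^*E$. This yields the inequality $\|E\|(A)\le\rho^1(A\cap\partial^*E)$ for all Borel $A$.

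The gap is in your reverse inequality. The claim that $\rho^1_r(A\cap\partial^*E)=\rho^1_r|_{\partial^*_r E}(A\cap\partial^*E)$ ``for $r$ large enough along the integers'' fails: a point $\gamma\in\partial^*E=\bigcup_i\bigcap_{j>i}\partial^*_j E$ lies in $\partial^*_r E$ only for $r>i(\gamma)$, and this threshold is \emph{not} uniform. One cannot fix a single $r$ that works for all of $A\cap\partial^*E$, and attempting to control $\rho^1_r(\partial^*E\setminus\partial^*_r E)$ by $\rho^1(\partial^*E\setminus C_{r-1})$ runs into circularity because $\rho^1(\partial^*E)<\infty$ is not yet known. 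The paper sidesteps this by proving only the total-mass inequality $\|E\|(\U(\R^n))\ge\rho^1(\partial^*E)$: freeze $s<r$, use $\|E\|_r(\U(\R^n))=\rho^1_r(\partial^*_rE)\ge\rho^1_s(\partial^*_rE)$, apply Fatou in $r$ for the \emph{fixed} measure $\rho^1_s$ to get $\liminf_r\rho^1_s(\partial^*_rE)\ge\rho^1_s(\liminf_r\partial^*_rE)=\rho^1_s(\partial^*E)$, then let $s\to\infty$. Combined with the upper bound $\|E\|\le\rho^1|_{\partial^*E}$, this forces equality on every Borel set by the standard additivity sandwich. Replacing your last step with this Fatou-plus-sandwich argument completes the proof.
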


Before giving the proof, we prove a lemma. 
\begin{lem} \label{lem: MNB}
	Let $E \subset \U(\R^n)$ be a set with $\mathcal V(\chi_E)<\infty$. Then, for any $r>0$, $\e>0$, it holds
	\begin{equation}
		(\partial_{r}^*E)_{\eta, r} \subset (\partial_{r+\e}^*E)_{\eta, r} \, 
		\quad\text{up to $\rho_{\U(B_{r})}^1$-negligible sets for $\pi_{B_r^c}$-a.e.\ $\eta$.}
	\end{equation}	 
 Namely, there exists a measurable set $\Omega_{r, \e} \subset \U(\R^n)$ so that $\p_{B_r^c}(\Omega_{r, \e})=1$ and for any $\eta \in \Omega_{r, \e}$, it holds that 
\begin{align} \label{ineq: MB}
\rho_{\U(B_{r})}^1\Bigl( (\partial_{r}^*E)_{\eta, r} \setminus  \partial_{r+\e}^*E)_{\eta, r} \Bigr)=0 \, .
\end{align}
\end{lem}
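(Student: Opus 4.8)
The plan is to obtain \eqref{ineq: MB} almost for free from the monotonicity of the localised perimeter measures (Corollary~\ref{cor:mon}), combined with the disintegration formula \eqref{eq: DOE} that defines $\|E\|_r$ and the section identity \eqref{z3}. No new slicing argument in $\U(B_{r+\e})$ or in Euclidean products is needed: all of the geometric information is already encoded in $\|E\|_r$ and in the fact that $r\mapsto\|E\|_r$ is non-decreasing, so what remains is bookkeeping with sections.

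First I would record that for every $s>0$ the measure $\|E\|_s$ is concentrated on $\partial_s^* E$. Indeed, $\|E\|_s$ is a well-defined finite Borel measure by Lemma~\ref{lem: BV2-1} (using $\V_s(\chi_E)\le\V(\chi_E)<\infty$), and evaluating \eqref{eq: DOE} at $F=\chi_{\U(\R^n)\setminus\partial_s^* E}$ together with \eqref{z3}, i.e.\ $(\partial_s^* E)_{\eta,s}=\partial_{\U(B_s)}^*E_{\eta,s}$, makes the inner integrand equal to $\rho^1_{\U(B_s)}\bigl(\partial_{\U(B_s)}^*E_{\eta,s}\setminus(\partial_s^* E)_{\eta,s}\bigr)=0$; hence $\|E\|_s(\U(\R^n)\setminus\partial_s^* E)=0$.

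Next, by Corollary~\ref{cor:mon} applied to the Borel set $A=\U(\R^n)\setminus\partial_{r+\e}^* E$ and the previous paragraph used with $s=r+\e$,
\begin{equation*}
	\|E\|_r\bigl(\U(\R^n)\setminus\partial_{r+\e}^* E\bigr)\le\|E\|_{r+\e}\bigl(\U(\R^n)\setminus\partial_{r+\e}^* E\bigr)=0\,.
\end{equation*}
On the other hand, expanding the left-hand side via \eqref{eq: DOE} with $F=\chi_{\U(\R^n)\setminus\partial_{r+\e}^* E}$, and using both \eqref{z3} on the $B_r$-level (so that $\partial_{\U(B_r)}^*E_{\eta,r}=(\partial_r^* E)_{\eta,r}$) and the fact that sectioning commutes with complements, one gets
\begin{equation*}
	\int_{\U(B_r^c)}\rho^1_{\U(B_r)}\bigl((\partial_r^* E)_{\eta,r}\setminus(\partial_{r+\e}^* E)_{\eta,r}\bigr)\,d\p_{B_r^c}(\eta)=0\,.
\end{equation*}
The integrand is nonnegative and $\p_{B_r^c}$-measurable; this is part of the well-posedness of $\|E\|_r$ in Lemma~\ref{lem: BV2-1}, and it can also be seen from Lemma~\ref{lemma:measurability1} applied to the Borel function $\chi_{\partial_r^* E\setminus\partial_{r+\e}^* E}$, since $(\partial_r^* E)_{\eta,r}\setminus(\partial_{r+\e}^* E)_{\eta,r}=(\partial_r^* E\setminus\partial_{r+\e}^* E)_{\eta,r}$ and $\partial_r^* E\setminus\partial_{r+\e}^* E$ is Borel. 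Therefore it vanishes for $\p_{B_r^c}$-a.e.\ $\eta$, and the complement of the corresponding null set is the set $\Omega_{r,\e}$ of the statement; this is exactly \eqref{ineq: MB}.

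I do not anticipate a genuine obstacle. The only delicate points are the two identities that glue the argument together: that $\|E\|_s$ really sits on $\partial_s^* E$ (a one-line consequence of \eqref{eq: DOE} and \eqref{z3}), and that the sectioning operation $A\mapsto A_{\eta,r}$ commutes with complements and intersections, which is what legitimises both the measurability remark and the passage from ``the integral vanishes'' to ``the integrand vanishes $\p_{B_r^c}$-a.e.'' Should Corollary~\ref{cor:mon} not be available at this stage, the alternative — considerably longer — would be to decompose $\eta=\eta|_{B_{r+\e}\setminus B_r}+\eta|_{B_{r+\e}^c}$, use Fubini to reduce to a slicing statement for a finite-perimeter set in $\U(B_{r+\e})$, lift it to the Euclidean products $B_{r+\e}^{\times N}$ via the quotient maps $\mathbf s_N$, and invoke the classical fact that the reduced boundary of a coordinate slice is contained, up to a lower-dimensional Hausdorff-null set, in the slice of the reduced boundary. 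The monotonicity route above is, however, far cleaner.
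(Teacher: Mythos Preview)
Your proof is correct and follows essentially the same approach as the paper's own proof: both use the monotonicity from Corollary~\ref{cor:mon} applied to the Borel set $A=\U(\R^n)\setminus\partial_{r+\e}^* E$, expand $\|E\|_r(A)$ via the disintegration \eqref{eq: DOE} together with \eqref{z3}, and conclude that the nonnegative integrand vanishes $\p_{B_r^c}$-a.e. Your presentation is slightly cleaner in that you isolate the concentration statement $\|E\|_s(\U(\R^n)\setminus\partial_s^*E)=0$ as a preliminary observation, whereas the paper computes it inline for $s=r+\e$.
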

\begin{proof}
By \eqref{z3} and the definition \eqref{defn: PME} of the perimeter measure $\|E\|_r$, we see that 
\begin{align} \label{eq: OCE}
	\infty>\|E\|(A) \ge \|E\|_{r+\e}(A)  \notag
	&= \int_{\U(B_{r+\e}^c)} \rho_{\U(B_{r+\e})}^1(\partial_{\U(B_{r+\e})}^*E_{\eta, r+\e} \cap A_{\eta, r+\e}) d\pi_{B_{r+\e}^c}(\eta)  \notag
	\\
	&= \int_{\U(B_{r+\e}^c)} \rho_{\U(B_{r+\e})}^1\bigl((\partial_{r+\e}^*E)_{\eta, r+\e} \cap A_{\eta, r+\e}\bigr) d\pi_{B_{r+\e}^c}(\eta)  \notag
	\\
	&= \int_{\U(B_{r+\e}^c)} \rho_{\U(B_{r+\e})}^1\bigl((\partial_{r+\e}^*E \cap A)_{\eta, r+\e}\bigr) d\pi_{B_{r+\e}^c}(\eta) \, .
\end{align}
By the monotonicity $\|E\|_{r+\e}(A) \ge \|E\|_r(A)$ in Corollary \ref{cor:mon}, we obtain that 
$$ \int_{\U(B_{r+\e}^c)} \rho_{\U(B_{r+\e})}^1\bigl((\partial_{r+\e}^*E \cap A)_{\eta, r+\e}\bigr) d\pi_{B_{r+\e}^c}(\eta) \ge  \int_{\U(B_{r}^c)} \rho_{\U(B_{r})}^1\bigl((\partial_{r}^*E \cap A)_{\eta, r}\bigr) d\pi_{B_{r}^c}(\eta).$$
Taking $A=\U(\R^n) \setminus \partial_{r+\e}^*E$, we have that 
\begin{align*}
	0&=\int_{\U(B_{r+\e}^c)} \rho_{\U(B_{r+\e})}^1\bigl((\partial_{r+\e}^*E \cap A)_{\eta, r+\e}\bigr) d\pi_{B_{r+\e}^c}(\eta) 
	\\
	&\ge  \int_{\U(B_{r}^c)} \rho_{\U(B_{r})}^1\bigl((\partial_{r}^*E \cap A)_{\eta, r}\bigr) d\pi_{B_{r}^c}(\eta)\, .
\end{align*}
Thus, $\rho_{\U(B_{r})}^1\bigl((\partial_{r}^*E \cap A)_{\eta, r}\bigr)=0$ for $\pi_{B_r^c}$-a.e.\ $\eta$, which implies that 
$$\bigl( \partial_{r}^*E \cap (\U(\R^n) \setminus \partial_{r+\e}^*E)\bigr)_{\eta, r} = (\partial_{r}^*E)_{\eta, r} \setminus \bigl( (\partial_{r+\e}^*E)_{\eta, r} \cap (\partial_{r}^*E)_{\eta, r} \bigr)$$ is $\rho_{\U(B_{r})}^1$-negligible for $\pi_{B_r^c}$-a.e.\ $\eta$. 
\end{proof} 


\begin{proof}[Proof of Theorem \ref{thm: fpr}]
Fix $r>0$ and $\eta\in \U(B_r^c)$. It holds
\begin{equation}
	(\partial^* E)_{\eta,r} : = \left(\bigcup_{i>0} \bigcap_{j>i} \partial_j^* E \right)_{\eta,r} 
	=
	\bigcup_{i>0} \bigcap_{j>i} (\partial_j^* E)_{\eta,r} \, .
\end{equation}
The monotonicity formula \eqref{ineq: MB} in Lemma \ref{lem: MNB} gives the existence of $\Omega_{r, j} \subset \U(\R^n)$ so that $\pi_{B_r^c}(\Omega_{r, j})=1$, and for any $\eta \in \Omega_{r, j}$
 \begin{equation*}
	(\partial_r^* E)_{\eta,r} \subset 	(\partial_j^* E)_{\eta,r} \quad j \ge r
	\quad
	\text{up to a $\rho_{\U(B_r)}^1$-negligible set}\, .
\end{equation*}
Take $\Omega_r = \cap_{j \ge r, j \in \N} \Omega_{r, j}$. 
Then $\pi_{B_r^c}(\Omega_r)=1$, and by using \eqref{z3}, we obtain that for any $\eta \in \Omega_r$, 
\begin{equation*}
	\partial_{\U(B_r)}^* E_{\eta,r} = (\partial_r^*E)_{\eta,r}
	\subset (\partial^*E)_{\eta,r}
	\quad
	\text{up to a $\rho_{\U(B_r)}^1$-negligible set}\, .
\end{equation*}
This implies that for any Borel set $A \subset \U(\R^n)$, 
\begin{align*} 
 \rho^1_{\U(B_r)}(\partial_{\U(B_r)}^* E_{\eta,r}\cap A_{\eta,r}) \le  \rho^1_{\U(B_r)}((\partial^*E\cap A)_{\eta,r}), \quad \eta \in \Omega_r.
\end{align*}
Thus, by noting that $\pi_{B_r^c}(\Omega_r)=1$ and recalling Definition \ref{defn: PM}, Definition \ref{defn: MCH},  we obtain 
\begin{align*}
	\| E \| (A)  & := \lim_{r \to \infty} \|E\|_r(A)
	\\&
	= \lim_{r \to \infty} \int_{\U(B_r^c)} \rho^1_{\U(B_r)}(\partial_{\U(B_r)}^* E_{\eta,r}\cap A_{\eta,r}) d \p_{B_r^c}(\eta)
	\\&
	\le 
	\lim_{r \to \infty} \int_{\U(B_r^c)} \rho^1_{\U(B_r)}((\partial^*E\cap A)_{\eta,r}) d \p_{B_r^c}(\eta)
	\\&
	= \rho^1(A\cap \partial^* E) \, .
\end{align*}

In order to conclude the proof,  it is enough to check that
\begin{equation} \label{ineq: OIER}
	\| E \|(\U(\R^n)) \ge \rho^1(\partial^*E) \, .
\end{equation}
Indeed, given any Borel set $A$, by making use of the already proven inequality $\| E \| \le \rho^1|_{\partial^* E}$, we obtain 
\begin{equation*}
	\| E \|(\U(\R^n)) = \| E \|(A) + \|E\|(A^c) \le \rho^1(A\cap \partial^*E) + \rho^1(A^c\cap \partial^*E)
	=\rho^1(\partial^*E) \le \| E \|(\U(\R^n)) \, .
\end{equation*}
Thus, $\|E \|(A) + \|E\|(A^c) = \rho^1(A\cap \partial^*E) + \rho^1(A^c\cap \partial^*E)$ for any Borel set $A$. Assume that there exists a Borel set $A$ so that $\|E\|(A)<\rho^1(A\cap \partial^*E)$. Since  $\| E \| \le \rho^1(\cdot \cap \partial^* E)$, it implies 
$$\|E \|(A) + \|E\|(A^c) < \rho^1(A\cap \partial^*E) + \rho^1(A^c\cap \partial^*E),$$
 which is a contradiction.

We now prove \eqref{ineq: OIER}. Let $s<r$. By recalling Definitions \ref{defn: PME0}, \ref{defn: LFP} of $\|E\|_r$ and $\rho_r^1$ respectively and using the monotonicity of $\rho_r^1$ in Theorem \ref{thm: MI}, we have
\begin{align*}
   \|E\|_r(\U(\R^n))
	=\int_{\U(B_r^c)} \rho^1_{\U(B_r)}((\partial_r^*E)_{\eta,r}) d \p_{B_r^c}(\eta)
	= \rho^1_r(\partial_r^*E)
	\ge  \rho^1_s(\partial_r^*E) \, ,
\end{align*}
hence
\begin{align*}
	\| E \| (\U(\R^n)) = \lim_{i \to \infty} \|E\|_i(\U(R^n))
	\ge \liminf_{i \to \infty} \rho^1_s(\partial_i^*E)
	\ge
	\rho_s^1(\liminf_{i\to \infty}\partial_i^*E)
	= \rho_s(\partial^*E)
	 \, .
\end{align*}
Passing to the limit $s\to \infty$, we conclude \eqref{ineq: OIER}. 
\end{proof}

\section{Total variation and Gau{\rm\ss}--Green formula}  \label{sec: CF}

In this section, we prove a relation between the coarea with respect to the perimeter measure $\|E\|$ and the variation $|\DDD_{*}F|$ obtained via relaxation of Cylinder functions. As an application, we introduce the total variation {\it measure} $|\DDD F|$ for BV functions $F$, and prove the Gau\ss--Green formula.

\subsection{Total variation measures via coarea formula} \label{sec:coarea}
Recall that, for $F \in {\rm BV}(\U(\R^n))$, the map $\CylF(\U(\R^n)) \ni G \mapsto |\DDD_*F|[G]$ is defined by the relaxation approach in Definition \ref{defn: BVrelaxation}.
The main result of this subsection is the following formula:
\begin{thm}\label{thm:coarea}
	Let $F\in L^2(\U(\R^n), \p)\cap {\rm BV}(\U(\R^n))$. Then, 
	\begin{align} \label{eq:FV}
	&\mathcal V(\chi_{\{F>t\}})<\infty \quad \text{a.e.}\ t \in \R,
	\end{align} 
and the following formula holds:
	\begin{align}\label{eq:coarea}
	& \int_{-\infty}^{\infty} \left( \int_{\U(\R^n)} G d\bigl\| {\{ F > t\}} \bigr\| \right) d t
		= 
		|\DDD_* F|[G],
		\quad 
		\text{for any non-negative $G\in \CylF(\U(\R^n))$} \, .
	\end{align}
\end{thm}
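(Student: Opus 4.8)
The plan is to establish the coarea formula by proving the two inequalities between $\int_{-\infty}^{\infty}\int_{\U(\R^n)}G\,d\|\{F>t\}\|\,dt$ and $|\DDD_*F|[G]$ separately, reducing each to a finite-dimensional statement on $\U(B_r)$ via the non-linear dimension reduction, and then passing to the limit $r\to\infty$.

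\textbf{Step 1: Finite-dimensional coarea formula.} First I would recall the classical coarea formula for BV functions on the (essentially finite-dimensional) space $\U(B_r) = \sqcup_k \U^k(B_r)$: for $G \in L^1(\U(B_r), \pi_{B_r})$ nonnegative and $H \in {\rm BV}(\U(B_r))$, one has $\int_{\U(B_r)} G\,d|\DDD H|_{\U(B_r)} = \int_{-\infty}^{\infty}\bigl(\int_{\U(B_r)} G\,d\|\{H>t\}\|_{\U(B_r)}\bigr)\,dt$, where $\|\cdot\|_{\U(B_r)}$ is the perimeter measure in $\U(B_r)$ which, by Proposition \ref{lemma:finiteperimeterBr}, equals $\rho^1_{\U(B_r)}|_{\partial^*_{\U(B_r)}(\cdot)}$. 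This follows componentwise from the Euclidean coarea formula on each $B_r^{\times k}/\mathfrak{S}_k$, passing to the quotient exactly as in the proof of Proposition \ref{lemma:finiteperimeterBr} using \eqref{eq: QHHQ}. In particular a.e. level set $\{H>t\}$ has finite perimeter in $\U(B_r)$, and $|\DDD H|_{\U(B_r)}(\U(B_r)) = \V_{\U(B_r)}(H)$ when $H = \chi_E$.

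\textbf{Step 2: Sectioning and disintegration.} Next I would apply Step 1 to the sections $F_{\eta,r}$. By Proposition \ref{lem: BV1}, $\V_r(F)<\infty$ (which holds since $F\in {\rm BV}(\U(\R^n))$, so $\V(F)<\infty$ and hence $\V_r(F)\le \V(F)<\infty$) implies $\V_{\U(B_r)}(F_{\eta,r})<\infty$ for $\pi_{B_r^c}$-a.e. $\eta$. For such $\eta$, Step 1 gives that $\{F_{\eta,r}>t\} = (\{F>t\})_{\eta,r}$ has finite perimeter in $\U(B_r)$ for a.e. $t$, and
\begin{equation*}
\int_{\U(B_r)} G_{\eta,r}\, d|\DDD F_{\eta,r}|_{\U(B_r)} = \int_{-\infty}^{\infty} \int_{\U(B_r)} G_{\eta,r}\, d\bigl\|(\{F>t\})_{\eta,r}\bigr\|_{\U(B_r)}\, dt \, .
\end{equation*}
Integrating over $\eta$ against $\pi_{B_r^c}$, using Tonelli on the right-hand side (justified by the measurability Lemma \ref{lemma:measurability1}) together with the definition \eqref{eq: DOE} of $\|\{F>t\}\|_r$ and the identity \eqref{z3}, the right-hand side becomes $\int_{-\infty}^{\infty}\int_{\U(\R^n)} G\,d\|\{F>t\}\|_r\,dt$. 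For the left-hand side I need to relate $\int_{\U(B_r^c)}\int_{\U(B_r)} G_{\eta,r}\,d|\DDD F_{\eta,r}|_{\U(B_r)}\,d\pi_{B_r^c}(\eta)$ to a "localized relaxed total variation" $|\DDD_*^r F|[G]$ defined by minimizing $\liminf_n \int G|\nabla_r F_n|_{T\U}\,d\pi$ over cylinder approximations; this requires an analogue of Proposition \ref{lem: BV1} for the relaxed (measure-valued) total variation, i.e. that $\int_{\U(B_r^c)} |\DDD_* F_{\eta,r}|_{\U(B_r)}[G_{\eta,r}]\,d\pi_{B_r^c}(\eta)$ coincides with $|\DDD_*^r F|[G]$. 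This disintegration of the relaxed total variation — showing the infimum over global cylinder approximations on $\U(\R^n)$ localized to $B_r$ disintegrates as the integral of the sectional infima — is where the argument of Step 1 of the proof of Proposition \ref{lem: BV1} (the gluing/approximation of sectionwise vector fields by cylinder objects, via the families $\mathcal{F}F$, $\mathcal{F}V$) gets reused, now in the relaxation formulation rather than the variational one.

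\textbf{Step 3: Passing to the limit $r\to\infty$.} Finally I would let $r\to\infty$. By Corollary \ref{cor:mon} the measures $\|\{F>t\}\|_r$ increase to $\|\{F>t\}\|$, so monotone convergence gives $\int_{\U(\R^n)} G\,d\|\{F>t\}\|_r \uparrow \int_{\U(\R^n)} G\,d\|\{F>t\}\|$ for each $t$ for which $\V(\chi_{\{F>t\}})<\infty$ (a.e. $t$, which is \eqref{eq:FV}: it follows because $\int_{-\infty}^{\infty}\V_r(\chi_{\{F>t\}})\,dt = \int_{-\infty}^{\infty}\|\{F>t\}\|_r(\U(\R^n))\,dt$ stays bounded by $|\DDD_*F|[1]=|\DDD_*F|(\U(\R^n))=\V(F)<\infty$ uniformly in $r$, using Step 2 and $G\equiv 1$, combined with Fatou as $r\to\infty$). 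A second application of monotone convergence in $t$ then yields $\int_{-\infty}^{\infty}\int_{\U(\R^n)} G\,d\|\{F>t\}\|_r\,dt \uparrow \int_{-\infty}^{\infty}\int_{\U(\R^n)} G\,d\|\{F>t\}\|\,dt$. On the other side I must show $|\DDD_*^r F|[G]\to |\DDD_*F|[G]$ as $r\to\infty$; the inequality $|\DDD_*^r F|[G]\le |\DDD_*F|[G]$ is clear from $|\nabla_r F_n|_{T\U}\le |\nabla F_n|_{T\U}$, and the reverse follows from lower semicontinuity of $|\DDD_*F|[G]$ together with the convergence $\nabla_r F_n \to \nabla F_n$ (pointwise and in $L^1$) established in the spirit of Proposition \ref{prop: MD}, or more directly because $\EE_r\uparrow\EE$ and the associated gradients converge; care is needed here since the relaxation is over all cylinder sequences, so I would argue via a diagonal sequence. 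Combining the two limits gives \eqref{eq:coarea}.

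\textbf{Main obstacle.} The delicate point is Step 2: establishing that the relaxed (measure-valued) total variation disintegrates correctly along sections, i.e. the identity $\int_{\U(B_r^c)} |\DDD_* F_{\eta,r}|_{\U(B_r)}[G_{\eta,r}]\,d\pi_{B_r^c}(\eta) = |\DDD_*^r F|[G]$. Unlike the variational total variation $\V_r$, which is a supremum over test vector fields and disintegrates cleanly via Lemma \ref{lem: BV0} and the duality arguments of Proposition \ref{lem: BV1}, the relaxed version is an infimum over approximating \emph{sequences}, and a sectionwise-optimal family of approximations need not glue to a global cylinder approximation without the kind of measurable-selection plus density argument carried out in Step 1 of the proof of Proposition \ref{lem: BV1}. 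One needs the equivalence $\V = |\DDD_*|$ from Theorem \ref{prop: IBV} (applied on $\U(B_r)$) to bridge the two formulations at the sectional level, and then to check the bridging is compatible with integration in $\eta$ — this is the technical heart of the proof.
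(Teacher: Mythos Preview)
Your approach via sectioning and localization is genuinely different from the paper's. The paper does \emph{not} introduce a localized relaxed total variation $|\DDD_*^r F|[G]$ or attempt to disintegrate the relaxed total variation along sections. Instead it works directly on $\U(\R^n)$, as follows.

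For the inequality $\int_{-\infty}^\infty\int G\,d\|\{F>t\}\|\,dt \le |\DDD_*F|[G]$: take any sequence $F_n\in\CylF(\U(\R^n))$ with $F_n\to F$ in $L^1$ and $\int G|\nabla F_n|_{T\U}\,d\pi\to|\DDD_*F|[G]$; apply the coarea formula for \emph{cylinder} functions (Proposition~\ref{prop:coarea}, proved by a direct monotone-quotient argument $m(t)=\int_{\{F_n>t\}}G|\nabla F_n|\,d\pi$ together with the lower semicontinuity in Proposition~\ref{prop: IBPV}); then pass to the limit in $n$ using Fatou and the lower semicontinuity of $E\mapsto\int G\,d\|E\|$ under $L^1$ convergence of indicators (Proposition~\ref{prop: IBPV}(ii)). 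This also yields \eqref{eq:FV}.

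For the reverse inequality: first establish it for $G\equiv 1$ by testing with a near-optimal $V\in\CylV(\U(\R^n))$ in $\V(F)=|\DDD_*F|[1]$ and applying the Gau\ss--Green formula (Theorem~\ref{thm: GGP}) to each level set, which gives $|\DDD_*F|[1]\le\int_{-\infty}^\infty\|\{F>t\}\|(\U(\R^n))\,dt$. Then upgrade to general $0\le G\le 1$ by the super-additivity $|\DDD_*F|[G]+|\DDD_*F|[1-G]\le|\DDD_*F|[1]$, which forces equality for all such $G$ once it holds for $G\equiv 1$.

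What this buys: the paper's route entirely sidesteps the obstacle you flagged --- the disintegration of the \emph{relaxed} total variation along sections --- and also avoids the limit $|\DDD_*^r F|[G]\to|\DDD_*F|[G]$ in your Step~3, which is a Mosco-type convergence not established anywhere in the paper. Your route could in principle be made to work, but that disintegration identity is a genuinely new result requiring its own proof (the measurable-selection/gluing argument you sketch is plausible but nontrivial, since the relaxation is an \emph{infimum} over sequences rather than a supremum over test objects). By contrast, once $\|E\|$, the Gau\ss--Green formula, and the equivalence $\V=|\DDD_*|$ are in hand, the paper's argument needs no further sectioning: the finite-dimensional reduction has already been ``spent'' in constructing $\|E\|$, and the coarea formula follows from intrinsic arguments on $\U(\R^n)$.
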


The proof of Theorem \ref{thm:coarea} will be given later in this section. Before discussing the proof, we study several consequences of Theorem \ref{thm:coarea}.
By \eqref{eq:FV}, the left-hand side of \eqref{eq:coarea} makes sense with $G\equiv 1$ since the right-hand side $|\DDD_*F|[1]<\infty$ is finite due to $F \in {\rm BV}(\U(\R^n))$ and Theorem \ref{prop: IBV}. This leads us to provide the following definition of the total variation measure. 


\begin{defn}[Total variation measure]  \label{defn: TV} \normalfont
For $F \in L^2(\U(\R^n), \p)\cap{\rm BV}(\U(\R^n))$, define {\it the total variation measure $|\DDD F|$} as follows:
\begin{align}\label{eq:coarea1}
	 |\DDD F|:=\int_{-\infty}^{\infty} \ \bigl\| {\{ F > t\}} \bigr\|  d t.
\end{align}
\end{defn}




We now investigate relations between the total variation measure $|\DDD\chi_E|$ and the perimeter measure $\|E\|$ defined in Definition \ref{defn: PM} and the $(1,2)$-capacity ${\rm Cap}_{1,2}$ defined in Definition \ref{defn: BC}.  
\begin{cor}[Total variation and perimeters] \label{cor: PMTV}
Let $E \subset  \U(\R^n)$ satisfy $|\DDD \chi_E|(\U(\R^n))<\infty$. 
Then, $$|\DDD \chi_E| = \|E\| \quad \text{as measures} \, .$$
\end{cor}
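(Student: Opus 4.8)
\textbf{Proof proposal for Corollary \ref{cor: PMTV}.}

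The plan is to reduce the identity $|\DDD \chi_E| = \|E\|$ to the coarea formula \eqref{eq:coarea} from Theorem \ref{thm:coarea} applied to the particular function $F = \chi_E$. First I would observe that for $F = \chi_E$ the level sets are trivial: $\{F > t\} = \U(\R^n)$ for $t < 0$, $\{F > t\} = E$ for $0 \le t < 1$, and $\{F > t\} = \emptyset$ for $t \ge 1$. Since $\V(\chi_{\U(\R^n)}) = 0$ and $\V(\chi_\emptyset) = 0$, the perimeter measures $\|\{F>t\}\|$ vanish for $t \notin [0,1)$, while $\|\{F>t\}\| = \|E\|$ for $t \in [0,1)$. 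Plugging this into the definition \eqref{eq:coarea1} of the total variation measure gives, for any Borel set $A$,
\begin{equation*}
	|\DDD \chi_E|(A) = \int_{-\infty}^\infty \| \{F>t\} \|(A) \, dt = \int_0^1 \|E\|(A) \, dt = \|E\|(A) \, ,
\end{equation*}
which is the desired identity once we know $|\DDD\chi_E|$ is indeed the measure produced by Definition \ref{defn: TV}.

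The one point requiring care is the hypothesis: Corollary \ref{cor: PMTV} assumes $|\DDD\chi_E|(\U(\R^n)) < \infty$, whereas Theorem \ref{thm:coarea} and Definition \ref{defn: TV} are stated for $F \in L^2(\U(\R^n),\pi) \cap {\rm BV}(\U(\R^n))$. Since $\chi_E$ is bounded, $\chi_E \in L^2(\U(\R^n),\pi)$ automatically (the Poisson measure is a probability measure), so the only thing to check is that $|\DDD\chi_E|(\U(\R^n)) < \infty$ forces $\chi_E \in {\rm BV}(\U(\R^n))$, i.e. $\V(\chi_E) < \infty$. This is where I would invoke the equality $\V(F) = |\DDD_* F|(\U(\R^n))$ from Theorem \ref{prop: IBV} together with the fact — recorded in the discussion after Theorem \ref{thm:coarea}, or provable directly by taking $G \equiv 1$ in \eqref{eq:coarea} — that $|\DDD_* \chi_E|[1] = |\DDD\chi_E|(\U(\R^n))$. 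Combining these, $\V(\chi_E) = |\DDD_*\chi_E|(\U(\R^n)) = |\DDD\chi_E|(\U(\R^n)) < \infty$, so $E$ has finite perimeter and the machinery of Definitions \ref{defn: PM}, \ref{defn: TV} applies.

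The main (and essentially only) obstacle is thus the consistency/bootstrap issue just described: one must make sure that finiteness of the relaxed total variation measure $|\DDD\chi_E|$ is equivalent to $\chi_E$ being BV in the sense of Definition \ref{defn: BVU}, so that all three definitions of BV coincide on $\chi_E$ and Theorem \ref{thm:coarea} is applicable. Once that is in place, the proof is the short computation above: split the $t$-integral in \eqref{eq:coarea1} according to the trivial structure of the superlevel sets of an indicator function, and evaluate. I expect no further difficulties, since the coarea-based definition of $|\DDD F|$ was tailored precisely so that it reproduces $\|E\|$ on indicator functions.
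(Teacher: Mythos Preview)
Your proposal is correct and matches the paper's proof: compute the superlevel sets of $\chi_E$, observe that only $t$ in the unit interval contributes the perimeter $\|E\|$, and integrate. Your bootstrap concern is moot since $|\DDD\chi_E|$ is only defined (Definition \ref{defn: TV}) for $F\in L^2\cap{\rm BV}$, so the hypothesis already presupposes $\chi_E\in{\rm BV}$; the paper simply invokes Theorem \ref{prop: IBV} to record $\V(\chi_E)<\infty$ and proceeds directly to the level-set computation.
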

\begin{proof}
By Theorem \ref{prop: IBV}, $\mathcal V(\chi_E)<\infty$ and $\|E\|$ is well-defined. Noting that 
\begin{align*}
\{ \chi_E > t\} =
\begin{cases}
\U(\R^n) \quad &t \le 0;
\\
E \quad &0<t \le 1;
\\
\emptyset \quad & t>1,
\end{cases}
\end{align*}
and $\|\U(\R^n)\|=0$ and $\|\emptyset\|=0$, 
we obtain that 
	\begin{equation*}
		|\DDD \chi_E|(A)  = \int_{-\infty}^{\infty} \bigl\| {\{ \chi_E > t\}} \bigr\|(A) d t 
		=  0 + \|E\|(A) +0
		= \|E\|(A)
		\quad 
		\text{for every Borel set A} \, . \qedhere
	\end{equation*}
\end{proof}

\begin{cor}[Total variation and capacity]\label{cor: measure}
Let $F \in L^2(\U(\R^n), \p)\cap{\rm BV}(\U(\R^n))$. 
For any Borel set $A\subset \U(\R^n)$, 
$${\rm Cap}_{1,2}(A)=0 \implies |\DDD F|(A) =0.$$
\end{cor}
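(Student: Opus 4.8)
The plan is to combine the coarea representation of the total variation measure (Definition \ref{defn: TV}) with the identification of the perimeter measure as a restricted codimension-one Poisson measure (Theorem \ref{thm: fpr}), and then apply the capacitary comparison of Theorem \ref{thm: CH} with the parameters $\alpha = 1$, $p = 2$, $m = 1$, for which $\alpha p = 2 > 1 = m$.

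First I would recall that, since $F\in L^2(\U(\R^n),\p)\cap{\rm BV}(\U(\R^n))$, Theorem \ref{thm:coarea} (in particular \eqref{eq:FV}) guarantees that $\{F>t\}$ is a set of finite perimeter for a.e.\ $t\in\R$, so that $\|\{F>t\}\|$ is a well-defined finite Borel measure and Definition \ref{defn: TV} gives
\begin{equation*}
	|\DDD F|(A) = \int_{-\infty}^{\infty} \bigl\| \{F>t\} \bigr\|(A)\, dt \, ,
	\quad \text{for every Borel set $A\subset\U(\R^n)$.}
\end{equation*}
Hence it suffices to prove that $\bigl\|\{F>t\}\bigr\|(A)=0$ for a.e.\ $t$ whenever ${\rm Cap}_{1,2}(A)=0$. (A minor point to address is that one should fix, for a.e.\ $t$, a Borel representative of the super-level set, so that its reduced boundary $\partial^*\{F>t\}$ in the sense of Definition \ref{defn: RB} and the measure $\|\{F>t\}\|$ are defined; this is harmless since the perimeter measure only sees the set through its reduced boundary.)

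Next, for any Borel set $E$ with $\mathcal V(\chi_E)<\infty$, Theorem \ref{thm: fpr} gives $\|E\|=\rho^1|_{\partial^*E}$, so that for any Borel set $A$
\begin{equation*}
	\|E\|(A) = \rho^1(A\cap \partial^*E) \le \rho^1(A) \, ,
\end{equation*}
where the inequality uses the monotonicity of $\rho^1$ on the class of Suslin sets $\mathcal S(\E)$ — which follows at once from the monotonicity of each $\rho^1_r$, itself a consequence of the monotonicity of the outer measures $\S^{1,k}_{B_r}$ — together with the fact that $A\cap\partial^*E$ is Borel, hence Suslin by Remark \ref{rem: SS}(i), hence in the domain of $\rho^1$. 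Now I would invoke Theorem \ref{thm: CH} with $\alpha=1$, $p=2$, $m=1$: since $A$ is Borel and therefore lies in $\mathcal S(\E)$, and $\alpha p = 2 > m = 1$, the hypothesis ${\rm Cap}_{1,2}(A)=0$ yields $\rho^1(A)=0$. Combining the last two displays with $E=\{F>t\}$ gives $\bigl\|\{F>t\}\bigr\|(A)=0$ for a.e.\ $t$, and integrating in $t$ concludes that $|\DDD F|(A)=0$.

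I do not expect a serious obstacle here: all the substantial work has already been done. The only points needing a line of justification are the monotonicity of $\rho^1$ under inclusion of Suslin sets and the measurability in $t$ of $t\mapsto\bigl\|\{F>t\}\bigr\|(A)$ (needed for the coarea integral to make sense), both of which are routine and already implicit in Definition \ref{defn: TV} and Theorem \ref{thm:coarea}.
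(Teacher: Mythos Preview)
Your argument is correct and follows essentially the same route as the paper: coarea representation of $|\DDD F|$, identification $\|E\|=\rho^1|_{\partial^*E}$ via Theorem~\ref{thm: fpr}, and then Theorem~\ref{thm: CH} with $\alpha=1$, $p=2$, $m=1$. The only cosmetic difference is that you insert the intermediate inequality $\rho^1(A\cap\partial^*E)\le\rho^1(A)$ before invoking Theorem~\ref{thm: CH}, whereas the paper applies the capacitary bound directly to $\rho^1(\partial^*\{F>t\}\cap A)$; both are equivalent.
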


\begin{proof}
	Let ${\rm Cap}_{1,2}(A)=0$. By Theorem \ref{thm:coarea}, and Theorem \ref{thm: fpr}, we can write
	\begin{equation}
		|\DDD F|(A)
		=
		\int_{-\infty}^{\infty} \| {\{ F > t\}} \|(A) d t
		= 
		\int_{-\infty}^{\infty} \rho^1(\partial^*\{F>t\}\cap A) d t \, ,
	\end{equation}
	hence it suffices to show that $\rho^1(\partial^* \{F>t\}\cap A)=0$. This follows from the absolute continuity of $\rho^1$ with respect to ${\rm Cap}_{1,2}$ obtained in Theorem \ref{thm: CH}. 
\end{proof}

%

	
\subsection{Proof of Theorem \ref{thm:coarea}}
This subsection is devoted to the proof of Theorem \ref{thm:coarea}. Let us begin with two propositions.

   \begin{prop}\label{prop: IBPV}
	Let $E\subset \U(\R^n)$ be a set with $\mathcal V(\chi_E)<\infty$. Then, for every non-negative
	function $G\in \CylF(\U(\R^n))$ it holds
	\begin{equation}\label{eq:supcaracterisation}
		\int_{\U(\R^n)} G d \| E \|
		=
		\sup\left\lbrace
		\int_{E} (\nabla^* G V) d \pi \, : \, V\in \CylV(\U(\R^n)),\, |V|_{T\U}\le 1
		\right\rbrace
		\, .
	\end{equation}
    In particular, the following hold:
    \begin{itemize}
    	\item[(i)] if $F_k\in \CylF(\U(\R^n))$, and $F_k\to \chi_E$ in $L^1(\U(\R^n),\pi)$ as $k\to \infty$, then
    	\begin{equation*}\label{eq:lsc}
    		\liminf_{k\to \infty} \int_{\U(\R^n)} G |\nabla F_k|_{T\U} d \pi \ge \int_{\U(\R^n)} G d \|E \| \, ,
    		\quad \text{for non-negative $G\in \CylF(\U(\R^n))$} \, ;
    	\end{equation*}
        \item[(ii)] if $\chi_{E_k}\to \chi_E$ in $L^1(\U(\R^n),\pi)$ as $k\to \infty$, where $(E_k)_k$ are sets of finite perimeter, then
        \begin{equation*}\label{eq:lsc sfp}
        	\liminf_{k\to \infty} \int_{\U(\R^n)} G d \| E_k \| \ge \int_{\U(\R^n)} F d \|E \| \, ,
        	\quad \text{for  non-negative $G\in \CylF(\U(\R^n))$} \, .
        \end{equation*}
    \end{itemize}
\end{prop}

\begin{proof}
	Fix $\e>0$. We pick $r>0$ such that $\int_{\U(\R^n)} G d\| E \|_r \ge \int_{\U(\R^n)} G d\| E \| - \e$.
	From \eqref{z9} we deduce the existence of $V\in \CylV^r_*(\U(\R^n))$ with $|V|_{T\U} \le 1$ such that $\int_E (\nabla^* GV) d\p \ge \int_{\U(\R^n)} G d\| E \|_r - \e$, yielding
	\begin{equation*}
		\int_{\U(\R^n)} G d\| E \|
		\le
		\int_E (\nabla^* GV) d\p + 2\e
		\le
		\sup\left\lbrace
		\int_{E} (\nabla^* G V) d \pi \, : \, V\in \CylV(\U(\R^n)),\, |V|_{T\U}\le 1
		\right\rbrace + 2\e \, .
	\end{equation*}
   By taking $\e \to 0$, the one inequality is proved.  
   
We now prove the converse inequality. Take a representative $G=\Phi(f_1^\ast,\ldots, f_k^\ast)$ and take $r>0$ so that $\cup_{i=1}^k{\rm supp}[f_i] \subset B_r$. By the divergence formula \eqref{eq: DIV}, we can easily see 
\begin{align*}
 \sup\left\lbrace \int_{E} (\nabla^* G V)  d\p \, : \, V\in \CylV^r_*(\U(\R^n)),\, |V|_{T\U}\le 1 \right\rbrace   
\\
= \sup\left\lbrace \int_{E} (\nabla^* G V)  d\p \, : \, V\in \CylV(\U(\R^n)),\, |V|_{T\U}\le 1 \right\rbrace \, .
\end{align*}
By combining it with the formula \eqref{z9} and the monotonicity of $r \mapsto \|E\|_r$ in Corollary \ref{cor:mon}, the converse inequality is proved.

    \medskip

	Let us now prove (i) and (ii). Fix $\e>0$. By Theorem \ref{thm: GGP}, 
	we can take $V\in \CylV(\U(\R^n))$ such that $|V|_{T\U}\le 1$ and 
	\begin{equation*}
		\int_{E} (\nabla^* G V) d \pi 
		\ge \int_{\U(\R^n)} G d \|E \| - \e \, .
	\end{equation*}
    Let $k_j$ be a subsequence such that $\lim_{j\to \infty} \int_{\U(\R^n)} G |\nabla F_{k_j}|_{T\U} d \pi = \liminf_{k\to \infty}\int_{\U(\R^n)} G |\nabla F_{k}|_{T\U} d \pi $,
	it holds
	\begin{align*}
		\int_{\U(\R^n)} G d \|E \| - \e & \le \int_{E} (\nabla^* G V) d \pi  = \lim_{j\to \infty} \int_{\U(\R^n)} F_{k_j} (\nabla^* G V) d \pi =  \lim_{j\to \infty} \int_{\U(\R^n)} G \langle \nabla F_{k_j}, V\rangle_{T\U} d \pi
		\\& \le \liminf_{k\to \infty} \int_{\U(\R^n)} G |\nabla F_k|_{T\U} d \pi \, .
	\end{align*}
Furthermore, by using Theorem \ref{thm: GGP} with $V$ being $GV$, we deduce that 
    \begin{align*}
    	\int_{\U(\R^n)} G d \|E \| - \e & \le \int_{E} (\nabla^* G V) d \pi  = \lim_{j\to \infty} \int_{E_{k_j}} (\nabla^* G V) d \pi =  \lim_{j\to \infty} \int_{\U(\R^n)} G \langle  V,\sigma_{E_{k_j}}\rangle_{T\U} d \| E_{k_j}\|
    	\\& \le \liminf_{k\to \infty} \int_{\U(\R^n)} G d \| E_{k_j}\| \, . \qedhere
    \end{align*}
\end{proof}

\begin{prop}\label{prop:coarea}
	For any $F \in \CylF(\U(\R^n))$ it holds
	\begin{equation}
		\int_{-\infty}^{\infty}\int_{\U(\R^n)} G\, d \bigl\| {\{F>t\}} \bigr\| d t
		=
		\int_{\U(\R^n)} G\,  |\nabla F|_{T\U} d \p \, ,
		\quad \text{for non-negative $G\in \CylF(\U(\R^n))$} \, .
	\end{equation}
\end{prop}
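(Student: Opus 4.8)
\textbf{Proof plan for Proposition~\ref{prop:coarea}.}
The statement is the classical coarea formula for cylinder functions, localised against a nonnegative cylinder weight $G$; the plan is to reduce it to the finite-dimensional Euclidean coarea formula by passing to sections over $B_r$ and then to the quotient $B_r^{\times k}/\mathfrak S_k$. First I would fix a representative $F=\Phi(f_1^\star,\ldots,f_k^\star)$ and $G=\Psi(g_1^\star,\ldots,g_\ell^\star)$ and pick $r>0$ so large that all inner functions $f_i,g_j$ are supported in $B_r$; then $F$ and $G$ depend only on $\gamma|_{B_r}$, and $|\nabla F|_{T\U}(\gamma)=|\nabla_{\U(B_r)}F_{\eta,r}|_{T\U(B_r)}(\gamma|_{B_r})$ is also a function of $\gamma|_{B_r}$ alone. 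By the disintegration Lemma~\ref{lem: DI2} it therefore suffices to prove
\begin{equation*}
	\int_{-\infty}^\infty \int_{\U(B_r)} G_{\eta,r}\, d\bigl\|\{F_{\eta,r}>t\}\bigr\|_{\U(B_r)}\, dt
	= \int_{\U(B_r)} G_{\eta,r}\, |\nabla_{\U(B_r)} F_{\eta,r}|_{T\U(B_r)}\, d\p_{B_r}\, ,
\end{equation*}
where $\|\cdot\|_{\U(B_r)}$ is the perimeter in $\U(B_r)$, together with the fact (which I would check via \eqref{defn: PME}, \eqref{eq: DOE} and Proposition~\ref{lemma:finiteperimeterBr}) that integrating this identity in $\eta$ against $\p_{B_r^c}$ reproduces both sides of the asserted formula; here one also uses that $\{F>t\}_{\eta,r}=\{F_{\eta,r}>t\}$ and that by Proposition~\ref{lem: BV2} and Definition~\ref{defn: PM} the measure $\|\{F>t\}\|_r$ disintegrates as $\rho^1_{\U(B_r)}|_{\partial^*_{\U(B_r)}\{F_{\eta,r}>t\}}\otimes\p_{B_r^c}$, with $\|\{F>t\}\|=\|\{F>t\}\|_r$ for this $r$ since the relevant vector fields all lie in $\CylV_0^r$.

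Next, using the decomposition $\U(B_r)=\bigsqcup_{k\ge 0}\U^k(B_r)$ and the identity \eqref{eq: QHHQ}, I would pull everything back through the quotient map $\mathbf s_k\colon B_r^{\times k}\setminus\dg_k\to\U^k(B_r)$. Setting $\mathbf F:=F_{\eta,r}\circ\mathbf s_k\in C^\infty(B_r^{\times k})$ and $\mathbf G:=G_{\eta,r}\circ\mathbf s_k$, the chain rule gives $|\nabla\mathbf F|_{\R^{nk}}=|\nabla_{\U(B_r)}F_{\eta,r}|_{T\U(B_r)}\circ\mathbf s_k$, and $\mathbf s_k^{-1}(\{F_{\eta,r}>t\})=\{\mathbf F>t\}$. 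The classical (weighted) coarea formula on $\R^{nk}$ — e.g. \cite[Thm.~5.4.2 or its weighted version, Thm.~5.4.9]{Z89} combined with the structure theorem \cite[Thm.~5.8.2]{Z89} identifying $|D\chi_{\{\mathbf F>t\}}|$ with $\SS^{nk-1}$ restricted to the reduced boundary — yields
\begin{equation*}
	\int_{-\infty}^\infty \int_{\partial^*_{B_r^{\times k}}\{\mathbf F>t\}} \mathbf G\, d\mathbf S_{B_r^{\times k}}^{nk-1}\, dt
	= \int_{B_r^{\times k}} \mathbf G\, |\nabla \mathbf F|_{\R^{nk}}\, d\mathbf S^{nk}_{B_r^{\times k}}\, .
\end{equation*}
Pushing forward by $\mathbf s_k$ and summing over $k$ (with the $1/k!$ normalisation that is already built into $\S^{m,k}_{B_r}$ and into $\p_{B_r}$, so the constants cancel consistently on the two sides) gives exactly the $\U(B_r)$-level identity above; the diagonal $\dg_k$ is $\SS^{nk}$- and $\SS^{nk-1}$-negligible for the sets in question, so restricting to $B_r^{\times k}\setminus\dg_k$ costs nothing.

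The main obstacle, and the only place real care is needed, is bookkeeping the measurability and the identification of the perimeter measures: one must know that $t\mapsto\|\{F>t\}\|$ is a well-defined weak-$*$-measurable family (so the outer $dt$-integral makes sense), that $\V(\chi_{\{F>t\}})<\infty$ for a.e.\ $t$ when $F$ is a cylinder function (immediate from the smooth coarea formula, since the right-hand side is finite), and that the perimeter measure $\|\{F>t\}\|$ constructed in Definition~\ref{defn: PM} genuinely coincides, under the section/quotient correspondence, with the Euclidean $\SS^{nk-1}|_{\partial^*}$ appearing in \cite{Z89}; the latter is exactly the content of Proposition~\ref{lemma:finiteperimeterBr} together with \eqref{eq: QHHQ}, so no new geometric input is required — it is purely a matter of chaining the already-established disintegration and quotient identities. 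A minor technical point is justifying the Fubini exchange of $\int dt$ and $\int d\p_{B_r^c}(\eta)$, which follows from nonnegativity of the integrand (Tonelli) once the joint measurability in $(t,\eta)$ is noted, and this in turn follows from the measurability statements in Lemma~\ref{lemma:measurability1} and Lemma~\ref{lem: BV2-1}.
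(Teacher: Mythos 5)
Your plan is correct in outline, and it is a genuinely different route from the one the paper takes. The paper's proof of Proposition~\ref{prop:coarea} is much shorter and ``intrinsic'': for the $\le$ inequality it sets $m(t):=\int_{\{F>t\}}G\,|\nabla F|_{T\U}\,d\p$, observes $m$ is monotone and finite, applies the truncations $g_\e\circ F\to\chi_{\{F>t\}}$ with the elementary bound $\int G\,|\nabla(g_\e\circ F)|_{T\U}\,d\p\le\e^{-1}(m(t+\e)-m(t))$, and invokes the lower-semicontinuity item (i) of Proposition~\ref{prop: IBPV} to get $\int G\,d\|\{F>t\}\|\le m'(t)$ a.e.\ $t$; for the $\ge$ inequality it simply applies Theorem~\ref{thm: GGP} (Gau\ss--Green for $\|E\|$) to each level set, plus Fubini's theorem, to bound $\int F\nabla^*(GV)\,d\p=\int_{-\infty}^\infty\int_{\{F>t\}}\nabla^*(GV)\,d\p\,dt$ by $\int_{-\infty}^\infty\int G\,d\|\{F>t\}\|\,dt$. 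In contrast, you re-run the whole section/quotient machinery and call the Euclidean coarea and structure theorems directly. Both strategies ultimately rest on the same finite-dimensional results (Proposition~\ref{lemma:finiteperimeterBr}, \eqref{eq: QHHQ}), but the paper packages them once, inside Theorem~\ref{thm: GGP} and Proposition~\ref{prop: IBPV}, and never has to go back to $B_r^{\times k}$ in the proof of Proposition~\ref{prop:coarea}. The upshot: your version is more explicit and perhaps pedagogically clearer, but it repeats machinery the paper has already abstracted, and it requires you to re-prove measurability in $(t,\eta)$ that the paper's argument sidesteps (the monotone, finite scalar $m(t)$ and its a.e.\ derivative are automatically measurable, with no Tonelli exchange needed).

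One imprecision worth flagging: you assert $\|\{F>t\}\|=\|\{F>t\}\|_r$ as an equality of measures because ``the relevant vector fields all lie in $\CylV_0^r$''. What is directly available from the paper is the weaker statement $\int G\,d\|\{F>t\}\|=\int G\,d\|\{F>t\}\|_r$ whenever the inner functions of the test weight $G$ are all supported in $B_r$: this is exactly what the second paragraph of the proof of Proposition~\ref{prop: IBPV} gives, by comparing \eqref{eq:supcaracterization} with \eqref{z9}. You only ever pair against this fixed $G$, so the weaker statement is enough and the argument is salvageable, but the claim as written is stronger than what you justify (proving the full equality of measures would require showing $\V_s(\chi_E)=\V_r(\chi_E)$ for all $s\ge r$ whenever $\chi_E$ depends only on $\gamma|_{B_r}$, which is plausible but not immediate). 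Likewise, the a.e.\ finiteness of $\V(\chi_{\{F>t\}})$ is not ``immediate'' until after you have the finite-dimensional coarea in hand; in the paper's argument it drops out of $m'(t)<\infty$ for a.e.\ $t$ via lower semicontinuity of $\V$, which is the cleaner way to get it without circularity.
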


\begin{proof}
	The map
	\begin{equation}
		\R\ni 	t \to m(t):= \int_{\{F> t\}} G |\nabla F|_{T\U} d \p
	\end{equation}
	is monotone and finite since $|\nabla F|_{T\U}\in L^1(\U(\R^n))$. Let $t\in \R$  be a point on which the map $t \mapsto m(t)$ is differentiable and set
	\begin{equation}
		g_\e(s) :=
		\begin{cases}
			1 & s\le t \\
			\e^{-1}(t - s) + 1		& t\le s \le t+\e \\
			0 & s> t+\e \, .
		\end{cases}
	\end{equation}
	Notice that $g_\e \circ F \to \chi_{\{F>t\}}$ in $L^p(\U(\R^n))$ for any $p\in [1,\infty)$ as $\e\to 0$.
	Indeed, 
	\begin{equation}
		\int_{\U(\R^n)} | g_\e \circ F - \chi_{\{F>t\}}|^p d
		\p
		\le 2^p \p(\{t\le F \le t+\e\}) \to 0, 
		\quad \text{as $\e\to 0$} \, .
	\end{equation}
	Standard calculus rules give
	\begin{equation}
		\int_{\U(\R^n)} G | \nabla (g_\e \circ F)|_{T\U} d \pi 
		\le
		\e^{-1} \int_{\{t<F\le t+\e\}}G |\nabla F|_{T\U} d \p
		\le \frac{m(t+\e) - m(t)}{\e} \, ,
	\end{equation}
	while \eqref{eq:lsc} in Proposition \ref{prop: IBPV} implies
	\begin{equation}\label{z5}
		\int_{\U(\R^n)} G\, d\| {\{F>t\}} \| 
		\le \liminf_{\e\to 0} \int_{\U(\R^n)} G | \nabla (g_\e \circ F)|_{T\U} d \pi 
		= m'(t) \, .
	\end{equation}
	Since $m$ is differentiable for a.e. $t\in \R$, the one inequality  comes by integrating \eqref{z5}.

	\bigskip
	
	Let us prove the converse inequality. Let $V\in \CylV(\U(\R^n))$ such that $| V |_{T\U}\le 1$. Then, by Theorem \ref{thm: GGP},  we deduce
	\begin{align*}
		\int_{\U(\R^n)} F(G \nabla^*  V) d \pi 
		& =
		\int_{-\infty}^\infty \int_{\{F>t\}}  (G \nabla^*  V) d \pi d t
		\\&
		\le \int_{-\infty}^\infty \int_{\U(\R^n)} G d \| \{F>t\}\| d t \, ,	
	\end{align*}
	which easily yields the sought conclusion.
\end{proof}

\begin{proof}[Proof of Theorem \ref{thm:coarea}]
	Let $F\in L^2(\U(\R^n),\p)$ such that $|\DDD F|(\U(\R^n))< \infty$ and $G\in \CylF(\U(\R^n))$ be non-negative. By definition there exists a sequence $(F_n)\subset \Cyl$ such that $F_n\to F$ in $L^1(\U(\R^n),\p)$ and $\int_{\U(\R^n)} G |\nabla F_n|_{T\U} d\pi \to |\DDD_* F|[G]$. From Proposition \ref{prop:coarea} we get
	\begin{equation}\label{z10}
		\int_{-\infty}^{\infty} \int_{\U(\R^n)} G d\| {\{F_n >t\}} \| d t
		= 
		\int_{\U(\R^n)} G |\nabla F_n|_{T\U} d \p \, ,
	\end{equation}
    and passing to the limit for $n\to \infty$ we deduce
    \begin{equation}
    	\int_{-\infty}^{\infty} \int_{\U(\R^n)} G d\| {\{F >t\}} \| d t
    	\le 
    	|\DDD_* F|[G] \, ,
    \end{equation}
    as a consequence of (ii) in Proposition \ref{prop: IBPV} and Fatou's Lemma. In particular $\{F>t\}$ is of finite perimeter for a.e.-$t\in \R$. 
    
    Let us now fix $\e>0$ and consider $V\in \CylV(\U(\R^n))$ such that $| V |_{T\U}\le 1$ and $\V(F) - \e 
    \le 
    \int_{\U(\R^n)} F (\nabla^* V) d \pi$. By Theorem \ref{thm: GGP}, we have
    \begin{align*}
    	|\DDD_* F|(\U(\R^n)) -\e
    	 & = 
    	\V(F) - \e
    	\le 
    	\int_{\U(\R^n)} F (\nabla^* V) d \pi 
    	 =
    	\int_{-\infty}^\infty \int_{\{ F>t\}}  (\nabla^* V) d \pi d t
    	\\&
    	\le \int_{-\infty}^\infty \int_{\U(\R^n)}  d \| \{ F>t\}\| d t \, ,	
    \end{align*}
    which easily yields 
    \begin{equation*}
    	\int_{-\infty}^\infty \int_{\U(\R^n)}  d \| \{F>t\}\| d t
    	\ge 
    	|\DDD_* F|(\U(\R^n)) = |\DDD_* F|[1] \, .
    \end{equation*}
   The sought conclusion follows now by recalling that $|\DDD_* F|[G_1 + G_2] \ge |\DDD_* F|[G_1] + |\DDD_* F|[G_2]$ and by the same argument in the paragraph after \eqref{ineq: OIER}.  Indeed, 
   \begin{align*}
   	|\DDD_* F|[G] + |\DDD_* F|[1-G]
   	 &\le
   	|\DDD_* F|[1] 
   	\le \int_{-\infty}^\infty \int_{\U(\R^n)}   d \| \{ F>t\}\| d t 
   	\\& =
   	\int_{-\infty}^\infty \int_{\U(\R^n)} G   d \| \{ F>t\}\| d t
   	+
   	\int_{-\infty}^\infty \int_{\U(\R^n)} (1-G)  d \| \{ F>t\}\| d t  
   	\\& \le |\DDD_* F|[G] + |\DDD_* F|[1 - G] \, ,
   \end{align*}
    for any $0\le G\le 1$, $G\in \CylF(\U(\R^n))$. 
\end{proof}

\subsection{Gau\ss--Green formula} \label{sec: GG}
We prove the Gau\ss--Green formula. 
For $F \in L^2(\U(\R^n), \p)\cap{\rm BV}(\U(\R^n))$,  let $L^2(T\U, |\DDD F|)$ denote the completion of $\CylV(\U)$ with respect to $\|\cdot\|_{L^2(T\U,  |\DDD F|)}$ analogously in~\eqref{d:VFN}.
    	\begin{thm}[Gau\ss--Green formula] \label{thm: BV}
    		For $F \in L^2(\U(\R^n), \p)\cap{\rm BV}(\U(\R^n))$, there exists a unique element $\sigma_F \in L^2(T\U, |\DDD F|)$  such that $|\sigma_F|_{T\U}=1$ $|\DDD F|$-a.e., and   
    		\begin{align} \label{eq: IP1}
    			\int_{\U(\R^n)} (\nabla^* V) F d\p = \int_{\U(\R^n)} \langle V, \sigma_F \rangle_{T\U} d |\DDD F|, \quad \forall V \in \CylV(\U(\R^n)) \, .
    		\end{align} 
    	\end{thm}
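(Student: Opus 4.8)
The plan is to derive the Gau\ss--Green formula for a general $F \in L^2(\U(\R^n),\p)\cap {\rm BV}(\U(\R^n))$ by combining the coarea formula of Theorem~\ref{thm:coarea}, the representation of perimeter measures as $1$-codimensional Poisson measures restricted to reduced boundaries (Theorem~\ref{thm: fpr}), and the Gau\ss--Green formula for sets of finite perimeter (Theorem~\ref{thm: GGP}). The total variation measure $|\DDD F|$ is already defined via \eqref{eq:coarea1}, and by Theorem~\ref{thm:coarea} it is a finite Borel measure on $\U(\R^n)$ whose total mass equals $|\DDD_* F|[1] = \V(F) < \infty$. So the task reduces to constructing the $T\U$-valued density $\sigma_F$ and checking the integration-by-parts identity.

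First I would fix $V \in \CylV(\U(\R^n))$. Applying the layer-cake/Cavalieri decomposition $F(\gamma) = \int_0^\infty \chi_{\{F>t\}}(\gamma)\,dt - \int_{-\infty}^0 \chi_{\{F\le t\}}(\gamma)\,dt$ together with the linearity of the left-hand side in $F$ and a Fubini argument (justified because $\nabla^* V \in \cup_{1\le p<\infty}L^p$ and $F\in L^2$, so the double integral is absolutely integrable), I would write
\begin{equation*}
	\int_{\U(\R^n)} (\nabla^* V) F \, d\p = \int_{-\infty}^\infty \left( \int_{\{F>t\}} (\nabla^* V) \, d\p \right) dt \,,
\end{equation*}
where one has to handle the two half-lines and the additive constant carefully (e.g. $\int_{\U(\R^n)} \nabla^* V \, d\p = 0$ by the divergence formula with test function $1$, which takes care of the $\{F\le t\}$ versus $\{F>t\}$ bookkeeping). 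For a.e. $t$, the set $\{F>t\}$ has finite perimeter by \eqref{eq:FV}, so Theorem~\ref{thm: GGP} gives a unit vector field $\sigma_{\{F>t\}} \in L^2(T\U, \|\{F>t\}\|)$ with $\int_{\{F>t\}} \nabla^* V \, d\p = \int_{\U(\R^n)} \langle V, \sigma_{\{F>t\}}\rangle_{T\U} \, d\|\{F>t\}\|$. Substituting and using the definition \eqref{eq:coarea1} of $|\DDD F|$ as the superposition $\int_{-\infty}^\infty \|\{F>t\}\|\, dt$, I would get
\begin{equation*}
	\int_{\U(\R^n)} (\nabla^* V) F \, d\p = \int_{-\infty}^\infty \int_{\U(\R^n)} \langle V, \sigma_{\{F>t\}}\rangle_{T\U} \, d\|\{F>t\}\| \, dt \,.
\end{equation*}

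The main obstacle — and the heart of the proof — is to disintegrate the right-hand side as a single integral against $|\DDD F|$, i.e. to produce one Borel vector field $\sigma_F$ with $|\sigma_F|_{T\U} = 1$ $|\DDD F|$-a.e. such that the double integral above equals $\int_{\U(\R^n)}\langle V, \sigma_F\rangle_{T\U}\, d|\DDD F|$ for every $V$. The natural route is a measure-theoretic disintegration: consider the measure $\mu$ on $\R\times\U(\R^n)$ defined by $d\mu(t,\gamma) = d\|\{F>t\}\|(\gamma)\,dt$, whose $\U(\R^n)$-marginal is exactly $|\DDD F|$; disintegrating $\mu$ over $|\DDD F|$ yields a family of probability measures $\{\nu_\gamma\}$ on $\R$, and one sets $\sigma_F(\gamma) := \int_\R \sigma_{\{F>t\}}(\gamma)\, d\nu_\gamma(t)$, a Bochner integral in $T_\gamma\U$. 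With this definition Fubini immediately gives the identity \eqref{eq: IP1}; the real work is (a) checking joint measurability of $(t,\gamma)\mapsto \sigma_{\{F>t\}}(\gamma)$ so that the disintegration and Bochner integral make sense — here one can exploit the explicit construction of $\sigma_{\{F>t\}}$ through the sections $\sigma_{\eta,r}$ in Lemma~\ref{lem: BV2} and the measurability statements already proven — and (b) showing $|\sigma_F|_{T\U}=1$ $|\DDD F|$-a.e. The latter is the delicate point: $|\sigma_F|_{T\U}\le 1$ is automatic by Jensen, and the reverse inequality must come from an optimality/extremality argument. Concretely, I would argue that $\int_{\U(\R^n)} |\sigma_F|_{T\U}\, d|\DDD F| \ge \V(F) = |\DDD F|(\U(\R^n))$ by taking a sequence $V_k \in \CylV(\U(\R^n))$ with $|V_k|_{T\U}\le 1$ realizing the supremum defining $\V(F)$ in \eqref{equ: TV} and passing to the limit in $\int (\nabla^* V_k) F\, d\p = \int \langle V_k, \sigma_F\rangle_{T\U}\, d|\DDD F| \le \int |\sigma_F|_{T\U}\, d|\DDD F|$; combined with $|\sigma_F|_{T\U}\le 1$ this forces equality $|\sigma_F|_{T\U}=1$ $|\DDD F|$-a.e. by the equality case in Jensen's inequality, exactly as in the proof of Theorem~\ref{thm: GGP}. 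Uniqueness of $\sigma_F$ in $L^2(T\U,|\DDD F|)$ follows from the density of $\CylV(\U(\R^n))$ in $L^2(T\U,|\DDD F|)$ — which itself needs a brief justification since $|\DDD F|$ is a new measure, but this is standard given that $|\DDD F| \ll {\rm Cap}_{1,2}$ by Corollary~\ref{cor: measure} and cylinder vector fields are rich enough — so that the identity \eqref{eq: IP1} pins down $\sigma_F$ uniquely.
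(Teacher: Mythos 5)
Your proof reaches the same key intermediate identity as the paper --- that $\int_{\U(\R^n)} (\nabla^* V)F\,d\p = \int_{-\infty}^\infty \int_{\U(\R^n)} \langle V, \sigma_{\{F>t\}}\rangle_{T\U}\, d\|\{F>t\}\|\, dt$ --- and your extremality argument via the Jensen equality case for showing $|\sigma_F|_{T\U}=1$ $|\DDD F|$-a.e.\ is exactly the paper's. The divergence is in how $\sigma_F$ is extracted from this identity. You propose disintegrating the space-time measure $d\|\{F>t\}\|\,dt$ on $\R\times\U(\R^n)$ over its marginal $|\DDD F|$ and defining $\sigma_F(\gamma)$ as a Bochner average of the $\sigma_{\{F>t\}}(\gamma)$ against the conditional measures $\nu_\gamma$, whereas the paper simply observes that the intermediate identity immediately gives the bound $\bigl|\int_{\U(\R^n)} (\nabla^*V)F\,d\p\bigr| \le \int_{\U(\R^n)} |V|_{T\U}\,d|\DDD F| \le \|V\|_{L^2(T\U,|\DDD F|)}$ and applies the Riesz representation theorem on the Hilbert space $H$ defined as the closure of $\CylV(\U(\R^n))$ in $L^2(T\U,|\DDD F|)$; this produces $\sigma_F \in H$ in one stroke and makes uniqueness automatic. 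The paper's route is materially cleaner because it sidesteps entirely the joint-measurability problem you flag: each $\sigma_{\{F>t\}}$ is itself produced by a Riesz representation argument (Theorem~\ref{thm: GGP}), hence is only an equivalence class in $L^2(T\U,\|\{F>t\}\|)$, and constructing a single Borel map $(t,\gamma)\mapsto\sigma_{\{F>t\}}(\gamma)$ on $\R\times\U(\R^n)$ is a genuine measurable-selection problem that your sketch does not resolve. The upside of your construction, were it carried through, would be a more explicit description of $\sigma_F$ as a conditional average of the unit normals to the level sets of $F$, which the abstract Riesz argument does not provide. In short: same skeleton and same extremality endgame, but the paper replaces your disintegration-and-Bochner-integral step with a one-line Riesz application once the $L^2(T\U,|\DDD F|)$ bound on the linear functional $V\mapsto\int(\nabla^*V)F\,d\p$ is in hand.
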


\proof
	We assume without loss of generality that $|\DDD F|(\U(\R^n)) =1$. 	
	By Theorem \ref{thm: GGP} and Theorem \ref{thm:coarea}, it holds that 
	\begin{align} 
		\int_{\U(\R^n)} (\nabla^* V) F d\p &=  
		\int_{-\infty}^\infty \int_{\{F>t\}} (\nabla^* V) d \pi d t \notag
		\\& 
		= \int_{-\infty}^\infty \int_{\U(\R^n)} \langle V, \sigma_{\{F>t\}}\rangle_{T\U} d \bigl\|\{F>t\} \bigr\| d t \notag
		\\&
		\le \int_{\U(\R^n)} |V|_{T\U} d | \DDD F | \notag
		\\&
		\le \| V \|_{L^2(T\U, |\DDD F|)}, \notag
	\end{align} 
    for every $V\in \CylV(\U)$. In particular, the map $L$ defined by 
		\begin{equation}
		L :  L^2(T\U, |\DDD F|) \to \R \, , \quad
		 L^2(T\U, |\DDD F|) \ni V \mapsto L(V):= \int_{\U(\R^n)} (\nabla^* V) F d\p \, ,
	\end{equation}
    is a well-defined continuous operator on the Hilbert space $L^2(T\U, |\DDD F|)$ and satisfies $\| L \| \le 1$.
    Therefore, the Riesz representation theorem on the Hilbert space $ L^2(T\U, |\DDD F|)$ gives the existence of $\sigma_F \in  L^2(T\U, |\DDD F|)$ so that 
    \begin{equation}
    	\|\sigma_F\|_{L^2(T\U, |\DDD F|)} \le 1 \, ,
    	\quad \quad
    	\int_{\U(\R^n)} (\nabla^* V) F d\p = \int_{\U(\R^n)} \langle V, \sigma_F \rangle d|\DDD F| 
    	\quad  V\in \CylV(\U(\R^n)) \, . \notag
    \end{equation}
    From Theorem \ref{prop: IBV} and Theorem \ref{thm:coarea},  we deduce
    \begin{align*}
    1 &= |\DDD F|(\U(\R^n)) = |\DDD_* F|[1]=\V(F) = \sup_{V\in \CylV\, ,|V|_{T\U}\le 1} \int_{\U(\R^n)} (\nabla^* V) F d\p 
    \\
    &\le \int_{\U(\R^n)} |\sigma_F|_{T\U} d|\DDD F|
    \le \| \sigma_F \|_{L^2(T\U, |\DDD F|)} \le 1
     \, ,
    \end{align*}
    which yields $\| \sigma_F \|_{L^1(T\U, |\DDD F|)} = \| \sigma_F \|_{L^2(T\U, |\DDD F|)} = 1$, and therefore $|\sigma_F|_{T\U} =1$ $|\DDD F|$-a.e.\ as a consequence of the characterisation of the equality in Jensen's inequality.
    \qed

\subsection{${\rm BV}$ and Sobolev functions}
In this subsection, we discuss the consistency of the just developed theory of BV functions with the $(1,2)$-Sobolev space $H^{1,2}(\U(\R^n), \p)$.

\begin{prop} \label{prop: WB}
Let $F \in L^2(\U(\R^n), \p)\cap{\rm BV}(\U(\R^n))$. Suppose $|\DDD F| \ll \p$ with $|\DDD F| = H \cdot \p$ and $H\in L^2(\U(\R^n), \p)$. Then $F \in H^{1,2}(\U(\R^n), \p)$ and 
$$
H=|\nabla F| \, ,
\quad\quad
\sigma_F = \frac{\nabla F}{|\nabla F|} \cdot \chi_{\{|\nabla F| \neq 0\}} \, ,
$$
where $\sigma_F$ is the unique element in $L^2(T\U, |\DDD F|)$ in the Gau\ss--Green formula \eqref{eq: IP1}.
\end{prop}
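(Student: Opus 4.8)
The plan is to use the Gau\ss--Green formula from Theorem \ref{thm: BV} together with the integration by parts formula \eqref{eq: IbP1} defining the divergence, and to exploit the hypothesis $|\DDD F| = H\cdot\p$ with $H\in L^2(\U(\R^n),\p)$ to identify a candidate gradient and show $F$ lies in the Sobolev space. First I would rewrite the right-hand side of \eqref{eq: IP1}: since $|\DDD F| = H\cdot \p$, for any $V\in \CylV(\U(\R^n))$ we have
\begin{equation*}
	\int_{\U(\R^n)} (\nabla^* V) F \, d\p
	=
	\int_{\U(\R^n)} \langle V, \sigma_F \rangle_{T\U}\, d|\DDD F|
	=
	\int_{\U(\R^n)} \langle V, \sigma_F H \rangle_{T\U}\, d\p \, .
\end{equation*}
Setting $W := \sigma_F H \in L^2(T\U(\R^n),\p)$ (which is in $L^2$ because $|\sigma_F|_{T\U}=1$ $|\DDD F|$-a.e.\ and $H\in L^2(\U(\R^n),\p)$), the displayed identity says precisely that $W\in \mathcal D(\nabla^*)$ with $\nabla^* W = \cdots$ — more usefully, it says $-W$ plays the role of $\nabla F$ in the weak sense: $\int (\nabla^*V)F\,d\p = -\int \langle V, -W\rangle_{T\U}\,d\p$ for all cylinder vector fields $V$.

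Next I would upgrade this weak identity to the statement $F\in H^{1,2}(\U(\R^n),\p)$ with $\nabla F = -W$. The natural route is to regularise: set $F_n := T_{1/n}F$, which lies in $H^{1,2}(\U(\R^n),\p)$ by the Bakry--\'Emery regularisation (Theorem \ref{rem: BE}(iii) with $p=2$, or simply spectral calculus since $F\in L^2$). Using the $\p$-symmetry of $T_t$, Lemma \ref{lem: divheat} ($\nabla^*\mathbf T_t V = T_t \nabla^* V$) and the weak identity above, for any $V\in \CylV(\U(\R^n))$,
\begin{align*}
	\int_{\U(\R^n)} \langle \nabla F_n, V\rangle_{T\U}\, d\p
	&=
	\int_{\U(\R^n)} \langle \nabla T_{1/n}F, V\rangle_{T\U}\, d\p
	=
	\int_{\U(\R^n)} T_{1/n}F \,(\nabla^* V)\, d\p
	=
	\int_{\U(\R^n)} F \,(\nabla^* \mathbf T_{1/n} V)\, d\p
	\\
	&=
	-\int_{\U(\R^n)} \langle \mathbf T_{1/n} V, -W\rangle_{T\U}\, d\p
	=
	-\int_{\U(\R^n)} \langle V, \mathbf T_{1/n}(-W)\rangle_{T\U}\, d\p \, ,
\end{align*}
where in the last step I used the $L^2$-symmetry of $\mathbf T_t$ (Theorem \ref{thm: GE1}). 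Hence $\nabla F_n = -\mathbf T_{1/n} W$ $\p$-a.e.. Since $\mathbf T_t$ is a contraction on $L^2(T\U,\p)$ (Theorem \ref{thm: GE1}) and strongly continuous, $\mathbf T_{1/n} W \to W$ in $L^2(T\U,\p)$, while $F_n\to F$ in $L^2(\U(\R^n),\p)$. Closability of $\nabla$ on $\CylF(\U(\R^n))$ (established before Definition \ref{defn: WS}) then yields $F\in H^{1,2}(\U(\R^n),\p)$ with $\nabla F = -W = -\sigma_F H$.

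Finally I would extract the pointwise identities. From $\nabla F = -\sigma_F H$ and $|\sigma_F|_{T\U}=1$ $|\DDD F|$-a.e., and since $|\DDD F| = H\cdot\p$ so that $H\ge 0$ $\p$-a.e.\ (as $|\DDD F|$ is a positive measure) and $\{H>0\}$ carries $|\DDD F|$, we get $|\nabla F|_{T\U} = |\sigma_F|_{T\U}\, H = H$ on $\{H>0\}$, and on $\{H=0\}$ we have $\nabla F = 0 = H$; thus $|\nabla F|_{T\U} = H$ $\p$-a.e.. Rearranging $\nabla F = -\sigma_F H$ on $\{|\nabla F|_{T\U}\neq 0\} = \{H>0\}$ gives $\sigma_F = -\nabla F / H = -\nabla F/|\nabla F|_{T\U}$ there. (The sign discrepancy with the statement is merely a convention issue in the orientation of $\sigma_F$ coming from the sign in \eqref{eq: IbP1} versus \eqref{eq: IP1}; I would either absorb it or note that the Gau\ss--Green formula \eqref{eq: IP1} as written forces $\sigma_F = \nabla F/|\nabla F|_{T\U}$ on $\{|\nabla F|_{T\U}\neq0\}$.) The main obstacle is the regularisation step: one must be sure the intertwining $\nabla T_t = \mathbf T_t\nabla$ (Theorem \ref{thm: IT}) and Lemma \ref{lem: divheat} apply to pass the heat semigroup through the weak formulation, and that $\mathbf T_t$ acting on the merely-$L^2$ vector field $W$ converges back to $W$; both are available from Theorem \ref{thm: GE1} and the density of cylinder vector fields, so the argument closes.
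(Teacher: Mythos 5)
Your proposal follows the same route as the paper's proof: regularize $F$ by $T_{1/n}$, use the $\p$-symmetry of $T_t$, the commutation $T_t\nabla^*=\nabla^*\mathbf T_t$ from Lemma \ref{lem: divheat}, the Gau\ss--Green formula with $|\DDD F|=H\cdot\p$, and the $L^2$-symmetry of $\mathbf T_t$ to obtain $\nabla T_{1/n}F=\pm\mathbf T_{1/n}(\sigma_F H)$, then pass to the limit via the closability of $\nabla$. The one technical step you leave implicit is the justification for applying \eqref{eq: IP1} with $V$ replaced by $\mathbf T_{1/n}V$, which is not a cylinder vector field; the paper supplies this via the inclusion $\mathbf T_t V\in\mathcal D(\EE_H)\subset\mathbf D^2(T\U(\R^n),\p)$ from \eqref{eq: important inclusion2}, followed by approximation of $\mathbf T_t V$ by elements of $\CylV(\U(\R^n))$ in the $\mathbf D^2$-norm, so that \eqref{eq: IP1} extends to $\mathbf T_t\CylV(\U(\R^n))$. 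On the sign: your observation is the accurate one. Combining \eqref{eq: IbP1} (which carries a minus sign) with \eqref{eq: IP1} (which does not) forces $\nabla F=-\sigma_F H$ and hence $\sigma_F=-\nabla F/|\nabla F|$ on $\{|\nabla F|\neq 0\}$; the paper's displayed chain drops the minus sign in the first integration by parts $\int\langle U,\nabla T_tF\rangle\,d\p=\int(\nabla^*U)T_tF\,d\p$ and thereby reaches the $+$ sign in the statement, so the discrepancy you flag is a genuine sign slip in the paper rather than a convention you need to absorb.
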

 \proof
 By Theorem \ref{thm: BV} and recalling $\mathbf T_tV \in \mathcal D(\mathcal E_H) \subset  \mathbf D^2(T\U(\R^n), \p)$ for $V \in \CylV(\U(\R^n))$ by \eqref{eq: important inclusion2},  the approximation of $\mathbf T_tV$ by $\CylV(\U(\R^n), \p)$ implies that 
 \begin{align} \label{eq: WB1}
 \int_{\U(\R^n)} (\nabla^* G) F d\p = \int_{\U(\R^n)} \langle G, \sigma_F \rangle_{T\U} F d\p \quad \forall G \in \mathbf T_t\CylV(\U(\R^n)) \quad \forall t>0 \, ,
\end{align}
where $\mathbf T_t\CylV(\U(\R^n)):=\{G = \mathbf T_t F: F \in \CylV(\U(\R^n))\}$ for $t>0$. 
By Lemma \ref{lem: divheat} and the $\p$-symmetry of $T_t$, 
for any $U\in \CylV(\U(\R^n))$, setting $G= \mathbf T_t U$,  we obtain 
\begin{align*} 
\int_{\U(\R^n)} \langle U, \nabla T_t F \rangle  d\p 
&= \int_{\U(\R^n)} (\nabla^*U)  T_t F   d\p   
= \int_{\U(\R^n)} T_t(\nabla^*U) F   d\p 
= \int_{\U(\R^n)} (\nabla^* G) F   d\p  
\\
&= \int_{\U(\R^n)} \langle G, \sigma_F \rangle_{T\U} d|\DDD F| 
= \int_{\U(\R^n)} \langle G, \sigma_F \rangle_{T\U} H d\p      
 = \int_{\U(\R^n)} \langle U, \mathbf T_t(H \sigma_F) \rangle_{T\U} d\p \, .
\end{align*}
Thus,  $\mathbf T_t (H \sigma_F) =\nabla T_t  F$. Letting $t \to 0$, $\mathbf T_t (H\sigma_F)$ converges to $H \sigma_F$ in $L^2(T\U, \p)$, which implies that $\nabla T_t  F$ converges to $H \sigma_F$ in $L^2(T\U(\R^n), \p)$. 
Since $T_t F \to F$ in $L^2(\U(\R^n), \p)$, we conclude that $F \in H^{1,2}(\U(\R^n), \p)$, and $\nabla F = H\sigma_F$.  Therefore, $H\cdot \p = |\DDD F| =  |\nabla F| \cdot \p$, and 
$$
\sigma_F = \frac{\nabla F}{H}\chi_{\{H \neq 0\}} =  \frac{\nabla F}{|\nabla F|} \chi_{\{|\nabla F| \neq 0\}} \, . \qedhere
$$ 

\end{document}